\numberwithin{equation}{section}
\newcommand{\TC}{\mathrm{TC}}
\newcommand{\rep}{\mathrm{Rep}}
\newcommand{\tr}{\mathrm{tr}}
\newcommand{\BG}{\mathrm{BG}}
\renewcommand{\hom}{\mathrm{Hom}}
\newcommand{\bor}{\mathrm{Bor}}
\newcommand{\sOFG}{\mathcal{O}_{\sF}(G)}
\newcommand{\pic}{\mathrm{Pic}}
\newcommand{\burng}{\mathrm{Burn}^{\mathrm{eff}}_G}
\newcommand{\catst}{\mathrm{Cat}^{\mathrm{perf}}_{\infty}}
\newcommand{\cats}[1]{\mathrm{Cat}^{\mathrm{perf}}_{#1, \infty}}
 \newcommand{\prl}{\mathcal{P}r_{\mathrm{st}}^L}
\newcommand{\prr}{\mathcal{P}r_{\mathrm{st}}^R}
\newcommand{\cobor}{\mathrm{coBor}}
\newcommand{\mot}{\mathrm{Mot}}
\renewcommand{\hom}{\mathrm{Hom}}
\newcommand{\mack}{\mathrm{Mack}}
\renewcommand{\sp}{\mathrm{Sp}}
\newcommand{\triv}{\mathscr{T}}
\newcommand{\burn}{\mathrm{Burn}^{\mathrm{eff}}}
\newcommand{\sF}{\mathscr{F}}
\newcommand{\sG}{\mathscr{G}}
\newcommand{\msk}{\mathrm{Mack}}
\newcommand{\sw}{\mathrm{Rep}}
\newcommand{\perf}{\mathrm{Perf}}
\newcommand{\sOG}{\mathscr{O}(G)}
\newcommand{\fun}{\mathrm{Fun}}
\newcommand{\Fun}{\mathrm{Fun}}
\newcommand{\Res}{\mathrm{Res}}
\newtheorem{theorem}{Theorem}[section]
\newtheorem{theorema}{Theorem}
\newtheorem{thm}[theorem]{Theorem}
\newtheorem{lem}[theorem]{Lemma}
\newtheorem{lemma}[theorem]{Lemma}
\newtheorem{corollary}[theorem]{Corollary}
\newtheorem*{corollarystar}{Corollary}
\newtheorem{proposition}[theorem]{Proposition}
\newtheorem{prop}[theorem]{Proposition}
\newtheorem{cor}[theorem]{Corollary}
\theoremstyle{definition}
\newtheorem{conj}[theorem]{Conjecture}
\newtheorem{remark}[theorem]{Remark}
\newtheorem{definition}[theorem]{Definition}
\newtheorem{example}[theorem]{Example}
\newtheorem{construction}[theorem]{Construction}
\newtheorem{cons}[theorem]{Construction}
\newcommand{\GSpec}{\mathrm{Sp}_G}
\newcommand{\e}[1]{\mathbb{E}_{#1}}
\renewcommand{\theenumi}{\arabic{enumi}}
\renewcommand{\p@enumii}{\theenumi.}
\newcommand{\CAlg}{\mathrm{CAlg}}
\begin{document}
	\title{Descent and vanishing in chromatic algebraic $K$-theory via group actions}
	\author{Dustin Clausen}
	\author{Akhil Mathew}
	\author{Niko Naumann}

	\author{Justin Noel}

	\date{\today}

	\begin{abstract}
	We prove some $K$-theoretic descent results for finite group actions on 
	stable $\infty$-categories, including the $p$-group case of the  Galois descent
	conjecture of 
	Ausoni--Rognes.  We also prove vanishing results in accordance with
	Ausoni--Rognes's redshift philosophy: in particular, we show that if $R$ is an $\mathbb{E}_\infty$-ring spectrum with $L_{T(n)}R=0$, then $L_{T(n+1)}K(R)=0$.  Our key observation is that descent and vanishing are logically interrelated, permitting to establish them simultaneously by induction on the height.

\end{abstract}

	\maketitle

\setcounter{tocdepth}{1}
\tableofcontents

\section{Introduction}

In this paper, we prove some results
concerning the algebraic $K$-theory of ring spectra and stable
$\infty$-categories after  $T(n)$-localization. 
Throughout this paper, 
 our telescopes $T(n)$ are taken at a fixed implicit prime $p$ and height
$n\geq 0$; we adopt the convention $T(0)=\mathbb{S}[1/p]$.   Our starting
point is the following two 
results concerning classical commutative rings $R$:

\begin{theorem} [\cite{Mitchell90}] 
\label{Mithm}
For $n \geq 2$, we have $L_{T(n)} K(R) = 0$.\end{theorem}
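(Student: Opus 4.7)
The plan is to reduce to $R=\mathbb{Z}$ and then show that $K(\mathbb{Z})_p^\wedge$ has chromatic height at most $1$, from which the vanishing for $n\geq 2$ follows since $T(n)$-localization kills any $T(k)$-local spectrum for $k\neq n$.

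First I would observe that $K(R)$ is a module over $K(\mathbb{Z})$ via the ring spectrum map $K(\mathbb{Z})\to K(R)$ induced by the unit $\mathbb{Z}\to R$. By the unit axiom $K(R)$ is a retract of $K(\mathbb{Z})\otimes K(R)$, and since $T(n)$-localization is smashing, it suffices to prove $L_{T(n)}K(\mathbb{Z})=0$. Next, Borel's theorem identifies $K(\mathbb{Z})_{\mathbb{Q}}$ with a sum of rational Eilenberg--MacLane spectra, which is $T(n)$-acyclic for $n\geq 1$, so the task reduces to showing $L_{T(n)}K(\mathbb{Z})_p^\wedge=0$ at each prime $p$.

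The heart of the matter is to show that $K(\mathbb{Z})_p^\wedge$ is \emph{height one}, i.e., $L_{T(1)}$-local (equivalently $L_1^{p,f}$-local). My approach would be via the localization fiber sequence $K(\mathbb{F}_\ell)\to K(\mathbb{Z})\to K(\mathbb{Z}[1/\ell])$ together with analogous reductions for $K$-theory of rings of integers in number fields, expressing $K(\mathbb{Z})_p^\wedge$ in terms of $K(\mathbb{F}_\ell)_p^\wedge$ and $K$-theory of number fields. By Quillen's computations, $K(\mathbb{F}_\ell)_p^\wedge$ is height one: for $\ell\neq p$ it is the fiber of an Adams operation on $KU_p^\wedge$, and for $\ell=p$ it is the $p$-completed image-of-$J$ spectrum. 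Controlling $K(\mathbb{Z}[1/p])_p^\wedge$ in the same way requires an \'etale descent, or Lichtenbaum--Quillen, input.

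The main obstacle is precisely this last step: establishing the height-one statement for the $K$-theory of the ring of $S$-integers, which does not formally follow from the height-one nature of $K$-theory of fields. Mitchell's original argument executes this via direct analysis of the relevant fiber sequences together with Adams-type spectral sequence arguments on the resulting $p$-complete spectra; modern formulations phrase this as \'etale (hyper)descent for $T(1)$-local $K$-theory on a sufficiently nice class of rings.
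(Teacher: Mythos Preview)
Your plan is essentially a sketch of Mitchell's original strategy, and you have correctly identified the hard step (bounding the chromatic height of $K(\mathbb{Z}[1/p])_p^\wedge$) while deferring its resolution to Mitchell. Two small corrections: $T(n)$-localization is \emph{not} smashing; the reduction to $\mathbb{Z}$ still works, but for the simpler reason that $T(n)$-acyclicity is preserved under tensor products and retracts. Also, $K(\mathbb{F}_p)_p^\wedge \simeq H\mathbb{Z}_p$ by Quillen, not the image-of-$J$ spectrum.

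The paper, by contrast, gives two genuinely different and self-contained proofs that bypass \'etale descent and Lichtenbaum--Quillen entirely. The first (inside \Cref{K(1)localthm}) is a rank-counting argument: after Quillen's localization sequence and a transfer, one reduces to $R=\mathbb{Z}[1/p,\zeta_p]$; then $R[C_p]\simeq R^{\times p}$ makes $L_{K(n)}(E_n\otimes K(R[C_p]))$ free of rank $p$ over $L_{K(n)}(E_n\otimes K(R))$, while $L_{K(n)}((E_n\otimes K(R))_{hC_p})$ is free of rank $p^n$ by \Cref{triv on morava}. The assembly map then gives a split injection from a free module of rank $p^n$ into one of rank $p$, forcing the base ring to vanish when $n\geq 2$. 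The second proof (\Cref{mitchellthm}) combines Swan's classical induction theorem for $H\mathbb{Z}$ (\Cref{swanthm}) with \Cref{thm:swan-vanish}: Swan induction for the proper subgroups of $C_p^{\times 2}$ forces a surjection of rationalized transfer maps which, by another rank count in $E_n$-cohomology of elementary abelian groups, is impossible unless $L_{K(n)}K(\mathbb{Z})=0$ for $n\geq 2$.

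Both of the paper's arguments are elementary counting tricks on free modules; your approach requires the full strength of Thomason-style descent (or equivalently the height-one Lichtenbaum--Quillen statement), which is considerably deeper input.
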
 

\begin{theorem}[\cite{Tho85}, \cite{TT90},
\cite{CMNN}] 
\label{galoisdescthm}
For $G$ a finite group and $R \to R'$ a $G$-Galois extension, the natural comparison map 
$L_{T(1)} K(R) \to (L_{T(1)}K(R'))^{hG}$ is an equivalence. 
\end{theorem}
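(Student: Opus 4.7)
The plan is to first upgrade faithfully flat descent for a $G$-Galois extension to an equivalence of stable $\infty$-categories $\perf(R) \simeq \perf(R')^{hG}$, where the target denotes the homotopy $G$-fixed points in $\catst$. This gives, by functoriality of $K$-theory, a natural comparison map
\[
K(R) \simeq K(\perf(R')^{hG}) \longrightarrow K(\perf(R'))^{hG} \simeq K(R')^{hG},
\]
and the content of the theorem is to show this becomes an equivalence after $L_{T(1)}$. I would then reduce to the case where $G$ is a $p$-group (for the implicit prime $p$) by a transfer argument exploiting that $L_{T(1)}$ is $p$-local, and further reduce by induction on $|G|$ via central subgroups of order $p$ to the essential case $G = C_p$.

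For $G = C_p$, the obstruction to descent is controlled by the norm cofiber sequence
\[
K(R')_{hC_p} \longrightarrow K(R')^{hC_p} \longrightarrow K(R')^{tC_p},
\]
together with the corresponding analysis of when $K$-theory commutes with $C_p$-fixed points of a stable $\infty$-category with $C_p$-action. After unwinding, the desired equivalence follows from the Tate vanishing $L_{T(1)} K(R')^{tC_p} = 0$; equivalently, one needs the descent spectral sequence $H^s(C_p, L_{T(1)}K_t(R')) \Rightarrow L_{T(1)}K_{t-s}(R)$ to acquire a horizontal vanishing line and collapse to the appropriate filtration.

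The main obstacle is precisely this Tate vanishing: the descent formalism is in place at the level of $\infty$-categories, but whether $K$-theory \emph{sees} this descent is a genuine question about how $T(1)$-local $K$-theory interacts with finite $p$-group actions. Thomason's original route deduces it from invertibility of the Bott element in mod-$p^\nu$ $K$-theory together with Suslin rigidity, which for strictly henselian rings identifies $K/p^\nu$ with a piece of complex topological $K$-theory where Tate vanishing for finite group actions is classical. A more modern route, closer in spirit to the present paper, identifies $L_{T(1)}K$ on a suitable class of rings with an explicit construction built out of spherical Witt vectors and representation rings, for which Tate vanishing for finite group actions is manifest. In either case the entire weight of the argument rests on this one Tate vanishing input; everything else is formal manipulation of descent diagrams, transfers, and the Bousfield--Kan spectral sequence.
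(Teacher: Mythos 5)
Your formal skeleton is sound and matches the paper's: categorical Galois descent gives $\perf(R)\simeq\perf(R')^{hG}$, a transfer argument reduces to the $p$-Sylow subgroup, and d\'evissage along central subgroups of order $p$ reduces to $G=C_p$ (this is exactly the reduction in \Cref{generalreduction}). The gap is in the step you identify as the crux. The norm cofiber sequence $K(R')_{hC_p}\to K(R')^{hC_p}\to K(R')^{tC_p}$ only compares homotopy orbits with homotopy fixed points of the \emph{spectrum} $K(R')$, and the $T(1)$-local vanishing of that Tate term is essentially free from Kuhn's theorem \cite{Kuh04} for \emph{any} spectrum with $C_p$-action --- it is not where the content lies, and it says nothing about $K(R)\simeq K(\perf(R')^{hC_p})$, which is neither of the two terms being compared. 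Since the norm factors as $K(R')_{hC_p}\to K(R)\to K(R')^{hC_p}$, with the first map induced by restriction of scalars, Kuhn's theorem reduces the theorem to showing that this transfer $K(R')_{hC_p}\to K(R)$ is a $T(1)$-equivalence. That is a statement about the $K$-theory of the fixed-point \emph{category}, not about the Tate construction of the spectrum $K(R')$; no manipulation of the descent spectral sequence produces it, and your proposal supplies no argument for it beyond deferring to Thomason's rigidity proof.

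The paper fills exactly this hole, in two independent ways. The route parallel to yours is \Cref{K(1)localthm}: after reducing by localization and transfer to $\mathbb{Z}[1/p,\zeta_p]$-algebras, one uses $R[C_p]\simeq R^{\times p}$ together with \Cref{triv on morava} to show that both sides of the ($p$-completed, $KU$-smashed) assembly map are free modules of rank $p$ over $(KU\otimes K(R))^{\wedge}_p$, so the splitting of \Cref{splittingcor} forces the map to be an equivalence; this verifies the geometric-fixed-point vanishing criterion of \Cref{generalreduction}. The alternative is \Cref{rem:alternate_argument}: Zariski-locally the $R[G]$-module $R'$ is free of rank one by the classical normal basis theorem, so \Cref{normalbasisdescthm} applies and one concludes by Zariski descent. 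Either argument isolates the genuine arithmetic input --- a rank count or a normal basis --- that your appeal to Tate vanishing does not provide.
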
 

Thus, the $K$-theory of an ordinary commutative ring has no chromatic
information beyond height one, and the localization to height one is
well-behaved in its descent properties. In fact, $T(1)$-local $K$-theory is even better-behaved than suggested by Theorem \ref{galoisdescthm}: under mild finiteness hypotheses, the Galois descent can be upgraded to an \'etale hyperdescent result, which leads to a  descent
spectral sequence from \'etale cohomology to $T(1)$-local $K$-theory as produced by \cite{Tho85, TT90}. 
Furthermore, one knows that under such conditions, the map $K(R; \mathbb{Z}_p)
\to L_{T(1)} K(R)$ from $p$-adic $K$-theory to its $T(1)$-localization is an
equivalence in high enough degrees, i.e., one has the Lichtenbaum--Quillen
conjecture, thanks to the work of Voevodsky--Rost, cf.~\cite{RO06, CM19} for
accounts.  However, we will not touch on these more advanced aspects in this paper.

Moving from ordinary rings to more general ring spectra, Ausoni--Rognes suggested that the above two theorems should fit into a broader ``redshift'' philosophy in 
algebraic $K$-theory, \cite{AR02, AR}. 
For an $\mathbb{E}_1$-ring spectrum $R$, one expects
that taking algebraic $K$-theory increases the ``chromatic complexity'' of $R$ by one. 
In the setting of \Cref{Mithm}, the Eilenberg--MacLane spectrum $HR$ has no
chromatic information at heights $\geq 1$, while the result states that $K(R) =
K(HR)$ has no chromatic information at heights $\geq 2$; furthermore,
\Cref{galoisdescthm} and its refinement to hyperdescent control the height one information very precisely. 

For $\mathbb{E}_\infty$-rings $R$, there is a particularly well-behaved notion
of chromatic complexity, thanks to a theorem of Hahn \cite{Hahn16}: if
$L_{T(n)}R=0$, then $L_{T(m)}R=0$ also for all $m>n$.  If $R$ is an
$\mathbb{E}_\infty$-ring, then so is $K(R)$, and in this setting one possible
expression of the redshift philosophy would be that $L_{T(n)}R=0 \Leftrightarrow
L_{T(n+1)}K(R)=0$.
Here we prove half of this statement. \begin{theorema} 
\label{TateMitchell}
Let $R$ be an $\mathbb{E}_\infty$-ring and $n\geq 0$.  If $L_{T(n)}R=0$, then $L_{T(n+1)}K(R)=0$.
\end{theorema}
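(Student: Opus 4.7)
The plan is to induct on $n\geq 0$, jointly establishing the vanishing statement of Theorem~A together with a companion $p$-group Galois descent theorem for $L_{T(n+1)}K$. The point is that at each stage descent reduces the vanishing question to a more rigid class of rings, while the vanishing output at height $n$ is precisely the input one needs to deploy descent at height $n+1$. Throughout I may replace $R$ by its $p$-completion, since $L_{T(n+1)}$ sees only $p$-complete information, and I freely invoke Hahn's theorem to upgrade the hypothesis $L_{T(n)}R=0$ to $L_{T(m)}R=0$ for every $m\geq n$.

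For the base case $n=0$: the hypothesis $L_{T(0)}R=R[1/p]=0$ means $R$ is $p$-power torsion, so after $p$-completion $R$ is built from discrete $\mathbb{F}_p$-algebras. A Dundas--Goodwillie--McCarthy-type reduction (in the ring spectrum setting available here) then reduces $L_{T(1)}K(R)$ to $L_{T(1)}K$ of connective $\mathbb{F}_p$-algebras, which vanishes by Quillen's computation: $K_*(\mathbb{F}_p)$ is concentrated in degree $0$ up to torsion prime to $p$, and is therefore annihilated by $L_{T(1)}$.

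For the inductive step $n\geq 1$, assume both statements at all smaller heights. Given an $\mathbb{E}_\infty$-ring $R$ with $L_{T(n)}R=0$, the aim is to produce a finite $p$-group extension $R\to R'$---candidates include adjunction of $p^k$-th roots of unity, or a chromatic tensor extension that isolates a height-$(n-1)$ monochromatic layer---such that, first, the $p$-group descent theorem at height $n$, just established in the same inductive stage, writes $L_{T(n+1)}K(R)$ as a suitable limit built from $L_{T(n+1)}K(R')$, and second, on $R'$ one can either directly argue that $L_{T(n+1)}K(R')=0$ or reduce to the inductive vanishing hypothesis via a Verdier-quotient/cofiber-sequence argument that peels off a lower monochromatic layer.

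The principal obstacle I anticipate is calibrating the two halves of the induction so that they dovetail cleanly: the class of extensions for which $p$-group descent at height $n$ can be proved must be broad enough to include the extensions produced in the vanishing argument, while conversely the vanishing construction must be engineered so that its output lies in that class. Arranging this bootstrap---while handling the subtleties between $T(n)$- and $K(n)$-localization, the $p$-completion conventions, and the behavior of $K$-theory on nonconnective inputs---is where the real technical work must reside.
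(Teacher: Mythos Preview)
Your high-level instinct—that vanishing and $p$-group descent should be proved simultaneously by induction on height—is exactly right, and matches the paper's overall architecture. But the inductive \emph{mechanism} you propose is not the one that works, and the proposal contains a genuine gap.

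The inductive step as you describe it relies on producing, for each $\mathbb{E}_\infty$-ring $R$ with $L_{T(n)}R=0$, a finite $p$-group Galois extension $R\to R'$ along which descent reduces the problem. No such construction is available in general, and the paper does not attempt one. Instead, the key object is the \emph{trivial} $C_p$-action on $\perf(R)$: one shows that the equivariant $K$-theory $K_{C_p}(R)$ is Borel-complete after $T(n+1)$-localization, equivalently that $L_{T(n+1)}\Phi^{C_p}K_{C_p}(R)=0$. This in turn maps to $K(R)^{tC_p}$, so one gets $L_{T(n+1)}(K(R)^{tC_p})=0$, and then a reverse-blueshift lemma (a strengthening of Hahn's theorem, \Cref{reverse:blueshift}) gives $L_{T(j)}K(R)=0$ for $j\geq n+2$. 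To control $\Phi^{C_p}K_{C_p}(R)$, one compares $\perf(R)^{\bor}$ with $\perf(R)_{\cobor}$: the Verdier quotient is $R^{tC_p}$-linear, so by Kuhn's blueshift and the inductive hypothesis its $K$-theory vanishes $T(n+1)$-locally; and the coBorel piece is group-ring $K$-theory, handled via Dundas--Goodwillie--McCarthy together with a direct counting argument for discrete rings and the Hesselholt--Nikolaus formula for $\TC$ of group rings. None of these ingredients appear in your sketch.

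Two further points. First, the paper does not run this induction for arbitrary $R$ directly: it first proves the special case $R=L_n^{p,f}\mathbb{S}$ (where Kuhn's blueshift cleanly feeds the induction), and only then deduces the general case via the purity result $L_{T(n+1)}K(R)\simeq L_{T(n+1)}K(L_{T(n)\oplus T(n+1)}R)$ from \cite{LMMT20}. Second, your base case is not correct as stated: an $\mathbb{E}_\infty$-ring with $R[1/p]=0$ is not in any useful sense ``built from discrete $\mathbb{F}_p$-algebras,'' and even for discrete $\mathbb{Z}/p^k$-algebras the vanishing $L_{T(1)}K(\mathbb{Z}/p^k)=0$ is a nontrivial theorem. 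The paper's base case is instead that $(\mathbb{S}[1/p])^{tC_p}=0$, which starts the $R^{tC_p}$ induction cleanly.
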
 

Recent work of  Burklund--Schlank--Yuan \cite[Th.~9.11]{BSY22} and Yuan \cite{Yuan21}
proves the converse of \Cref{TateMitchell}: if $R$ is a $p$-local
$\mathbb{E}_\infty$-ring with $L_{T(n)} R \neq 0$, then
$L_{T(n+1)} K(R) \neq 0$. 
Many special cases of \Cref{TateMitchell}  were previously known. 
In particular, in important specific cases, much more precise (Lichtenbaum--Quillen) statements about $K(R)$ have been proved, as in \cite{HW20,
AKACHR22, HRW22,
Ausoni10, AR02}.

\Cref{TateMitchell} generalizes Mitchell's vanishing \Cref{Mithm}.  We note that there
is a more general statement which applies also to $\mathbb{E}_1$-rings $A$: if
both $L_{T(n)}A=0$ and $L_{T(n+1)}A=0$, then $L_{T(n+1)}K(A)=0$; see
\Cref{purityTnlocal}, which is also explored in \cite{LMMT20}.  

We also have an analog of Thomason's descent \Cref{galoisdescthm}. For the
statement, we
need to assume $T(n)$-local vanishing of the $C_p$-Tate construction $R^{tC_p}$
(taken with respect to the trivial action); this assumption is satisfied if $R$
is a discrete ring and $n = 1$, i.e., the setting of \Cref{galoisdescthm}.
In addition, we need to assume the finite group $G$ is a $p$-group, where $p$ is
the (throughout fixed) prime at which chromatic localizations are taken. 

\begin{theorema}\label{ASthmintro}
Let $R$ be an $\mathbb{E}_\infty$-ring and $n\geq 0$.  Suppose
$L_{T(n)}(R^{tC_p})=0$.  Then for $\mathcal{C}$ any $R$-linear
idempotent-complete stable
$\infty$-category equipped with an $R$-linear action of a finite $p$-group $G$, the homotopy fixed point
comparison map for $T(n+1)$-local $K$-theory is an equivalence:
\begin{equation} \label{compmapintrostatement}
L_{T(n+1)}K(\mathcal{C}^{hG})\overset{\sim}{\longrightarrow}(L_{T(n+1)}K(\mathcal{C}))^{hG}.\end{equation}
\end{theorema} 

If $R\rightarrow R'$ is a $G$-Galois extension of commutative rings, then by
Galois descent we have
$\operatorname{Perf}(R)\overset{\sim}{\rightarrow}\operatorname{Perf}(R')^{hG}$;
thus, when $n=0$,  \Cref{ASthmintro} recovers the $p$-group case of
\Cref{galoisdescthm}.  But in fact the case of general $G$ in
\Cref{galoisdescthm} reduces to the $p$-group case by a simple transfer
argument, as already pointed out and exploited by Thomason; in particular,
\Cref{ASthmintro} implies \Cref{galoisdescthm}. 

However,
\Cref{ASthmintro} does not hold for an arbitrary finite
group $G$, essentially because the $G$-action is allowed to be arbitrary.  In
fact, for the trivial action of $G$ on $\operatorname{Perf}(\mathbb{C})$ and
$n=0$, one can calculate both sides of \eqref{compmapintrostatement} using Suslin's equivalence \cite{Suslin84}
between topological and algebraic $K$-theory. 
One obtains that the source is the $p$-completed $G$-equivariant topological $K$-theory
of a point while the target is  $KU_{\hat{p}}^{BG}$. 
For $G$ of order prime-to-$p$
the result is evidently false because $KU_{\hat{p}}^{BG} = KU_{\hat{p}}$, 
while for $G$ a $p$-group, \Cref{ASthmintro} amounts to the
$p$-complete Atiyah--Segal completion theorem.  Nonetheless, there is a generalization of \Cref{ASthmintro} to arbitrary finite groups which shows that the descent question for arbitrary $G$ reduces to that for cyclic subgroups of order prime to $p$; see \Cref{ASthmwithcyclic}.

We remark that these theorems also hold with $K$-theory replaced by an arbitrary additive invariant, and one also has ``co-descent" or ``assembly map" equivalences dual to the descent statements of \Cref{ASthmintro}; see \Cref{generalreduction} for more details.

Let us now give the basic example of these results. 
Throughout this paper, we will use the notation 
$L_{n}^{p, f} = L_{T(0) \oplus \dots \oplus T(n)}$, following \cite{LMMT20}; in
particular, we have the $L_n^{p,f}$-local sphere $L_n^{p,f} \mathbb{S}$. 
An $L_n^{p, f}$-local stable $\infty$-category
is one where the mapping spectra are $L_{n}^{p, f}$-local, or equivalently one
which is $L_n^{p, f} \mathbb{S}$-linear. 
By Kuhn's ``blueshift" theorem \cite{Kuh04},  if a spectrum $X$ is $L_n^{p,f}$-local then
$X^{tC_p}$ is $L_{n-1}^{p,f}$-local.  Thus, 
from \Cref{TateMitchell} and \Cref{ASthmintro}
we deduce the following:

\begin{theorema}\label{chromatic}
Let $n\geq 0$, and let $\mathcal{C}$ be an $L_n^{p,f}$-local
idempotent-complete stable $\infty$-category.  Then $L_{T(m)}K(\mathcal{C})=0$ for all $m\geq n+2$, and for any finite $p$-group $G$ acting on $\mathcal{C}$ we have
$$L_{T(n+1)}K(\mathcal{C}^{hG})\overset{\sim}{\rightarrow}(L_{T(n+1)}K(\mathcal{C}))^{hG}.$$
\end{theorema}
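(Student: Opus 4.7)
The plan is to deduce both parts of the theorem directly from Theorems \ref{TateMitchell} and \ref{ASthmintro}, specializing to the $\mathbb{E}_\infty$-ring $R = L_n^{p,f}\mathbb{S}$. Since $\mathcal{C}$ is $L_n^{p,f}$-local, it is automatically $R$-linear, so $K(\mathcal{C})$ acquires a $K(R)$-module structure, and both the $T(m)$-local vanishing of $K(R)$ and the $G$-homotopy-fixed-point comparison for $K(R)$ will propagate to $K(\mathcal{C})$. The whole argument thus reduces to verifying the chromatic hypotheses of the two theorems for this specific $R$.

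For the vanishing statement, the first step is to observe that $L_{T(m)}(L_n^{p,f}\mathbb{S}) = 0$ for every $m \geq n+1$. This is immediate from the fact that $L_n^{p,f}$ is a finite, hence smashing, localization together with the observation that any finite type-$m$ complex is $L_n^{p,f}$-acyclic for $m > n$, whence $L_n^{p,f}T(m)=0$. Applying \Cref{TateMitchell} to $R = L_n^{p,f}\mathbb{S}$, with its index parameter taken to be an arbitrary $m \geq n+1$, yields $L_{T(m+1)}K(R)=0$ for all such $m$, i.e., $L_{T(m)}K(R)=0$ for $m\geq n+2$; the module structure then transfers this vanishing to $K(\mathcal{C})$.

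For the descent statement, Kuhn's blueshift theorem (cited in the paragraph preceding the theorem) implies that $R^{tC_p}$ is $L_{n-1}^{p,f}$-local since $R$ is tautologically $L_n^{p,f}$-local; the same smashing observation as above gives that any $L_{n-1}^{p,f}$-local spectrum kills $T(n)$, so in particular $L_{T(n)}(R^{tC_p}) = 0$. Thus the hypothesis of \Cref{ASthmintro} is satisfied for $R$, and its conclusion yields exactly the claimed homotopy-fixed-point equivalence for $K(\mathcal{C})$. With the two main theorems in hand, no genuine obstacle remains; the only minor care point is the edge case $n=0$, where $L_{-1}^{p,f}$ is interpreted as the zero localization so that Kuhn's theorem gives $R^{tC_p}=0$ and both hypotheses hold vacuously.
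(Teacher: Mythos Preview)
Your deduction is formally valid as an implication, but it is circular within the paper's logical structure. As the introduction states explicitly, Theorems~\ref{TateMitchell} and~\ref{ASthmintro} are proved \emph{by first establishing \Cref{chromatic}} and then combining with the purity result of \cite{LMMT20}; see the proof of \Cref{combinationAB}, which invokes \Cref{purityTnlocal}, which in turn rests on \Cref{chromatic}. So you cannot cite Theorems~A and~B to prove Theorem~C.

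The paper's actual argument for \Cref{chromatic} goes directly through the inductive vanishing machinery (\Cref{Cpgeovanishlemma} together with \Cref{generalreduction}), by induction on $n$. Taking $R=L_n^{p,f}\mathbb{S}$, one must verify the hypotheses $L_{T(n+1)}R=0$ and $L_{T(n+1)}K(R^{tC_p})=0$ of \Cref{Cpgeovanishlemma}(1). The first is immediate. For the second, Kuhn's blueshift makes $R^{tC_p}$ an $L_{n-1}^{p,f}$-local spectrum, so $K(R^{tC_p})$ is a module over $K(L_{n-1}^{p,f}\mathbb{S})$, and the inductive hypothesis (the vanishing part of \Cref{chromatic} at height $n-1$) gives $L_{T(n+1)}K(L_{n-1}^{p,f}\mathbb{S})=0$; the base case $n=0$ has $R^{tC_p}=0$. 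Then \Cref{Cpgeovanishlemma} yields both the descent statement (via (2) and \Cref{generalreduction}) and the higher vanishing (via (3)). Your use of Kuhn's blueshift and the module-structure trick is exactly right; you just need to feed them into \Cref{Cpgeovanishlemma} rather than into Theorems~A and~B.
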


In fact, for the proofs of \Cref{TateMitchell} and \Cref{ASthmintro} we proceed
by first proving this special case, \Cref{chromatic}.  Then we combine with a
recent result of Land--Mathew--Meier--Tamme \cite{LMMT20} to the effect that
$L_{T(n)}K(R)\overset{\sim}{\rightarrow} L_{T(n)}K(L_n^{p,f}R)$ (for $n\geq 1$)
which lets us deduce the general case.  (Actually, we also use the result of
\cite{LMMT20} in the proof of \Cref{chromatic}, but in a more indirect way.)

An interesting aspect of our arguments is that we show a logical connection
between the vanishing and the descent theorems.  This is expressed in the
following result, from which we deduce 
all of the above theorems. 

\begin{theorem}[Inductive vanishing,
\Cref{Cpgeovanishlemma}]\label{inductivevanishingthm}
Let $R$ be an $\mathbb{E}_\infty$-ring spectrum and $n\geq 1$.  Then for the following conditions, we have the implications (A) $\Rightarrow$ (B) $\Rightarrow$ (C):
\begin{enumerate}
\item[(A)] $L_{T(n)}R=0$ and $L_{T(n)}K(R^{tC_p})=0$.
\item[(B)] For any action of a finite $p$-group $G$ on an $R$-linear
idempotent-complete stable $\infty$-category $\mathcal{C}$, the comparison map
$$L_{T(n)}K(\mathcal{C}^{hG})\overset{\sim}{\rightarrow} (L_{T(n)}K(\mathcal{C}))^{hG}$$
is an equivalence.
\item[(C)] $L_{T(i)}K(R)=0$ for $i\geq n+1$.
\end{enumerate}
\end{theorem}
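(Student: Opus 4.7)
The statement comprises two implications, which I address in turn.

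For $(1) \Rightarrow (2)$, the descent direction, my first step is to reduce to $G = C_p$. Every finite $p$-group has a normal subgroup $N$ with quotient $C_p$; using the identification $\mathcal{C}^{hG} \simeq (\mathcal{C}^{hN})^{hC_p}$ together with the fact that $\mathcal{C}^{hN}$ remains $R$-linear and carries an induced $C_p$-action, an induction on $|G|$ reduces the problem to $G = C_p$. For $G = C_p$, the key is to exhibit a Tate-type fiber sequence identifying the fiber of the comparison map
\[
L_{T(n)}K(\mathcal{C}^{hC_p}) \longrightarrow (L_{T(n)}K(\mathcal{C}))^{hC_p}
\]
as the $K$-theory of a stable $\infty$-category linear over $R^{tC_p}$. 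Any such invariant is a module over $L_{T(n)}K(R^{tC_p})$, which vanishes by hypothesis, so the fiber vanishes after $T(n)$-localization (the ``module over the zero ring'' principle emphasized in the introduction). The assumption $L_{T(n)}R = 0$ is used to suppress the homotopy-orbit contribution, isolating the Tate piece. The main obstacle in this direction is constructing the requisite fiber sequence for $C_p$-actions on arbitrary $R$-linear stable $\infty$-categories; this requires a careful equivariant analysis that goes beyond the setting of commutative rings.

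For $(2) \Rightarrow (3)$, the vanishing direction, by Hahn's theorem applied to the $\mathbb{E}_\infty$-ring $K(R)$ it suffices to establish $L_{T(n+1)}K(R) = 0$. My approach is to apply the descent hypothesis (2) to a carefully chosen $C_p$-action on an $R$-linear category, combined with Kuhn's blueshift theorem, under which $(-)^{tC_p}$ lowers the chromatic height by one. The goal is to arrange matters so that the fixed-point side of the descent equation encodes $L_{T(n+1)}K(R)$-level information, while Kuhn's blueshift forces this information to vanish on the $T(n)$-local homotopy-fixed side. The central difficulty, and I expect the hardest part of the theorem, is identifying the precise $R$-linear category and $C_p$-action for which this chromatic shift mechanism works; without the correct choice the argument cannot bridge the gap between the height-$n$ descent statement and the height-$(n+1)$ vanishing conclusion.
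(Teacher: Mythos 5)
Both implications in your plan have genuine gaps, and in each case the missing ingredient is the actual content of the paper's proof.

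For $(1)\Rightarrow(2)$: the reduction to $G=C_p$ by d\'evissage is right, but the fiber of the comparison map $L_{T(n)}K(\mathcal{C}^{hC_p})\to (L_{T(n)}K(\mathcal{C}))^{hC_p}$ is \emph{not} the $K$-theory of a single $R^{tC_p}$-linear category, and the hypothesis $L_{T(n)}R=0$ does not ``suppress the homotopy-orbit contribution'' by any module-over-zero argument. The paper works with the genuine $C_p$-spectrum $K_{C_p}(R)$ and must show $L_{T(n)}\Phi^{C_p}K_{C_p}(R)=0$ (\Cref{generalreduction}); this splits into two pieces via the inclusion $\perf(R)_{\cobor}\subseteq\perf(R)^{\bor}$. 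The Verdier quotient $\fun(BC_p,\perf(R))/\perf(R[C_p])$ is indeed $R^{tC_p}$-linear and is killed by $L_{T(n)}K(R^{tC_p})=0$ — that part of your plan matches. But the other piece is the cofiber of the assembly map $K(R)\otimes BC_{p+}\to K(R[C_p])$, and its $T(n)$-local vanishing is the hard core of the proof: one first reduces to connective $R$ via \Cref{LMTvanish}, then applies Dundas--Goodwillie--McCarthy to reduce to $\pi_0R$ (handled by a separate rank-counting argument over Morava $E$-theory, \Cref{K(1)localthm}) and to $\TC$, where the Hesselholt--Nikolaus identification of $\Phi^{C_p}\TC$ of group rings as $\mathrm{THH}(A;\mathbb{Z}_p)_{h\mathbb{T}_p}[1]\otimes C_p$ finally lets $L_{T(n)}R=0$ enter. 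Your sketch omits the assembly map entirely, so the role of the hypothesis $L_{T(n)}R=0$ is unaccounted for.

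For $(2)\Rightarrow(3)$: you correctly note that Hahn's theorem reduces to $i=n+1$, but you leave the bridge from height $n$ to height $n+1$ as an unidentified ``careful choice,'' and the direction of blueshift you invoke is the wrong one. The answer is simply the trivial $C_p$-action on $\perf(R)$: condition (2) says the $C_p$-spectrum $L_{T(n)}K_{C_p}(R)$ is Borel-complete, so its geometric fixed points vanish $T(n)$-locally; since there is a ring map $\Phi^{C_p}K_{C_p}(R)\to K(R)^{tC_p}$, this gives $L_{T(n)}(K(R)^{tC_p})=0$. One then needs not Kuhn's blueshift but its converse, \Cref{reverse:blueshift} (deduced from Hahn's theorem via faithful flatness of $\pi_0(E_{n+1}^{tC_p})$ over $\pi_0(E_{n+1})$ after suitable quotients): $L_{T(n)}(A^{tC_p})=0$ forces $L_{T(j)}A=0$ for $j\geq n+1$. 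Without this converse-blueshift statement the height shift cannot be realized.
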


\Cref{inductivevanishingthm} allows an inductive approach to simultaneously
proving vanishing and descent statements.  In fact, \Cref{chromatic} follows
immediately from it by inductively taking $R=L_n^{p,f}\mathbb{S}$ (and
replacing $n$ with $n+1$), via Kuhn's blueshift theorem.


Concerning the general descent result \Cref{ASthmintro}, we have already
mentioned that it recovers Galois descent for $K(1)$-local $K$-theory and the $p$-completed $p$-group case of the Atiyah-Segal completion theorem.  We also use it to obtain the following $p$-group case of a conjecture of Ausoni--Rognes
\cite[Conj.~4.2]{AR}:

\begin{corollarystar}[\Cref{ARpgroup}] 
Let $A \to B$ be a $T(n)$-local $G$-Galois extension of
$\mathbb{E}_\infty$-rings, in the sense of Rognes \cite{Rognes08}, for $G$ a finite
$p$-group. Then the maps 
$L_{T(n+1)} K(A) \to L_{T(n+1)} ( K(B)^{hG}) \to (L_{T(n+1)} K(B))^{hG}$ are
equivalences. 
\end{corollarystar}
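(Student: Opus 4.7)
The plan is to apply Theorem~\ref{ASthmintro} to the stable $\infty$-category $\mathcal{C}:=\operatorname{Perf}(B)$, equipped with the $G$-action induced by the Galois structure on $B$; I will use $R:=L_{T(n)}\mathbb{S}$ as the ambient $\mathbb{E}_\infty$-base, which is appropriate because $A$ and $B$, being $T(n)$-local, are both $R$-algebras and so $\operatorname{Perf}(B)$ is $R$-linear. Once Theorem~\ref{ASthmintro} is in force, the defining property of a Rognes $G$-Galois extension furnishes an identification $\operatorname{Perf}(B)^{hG}\simeq\operatorname{Perf}(A)$ (in the $T(n)$-local setting), which translates the conclusion of that theorem into the desired descent for $K$-theory.

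To verify the hypothesis $L_{T(n)}(R^{tC_p})=0$ of Theorem~\ref{ASthmintro}, I invoke Kuhn's blueshift theorem \cite{Kuh04}: since $L_{T(n)}\mathbb{S}$ is $L^{p,f}_n$-local, its $C_p$-Tate construction is $L^{p,f}_{n-1}$-local, hence $T(n)$-acyclic. Applying Theorem~\ref{ASthmintro} together with the Rognes identification then immediately yields the composite equivalence
$$L_{T(n+1)} K(A) \;\simeq\; L_{T(n+1)} K(\operatorname{Perf}(B)^{hG}) \;\simeq\; (L_{T(n+1)} K(B))^{hG}.$$

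To upgrade this to the two separate equivalences asserted in the corollary, one additionally checks that $L_{T(n+1)}$ commutes with $G$-homotopy fixed points on $K(B)$. Since $G$ is a finite $p$-group, filtering by a composition series reduces the claim to the case $G=C_p$; then the fiber sequence $X_{hC_p}\to X^{hC_p}\to X^{tC_p}$ applied to $X=K(B)$ shows it suffices to prove $L_{T(n+1)}(K(B)^{tC_p})=0$. I expect this last vanishing to follow from the combination of Kuhn's blueshift (which gives $L_{T(n)}(B^{tC_p})=0$) with Theorem~\ref{TateMitchell} applied to $R=B^{tC_p}$, after a suitable comparison between $K(B)^{tC_p}$ and $K$-theory of the Tate-valued ring.

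\emph{Main obstacle.} The chief technical difficulty lies in reconciling the two chromatic levels at play: Rognes's Galois descent identifies $\operatorname{Perf}(A)\simeq\operatorname{Perf}(B)^{hG}$ only in the $T(n)$-local world, whereas the statement to be proved concerns $T(n+1)$-local $K$-theory. Bridging this gap requires careful use of the Land--Mathew--Meier--Tamme invariance result \cite{LMMT20}, which lets one pass between $T(n)$-locally identified categories without losing information at the $T(n+1)$-local $K$-theoretic level. Handling this comparison, together with the auxiliary Tate vanishing needed to split the composite equivalence into its two factors, is where the bulk of the work seems to go.
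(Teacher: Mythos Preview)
Your proposal has a genuine gap, and you have in fact identified it yourself in your ``Main obstacle'' paragraph without resolving it. The claimed identification $\operatorname{Perf}(B)^{hG}\simeq\operatorname{Perf}(A)$ is simply \emph{false} for a $T(n)$-local Galois extension: the defining condition is only that $B\otimes_A B\to\prod_G B$ is a $T(n)$-equivalence, and this is not enough to make $\operatorname{Perf}(A)\to\operatorname{Perf}(B)^{hG}$ an equivalence of stable $\infty$-categories. So applying Theorem~\ref{ASthmintro} to $\mathcal{C}=\operatorname{Perf}(B)$ gives you $L_{T(n+1)}K(\operatorname{Perf}(B)^{hG})\simeq (L_{T(n+1)}K(B))^{hG}$, but the left-hand side is not $L_{T(n+1)}K(A)$. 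Invoking \cite{LMMT20} in the vague form you suggest does not fix this: that result controls $K(R)\to K(L_n^{p,f}R)$, not the comparison between $\operatorname{Perf}(A)$ and $\operatorname{Perf}(B)^{hG}$.

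The paper's actual argument circumvents this by replacing $\operatorname{Perf}$ with the category $\mathscr{D}(A)$ of \emph{dualizable} objects in $L_{T(n)}\operatorname{Mod}(A)$, and setting $K'(A):=K(\mathscr{D}(A))$. For $\mathscr{D}$ one genuinely has $T(n)$-local Galois descent, $\mathscr{D}(A)\simeq\mathscr{D}(B)^{hG}$, so Theorem~\ref{ASthmintro} (in its full form, which already handles the separate equivalences you worry about) gives the result for $K'$. The remaining point is that the fiber of $K(A)\to K'(A)$ is a module over $K(L_{n-1}^{p,f}\mathbb{S})$---because the Verdier quotient $\mathscr{D}(A)/\operatorname{Perf}(A)$ is $L_{n-1}^{p,f}$-linear (any dualizable $T(n)$-local module becomes perfect after tensoring with a finite type~$n$ complex)---and such modules are $T(n+1)$-acyclic by Theorem~\ref{TateMitchell}. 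This is the missing idea: the correct category to feed into Theorem~\ref{ASthmintro} is $\mathscr{D}(B)$, not $\operatorname{Perf}(B)$, and the passage back to $K$ goes through Theorem~\ref{TateMitchell} applied one height down. Your attempt to relate $K(B)^{tC_p}$ to $K(B^{tC_p})$ is both unnecessary and not well-founded: there is no comparison map of the sort you gesture at.
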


Besides the above thread of results, we also prove some other descent results of a slightly different nature with different techniques.  Like the results of our previous paper \cite{CMNN}, these work uniformly for all chromatic
heights, including height zero, and do not assume $G$ to be a $p$-group; but on the other hand they make more restrictive assumptions on the action of the group $G$.

\begin{theorema} 
Let $\mathcal{C}$ be a monoidal, idempotent-complete stable $\infty$-category with biexact
tensor product equipped with a (monoidal)
action of a finite group $G$.  
Let $\mathrm{tr} \colon \mathcal{C} \to \mathcal{C}^{hG}$ denote the
$G$-equivariant biadjoint to the
forgetful functor $\mathcal{C}^{hG} \to \mathcal{C}$. 
Suppose the $G$-equivariant object $\mathrm{tr}(\mathbf{1}) \in \fun(BG,
\mathcal{C}^{hG})$ has  class in $K_0( \fun(BG, \mathcal{C}^{hG}))$ equal
to that of the induced $G$-object $ \bigoplus_G \mathbf{1}_{\mathcal{C}^{hG}}
\in \fun(BG, \mathcal{C}^{hG})$. Then the 
comparison map
$$K(\mathcal{C}^{hG})\rightarrow K(\mathcal{C})^{hG}$$
 induces an equivalence after $T(n)$-localization
for any $n\geq 0$ and for any prime $p$.
\label{normalbasisthm}
\end{theorema}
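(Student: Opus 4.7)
The plan is to exploit the biadjunction between the forgetful functor $F\colon \mathcal{C}^{hG}\to\mathcal{C}$ and its two-sided adjoint $\tr$, together with the normal-basis hypothesis on $\tr(\mathbf{1})$, to establish the descent equivalence after $T(n)$-localization. The guiding principle is that the condition $[\tr(\mathbf{1})]=[\bigoplus_G\mathbf{1}]$ in $K_0(\fun(BG,\mathcal{C}^{hG}))$ is a $K$-theoretic form of the classical normal basis theorem, which is exactly what is needed to force descent.

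First I would set up the $K$-theoretic picture. Applying $K$ produces the comparison map $\alpha\colon K(\mathcal{C}^{hG})\to K(\mathcal{C})^{hG}$ from $F$, together with a dual ``co-descent'' map $\beta\colon K(\mathcal{C})_{hG}\to K(\mathcal{C}^{hG})$ induced by $\tr$. By the projection formula for the biadjunction, the endofunctor $\tr\circ F$ on $\mathcal{C}^{hG}$ is naturally equivalent to $-\otimes \tr(\mathbf{1})$, where $\tr(\mathbf{1})$ carries its residual $G$-action as an object of $\fun(BG,\mathcal{C}^{hG})$; so the composite $K(\tr)\circ K(F)$ is the endomorphism of $K(\mathcal{C}^{hG})$ given by ``multiplication by $[\tr(\mathbf{1})]$'' in a suitable equivariant sense. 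One analyzes the composite $F\circ\tr$ symmetrically, obtaining on $K$-theory an endomorphism $K(F)\circ K(\tr)$ of $K(\mathcal{C})$ that is equivariantly multiplication by a twisted form of the same class.

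Second, I would invoke the hypothesis to replace $[\tr(\mathbf{1})]$ by $[\bigoplus_G\mathbf{1}]$, i.e.\ by the class of induction from the trivial subgroup. The effect of this substitution on $K$-theory is that the composites above become the canonical norm/assembly maps in $G$-equivariant $K$-theory, which factor through $K(\mathcal{C})_{hG}\to K(\mathcal{C})^{hG}$; by the trace-methods developed in \cite{CMNN} (building on an Atiyah--Segal style framework), these canonical composites are $T(n)$-local equivalences for any finite $G$. A formal diagram chase, exhibiting $\alpha$ as a retract of such an equivalence, then yields that $\alpha$ is itself a $T(n)$-local equivalence for every $n\geq 0$ and every prime $p$.

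The main obstacle is upgrading the $K_0$-hypothesis from a $\pi_0$-statement to an equivalence of spectrum-level endomorphisms. Although $K_0$ acts on $K$-theory via the $E_\infty$-module structure, one must carry out this upgrade coherently with the $G$-action, working throughout in the equivariant world $\fun(BG,\mathcal{C}^{hG})$ rather than in $\mathcal{C}^{hG}$ alone. This equivariant coherence of the ``multiplication by a class'' operation is the principal technical hurdle, and requires careful use of the monoidal enhancement of $K$-theory together with the functoriality of the biadjunction in $\mathcal{C}$.
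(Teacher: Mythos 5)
There is a genuine gap in Step 2, and it sits exactly where the theorem's content lies. Your Step 1 is fine: the projection formula does identify $\tr\circ F$ with $-\otimes\tr(\mathbf{1})$, and one does get the two maps $\beta\colon K(\mathcal{C})_{hG}\to K(\mathcal{C}^{hG})$ and $\alpha\colon K(\mathcal{C}^{hG})\to K(\mathcal{C})^{hG}$ with $\alpha\circ\beta$ the norm. But the ``formal diagram chase exhibiting $\alpha$ as a retract of an equivalence'' is never produced, and it cannot be produced by the means you describe. The norm $K(\mathcal{C})_{hG}\to K(\mathcal{C})^{hG}$ is indeed a $T(n)$-local equivalence for any finite $G$ (this is Kuhn's Tate vanishing, not a trace-method result of \cite{CMNN}); but $\alpha\circ\beta\simeq\mathrm{Nm}$ only makes $\alpha$ $T(n)$-locally \emph{split surjective}. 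To realize $\alpha$ as a retract of $\mathrm{Nm}$ in the arrow category you would need a map $\gamma\colon K(\mathcal{C}^{hG})\to K(\mathcal{C})_{hG}$ with $\beta\circ\gamma\simeq\mathrm{id}$, i.e.\ a $T(n)$-local section of $\beta$ --- and that is precisely the hard part of the theorem, not a formality. Note also that if the norm being an equivalence sufficed, descent would hold with no hypothesis on $\tr(\mathbf{1})$ at all, contradicting the counterexample in the introduction (trivial action of a prime-to-$p$ group on $\perf(\mathbb{C})$). Your fallback, multiplication by $[\tr(\mathbf{1})]$, does not help: the underlying (non-equivariant) class of $\tr(\mathbf{1})$ in $K_0(\mathcal{C}^{hG})$ is $|G|\cdot[\mathbf{1}]$, so $K(\tr)\circ K(F)$ is multiplication by $|G|$, which is not $T(n)$-locally invertible when $p\mid |G|$.

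What actually closes the argument, and what your proposal is missing, is the following use of the $G$-action on $\tr(\mathbf{1})$ together with Kuhn's splitting of the stable transfer. The object $\mathbf{1}_{\mathcal{C}}$ with its canonical $G$-equivariance determines a map $BG\to\mathcal{C}^{\simeq}_{hG}$, hence $BG_+\to K(\mathcal{C})_{hG}\xrightarrow{\beta}K(\mathcal{C}^{hG})$, classifying the object $\tr(\mathbf{1})\in\fun(BG,\mathcal{C}^{hG})$; the transfer $\tr_{\BG}\colon BG_+\to\mathbb{S}\to K(\mathcal{C}^{hG})$ classifies the induced object $\bigoplus_G\mathbf{1}$. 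Because $BG$ is connected, equality of the two classes in $K_0(\fun(BG,\mathcal{C}^{hG}))$ already forces these two maps $BG\to\Omega^\infty K(\mathcal{C}^{hG})$ to be homotopic --- so the ``coherent spectrum-level upgrade'' you flag as the main obstacle is not actually needed. Thus $\tr_{\BG}$ factors through $\beta$. Since $L_{T(n)}(\tr_{\BG})\colon L_{T(n)}BG_+\to L_{T(n)}\mathbb{S}$ admits a section (Kuhn's theorem, valid for all $n\geq 0$ and all $p$; this is the input your proposal never invokes), the unit of $L_{T(n)}K(\mathcal{C}^{hG})$ lies in the image of $L_{T(n)}\beta$. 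One then concludes not by a retract argument but by a nilpotence argument: the $G$-ring spectrum $K_G(\mathcal{C})$ is $(\triv,\epsilon)$-nilpotent because the unit is in the image of its transfer (\Cref{BGcrit}, via \Cref{crittrivnilpepsilon}), and nilpotence implies that both comparison maps, in particular $\alpha$, are $T(n)$-local equivalences.
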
 

\Cref{normalbasisthm} states that a type of normal basis property (e.g., for a
Galois extension of fields, the condition on $\mathrm{tr}(\mathbf{1})$ follows from the normal basis theorem)
for the $G$-equivariant object $\mathrm{tr}( \mathbf{1}) \in \mathcal{C}^{hG}$ 
implies that the homotopy fixed point comparison map for $K$-theory is an equivalence after $T(n)$-localization. 
The argument  is based on the vanishing 
\cite{Kuh04}
of Tate spectra in telescopic homotopy
theory. 
In fact, \Cref{normalbasisthm} yields another proof of \Cref{galoisdescthm}
avoiding the use of $\mathbb{E}_\infty$-structures, see \Cref{rem:alternate_argument}.

Next, we use \cite{MNN_nilpotence} to prove a third descent result (\Cref{swandescintro} below),
which applies in more general situations, albeit
with a weaker conclusion. For this, we first formulate a generalization of 
the homotopy fixed point comparison maps with respect to a family of subgroups of $G$. 

\begin{construction}[Comparison maps for families of subgroups] 
Let $\mathcal{C}$ be an idempotent-complete stable $\infty$-category with an action of a finite
group $G$. 
Let $\sF$ be a family of subgroups of $G$, i.e., $\sF$ is nonempty and closed under 
subconjugation, and  
let $\sOFG$ be the category of $G$-sets of the form $G/H, H \in \sF$. 
We obtain a comparison map 
\begin{equation}  \label{compmap2} K(\mathcal{C}^{hG}) \to \varprojlim_{G/H \in \sOFG^{op}}
K(\mathcal{C}^{hH}); \end{equation}
this generalizes the homotopy fixed point comparison map, which is the case where $\sF =
\left\{(1)\right\}$. 
\end{construction}

The map \eqref{compmap2} is dual to the type of assembly maps which (for
infinite groups) 
are the subject of the Farrell--Jones conjecture and its variants. 
In the rest of this paper, we will introduce 
a basic condition on an $\mathbb{E}_\infty$-ring that guarantees the maps
\eqref{compmap2} are equivalences after telescopic localization, and 
implies a bound on the chromatic complexity of the algebraic $K$-theory. 

\begin{definition}[Swan $K$-theory, Malkiewich \cite{Mal17}] 
Let $R$ be an $\mathbb{E}_\infty$-ring spectrum, and let $G$ be a
(discrete) group. 
We define the ring $\rep(G, R)$, called the \emph{Swan $K$-theory of $R$}, via
\[ \rep(G, R)  = K_0 ( \fun(BG, \perf(R))).  \]
For $G$ finite, 
using induction and restriction functors, one makes $\rep(-, R)$ into a Green
functor, cf.~\Cref{def:swan_green}.  \end{definition}

The $\infty$-category $\fun(BG, \perf(R))$ is an analog of the category of
complex representations of the finite group $G$; 
studying this in analogy with complex or modular representation theory for $R =
KU$ has been proposed by Treumann \cite{Treumann15}. 
In this analogy, the ring $\rep(G, R)$ is an analog of the classical
representation ring of $G$ (to which it reduces when $R = H\mathbb{C}$). 
In general, the calculation of the rings $\rep(G, R)$ seems to be an
interesting problem (e.g., for $R = KU$), although we know very few examples. 

\begin{definition}[$R$-based Swan induction] 
Fix a finite group $G$ and an $\mathbb{E}_\infty$-ring $R$.
If the Green functor $\rep(-, R) \otimes \mathbb{Q}$ (for subgroups of $G$) is induced
from a family $\sF$ of subgroups of $G$, then we say that \emph{$R$-based Swan induction}
holds for the family $\sF$. 
\end{definition}

In \cite{Swa60}, Swan shows that $H\mathbb{Z}$-based Swan induction holds for the family
of cyclic groups for any finite group; see also \cite{Swa70} for a detailed
treatment of Swan $K$-theory for a discrete ring. For $H\mathbb{C}$, this is Artin's
classical induction theorem for the representation ring. 
Via
some explicit geometric arguments, we prove the following instances of Swan
induction for $\mathbb{E}_\infty$-ring spectra. 

\begin{theorema}\label{Ex:Swaninduction}
If $R$ is an $\mathbb{E}_\infty$-ring, then for every finite group, $R$-based Swan induction holds for: 
\begin{enumerate}
\item the family of abelian subgroups if $R$ 
admits an $\mathbb{E}_1$-map from $MU$. 
\item\label{Ex:Swaninduction_KU} the family of abelian subgroups of rank $\leq 2$ if $R=KU$. 
\item\label{Ex:Swaninduction_Morava} the family of  abelian subgroups of $p$-rank $\leq n+1$ and $\ell$-rank $\leq 1$ for primes $\ell \neq p$ if $R=E_n$ is Morava $E$-thory of height $n$ at the prime $p=2$. \end{enumerate}
\end{theorema}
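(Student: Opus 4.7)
The general strategy is to produce, for each pair $(R, \sF)$ in the statement and each finite group $G$, an ``induction identity'' expressing the unit $1 \in \rep(G,R) \otimes \mathbb{Q}$ as a $\mathbb{Q}$-linear combination of classes of the form $\mathrm{Ind}_H^G(x_H)$ for $H \in \sF$. By the Green functor structure of $\rep(-,R)$ and Frobenius reciprocity, any such identity for $1$ implies surjectivity of $\bigoplus_{H \in \sF} \mathrm{Ind}_H^G$ at $G$, i.e.\ that $\rep(-,R) \otimes \mathbb{Q}$ is $\sF$-induced in the required sense.

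For part (1), I would use the unit map of Green functors $A(-) \to \rep(-,R)$ from the Burnside Green functor, combined with the classical Artin identity expressing $1 \in A(G)\otimes\mathbb{Q}$ as a $\mathbb{Q}$-linear combination of induced classes from cyclic (hence abelian) subgroups. Complex orientability provides a more geometric realization of this fact: for abelian $A$, the category $\fun(BA, \perf(R))$ splits according to characters of $A$ into the formal group $\widehat{\mathbb{G}}_R$, producing a rational character map $\rep(A,R) \otimes \mathbb{Q} \to \mathrm{Fun}(\hom(A, \widehat{\mathbb{G}}_R), \mathbb{Q})$ compatible with restriction. Assembling these for varying abelian $A$ yields the induction identity explicitly.

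For parts (2) and (3), the refined families with rank $\leq n+1$ reflect a categorified Hopkins--Kuhn--Ravenel-type character theory. Classical HKR gives, for any complex orientable $R$ of height $n$, a rational character map $R^*(BG) \otimes L \to \prod R^*(BC_G(\alpha))$ indexed over conjugacy classes of homomorphisms $\alpha \colon \mathbb{Z}_p^n \to G$, where $L$ is a suitable faithfully flat extension. The categorical step from $R^*(BG)$ (morphisms in $\fun(BG,\perf(R))$) up to $\rep(G,R) = K_0(\fun(BG,\perf(R)))$ raises the effective dimension by one, which is what accounts for the ``$+1$'' in the rank bounds. The plan is to construct an explicit rational character map from $\rep(G,R)\otimes\mathbb{Q}$ into a product of $\rep(A,R)\otimes \mathbb{Q}$ with $A$ ranging over abelian subgroups of the prescribed rank, show that its kernel vanishes by reduction to the classical HKR theorem applied to the relevant $K_0$-computations of $\fun(BA,\perf(R))$, and then conclude via a standard Dress-type argument: rational injectivity of restriction to $\sF$-subgroups of a Mackey functor with Green functor structure implies $\sF$-induction rationally.

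The main obstacle is the construction and precise analysis of the categorified character map in parts (2) and (3), and in particular identifying the correct rank bounds. For part (3), the restriction to $p=2$ likely reflects the availability of explicit $\mathbb{E}_\infty$-models or computational tractability at the prime 2 that allow the character map for $E_n$ to be verified directly via the formal group; the $\ell$-rank $\leq 1$ condition for $\ell\neq p$ reflects that $E_n$ is rationally trivial away from $p$, so the $\ell$-part of the induction reduces to classical Artin induction, which gives cyclic (rank $1$) $\ell$-subgroups.
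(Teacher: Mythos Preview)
Your proposal has genuine gaps in all three parts, and the paper's approach is completely different and much more concrete.

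For part (1), your first idea is simply wrong: there is \emph{no} ``classical Artin identity'' in the Burnside ring $A(G)\otimes\mathbb{Q}$ expressing $1$ as induced from cyclic (or abelian) subgroups. Indeed, $A(G)\otimes\mathbb{Q}\cong\prod_{(H)}\mathbb{Q}$ is a product over conjugacy classes of \emph{all} subgroups, and the idempotent $\widetilde{e}_G$ supported at $G$ itself is never hit by induction from proper subgroups (take $G=C_p\times C_p$). You may be thinking of Artin's theorem for the complex representation ring $R_{\mathbb{C}}(G)$, but there is no ring map $R_{\mathbb{C}}(G)\to\rep(G,R)$ in general. Your second idea, splitting $\fun(BA,\perf(R))$ into character pieces for abelian $A$, is also false for a general complex orientable $R$: this would require the associated formal group to be, say, multiplicative. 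The paper instead proceeds geometrically (\Cref{artinworkhorse}, \Cref{cxorientswan}): embed $G\subseteq U(n)$, let $G$ act on the flag variety $U(n)/T$, observe that all isotropy is abelian, and use the projective bundle formula (which \emph{does} use complex orientability) to show $R\otimes (U(n)/T)_+$ is $G$-equivariantly a sum of even shifts of the unit. Counting cells versus counting summands gives the induction identity.

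For parts (2) and (3), your categorified HKR sketch is too vague to be a proof, and the Dress-type principle you invoke (``rational injectivity of restriction to $\sF$ implies $\sF$-induction'') is false as stated: the Burnside Green functor $A(-)\otimes\mathbb{Q}$ has injective restriction to cyclic subgroups but is not cyclic-induced. The paper's arguments are entirely different and hands-on. For $KU$ (\Cref{artinindKU}), after reducing to $G=C_p^{\times 3}$ and the family of proper subgroups, the key input is \emph{twisted $K$-theory}: one produces invertible objects $KU_\tau\in\fun(BC_p^{\times 2},\perf(KU))$ from $\tau\in H^3(C_p^{\times 2};\mathbb{Z})$, proves an explicit decomposition $KU\otimes\mathbb{D}\mathbb{CP}^{p-1}_+\simeq\bigoplus_\tau KU_\tau$ for the Heisenberg projective action on $\mathbb{CP}^{p-1}$, and then does a careful idempotent computation in the group ring $\mathbb{Q}[H^3(BC_p^{\times 3};\mathbb{Z})]$ to force $\widetilde{e}_{C_p^{\times 3}}=0$. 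For $E_n$ at $p=2$ (\Cref{thm:2-primary}), the argument uses \emph{real} representation spheres, which is exactly why $p=2$ is special (odd $p$-groups have no nontrivial one-dimensional real characters): for $G=C_2^{\times(n+2)}$ one sets $\alpha=\prod_{i=1}^{n+2}(1-\eta_i)\in RO(G)$, shows $S^\alpha\otimes E_n\simeq E_n$ equivariantly (via a rank/restriction argument, \Cref{thm:induction}), and computes $[S^\alpha]=-1+C$ in $\rep(G,\mathbb{S})$ with $C$ properly induced, whence $2$ is induced in $\rep(G,E_n)$. The $\ell$-rank $\leq 1$ clause for $\ell\neq p$ comes from \Cref{p-1local}, not from ``$E_n$ being rationally trivial away from $p$''.
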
 

This statement subsumes \Cref{cxorientswan}, \Cref{artinindKU}, and parts of \Cref{thm:2-primary}.
Informally, 
\Cref{Ex:Swaninduction_KU} states that while a complex representation 
of a finite group $G$ is determined by its character (i.e., its restriction to
cyclic subgroups), the class in $\rep(G, KU)$ of a ``representation'' of $G$ with $KU$-coefficients should be
determined by a sort of ``2-character,'' defined on pairs of commuting elements
of $G$;   moreover, there should be generalizations to higher heights.
We conjecture that \Cref{Ex:Swaninduction_Morava} should be true for odd
primes too (\Cref{conj:Morava_Swan}).

We show that the Swan induction condition guarantees
that the maps \eqref{compmap2} become equivalences after telescopic
localization, for every $R$-linear $\infty$-category.  
This relies on similar techniques as in \cite{CMNN}.

\begin{theorema}[Descent via Swan induction, see \Cref{swaninddesc}] 
\label{swandescintro}
Let $R$ be an $\mathbb{E}_\infty$-ring spectrum, $G$ a finite group, and $\sF$
a family of subgroups of $G$. 
Suppose that 
$R$-based Swan induction holds for the family 
$\sF$. 
Then, for every $R$-linear idempotent-complete stable $\infty$-category $\mathcal{C} $ with an action of
$G$, the natural map \eqref{compmap2}, namely 
\begin{equation*} K(\mathcal{C}^{hG}) \to \varprojlim_{G/H \in \sOFG^{op}}
K(\mathcal{C}^{hH}); \end{equation*}
becomes an equivalence after $T(n)$-localization, for any $n$ and any implicit prime $p$.
Furthermore, the limit in \eqref{compmap2} commutes with $L_{T(n)}$.
\end{theorema}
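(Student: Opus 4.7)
The plan is to reformulate the descent statement using spectral Mackey functors on the orbit category of $G$, exploit the Green functor module structure furnished by Swan $K$-theory, and then apply the nilpotence machinery of \cite{MNN_nilpotence, CMNN} to convert the rational Swan induction hypothesis into a $T(i)$-local equivalence. The strategy closely follows \cite{CMNN}, with $R$-based Swan induction taking the place of the classical $\mathbb{Z}$-based version used there.

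First, package the assignment $H\mapsto L_{T(i)}K(\mathcal{C}^{hH})$ into a spectral Mackey functor $M$ on $\sOG$, with transfers supplied by the biadjoint structure of the adjunctions $\mathcal{C}^{hH'}\rightleftarrows\mathcal{C}^{hH}$ for $H'\subseteq H$. The comparison map \eqref{compmap2} then becomes the canonical map $M(G/G)\to\lim_{\sOFG^{op}}M(G/H)$. Next, observe that $M$ is naturally a module over the Green functor $H\mapsto\rep(H,R)$: the tensor action of $\fun(BH,\perf(R))$ on $\fun(BH,\mathcal{C})=\mathcal{C}^{hH}$ induces a compatible $\rep(H,R)$-action on $K$-theory after passing to $K_0$, and this is functorial in $H$ with respect to both restriction and induction.

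By the Swan induction hypothesis, the unit class $[\mathbf{1}]\in\rep(G,R)\otimes\mathbb{Q}$ lies in the image of induction from $H\in\sF$; via the module action, this already yields a rational splitting of both the descent map and its dual assembly map for $M$. To upgrade from rational to $T(i)$-local, the plan is to invoke the Tate-vanishing properties of $T(i)$-local stable homotopy (Kuhn \cite{Kuh04}) to dispose of prime-to-$p$ obstructions, and then apply the nilpotence formalism of \cite{MNN_nilpotence, CMNN}, which forces the fiber of the comparison map to be nilpotently annihilated by $\rep(G,R)$ and hence to vanish $T(i)$-locally. The same circle of ideas yields the second claim, that the limit commutes with $L_{T(i)}$. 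The main obstacle is precisely this rational-to-$T(i)$-local upgrade in the presence of $p$-torsion in $\rep(-,R)$; however, the nilpotence toolkit of \cite{CMNN} was designed for exactly this situation, and the $R$-based Swan induction hypothesis then plugs in as the required input Green functor.
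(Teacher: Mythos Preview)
Your proposal is essentially the same strategy as the paper's proof, and the outline is sound. The paper phrases the argument in the language of $(\sF,\epsilon)$-nilpotence of the genuine $G$-spectrum $K_G(R)$: the Swan induction hypothesis is exactly rational $\sF$-nilpotence of $K_G(R)$ (\Cref{prop:rational}), and the May nilpotence conjecture \cite{MNN_nilpotence} upgrades this to $(\sF,\epsilon)$-nilpotence (\Cref{rationaltoFnilp}); then any module over $K_G(R)$, in particular the equivariant $K$-theory of $\mathcal{C}$, inherits $(\sF,\epsilon)$-nilpotence, which gives both the $T(i)$-local descent and the commutation of the limit with $L_{T(i)}$.

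Two places where you should tighten the language to match what actually makes the argument go through. First, the relevant module structure is over the $G$-ring spectrum $K_G(R)$ (equivalently over $\mathcal{U}_G(\perf(R))$ in $\mack_G(\mot_R)$), not merely over the $\pi_0$-level Green functor $\rep(-,R)$; the latter only records $K_0$ and is not enough to run the nilpotence machinery. Second, the ``rational-to-$T(i)$-local upgrade'' you flag as the crux is precisely the step that requires the full $\mathbb{E}_\infty$-structure on $K_G(R)$: one applies \cite{MNN_nilpotence} to the $\mathbb{E}_\infty$-ring $(K_G(R)\otimes\widetilde{E\sF})^G$, whose rationalization vanishes by hypothesis, to conclude that all its telescopic localizations vanish. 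Your citation of \cite{MNN_nilpotence} is the right pointer, but the argument does not proceed by showing the fiber of the comparison map is ``nilpotently annihilated by $\rep(G,R)$''; rather, one shows once and for all that $K_G(R)$ itself is $(\sF,\epsilon)$-nilpotent, and then the descent for every $\mathcal{C}$ follows formally from the module structure.
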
 

In fact, 
\Cref{swandescintro} can be combined with \Cref{ASthmintro}, yielding the
following result (\Cref{ASthmwithcyclic}): if $L_{T(n)} (R^{tC_p}) = 0$, then the
comparison map
\eqref{compmap2} becomes an equivalence after $T(n+1)$-localization, for $\sF$
the family of cyclic subgroups of order prime to $p$.

Our final main result, which is inspired by the character theory of
Hopkins--Kuhn--Ravenel \cite{HKR00},  is that a certain case of Swan induction implies the
vanishing of the $T(i)$-localizations of algebraic $K$-theory for large $i$.

\begin{theorema} \label{swanvanishintro}
Let $R$ be an $\mathbb{E}_{\infty}$-ring, $p$ a prime, and $n>0$.
Suppose that $R$-based Swan induction holds for the family of
proper subgroups of $C_p^{\times n}$. 
Then $L_{T(i)} K(R) = 0$ for $i \geq n$ at the implicit prime $p$. 
\end{theorema}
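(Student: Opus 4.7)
My plan is to combine \Cref{swandescintro} with the inductive vanishing theorem \Cref{inductivevanishingthm}, proceeding by induction on $n$.

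For the inductive step (with $n \geq 2$), I apply \Cref{swandescintro} with $G=C_p^{\times n}$ and $\sF$ the family of proper subgroups of $G$; specializing the resulting $T(i)$-local descent equivalence to $\mathcal{C}=\perf(R)$ with the trivial $G$-action gives
$$L_{T(i)}K(\fun(BG,\perf(R)))\simeq L_{T(i)}\varprojlim_{G/H\in\sOFG^{op}}K(\fun(BH,\perf(R)))$$
for every $i$. Equivalently, by the $G$-space cofiber sequence $E\sF_{+}\to S^{0}\to\widetilde{E}\sF$, the fiber of this comparison map, which is naturally identified with the ``geometric $G$-fixed-point piece'' of the $G$-equivariant $K$-theory spectrum attached to $\perf(R)$, vanishes after $T(i)$-localization for every $i$. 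I then plan to convert this vanishing into the pair of conditions $L_{T(n-1)}R=0$ and $L_{T(n-1)}K(R^{tC_p})=0$ by iterating the $C_p$-Tate construction and invoking Kuhn's blueshift theorem \cite{Kuh04}: for $G=C_p^{\times n}$ elementary abelian, the $\widetilde{E}\sF$-piece decomposes chromatically via $n$ nested Tate pieces, each of which drops chromatic height by one, so that the ``top'' vanishing should precisely detect the pair of height $n-1$ conditions above. Once these two $T(n-1)$-local vanishing statements are in hand, \Cref{inductivevanishingthm}(1)$\Rightarrow$(3) applied with $n-1$ in place of $n$ yields $L_{T(i)}K(R)=0$ for $i\geq n$, closing the induction.

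The base case $n=1$ must be handled separately, since \Cref{inductivevanishingthm} requires $n\geq 1$. Swan induction for $\sF=\{e\}$ in $C_p$ demands that $1\in\rep(C_p,R)\otimes\mathbb{Q}$ lie in the image of induction from $K_0(R)\otimes\mathbb{Q}$, i.e., be a multiple of $[R[C_p]]$; comparing this with the character/idempotent decomposition of $\rep(C_p,R)\otimes\mathbb{Q}$ -- where induced classes are ``diagonal'' across characters while $1$ is concentrated on the trivial character -- forces $R\otimes\mathbb{Q}=0$. A primary decomposition of $R$ then yields the conclusion $L_{T(i)}K(R)=0$ for $i\geq 1$ via \Cref{TateMitchell} at height $0$ together with Hahn's theorem applied to the $\mathbb{E}_{\infty}$-ring $K(R)$.

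The main obstacle is the chromatic-equivariant identification in the inductive step, namely relating the vanishing of the ``top'' $\widetilde{E}\sF$-piece precisely to the pair of $T(n-1)$-local conditions needed to invoke \Cref{inductivevanishingthm}. This requires carefully tracking how the iterated Tate constructions mediated by the proper-subgroup family of $C_p^{\times n}$ interact with chromatic height -- essentially the content of a Hopkins--Kuhn--Ravenel style character-theoretic reformulation of Swan induction in terms of height-$(n-1)$ data.
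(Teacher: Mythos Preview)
Your proposed inductive approach has a genuine gap that cannot be filled, and the paper's proof proceeds along entirely different lines.

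The central problem is in your inductive step: you cannot extract the conditions $L_{T(n-1)}R=0$ and $L_{T(n-1)}K(R^{tC_p})=0$ from the Swan induction hypothesis. Swan induction is a statement about the Green functor $\rep(-,R)=K_0(\fun(B(-),\perf(R)))$, whereas the conditions you need are chromatic vanishing statements about $R$ itself. There is no mechanism to pass from one to the other. Your suggestion that the $\widetilde{E\sF}$-piece ``decomposes chromatically via $n$ nested Tate pieces'' conflates two very different objects: the geometric fixed points $\Phi^G K_G(R)$ are built from the $K$-theory of the categories $\fun(BH,\perf(R))$, not from iterated Tate constructions on $R$ or on $K(R)$. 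Kuhn's blueshift applies to $(-)^{tC_p}$ on spectra, but $\Phi^{C_p}$ of the genuine $G$-spectrum $K_G(R)$ is not a Tate construction on anything simple. Likewise, there is no reason that Swan induction for $R$ at $C_p^{\times n}$ should imply Swan induction for $R^{tC_p}$ at $C_p^{\times (n-1)}$, which is what your induction would require. Your base case has a separate error: even granting $R\otimes\mathbb{Q}=0$, this does not give $L_{T(0)}R=R[1/p]=0$ (take $R=H\mathbb{F}_\ell$ with $\ell\neq p$), so \Cref{TateMitchell} at height~$0$ does not apply.

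The paper's argument is direct and non-inductive. First, a rank-counting lemma (\Cref{thm:bdd-complex}) shows that for \emph{any} $\mathbb{E}_\infty$-ring $A$ and $G=C_p^{\times n}$, surjectivity of the rationalized transfer $\bigoplus_{H\subsetneq G}A^0(BH)\otimes\mathbb{Q}\to A^0(BG)\otimes\mathbb{Q}$ forces $L_{T(i)}A=0$ for $i\geq n$; the proof replaces $A$ by $L_{K(i)}(E_i\otimes A)$ and compares the ranks $p^{i(n-1)}\cdot\frac{p^n-1}{p-1}$ versus $p^{in}$ of the two sides as free $\pi_0A$-modules. Then one applies this lemma with $A=K(R)$: the Swan induction hypothesis makes $K_G(R)_{\mathbb{Q}}$ nilpotent for the family of proper subgroups, hence so is its Borelification $(K_G(R)^{\bor})_{\mathbb{Q}}$, and Borel-nilpotence for the trivial action on $K(R)$ is exactly the transfer-surjectivity hypothesis of \Cref{thm:bdd-complex} for $K(R)$. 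The point is that Swan induction gives information about $K(R)$, not about $R$, and the paper exploits this directly rather than trying to descend to a statement about $R$.
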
 

As a consequence, we obtain a new proof of Mitchell's theorem (\Cref{Mithm}),
and we recover several chromatic bounds, e.g., that if $p=2 $ or $n = 1$, then 
we have $L_{T(i)} K(E_n) = 0$ for $i \geq n + 2$. These bounds are special
cases
of \Cref{TateMitchell} above, although the method is different and
could be useful in other settings as well; for instance, in
\Cref{thm:2-primary} we prove 2-primary Swan induction results for $MO\left
\langle n\right\rangle$ which therefore implies bounds on the chromatic
complexity of $K( MO\left \langle n\right\rangle)$.

\subsection*{Conventions}

We let $\mathcal{S}$ denote the $\infty$-category of anima, $\sp$ denote the
$\infty$-category of spectra, $\mathcal{S}_G$ the $\infty$-category of
$G$-anima (i.e., genuine $G$-spaces), and $\GSpec$ the
$\infty$-category of (genuine) $G$-spectra. 
We denote by $\mathbb{S}$ the unit of either of these (i.e., the sphere
spectrum). We write $\mathbb{D}X$ for the Spanier--Whitehead dual of $X$. 

We let $L_n^{p, f}$ denote the finite localization \cite{Miller92} on $\sp$ away from a 
finite type $n+1$ spectrum (at the implicit prime $p$). In particular, for $n =
0$, we have $L_0^{p, f}(X) = X[1/p]$. This convention follows \cite{LMMT20};
for $p$-local spectra, it agrees with what is usually denoted $L_n^{f}$. 
Equivalently, if $T(i)$ denotes the telescope of a $v_i$-self map of a finite
type $i$ complex (so by convention $T(0) $ can be taken to be
$\mathbb{S}[1/p]$), then $L_{n}^{p, f} = L_{T(0) \oplus \dots \oplus T(n)}$. 

We write $K$ for connective algebraic $K$-theory. 
Most of our results hold only after telescopic localization, after which
there is no difference between connective and nonconnective $K$-theory, and we
will anyway state them in the generality of additive invariants. 

We write $\catst$ for the $\infty$-category of small, idempotent-complete stable
$\infty$-categories and exact functors between them, cf.~\cite{BGT13}. 
More generally, given an $\mathbb{E}_\infty$-ring $R$, we write $\cats{R}$ for the
$\infty$-category of 
small, idempotent-complete $R$-linear stable
$\infty$-categories and $R$-linear functors between them, so $\cats{R}$ is
$\perf(R)$-modules in $\catst$. Compare \cite{HSS17} for a treatment. 

An $\infty$-category $\mathcal{C}$ is called \emph{preadditive} (also called
\emph{semiadditive} in the literature) if it is
pointed, admits finite coproducts, and finite coproducts are (canonically)
identified with finite products, see \cite[Sec.~2]{GGN15}. 
Given a preadditive $\infty$-category $\mathcal{C}$, 
we say that $\mathcal{C}$ is \emph{additive} if the $\mathbb{E}_\infty$-anima
$\hom_{\mathcal{C}}(X, Y)$ for $X, Y \in \mathcal{C}$ are grouplike. 
\subsection*{Acknowledgments}

We thank Clark Barwick, Jesper Grodal, Jeremy Hahn, Rune Haugseng, Michael Hopkins, Marc
Hoyois, Markus Land, Jacob Lurie, Lennart Meier,  John Rognes, Georg Tamme, and Craig
Westerland for helpful discussions. 
We are very grateful to the referee for many comments on an earlier version of
this paper. 
The second author thanks the University of Copenhagen and the University of
Regensburg for hospitality. 
The second author was supported by the NSF Graduate Fellowship under grant
DGE-114415,  
a Clay Research Fellowship, and the National Science Foundation under Grant
No.~ 2152311. 
The third and fourth authors were partially supported by the SFB 1085, Regensburg.

\section{Mackey functors and equivariant algebraic $K$-theory}

In this section, we review the setup of equivariant algebraic 
$K$-theory which plays an integral role in our approach to the present descent
theorems.
The use of equivariant algebraic $K$-theory refines the use of 
the transfer, which is central to all such descent
results, going back to \cite{Tho85}.\footnote{A toy example of this argument is
the Galois descent for rationalized algebraic $K$-theory,
cf.~\cite[Th.~2.15]{Tho85}.} 
In studying the comparison map $K(\mathcal{C}^{hG})\rightarrow K(\mathcal{C})^{hG}$, one
observes that 
there is also a map
\begin{equation}\label{eq:coind}
K(\mathcal{C})_{hG} \to K(\mathcal{C}^{hG}),
\end{equation} 
arising from the $G$-equivariant functor $ \mathcal{C} \to
\mathcal{C}^{hG}$ biadjoint to the forgetful functor $\mathcal{C}^{hG} \to
\mathcal{C}$, such that the composition with the comparison map is the norm
map $K(\mathcal{C})_{hG} \to K(\mathcal{C})^{hG}. $ 
In the case of $\mathcal{C} = \mathrm{Perf}(R')$ for a $G$-Galois extension $R \to R'$ of commutative rings, 
then Galois descent
gives $\mathrm{Perf}(R) \simeq \mathrm{Perf}(R')^{hG}$ and 
\eqref{eq:coind} is the map 
\[ K(R')_{hG} \to K(R)  \]
which arises from restriction of scalars from $R'$-modules to $R$-modules. 
These types of transfer maps and their functorialities can be encoded using the language
of (genuine) $G$-spectra, and some of the techniques for proving descent results can be
expressed using the language of $\sF$-nilpotence \cite{MNN17, MNN19}.

Several authors have considered 
the setup of equivariant algebraic $K$-theory, including Merling \cite{Me17},
 Barwick \cite{Bar17}, 
 Malkiewich--Merling \cite{MM19}, 
 and Barwick--Glasman--Shah
\cite{BGS20}. We will follow the setup of \cite{Bar17, BGS20}, but will try to keep 
the exposition mostly self-contained. 
In particular, we will use the theory of spectral Mackey functors, 
which is equivalent to the theory of $G$-spectra by work of 
Guillou--May \cite{GM11}
and Nardin \cite{Na16}; we will also give another proof of this equivalence in
the appendix. 

\begin{definition}[{The effective Burnside $\infty$-category, 
\cite[Sec.~3]{Bar17}}]\label{def:burnside}
For a finite group $G$, let $\burng$ denote the \emph{effective Burnside $\infty$-category}
of the category of finite (left) $G$-sets
and $G$-maps; informally, 
$\burng$ is the nerve of the (weak) $(2, 1)$-category
defined as follows: 
\begin{itemize}
\item  
The
objects of $\burng$ are finite $G$-sets $S$,
\item
 Given finite $G$-sets $S$ and
$T$, $\hom_{\burng}(S, T)$ is the nerve of the groupoid of spans 
of finite $G$-sets
\[ \xymatrix{
& U \ar[rd] \ar[ld] & \\
S &  & T
}\]
and isomorphisms 
of spans.
\item Composition is given by pullback of spans. 
\end{itemize}
\end{definition}

That is, $\burng$ is the span category of the category of finite $G$-sets, as in 
\cite[App.~C]{BH17}. 
The $\infty$-category
$\burng$ is preadditive, and the direct sum comes from the disjoint union of
finite $G$-sets.

One then obtains the following definition 
\cite[Sec.~6]{Bar17}
of a Mackey functor; this reduces to the  classical notion  when
$\mathcal{C}$ is the category of abelian groups.

\begin{definition}[Mackey functors] 
Given any presentable, preadditive $\infty$-category $\mathcal{C}$, we define a
\emph{$\mathcal{C}$-valued Mackey functor} (for the finite group $G$) to be a $\mathcal{C}$-valued
presheaf on $\burng$ which takes finite coproducts of $G$-sets to products in
$\mathcal{C}$. 
We let $\msk_G(\mathcal{C})$ denote the $\infty$-category of
$\mathcal{C}$-valued Mackey functors. 

 Let $M \in \msk_G(\mathcal{C})$. 
Given a subgroup $H \subseteq G$, we write
\[ M^H  \stackrel{\mathrm{def}}{=} M(G/H), \]
and call this the \emph{$H$-fixed points} of $M$. 
\end{definition} 

\begin{remark}[Comparison with  the $\mathcal{P}_{\Sigma}$-construction]\label{rem:p_sigma}
Consider the nonabelian derived $\infty$-category $\mathcal{P}_{\Sigma}(\burng)$
of $\burng$, in the sense of \cite[Sec.~5.5.8]{Lur09}, i.e.,
$\mathcal{P}_{\Sigma}(\burng)$ is the $\infty$-category of presheaves on
$\burng$ which preserve finite products, or equivalently 
$\mathcal{P}_{\Sigma}(\burng)$ is obtained by freely adding sifted colimits to
$\burng$. Then $\msk_G(\mathcal{C}) = \mathcal{P}_{\Sigma}(\burng) \otimes
\mathcal{C}$ via the Lurie tensor product,
cf.~\cite[Sec.~4.8.1]{Lur17}. 
\end{remark}

\begin{construction}[The symmetric monoidal structure on Mackey functors] 
Suppose now $\mathcal{C}$ is a presentably symmetric monoidal $\infty$-category
which is preadditive. Then there is a canonical structure of a
presentably symmetric  monoidal structure 
on $\msk_G(\mathcal{C})$, obtained (implicitly by Day convolution) as follows.  
We consider the symmetric monoidal structure on $\burng$ obtained from the
cartesian product on finite $G$-sets (and products of spans). 
This symmetric monoidal structure commutes with finite coproducts in each
variable. Applying $\mathcal{P}_\Sigma$, we obtain a canonical
presentably symmetric monoidal 
structure on $\mathcal{P}_\Sigma( \burng)$ such that the Yoneda functor
is symmetric monoidal, \cite[Prop.~4.8.1.10]{Lur17}. 
Now via the Lurie tensor product, $\msk_G(\mathcal{C}) = \mathcal{P}_{\Sigma}(
\burng) \otimes \mathcal{C}$
then acquires the structure of a presentably symmetric monoidal
$\infty$-category. 
\end{construction} 
\begin{remark}[Functoriality of $\msk_G(-)$]\label{rem:functoriality}
Let $\mathcal{C}, \mathcal{D}$ be presentable, preadditive $\infty$-categories.
Let $F \colon \mathcal{C} \to \mathcal{D}$ be an accessible functor 
which commutes with finite coproducts (but not necessarily all colimits). 
Then we can still define a natural functor $\msk_G(\mathcal{C}) \to
\msk_G(\mathcal{D})$ induced by $F$ by sending a $\mathcal{C}$-valued Mackey functor to the
corresponding $\mathcal{D}$-valued one (i.e., composing with $F$). 
However, this is slightly awkward
to formulate in our setup where $\msk_G(\mathcal{C}) = \mathcal{C} \otimes
\mathcal{P}_\Sigma(\burng)$, since this tensor product takes place in the world
of presentable $\infty$-categories. 
We can modify this by fixing a suitable cardinal $\kappa$, considering the
$\kappa$-compact objects $\mathcal{C}^{\kappa} \subseteq \mathcal{C}$, then defining the
cocontinuous functor $\mathrm{Ind}( \mathcal{C}^\kappa) \to \mathcal{D}$ and
applying $\msk_G(-) = (-) \otimes \mathcal{P}_\Sigma(\burng)$ to it. Varying
$\kappa$, we obtain a functor out of $\mathcal{C}$. In particular, this also shows that if
$\mathcal{C}, \mathcal{D}$ are symmetric monoidal and if $F$ has a lax symmetric
monoidal structure, then $\msk_G(\mathcal{C}) \to \msk_G(\mathcal{D})$ has a lax
symmetric monoidal structure; alternatively one could see this using Day
convolution, cf.~\cite{Gla16} or \cite[Sec.~2.2.6]{Lur17}. 
\label{technicalpt}
\end{remark}

\begin{remark}[Spectral Mackey functors and $G$-spectra] 
Suppose $\mathcal{C} = \sp$ is the $\infty$-category of spectra. 
Then by \cite{GM11, Na16}, we have an equivalence between $\mack_G( \sp)$ and 
the $\infty$-category $\GSpec$ of (genuine) $G$-spectra, cf.~also the appendix for an independent
account of this equivalence. The
target of equivariant algebraic $K$-theory will naturally be $\mack_G(\sp)$, and
so we can equally regard equivariant algebraic $K$-theory as a $G$-spectrum. 
\end{remark} 

\begin{example}[The case of the trivial group] 
Suppose $G =(1)$ is the trivial group. 
Then $\burn_{(1)}$ is the category of finite sets and correspondences between
them. 
This is 
the free preadditive category on a single generator,
a result due to Cranch \cite{Cra10}, cf.~\cite[Prop.~C.1]{BH17} for another account. 
It follows that $\msk_{(1)}(\mathcal{C}) = \mathcal{C}$. 
In particular, it follows that $\mathcal{P}_{\Sigma}( \burn_{(1)})$ is the
$\infty$-category of $\mathbb{E}_\infty$-anima, since this is the free presentable
preadditive $\infty$-category on one object, cf.~\cite{GGN15}. 
\end{example}

\begin{cons}[Relation to the orbit category]
\label{relationtoorbit}
Let $\sOG$ be the \emph{orbit category} of $G$, i.e., the category of
nonempty transitive $G$-sets.
We have a natural functor $\sOG \to \burng$ which sends the $G$-set
$S$ to $S \in \burng$ and the $G$-map $f \colon S \to T$  to the span
\[ \xymatrix{
& \ar[ld]^{\mathrm{id}} S  \ar[rd]^f  \\
S &   & T. 
}\]
We also obtain a natural functor $\sOG^{op} \to \burng$ in a similar
(dual) manner. 
Suppose $f \colon G/H \to G/H'$ is a morphism in $\sOG$. Given a
$\mathcal{C}$-valued Mackey functor $M$, we then obtain 
morphisms in $\mathcal{C}$
\[ f^*  \colon M^{H'} \to M^H, \quad f_* \colon M^H \to M^{H'} . \]
Thus, given the Mackey functor $M$, we obtain two functors
\[ \sOG^{op} \to \mathcal{C}, \quad \sOG \to \mathcal{C},  \]
which both send $G/H \mapsto M(G/H)=M^H$, and such that the functoriality is via
$f^*$ in the first case and via $f_*$ in the second case. \end{cons}

Next, we discuss the most basic source of Mackey functors: the Borel-equivariant
ones, or those $M$ for which $M(G/H) \simeq M( G)^{hH}$ for all subgroups $H
\subseteq G$. We begin with the case where $\mathcal{C}$ is given by $\mathbb{E}_\infty$-monoids
in anima. 

\begin{proposition} 
\label{borelequivspaces}
There is a   symmetric monoidal Bousfield localization functor
$\mathcal{P}_{\Sigma}(\burng) \to \fun(BG, \mathcal{P}_{\Sigma}(\burn_{(1)}))$
such that the essential image of its right adjoint inclusion consists of those
product-preserving presheaves $\mathcal{F} \colon (\burng)^{op} \to \mathcal{S}$ such
that 
for each finite $G$-set $S$, the natural map 
$\mathcal{F}(S) \to \mathcal{F}(G \times S)^{hG}$ is an equivalence. 
Here $G$ acts on $G$ (in the category of finite $G$-sets, and hence in $\burng$)
by right multiplication. 
\end{proposition}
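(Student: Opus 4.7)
The plan is to build the Bousfield localization from the universal property of $\mathcal{P}_{\Sigma}(\burng)$, compute its right adjoint via Shapiro's lemma, and verify the characterization of the essential image by direct analysis of the homotopy fixed point condition.

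For the construction of $L$, I would use that $\mathcal{P}_{\Sigma}(\burng)$ is the free presentable preadditive $\infty$-category on $\burng$, so that specifying a symmetric monoidal colimit-preserving $L$ into $\fun(BG, \mathcal{P}_{\Sigma}(\burn_{(1)}))$ amounts to specifying a symmetric monoidal, product-preserving functor $\widetilde L \colon \burng \to \fun(BG, \mathcal{P}_{\Sigma}(\burn_{(1)}))$. I would take $\widetilde L(S)$ to be the free $\mathbb{E}_\infty$-space on the underlying set of $S$ equipped with its $G$-action, sending a span $S \leftarrow U \to T$ to the composite of the $\mathbb{E}_\infty$-transfer along $U \to S$ and the pushforward along $U \to T$. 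Because $L$ is manifestly symmetric monoidal, once the adjunction is in place the resulting Bousfield localization will be symmetric monoidal as well.

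By the adjoint functor theorem $L$ admits a right adjoint $R$, and by Yoneda $R(M)(S) = \mathrm{Map}_{\fun(BG, \mathcal{P}_{\Sigma}(\burn_{(1)}))}(\widetilde L(S), M)$. For $S = G/H$ one identifies $\widetilde L(G/H) \simeq \mathrm{Ind}_H^G \mathbf{1}$, with $\mathbf{1}$ the free $\mathbb{E}_\infty$-space on one generator, and Frobenius reciprocity yields $R(M)(G/H) = M^{hH}$; fully faithfulness of $R$ then follows because the counit $LR(M) = R(M)(G) = M^{h\{1\}} = M$ is an equivalence. To check the claimed image characterization, I would handle both directions by one computation: for any finite $G$-set $S$ the diagonal action on $G \times S$ is free with orbit set canonically $S$, so for any product-preserving $\mathcal{F}$ we have $\mathcal{F}(G \times S) \simeq \prod_{s \in S} \mathcal{F}(G)$, and the outer $G$-action from right multiplication on the first factor permutes these factors via the inverse action on $S$ while twisting each factor by the $G$-action on $\mathcal{F}(G)$. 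Shapiro's lemma then identifies $\mathcal{F}(G \times S)^{hG}$ with $\prod_{[s] \in G \backslash S} \mathcal{F}(G)^{h G_s}$; for $\mathcal{F} = R(M)$ this recovers $R(M)(S)$, and for an arbitrary $\mathcal{F}$ satisfying the condition it yields $\mathcal{F}(G/H) \simeq \mathcal{F}(G)^{hH} = RL(\mathcal{F})(G/H)$, so the unit $\mathcal{F} \to RL(\mathcal{F})$ is an equivalence on orbits and hence on all finite $G$-sets by additivity.

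The main obstacle I anticipate is the careful bookkeeping of the two commuting $G$-actions on $G \times S$ in the image characterization: the diagonal action that makes $G \times S$ into a $G$-set (and hence an object of $\burng$) and the outer action from right multiplication on the first factor (which induces the $G$-action on $\mathcal{F}(G \times S)$ over which the homotopy fixed point is taken). Pinning down how these interact under the orbit decomposition $G \times S \cong \coprod_{s \in S} G$, so that Shapiro's lemma applies cleanly, is the heart of the matter; once this is in place the remainder of the argument is essentially formal.
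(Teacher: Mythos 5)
Your proposal is correct, and it rests on the same two computational pillars as the paper's argument — the identification of $\hom_{\burng}(G,S)$ with the free $\mathbb{E}_\infty$-space on the underlying set of $S$, and the untwisting $G\times S\cong\coprod_{S}G$ together with the bookkeeping of the residual right-multiplication action — but it is organized differently. The paper first defines the localization intrinsically, as the Bousfield localization at the set of maps $y(G\times S)_{hG}\to y(S)$; the characterization of the local objects is then immediate from Yoneda, and the real work is in showing that the localized category agrees with $\fun(BG,\mathcal{P}_{\Sigma}(\burn_{(1)}))$, which is done by exhibiting the $y(S)$ for $S$ finite free as compact projective generators on both sides and comparing mapping spaces between them (ultimately reducing to the case $S=\ast$, $T=G$, where both sides are the free $\mathbb{E}_\infty$-space on a generator). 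You instead construct the left adjoint $L$ directly from the universal property of $\mathcal{P}_{\Sigma}(\burng)$, compute its right adjoint on orbits via induction and Frobenius reciprocity to get $R(M)(G/H)\simeq M^{hH}$, and verify full faithfulness and the image characterization by hand via Shapiro's lemma. Your route avoids the intermediate localized category and the generator comparison, at the cost of checking the unit and counit directly; the paper's route makes the Bousfield-localization structure (and its compatibility with the tensor product) visible from the outset and isolates the only nonformal input in a single mapping-space comparison. Two points you should make rigorous in a full write-up: first, the coherent construction of $\widetilde{L}$ on spans (transfer followed by pushforward) is most cleanly obtained, as in the paper, by composing the forgetful functor $\burng\to\fun(BG,\burn_{(1)})$ of span categories with the Yoneda embedding into $\fun(BG,\mathcal{P}_{\Sigma}(\burn_{(1)}))$, rather than by defining it morphism-by-morphism; second, the identification $LR(M)\simeq R(M)(G)$ underlying your counit computation requires observing that $L$ agrees with evaluation at the $G$-set $G$ (with its residual action), which holds because both functors are cocontinuous and agree on representables.
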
 
\begin{proof} 
Let $y \colon \burng \to 
\mathcal{P}_{\Sigma}(\burng)$ be the Yoneda embedding, and consider the
Bousfield localization functor $L_{\mathcal{I}}$ on $\mathcal{P}_{\Sigma}(\burng)$ with respect 
to the maps 
$\mathcal{I} = \{ y(G \times S)_{hG} \to y(S) \}$,
 for each finite $G$-set $S$. Here we use the map of $G$-sets $G \times S \to
 S$ given by projection onto the second factor, and the $G$-action on the source
 (in the category of $G$-sets) by right multiplication on the first
 factor. 

Since $y$ is symmetric monoidal and the tensor product on 
$\mathcal{P}_{\Sigma}(\burng)$ commutes with colimits in each variable, the class $\mathcal{I}$  is preserved by tensoring
with objects in the image of $y$, and we see that this Bousfield localization
$L_{\mathcal{I}}$
respects the symmetric monoidal structure. 
Unwinding the definitions, we see that the image of $L_{\mathcal{I}}$ is precisely those product-preserving
presheaves $\mathcal{F}$ as in the statement
because $\mathrm{Hom}_{\mathcal{P}_{\Sigma}(\burng)}(y(G\times S)_{hG},\mathcal{F})=
 \mathcal{F}(G\times S)^{hG}$.
 In particular, for any finite
$G$-set $S$ which is $G$-free, $y(S)$ is $\mathcal{I}$-local, as one sees by
unwinding the definition of mapping anima in $\burng$. 

We claim that the $\left\{y(S)\right\}$ for $S$   finite and $G$-free
form a set of compact projective generators for 
$L_{\mathcal{I}} \mathcal{P}_{\Sigma}( \burng)$. Compactness and projectivity follow because
for a finite free $G$-set $S$, 
the functor
$\mathcal{F} \mapsto \mathcal{F}(S)$ (with values in $\mathcal{S}$) commutes with sifted colimits 
on 
$\mathcal{P}_{\Sigma}( \burng)$
and 
carries the maps in 
$\mathcal{I}$ to equivalences. Moreover,  the $y(T)$ for $T \in
\burng$ can be expressed up to $\mathcal{I}$-equivalence as colimits of the
$y(S)$ for $S$ finite $G$-free by 
construction; therefore, the $\left\{y(S)\right\}$ generate. This verifies the claim about 
$L_{\mathcal{I}} \mathcal{P}_{\Sigma}( \burng)$. 

The symmetric monoidal functor 
$\burng \to \fun(BG, \burn_{(1)})$  which remembers an underlying set, or
correspondence, with $G$-action
extends to a cocontinuous symmetric monoidal functor
$\mathcal{P}_{\Sigma}(\burng) \to \fun(BG,
\mathcal{P}_{\Sigma}( \burn_{(1)}))$. 
Evidently, this functor carries the class of maps $\mathcal{I}$ to equivalences,
and factors symmetric monoidally through the 
Bousfield localization $L_{\mathcal{I}}$. It remains to show that the induced
functor 
$L_{\mathcal{I}} \mathcal{P}_{\Sigma}( \burng) \to 
\fun(BG,
\mathcal{P}_{\Sigma}( \burn_{(1)}))$ is an equivalence. The 
compact projective generators  on both sides are given by  $y(S)$ for $S$ a finite free
$G$-set, so it suffices to compare maps between them. Equivalently, it suffices to show that the map 
$\hom_{\burng}( S, T) \to 
\hom_{\fun(BG, \burn_{(1)})}( S,T)$ is an equivalence for $S, T$
finite free $G$-sets (in fact, it suffices for the $G$-action on one of them to
be free). 
By decomposing $S$ and $T$ and using duality, it suffices to prove that this map
is an equivalence when $S = \ast$ and $T = G$; then one checks directly that
both sides are the free $\mathbb{E}_\infty$-space on a generator and the map is
an equivalence. 
\end{proof}

\begin{construction}[Borel-equivariant objects]\label{construct:borel}
Let $\mathcal{C}$ be a presentably symmetric monoidal, preadditive $\infty$-category.
Tensoring the Bousfield localization of \Cref{borelequivspaces} with
$\mathcal{C}$, 
we obtain a symmetric monoidal Bousfield localization functor
$$\msk_{G}(\mathcal{C}) \to \fun(BG, \mathcal{C}),$$ 
with a fully faithful lax symmetric monoidal right adjoint functor 
called ``Borelification'', 
\[ (-)^{\bor}  \colon \fun(BG, \mathcal{C}) \to \msk_G(\mathcal{C}). \]
The essential image of $(-)^{\bor}$ (called \emph{Borel-equivariant} objects) is given by those product-preserving
presheaves $\mathcal{F} \colon (\burng)^{op} \to \mathcal{C}$ such that for any finite $G$-set
$S$, we have $\mathcal{F}(S) \xrightarrow{\sim} \mathcal{F}(G \times S)^{hG}$. 
In other words, $\mathcal{F}$ is Borel-equivariant if and only if the restriction of
$\mathcal{F}$ to $\sOG^{op}$ is right Kan
extended from the full subcategory spanned by the $G$-set $G$. 
\label{borelequivobjects}
\end{construction}

We will be interested in the above construction when $\mathcal{C} = \catst$.  
For this, recall 
that 
$\catst$ is preadditive
(cf.~\cite[Prop.~4.7]{Bar16} for this result in the
closely related context of
Waldhausen $\infty$-categories). Moreover, $\catst$ is presentable \cite[Cor.~4.25]{BGT13}, and symmetric
monoidal under the Lurie tensor product \cite[Th.~3.1]{BGT13}.
For an idempotent-complete stable $\infty$-category $\mathcal{A}$
with $G$-action, we
obtain
a $\catst$-valued Mackey functor $M_{\mathcal{A}}$ such that 
$M_{\mathcal{A}}(G/H)  = \mathcal{A}^{hH}$. 
For a map of finite $G$-sets $f \colon G/H \to G/K$, then $f^*$  is the
natural pullback map $\mathcal{A}^{hK} \to \mathcal{A}^{hH}$. 
We will need to know that in this case, $f_*$ can also be described explicitly. 

\begin{proposition}\label{prop:push_forward_borel}
Let $M \in \msk_G( \catst)$ be Borel-equivariant. 
Then for any map $f \colon S \to T$  of finite $G$-sets, the 
functor $f_*: M(T) \to M(S)$ (of \Cref{relationtoorbit}) is both left and right adjoint to $f^*: M(S) \to
M(T)$. 
\end{proposition} 
\begin{proof} 
We will verify this by invoking from \cite{Bar17} a construction of a
$\catst$-valued Mackey functor
which does have the desired adjointness property, which is Borel, and 
whose underlying object of $\fun(BG, \catst)$ agrees with that of $M$. 

Let $\mathcal{A} = M(G/\left\{e\right\}) \in \fun(BG, \catst)$. 
Note first that for any map of finite sets $f \colon S_0 \to T_0$, the 
pullback functor
$f^*  \colon \fun(T_0, \mathcal{A}) \to \fun(S_0, \mathcal{A})$ admits a right (and
left) adjoint $f_* \colon \fun(S_0, \mathcal{A}) \to \fun(T_0, \mathcal{A})$ given by
summing over the fibers. 
Moreover, one has the base-change property: 
given a pullback square of finite sets 
\[ \xymatrix{
U_0 \ar[d]  \ar[r] & V_0 \ar[d]  \\
S_0 \ar[r] &  T_0,
}\]
the induced square in $\catst$ obtained by applying 
pullback everywhere is left and right adjointable \cite[Def.~7.3.1.2]{Lur09}. 

Now for every $G$-set $S$, we consider $\fun_G( S, \mathcal{A}) \in \catst$;
this is also $\fun( S , \mathcal{A})^{hG}$ for the diagonal $G$-action (with
$G$ acting on both $S$ and $\mathcal{A}$). 
Given a map of $G$-sets $f \colon S \to T$, we have a pullback functor $f^* \colon \fun_G(T,
\mathcal{A}) \to \fun_G(S, \mathcal{A})$; we obtain a $\catst$-valued presheaf
on the category of finite $G$-sets. 
We claim that for any map $f \colon S \to T$, the functor $f^* \colon  \fun_G(T, \mathcal{A}) \to \fun_G(S,
\mathcal{A})$ 
admits  an adjoint (in either direction), and furthermore that 
for any pullback square of finite $G$-sets
\[ \xymatrix{
U \ar[d]  \ar[r] & V \ar[d]  \\
S \ar[r] &  T,
}\]
the induced square in $\catst$ obtained by pullback functoriality 
is adjointable (in either direction). This follows from the 
above verification in the case of 
finite sets, and then taking homotopy fixed points in view of 
\cite[Cor.~4.7.4.18]{Lur17}. 
Indeed, the result of \emph{loc.~cit.} shows that given a square in $\fun(BG,
\mathrm{Cat}_\infty)$ which is left (or right) adjointable, the square in
$\mathrm{Cat}_\infty$ obtained by taking $G$-homotopy fixed points remains left
(or right) adjointable. 

We thus have a $\catst$-valued presheaf on the category of finite $G$-sets, $S
\mapsto \fun_G(S, \mathcal{A})$, and we have verified the
adjointability conditions needed to
apply 
the unfurling construction of \cite[Sec.~11]{Bar17}, which produces a
$\catst$-valued Mackey functor $M'$
extending the above presheaf whose restriction to $\mathscr{O}(G)$ is given by the
adjoints $f_*$. In particular, $M'$ satisfies the condition of the
proposition. 
The Mackey functor $M'$ is Borel-complete (since this
condition only depends on the restriction to the category of finite $G$-sets) and must therefore agree with
$M$, since the restrictions of $M, M'$ in $\fun(BG, \catst)$ agree; it follows
now that $M$ has the desired property in the proposition. 
\end{proof}

Now we describe the fundamental construction for our purposes, the equivariant
algebraic $K$-theory of group actions, in the form constructed by
Barwick--Glasman--Shah, cf.~\cite[Sec.~8]{BGS20}.

\begin{construction}[{Equivariant $K$-theory of group actions
\cite[Sec.~8]{BGS20}}]
It follows from \Cref{construct:borel} that we have a lax symmetric monoidal functor
of ``Borelification''
\[ 
  (-)^{\mathrm{Bor} } \colon 
\fun(BG, \catst) \to \msk_G(\catst),  \]
and composing it with the lax symmetric monoidal algebraic $K$-theory functor as in \Cref{rem:functoriality},
we obtain a lax
symmetric monoidal functor 
\begin{equation} K_G \colon \fun(BG, \catst) \to \mack_G( \sp), \end{equation}
such that $K_G(\mathcal{A})^H=K(\mathcal{A}^{hH})$ whenever
$\mathcal{A}\in \fun(BG, \catst)$ and $H\subseteq G$.
\end{construction} 

\begin{example}[Equivariant algebraic $K$-theory of $\mathbb{E}_\infty$-rings]
Let $R$ be an $\mathbb{E}_\infty$-ring with $G$-action. 
Then we write $K_G(R)$ for $K_G( \perf(R))$. 
\end{example}

We will actually need a slight generalization of the above, in order to handle
invariants other than algebraic $K$-theory. 
Given a base $\mathbb{E}_\infty$-ring $R$, we consider the presentably symmetric
monoidal $\infty$-category $\mot_R$ of
$R$-linear noncommutative motives, cf.~\cite{BGT13, HSS17}. 
We have a symmetric monoidal  functor 
$\mathcal{U} \colon \cats{R} \to \mot_R$ which is an additive
invariant, i.e., it preserves filtered colimits and 
carries semiorthogonal decompositions 
to direct sums in $\mot_R$; 
moreover, 
$\mathcal{U}$ is initial for these data and conditions. 

\begin{construction}[$\mot_R$-valued Mackey functors] 
\label{motRmackey}
Composing the functor $(-)^{\mathrm{Bor}}$ with 
$\mathcal{U}$, we obtain a lax symmetric monoidal functor
\[ \mathcal{U}_G \colon \fun(BG, \cats{R}) \to \mack_G(  \mot_R) \simeq \mack_G(\sp)
\otimes \mot_R, \]
i.e., $\mathcal{U}_G$ takes values in Mackey functors in
$R$-linear noncommutative motives. (Here we use
\Cref{technicalpt}, since $\mathcal{U}$ does not preserve all colimits.) 
Since for any $\mathcal{A} \in \cats{R}$, 
the algebraic $K$-theory $K(\mathcal{A})$ can be recovered as $\hom_{\mot_R}(
\mathbf{1}, \mathcal{U}(A) )$ by \cite{BGT13} and \cite{HSS17}, it follows that 
the equivariant algebraic $K$-theory functor $K_G$
is the composition of $\mathcal{U}_G$ and the functor
$\mathrm{id} \otimes \hom_{\mot_R}( \mathbf{1}, -) \colon \mack_G( \sp) \otimes \mot_R
\to \mack_G(\sp)$. 
\end{construction}

Finally, in order to treat assembly-type maps for group rings, we will need to discuss the coBorel variant of the above. 

\begin{construction}[coBorel Mackey functors] 
Let $M$ be a $\mathcal{C}$-valued Mackey functor, for $\mathcal{C}$ a
presentable preadditive $\infty$-category. 
Note that $\msk_G(\mathcal{C}) = \mathcal{P}_{\Sigma}(\burng) \otimes
\mathcal{C}$ is naturally tensored over 
$\mathcal{P}_{\Sigma}(\burng)$. Let $y: \burng \to \mathcal{P}_\Sigma(\burng)$
denote the Yoneda embedding. 

We will say that $M$ is \emph{coBorel} if the natural map 
$(M \otimes y(G))_{hG} \to M$ is an equivalence in $\msk_G(\mathcal{C})$. 
Any $M \in \msk_G(\mathcal{C})$ admits its {\em coBorelification} $M_{\cobor} = (M
\otimes y(G))_{hG}$, which is the universal coBorel Mackey functor mapping to
$M$; this follows because the object $y(G)_{hG}$ in
$\mathcal{P}_{\Sigma}(\burng)$ is an idempotent object for the tensor
structure.\footnote{In fact, there is a natural symmetric monoidal functor from the
$\infty$-category of $G$-anima to $\mathcal{P}_\Sigma( \burng)$ which is the
identity on $G$-sets; then $y(G)_{hG}$ is the image of the $G$-space $EG$.}
The coBorelification only depends on the underlying object of $\fun(BG,
\mathcal{C})$ (since $M \otimes y(G)$ does), so we can also consider this as a functor
$$(-)_{\cobor}  \colon \fun(BG, \mathcal{C}) \to \msk_{G}(\mathcal{C}).$$ 
Dually as in \Cref{borelequivobjects}, 
$(-)_{\cobor}$ is fully faithful; the essential image consists of those
$M \in \msk_G(\mathcal{C})$ such that 
the dual comparison maps $\left( M^{\left\{1\right\}}\right)_{hH} \to M^H$ are equivalences
for all $H \subseteq G$. 
By the universal property, we obtain a natural map $(-)_{\cobor} \to (-)^{\bor}$
which is an equivalence after forgetting to
$\fun(BG, \mathcal{C})$. 
The functor $(-)_{\cobor}
\colon \fun(BG, \mathcal{C}) \to \msk_{G}(\mathcal{C})$ is the left adjoint to
the localization functor $\msk_G(\mathcal{C}) \to \fun(BG, \mathcal{C})$ of \Cref{construct:borel} (whose right adjoint was the
Borelification). 
\end{construction}

We now describe the coBorelification of 
the $\catst$-valued Mackey functors 
constructed above. 
This involves controlling homotopy orbits in $\catst$. 
To begin with, we need some facts about limits and colimits of presentable, stable $\infty$-categories,
cf.~\cite[Sec.~5.5.3]{Lur09}. 
Let $\prl$ denote the $\infty$-category of presentable, stable
$\infty$-categories and left adjoint functors between them. 
Let $\prr$ 
 denote the $\infty$-category of presentable, stable
$\infty$-categories and right adjoint functors between them, so we have an
equivalence in $\prl \simeq (\prr)^{op}$. 
It follows that the underlying $\infty$-category of   a colimit in $\prl$ (of
some diagram $i \mapsto \mathcal{C}_i, i \in I$) can be calculated by 
taking the inverse limit along $I^{op}$ of the right adjoints
\cite[Th.~5.5.3.18]{Lur09}. 
Explictly, via the Grothendieck construction, we can 
express the diagram $I \to \prl$ in terms of a presentable fibration 
$\widetilde{\mathcal{C}} \to I$, which is both a cartesian and a cocartesian
fibration (cf.~\cite[Def.~5.5.3.2]{Lur09}); the 
limit in $\prl$ is given by the 
$\infty$-category of cocartesian sections, whereas the colimit is given by the
$\infty$-category of cartesian sections. 

We can use this to describe colimits in $\catst$.  

\begin{cons}[Colimits in $\catst$] 
\label{relationcatstprl}
Consider the functor $\mathrm{Ind}  \colon \catst \to \prl$ (\cite[Sec.~5.3.5]{Lur09}).
This functor admits a right adjoint 
sending a presentable, stable $\infty$-category to its subcategory of compact
objects; therefore, $\mathrm{Ind}$ 
commutes with all colimits. 
To compute a colimit in $\catst $ of an $I$-indexed diagram, $i \mapsto
\mathcal{A}_i$, we therefore form the $I^{op}$-indexed diagram of
$\mathrm{Ind}(\mathcal{A}_i)$ and the right adjoint functors, and then take the
compact objects in the limit. 
\end{cons}

\begin{example}[Homotopy orbits in $\catst$] 
Let $\mathcal{A} \in \fun(BG, \catst)$. 
We claim that  $\mathcal{A}_{hG}$ is naturally described as the full subcategory of 
compact objects in 
$\mathrm{Ind}(\mathcal{A})^{hG}$. 

To see this, we first describe the homotopy orbits
$\mathrm{Ind}(\mathcal{A})_{hG}$ in $\prl$. 
Form the presentable fibration over $BG$ with fiber
$\mathrm{Ind}(\mathcal{A})$; as above, the cocartesian sections give
$\mathrm{Ind}(\mathcal{A})^{hG}$ (the homotopy limit in $\prl$) while the
cartesian sections give $\mathrm{Ind}(\mathcal{A})_{hG}$. Since $BG$ is an
$\infty$-groupoid, the cartesian and cocartesian sections are the same and we
have a canonical identification $\mathrm{Ind}(\mathcal{A})_{hG} =
\mathrm{Ind}(\mathcal{A})^{hG}$ in $\prl$. The claim about $\mathcal{A}_{hG}$ now follows by passage to
compact objects. 

Equivalently, we find that $\mathcal{A}_{hG} \in \catst$ is the full subcategory
of $\mathcal{A}^{hG}$ generated as a thick subcategory by
the image of the functor $\mathcal{A} \to \mathcal{A}^{hG}$ adjoint to the
forgetful functor, since this image forms a set of compact generators of
$\mathrm{Ind}(\mathcal{A})^{hG}.$
In particular, we have a natural fully faithful embedding $\mathcal{A}_{hG}
\subseteq
\mathcal{A}^{hG}$ (which we verify below to be the norm map); it follows that 
for any diagram $\mathcal{A}_j, j \in J$ in $\fun(BG, \catst)$, the
natural map 
$(\varprojlim_J \mathcal{A}_j)_{hG} \to \varprojlim_J (\mathcal{A}_j)_{hG}$ is
fully faithful, since $(-)^{hG}$ commutes with limits. 
\end{example}

\begin{proposition} 
\label{fullyfaithfulnormmap}
Let $\mathcal{A} \in \fun(BG, \catst)$. 
Then the natural map 
of $\catst$-valued Mackey functors
$$ \mathcal{A}_{\cobor} \to \mathcal{A}^{\bor},$$
is fully faithful on $H$-fixed points for $H \subseteq G$. 
Moreover, $(\mathcal{A}_{\cobor})^H \subseteq (\mathcal{A}^{\bor})^{H} =
\mathcal{A}^{hH}$ is the
thick subcategory generated by the image of the biadjoint $\mathcal{A} \to
\mathcal{A}^{hH}$. 
\end{proposition}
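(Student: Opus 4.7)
The plan is to exploit the characterization of the essential image of $(-)_{\cobor}$ given just above the statement: a semiMackey functor $M \in \msk_G(\catst)$ is coBorel precisely when the canonical comparison maps $(M^{\{1\}})_{hH} \to M^H$ are equivalences for every subgroup $H \subseteq G$. Applied to $M = \mathcal{A}_{\cobor}$, this will identify $(\mathcal{A}_{\cobor})^H$ with the homotopy orbits $\mathcal{A}_{hH}$ in $\catst$ of the restricted $H$-action on $\mathcal{A}$, and the preceding example on homotopy orbits in $\catst$ will then finish the job.

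First, since coBorelification, like Borelification, preserves the underlying object in $\fun(BG, \catst)$, we have
\[ (\mathcal{A}_{\cobor})^{\{1\}} \simeq \mathcal{A} \simeq (\mathcal{A}^{\bor})^{\{1\}} \]
as $G$-equivariant objects of $\catst$. Because $\mathcal{A}_{\cobor}$ is coBorel by construction, the canonical map $(\mathcal{A}_{\cobor})^{\{1\}}_{hH} \to (\mathcal{A}_{\cobor})^H$ is an equivalence, giving $(\mathcal{A}_{\cobor})^H \simeq \mathcal{A}_{hH}$, where the homotopy orbits are computed in $\catst$ relative to the $H$-action obtained by restriction.

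Next, I would analyze the comparison map $(\mathcal{A}_{\cobor})^H \to (\mathcal{A}^{\bor})^H = \mathcal{A}^{hH}$. By naturality of the maps $(-)^{\{1\}}_{hH} \to (-)^H$ in $\msk_G(\catst)$, this map fits into a commutative square identifying it with the corresponding ``norm-type'' map $\mathcal{A}_{hH} \to \mathcal{A}^{hH}$ attached to the Borel Mackey functor $\mathcal{A}^{\bor}$. The latter is induced by the Mackey-functor pushforward $f_*$ along $f \colon G \to G/H$, and \Cref{prop:push_forward_borel} identifies this pushforward on $\mathcal{A}^{\bor}$ with the categorical biadjoint of $f^* \colon \mathcal{A}^{hH} \to \mathcal{A}$, i.e., with the biadjoint $\mathcal{A} \to \mathcal{A}^{hH}$ of the forgetful functor. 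Hence the comparison $(\mathcal{A}_{\cobor})^H \to \mathcal{A}^{hH}$ is precisely the map from $\mathcal{A}_{hH}$ induced by the biadjoint, and by the preceding example this is a fully faithful embedding whose essential image is the thick subcategory of $\mathcal{A}^{hH}$ generated by the image of the biadjoint, as claimed.

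The only step requiring genuine care, rather than a major obstacle, is verifying that the ``canonical'' comparison map $(M^{\{1\}})_{hH} \to M^H$ used in the characterization of coBorel objects really does agree, when $M$ is Borel, with the biadjoint's orbit map. This reduces to unwinding the definition of the Mackey pushforward $f_*$ for $f \colon G \to G/H$ and combining it with \Cref{prop:push_forward_borel} and the explicit description of homotopy orbits in $\catst$ supplied by the preceding example; all three inputs are already in hand.
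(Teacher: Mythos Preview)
Your argument is correct and the logic is sound, but it takes a different route from the paper's. The paper works directly with the defining formula $\mathcal{A}_{\cobor}=(\mathcal{A}^{\bor}\otimes y(G))_{hG}$: it rewrites the $H$-fixed points of the comparison map as the canonical interchange map $((\mathcal{A}\otimes G)^{hH})_{hG}\to((\mathcal{A}\otimes G)_{hG})^{hH}$, and then invokes the final observation of the preceding example, namely that the functor $\mathbf{T}=(-)_{hG}$ carries limits to fully faithful maps, applied to $\mathcal{B}=\mathcal{A}\otimes G$ with its $G\times H$-action. Your approach instead uses the abstract characterization of coBorel objects to identify $(\mathcal{A}_{\cobor})^H\simeq\mathcal{A}_{hH}$, then uses the naturality square together with \Cref{prop:push_forward_borel} to see that the comparison map is the one induced by the biadjoint, and finally matches this with the embedding of the example. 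Your route is a bit more structural and makes nice use of \Cref{prop:push_forward_borel}, whereas the paper's is more direct and avoids the extra identification. The step you flag as requiring care---that the embedding $\mathcal{A}_{hH}\subseteq\mathcal{A}^{hH}$ from the example agrees with the orbit-factorization of the biadjoint---is not difficult but is genuine content: it follows from the universal property of the colimit $\mathcal{A}_{hH}$ once one checks that the structure map $\mathcal{A}\to\mathcal{A}_{hH}$, composed with the example's embedding, is the biadjoint (which comes out of the $\mathrm{Ind}$ description, since the colimit structure map in $\prl$ is left adjoint to the forgetful functor). This is precisely what the parenthetical ``which we verify below to be the norm map'' in the preceding example is referring to.
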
 
\begin{proof} 

Let $H \subseteq G$. Then we claim that the natural map (i.e., the norm map)
in $\catst$,
\begin{equation} \label{compmapcats}  \mathcal{A}_{hH} =((\mathcal{A} \otimes G)^{hH})_{hG}  \to \mathcal{A}^{hH} 
= ((\mathcal{A} \otimes G)_{hG})^{hH}
\end{equation}
is fully faithful; this map is the $H$-fixed points of $\mathcal{A}_{\cobor} \to
\mathcal{A}^{\bor}$. 
But this follows from the observation in the previous example: 
we saw that if $\mathbf{T} \colon \fun(BG, \catst) \to \catst$ is the functor 
$\mathcal{B}
\mapsto \mathcal{B}_{hG}$, then if  
$\mathcal{B}$ has a $G \times H$-action, then 
$\mathbf{T}( \mathcal{B}^{hH}) \to \mathbf{T}(\mathcal{B})^{hH}$ is fully
faithful. 
It follows that $\mathcal{A}_{\cobor} \to \mathcal{A}^{\bor}$ is fully faithful
on each fixed points. To see that its essential image is  the subcategory as
claimed, we observe that $\mathcal{A} \to \mathcal{A}_{hH}$ has image
generating the target as a thick subcategory. 
\end{proof} 

\begin{example}[Assembly maps] \label{assembly maps}
Let $G$ act trivially on $\perf(R)$. 
Then we find that for each subgroup $H \subseteq G$, one has $\perf(R)_{hH} \simeq \perf(
R[H]) \subseteq \fun(BH, \perf(R))$ is the collection of compact objects in 
$\fun(BH, \mathrm{Mod}(R))$. In particular, the $\catst$-valued Mackey functor 
$(\perf(R))_{\cobor}$ is precisely the one that leads to the theory of assembly
maps, cf.~\cite{RV18}. 
\end{example}

\begin{construction}[Equivariant algebraic $K$-theory, coBorel version] 
\label{equivalgKcobor}
Combining the above, we obtain a functor
$$ \mathcal{U}_{G, \cobor}  \colon \fun(BG, \cats{R})  \xrightarrow{(-)_\cobor} \msk_G(\cats{R}) \to 
\mack_G( \mot_R),$$
which is the coBorel version 
of \Cref{motRmackey}. 
If $\mathcal{A}$ is an algebra object of $\fun(BG, \cats{R})$, then 
$\mathcal{A}_{\cobor} = \mathcal{A}^{\bor} \otimes y(G)_{hG}$ is a module over $\mathcal{A}^{\bor}$ in
$\msk_G(\cats{R})$. 
Therefore, $\mathcal{U}_{G, \cobor}(\mathcal{A})$ is a module over
$\mathcal{U}_G(\mathcal{A})$ (which is an algebra object of $\mack_G(\mot_R)$). 
\end{construction}

\section{Review of nilpotence}

To prove our descent theorems, it will be convenient to use the language of
nilpotence, as in \cite{MNN17, MNN19}. 
For the material in section 5 and further, we also need the variant of $\epsilon$-nilpotence, as used in \cite{CMNN}. 

\begin{definition}[Nilpotence]
Given a finite group $G$ and a family $\sF$ of subgroups, 
a $G$-spectrum $X$ is said to be \emph{$\sF$-nilpotent}
\cite[Def.~6.36]{MNN17} if it 
belongs to the thick subcategory (or equivalently the thick
$\otimes$-ideal) generated by $G$-spectra which are induced from
subgroups in $\sF$. 
We say that a $G$-spectrum $X$ is \emph{$(\sF, \epsilon)$-nilpotent} if 
there exists a finite set of prime numbers $\Sigma$ such that for every finite
spectrum $F$ whose localizations at primes in $\Sigma$ are nontrivial, 
then $X$ belongs to the thick $\otimes$-ideal of $G$-spectra 
generated by $F$ and the $\sF$-nilpotent $G$-spectra. 
(This somewhat involved definition 
 in particular implies that every passage to $T(n)$-local coefficients makes $X$
 $\mathcal{F}$-nilpotent, and this is an if and only if 
for the endomorphism $G$-ring spectrum of $X$.  
 Compare~\cite[Sec.~2.3]{CMNN} with $A = \prod_{H \in \sF} F(G/H_+, \mathbb{S})$.) 
\end{definition}

\begin{definition}[{$\sF$-completeness, cf.~\cite[Sec.~6.1]{MNN17}}] 
Given a finite group $G$ and a family $\sF$ of subgroups, 
let $E \sF$ be the classifying space of the family $\sF$ as reviewed in
\cite[Cons.~6.3]{MNN17}. 
We say that $X \in \GSpec$ is \emph{$\sF$-complete}
if the map $X \to F( E\sF_+, X)$ is an equivalence, or equivalently if $X$ is
complete with respect to the algebra object $A = \prod_{H \in \sF} F(G/H_+,
\mathbb{S})$. This in particular implies that $X^G \xrightarrow{\sim}
\varprojlim_{G/H \in \sOFG^{op}} X^H$. 
\end{definition} 

\begin{proposition} 
Given an $(\sF, \epsilon)$-nilpotent $G$-spectrum $X$, 
the natural comparison maps \begin{equation} 
\label{FcompmapX}
\varinjlim_{G/H \in \sOFG} X^H \to X^G
\to \varprojlim_{G/H \in \sOFG^{op}} X^H,
\end{equation} 
become equivalences after applying $L_{T(n)}$ for any height $n$ and implicit
prime $p$; moreover, the functor $L_{T(n)}$ can be applied either inside or outside the 
homotopy limit on the right of \eqref{FcompmapX},
i.e., the map  
\begin{equation} 
L_{T(n)}\left( \varprojlim_{G/H \in \sOFG^{op}} X^H\right) \to \varprojlim_{G/H \in
\sOFG^{op}} L_{T(n)}X^H
\label{compmap2X}
\end{equation} 
is an equivalence. 
\label{Fepsilonimpliescompmap}
\end{proposition} 
\begin{proof} 
Fix $n$ and the implicit prime $p$. 
Given an $\sF$-nilpotent $G$-spectrum $X$, the maps of \eqref{FcompmapX} are
equivalences, cf.~\cite[Prop.~2.8]{MNN19} (in particular, $X$ is
$\sF$-complete). 
If $X$ is 
$\sF$-nilpotent then the $T(n)$-localization of $X$ remains $\sF$-nilpotent by
a thick subcategory argument, whence \eqref{compmap2X} is also an equivalence. 
Now the collection 
of $G$-spectra for which \eqref{FcompmapX} and \eqref{compmap2X} 
are equivalences is a thick subcategory which contains the $\sF$-nilpotent
$G$-spectra and the $G$-spectra of the form $F \otimes Y$ for $Y \in \GSpec$ and
$F$ a finite torsion spectrum of type (at $p$) $\geq n+1$; this collection therefore
contains the $(\sF, \epsilon)$-nilpotent $G$-spectra. 
\end{proof} 

We now discuss some criteria for nilpotence, starting with the case of the
family $\triv$ consisting only of the trivial subgroup. 
Let $EG$ denote the universal free $G$-space (or equivalently the classifying
space of the family consisting of the trivial subgroup). 
Let $\widetilde{EG}$ denote the
cofiber of $EG_+ \to \mathbb{S}$ in $\GSpec$; it is naturally an algebra object
in $\GSpec$, as the smashing localization of $\mathbb{S}$ in $\GSpec$ away from the
localizing $\otimes$-ideal generated by the free $G$-spectra,
cf.~\cite[Prop.~6.5]{MNN17}. 
In the following, let $R$ be an associative algebra in $\GSpec$. 
Then we consider the associative algebra $ (R \otimes \widetilde{EG})^G$ in $\sp$. 
Since $EG = (G)_{hG}$ (in the $\infty$-category of $G$-anima), we have a
cofiber sequence
\[ R_{hG} \to R^G \to (R \otimes \widetilde{EG})^G,  \]
where 
$R_{hG} = R^{\left\{1\right\}}_{hG}$ and 
$R_{hG} \to R^G$ is the transfer for the $G$-spectrum
$R$. 

\begin{proposition}[Criteria for $\triv$-nilpotence] 
\label{crittrivnilp}
An associative algebra $R$ in $\sp_G$ is $\triv$-nilpotent (for $\triv = \left\{(1)\right\}$) if and only if
$(R  \otimes \widetilde{EG})^G$ is
contractible. 
\end{proposition}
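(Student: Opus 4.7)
The plan is to exploit that $\widetilde{EG}$ is the unit of the smashing Bousfield localization of $\sp_G$ away from the free $G$-spectra (i.e., away from the localizing tensor-ideal generated by $G_+$), so that the property $X \otimes \widetilde{EG} \simeq 0$ cuts out precisely this ideal.

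For the forward implication, I would verify that the collection $\{X \in \sp_G : X \otimes \widetilde{EG} \simeq 0\}$ is a thick tensor-ideal. It contains $G_+$: by the projection formula, $G_+ \otimes EG_+ \simeq \mathrm{Ind}_e^G(\mathrm{Res}_e^G EG_+) \simeq \mathrm{Ind}_e^G(\mathbb{S}) = G_+$ since $EG$ is non-equivariantly contractible, so the cofiber $G_+ \otimes \widetilde{EG}$ vanishes. As $R$ is $\triv$-nilpotent by hypothesis, it belongs to this ideal, yielding $R \otimes \widetilde{EG} \simeq 0$ in $\sp_G$ and therefore $(R \otimes \widetilde{EG})^G \simeq 0$.

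For the converse, suppose $(R \otimes \widetilde{EG})^G$ is contractible. I would first show that $R \otimes \widetilde{EG} \simeq 0$ already holds in $\sp_G$. Write $A := R \otimes \widetilde{EG}$, which is an associative algebra in $\sp_G$. Its unit $u \colon \mathbb{S} \to A$ represents a class in $\pi_0^G(A) = \pi_0(A^G)$ equal to the ring unit, which is zero by assumption. Hence $u$ is nullhomotopic in $\sp_G$, and since $\mathrm{id}_A$ factors as $A \simeq \mathbb{S} \otimes A \xrightarrow{u \otimes \mathrm{id}} A \otimes A \xrightarrow{\mu} A$, we get $A \simeq 0$ in $\sp_G$.

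To finish, I would upgrade $R \otimes \widetilde{EG} \simeq 0$ to $\triv$-nilpotence of $R$. Tensoring the cofiber sequence $EG_+ \to \mathbb{S} \to \widetilde{EG}$ with $R$ gives $R \simeq R \otimes EG_+$. Realizing $EG$ as a filtered colimit of its free $G$-CW skeleta $EG^{(n)}$, each $R \otimes EG^{(n)}_+$ lies in the thick $\otimes$-ideal of $\sp_G$ generated by $G_+$, since $EG^{(n)}_+$ is built from finitely many free cells. Because $R$ is compact as a module over itself, $\mathrm{id}_R$ factors through some $R \otimes EG^{(n)}_+$, exhibiting $R$ as a retract and placing $R$ in this thick ideal; i.e., $R$ is $\triv$-nilpotent. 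The main obstacle is precisely this last upgrade from the localizing condition ($R \otimes \widetilde{EG} \simeq 0$) to the thick condition ($\triv$-nilpotence), which is where the algebra structure on $R$ is essential via the compactness of $R$ as an $R$-module; compare \cite[Prop.~6.4]{MNN17}.
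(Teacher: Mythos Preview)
Your proof is correct and follows essentially the same route as the paper. The paper's proof is a one-sentence appeal to \cite[Th.~4.19]{MNN17}, and your argument is precisely an unpacking of what that citation provides: the forward direction is the easy inclusion of thick into localizing, and for the converse you first use the algebra structure to get $R \otimes \widetilde{EG} \simeq 0$ from the vanishing of its $G$-fixed points, then use compactness of $R$ over itself to pass from the localizing $\otimes$-ideal generated by $G_+$ back to the thick one.
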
 

\begin{proof} 
This follows from \cite[Th.~4.19]{MNN17}, since $R \otimes \widetilde{EG}$ is the localization of
$R$ (in $\GSpec$) away from the localizing $\otimes$-ideal generated by the
free $G$-spectra. 
\end{proof} 

\begin{proposition}[Criterion for $(\triv, \epsilon)$-nilpotence] 
Let $R$ be an associative algebra in $\sp_G$. Suppose that 
$(R \otimes\widetilde{EG})^G $ has trivial $T(n)$-localization for
$n \geq 1$ and all primes $p$  and trivial rationalization. Then $R$ is $(\triv,
\epsilon)$-nilpotent. 
\label{crittrivnilpepsilon}
\end{proposition}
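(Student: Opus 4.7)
The plan is to reduce to the strict analog \Cref{crittrivnilp} via the characterization of $(\triv,\epsilon)$-nilpotence cited in the definition above, namely \cite[Prop.~2.19]{CMNN}. This criterion asserts that for an associative algebra $R$ in $\sp_G$, $(\triv,\epsilon)$-nilpotence is equivalent to the $\triv$-nilpotence of $L_{T(n)}R$ for every $n \geq 1$ and every prime $p$, together with the $\triv$-nilpotence of the rationalization $R \otimes \mathbb{Q}$. Combining this criterion with \Cref{crittrivnilp} applied to each localized algebra, the proposition is reduced to verifying
\[ (L_{T(n)}R\otimes\widetilde{EG})^G \simeq 0 \quad (n \geq 1,\ \text{all primes } p), \qquad (R \otimes \mathbb{Q} \otimes\widetilde{EG})^G \simeq 0. \]

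To deduce these vanishings from the hypothesis on $X := (R \otimes \widetilde{EG})^G$, the key input is a projection formula: for a non-equivariant spectrum $E$ viewed as a trivial $G$-spectrum and any $G$-spectrum $Y$, the natural map $E \otimes Y^G \to (E \otimes Y)^G$ is an equivalence. For dualizable $E$ this is a standard consequence of duality, and the general case follows by writing $E$ as a filtered colimit of finite spectra and using that the genuine fixed-points functor $(-)^G$ preserves filtered colimits when $G$ is finite. Taking $E = L_{T(n)}\mathbb{S}$ or $E = \mathbb{S}\otimes\mathbb{Q}$ (both smashing localizations) and $Y = R \otimes \widetilde{EG}$ then yields
\[ (L_{T(n)}R\otimes\widetilde{EG})^G \simeq L_{T(n)}X, \qquad (R \otimes \mathbb{Q} \otimes \widetilde{EG})^G \simeq X \otimes \mathbb{Q}, \]
both of which vanish by assumption.

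The main obstacle I anticipate is the invocation of \cite[Prop.~2.19]{CMNN} in the precise form matching the stated hypotheses. The definition of $\epsilon$-nilpotence involves a finite exceptional set of primes $\Sigma$, and one must verify that the characterization of $(\triv,\epsilon)$-nilpotence for ring $G$-spectra really amounts to telescopic vanishing at heights $n \geq 1$ at all primes together with rational vanishing, rather than requiring information about $L_{T(0)}$ at each individual prime. Once that bookkeeping is settled, the proof is a formal combination of \Cref{crittrivnilp} and the projection formula.
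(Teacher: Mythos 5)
Your strategy---reduce to the strict criterion of \Cref{crittrivnilp} applied to localizations of $R$, height by height---is genuinely different from the paper's, and it has a gap at its central step. The claim that $L_{T(n)}$ is a smashing localization is false for $n\geq 1$: in general $L_{T(n)}Y\not\simeq L_{T(n)}\mathbb{S}\otimes Y$ (only the finite localizations $L_n^{p,f}=L_{T(0)\oplus\cdots\oplus T(n)}$ are smashing). So while your projection formula is correct (dualizable case plus the fact that $(-)^G$ preserves filtered colimits), with $E=L_{T(n)}\mathbb{S}$ it computes $\bigl(L_{T(n)}\mathbb{S}\otimes R\otimes\widetilde{EG}\bigr)^G\simeq L_{T(n)}\mathbb{S}\otimes X$, which is neither $L_{T(n)}X$ nor $\bigl(L_{T(n)}R\otimes\widetilde{EG}\bigr)^G$; and $L_{T(n)}\mathbb{S}\otimes X$ need not vanish when $X$ is merely $T(n)$-acyclic. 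Only the rational identity in your display is justified this way. There is a second, related problem with the target of the reduction: requiring the genuine localization $L_{T(n)}R\in\sp_G$ to be $\triv$-nilpotent on the nose is a delicate condition for a $T(n)$-local $G$-spectrum (compare \Cref{rem:k1_local_g_spectra}, where the conditions that behave well are $\sF$-completeness, or $\sF$-nilpotence of $M\otimes F$ for $F$ finite of type $n$), and verifying it would require Kuhn's Tate-vanishing theorem, not just a projection formula. The version of your idea that does interact correctly with the projection formula replaces $L_{T(n)}R$ by $R\otimes T(n)$ (a ring, and a filtered colimit of finite spectra), giving honestly $(R\otimes T(n)\otimes\widetilde{EG})^G\simeq T(n)\otimes X=0$; but one must then still supply the arithmetic fracture argument assembling these height-by-height conclusions into the finite set $\Sigma$ demanded by the definition of $(\triv,\epsilon)$-nilpotence, which is the real content of the proposition and is exactly the bookkeeping you flagged but did not carry out.

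The paper's proof verifies the definition directly and avoids any commutation of $L_{T(n)}$ with fixed points. It applies \cite[Prop.~2.7]{CMNN} to the single non-equivariant ring spectrum $X=(R\otimes\widetilde{EG})^G$ to produce a finite set of primes $\Sigma$ with $X$ in the thick $\otimes$-ideal generated by any finite $F$ having nontrivial localizations at $\Sigma$; transports this to $\sp_G$ via the adjunction between inflation and genuine fixed points (the counit makes $R\otimes\widetilde{EG}$ a module over the inflation of $X$, hence a retract of their tensor product); uses the cofiber sequence $R\otimes EG_+\to R\to R\otimes\widetilde{EG}$ to place $R$ in the localizing $\otimes$-ideal generated by $F$ and $G_+$; and finally invokes \cite[Th.~4.19]{MNN17} to pass from the localizing to the thick $\otimes$-ideal. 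You should either adopt that route or repair yours along the lines indicated above.
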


\begin{proof} 
Our assumptions imply that 
there exists a finite set of prime numbers $\Sigma$ such that for every finite
complex $F$ with nontrivial localizations at primes in $\Sigma$, 
the associative algebra spectrum 
$(R \otimes \widetilde{EG})^G$ belongs to the thick $\otimes$-ideal generated by
$F$, cf.~\cite[Prop.~2.7]{CMNN}. Thus, 
the $G$-spectrum 
$R \otimes \widetilde{EG}$
belongs to the thick $\otimes$-ideal generated by $F$; here we use the natural
adjunction $(i_*, (-)^G): \sp \rightleftarrows \GSpec$. 
Therefore, $R$ belongs to the localizing $\otimes$-ideal generated by $F$ and by
the $G$-spectrum $G_+$ (using the fiber sequence $R \otimes EG_+ \to R \to R
\otimes \widetilde{EG}$), and hence it belongs to the similarly
generated thick $\otimes$-ideal by 
\cite[Th.~4.19]{MNN17} (which we apply to $\mathcal{C} = \GSpec$ and the dualizable associative algebra object $F \otimes \mathbb{D} F \times \mathbb{D}
G_+ \in \GSpec$, for $\mathbb{D}$ the categorical dual) 
again.
The result follows. 
\end{proof}

We next include three general results about $\sF$-nilpotence for an arbitrary
family. 
The first result states that when $R$ is rational (i.e, $T(0)$-local), $\sF$-nilpotence is a purely
algebraic condition on $\pi_0$; the second (which will only be used with $\sF
= \triv$) gives a generalization of this to
$T(n)$-local objects.  
The third result allows us to transfer rational $\sF$-nilpotence to $(\sF,
\epsilon)$-nilpotence in the presence of an $\mathbb{E}_\infty$-structure, using
the May nilpotence
conjecture \cite{MNN_nilpotence}.
For this, we let $E\sF$ denote the universal $G$-space
for the family $\sF$ and $\widetilde{E \sF}$ the cofiber of the map
$E\sF_+ \to \mathbb{S}$ in $\GSpec$, so $\widetilde{E \sF}$ is the
localization of $\mathbb{S}$ away from the localizing $\otimes$-ideal generated by the
$\left\{G/H_+, H \in \sF\right\}$.

\begin{proposition}[{\cite[Prop.~4.11]{MNN19}}]\label{prop:rational}
Suppose that the associative algebra $R$ in $\sp_G$ is rational. Then $R$ is $\sF$-nilpotent  if and only if the
induction map 
$\bigoplus_{H \in \sF}\pi_0( R^H) \to \pi_0(R^G)$ is surjective, or equivalently
has image containing the unit. \qed
\end{proposition}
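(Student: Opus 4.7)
The plan is to apply the family analog of \Cref{crittrivnilp}, namely \cite[Th.~4.19]{MNN17}: the algebra $R$ is $\sF$-nilpotent if and only if $R\otimes\widetilde{E\sF}\simeq 0$ in $\GSpec$, equivalently the geometric fixed points $\Phi^K R$ vanish for every subgroup $K\notin\sF$. The equivalence in the proposition then becomes a translation between the vanishing of $R\otimes\widetilde{E\sF}$ and a $\pi_0$-level condition.

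For the forward direction (which does not require rationality), suppose $R$ is $\sF$-nilpotent. Then $R\otimes\widetilde{E\sF}$ lies in the thick $\otimes$-ideal generated by the objects $G/H_+\otimes\widetilde{E\sF}$ for $H\in\sF$, each of which is contractible because $\widetilde{E\sF}^H\simeq 0$. Hence $R\otimes E\sF_+\xrightarrow{\sim} R$, and filtering $E\sF_+$ by its $G$-CW skeleta one extracts that the unit $1\in\pi_0 R^G$ is a finite sum of transfers $\mathrm{tr}_H^G(x_H)$ with $H\in\sF$ and $x_H\in\pi_0 R^H$. By Frobenius reciprocity the total image of these transfers is an ideal of $\pi_0 R^G$, so containing the unit makes it all of $\pi_0 R^G$, yielding the defect-base condition.

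For the reverse direction, rationality enters through the Burnside-ring splitting. The rational Burnside ring $A(G)\otimes\mathbb{Q}$ is a product of copies of $\mathbb{Q}$ indexed by conjugacy classes $(K)$ of subgroups of $G$, with orthogonal primitive idempotents $e_{(K)}$. Since $R$ is rational, these idempotents act on $R$ and yield a splitting $R\simeq\bigoplus_{(K)} e_{(K)}R$, where the $(K)$-summand is characterized by its geometric fixed points being concentrated at conjugates of $K$: $\Phi^L(e_{(K)}R)\simeq\Phi^K R$ if $(L)=(K)$ and vanishes otherwise. The key identification is that the ideal of $\pi_0 R^G$ generated by the transfer images $\mathrm{tr}_H^G(\pi_0 R^H)$ for $H\in\sF$ coincides with the ideal generated by $\{e_{(K)}\}_{(K)\in\sF}$; this uses that $\mathrm{tr}_H^G(1_H)\in A(G)\otimes\mathbb{Q}$ is a rational combination of $e_{(K)}$ for $(K)$ subconjugate to $H$, together with the family property of $\sF$. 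Hence the defect-base hypothesis forces the image of $e_{(K)}$ in $\pi_0 R^G$ to vanish for each $(K)\notin\sF$, so $e_{(K)}R\simeq 0$ and $\Phi^K R\simeq 0$ for such $K$, concluding $\sF$-nilpotence.

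The main obstacle in this plan is the reverse direction: one must justify the idempotent splitting of rational $G$-spectra and identify the transfer-generated ideal with the idempotent-generated one. These rest on the tom Dieck/Greenlees--May decomposition of rational $G$-equivariant stable homotopy theory, which is the nontrivial technical input. In \cite{MNN19} this is packaged via Burnside-ring localization, which provides a clean way to avoid explicit computation with the idempotents themselves; alternatively one can argue directly that, rationally, the localizing and thick $\otimes$-ideals generated by $\{G/H_+\}_{H\in\sF}$ coincide, reducing the question to the Burnside ring as above.
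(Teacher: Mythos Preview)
The paper gives no proof of this statement (it is cited directly from \cite{MNN19} with a \qed), so there is nothing to compare against. However, your argument has a genuine gap, and in fact you have the role of rationality reversed between the two directions.

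The implication ``transfer surjective $\Rightarrow$ $\sF$-nilpotent'' holds for \emph{any} associative algebra $R$ in $\GSpec$, without rationality, by a simpler argument than your idempotent one: for $H\in\sF$ the restriction $\mathrm{Res}^G_H\widetilde{E\sF}$ is contractible, so each transfer $\mathrm{tr}_H^G$ maps to zero in $\pi_0(R\otimes\widetilde{E\sF})^G$; if $1$ lies in the sum of transfer images then the unit of the ring $(R\otimes\widetilde{E\sF})^G$ is null, whence $R\otimes\widetilde{E\sF}\simeq 0$ and \cite[Th.~4.19]{MNN17} gives $\sF$-nilpotence. Your idempotent argument for this direction is correct but unnecessarily elaborate.

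It is the \emph{forward} direction that genuinely needs rationality, and your skeletal argument is where the gap lies. The image of $\pi_0(R\otimes E\sF_+)^G\to\pi_0 R^G$ is not contained in the sum of the transfer images from $\pi_0 R^H$: the cellular filtration of $E\sF_+$ contributes terms from $\pi_{-n}R^H$ for $n>0$ via the Atiyah--Hirzebruch-type spectral sequence, and these are not degree-zero transfers. For a concrete counterexample take $G=C_2$, $\sF=\triv$, and $R=i^*A\otimes E{C_2}_+$ where $A$ is the square-zero extension of $H\mathbb{Z}$ by $\Sigma^{-1}H\mathbb{Z}/2$. Then $R$ is an associative $\triv$-nilpotent $C_2$-ring spectrum, but $R^{C_2}\simeq A\otimes B{C_2}_+$ and the spectral sequence shows $\pi_0 R^{C_2}$ is a nontrivial extension of $\mathbb{Z}/2=H_1(BC_2;\pi_{-1}A)$ by $\mathbb{Z}$; the transfer $\pi_0 A=\mathbb{Z}\to\pi_0 R^{C_2}$ only hits the bottom filtration and misses the $\mathbb{Z}/2$.

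With rationality, the forward direction follows from the very idempotent argument you reserved for the other direction: $\sF$-nilpotence gives $(1-e_\sF)=0$ in $\pi_0 R^G$, and $e_\sF=\sum_{(K)\in\sF}e_{(K)}$ is a $\mathbb{Q}$-linear combination of the classes $[G/H]=\mathrm{tr}_H^G(1)$ for $H\in\sF$, so $1=e_\sF$ lies in the transfer image.
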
 

Let $L_{T(i)} \GSpec$ denote the full subcategory of $\GSpec$ spanned by the
$T(i)$-local objects, i.e., those for which the $H$-fixed points for each
subgroup $H \subseteq G$ are $T(i)$-local spectra (at the implicit prime $p$); this
equivalence follows because the orbits form a set of compact generators for
$\GSpec$. 
We next give a criterion for $\sF$-completeness in $L_{T(i)}\GSpec$. 
This will use the vanishing of the Tate constructions in $L_{T(i)} \sp$,
due to \cite{Kuh04}, in the following equivalent form: 

\begin{lemma} 
If $\mathcal{C}$ is any presentable stable $\infty$-category and $X \in \fun(BH,
\mathcal{C})$ is an $H$-object in $\mathcal{C}$ for some finite group $H$, then $X_{hH}
\otimes T(i) \in \mathcal{C}$ belongs to the thick subcategory generated by $X
\otimes T(i)$. 
\label{Tatevanishconseq}
\end{lemma}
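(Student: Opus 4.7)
The plan is to prove this lemma as an equivalent reformulation of Kuhn's Tate-vanishing theorem \cite{Kuh04}, after reducing to the case where $H = C_p$ is cyclic of prime order.

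First, I would reduce to $H = C_p$ for some prime $p$. Since $T(i)$ is concentrated at a single prime $p$---being $p$-local for $i \geq 1$ and $p$-inverted for $i = 0$---the index $[H:P]$ of a Sylow $p$-subgroup $P \subseteq H$ acts invertibly after tensoring with $T(i)$. The composite of the natural map $X_{hP} \to X_{hH}$ with the transfer $X_{hH} \to X_{hP}$ is multiplication by $[H:P]$, so $X_{hH} \otimes T(i)$ appears as a retract of $X_{hP} \otimes T(i)$. This reduces to the case $H = P$ a $p$-group. For such $P$, choose a normal subgroup $K \triangleleft P$ of index $p$ so that $P/K \cong C_p$; using the identity $X_{hP} \simeq (X_{hK})_{hC_p}$, where $X_{hK}$ inherits a residual $C_p$-action, induction on $|P|$ combined with the $C_p$ case delivers the general $p$-group case.

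For the $C_p$ case, I would consider the norm cofiber sequence
\[
Y_{hC_p} \to Y^{hC_p} \to Y^{tC_p}
\]
in $\mathcal{C}$, where $Y = X \otimes T(i)$. Kuhn's theorem, applied in the $\mathrm{Sp}$-module category $\mathcal{C}$, implies that $L_{T(i)} \Sigma^\infty_+ BC_p$ is dualizable in $L_{T(i)} \mathrm{Sp}$ and hence lies in the thick subcategory generated by the $T(i)$-local sphere. Since $T(i)$-localization is smashing, this yields $\Sigma^\infty_+ BC_p \otimes T(i) \in \mathrm{thick}(T(i))$ in $\mathrm{Sp}$, which handles the trivial-action case directly by tensoring with $X$. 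For arbitrary actions on $X$, one combines the norm cofiber sequence with the Tate vanishing (again Kuhn) and an analysis of the tower computing $Y^{hC_p}$, whose successive layers are finite sums of shifts of $Y$, to conclude that $Y_{hC_p}$ itself lies in $\mathrm{thick}(Y)$.

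The main obstacle is upgrading Kuhn's asymptotic vanishing---a statement in $L_{T(i)} \mathrm{Sp}$---to the sharper finite-generation (thick subcategory) statement in the possibly non-$T(i)$-local ambient category $\mathcal{C}$. The essential bridge is the smashing property of $L_{T(i)}$, which converts dualizability of $L_{T(i)} \Sigma^\infty_+ BC_p$ into a thick subcategory containment of $\Sigma^\infty_+ BC_p \otimes T(i)$ inside $\mathrm{Sp}$; this containment then transports into $\mathcal{C}$ via its canonical $\mathrm{Sp}$-module structure, yielding the desired conclusion.
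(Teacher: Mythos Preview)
Your reduction to $H=C_p$ is fine for $i\geq 1$ (note it already fails for $i=0$, since $T(0)=\mathbb{S}[1/p]$ does not invert the prime-to-$p$ integer $[H:P]$). The real gap is in the $C_p$ case. The ``essential bridge'' you invoke---that $L_{T(i)}$ is a smashing localization---is false for $i\geq 1$: if it were smashing then $H\mathbb{Z}\otimes L_{T(i)}\mathbb{S}=L_{T(i)}H\mathbb{Z}=0$, forcing $L_{T(i)}\mathbb{S}=0$. Without smashing, the unit of $L_{T(i)}\sp$ is not compact, so dualizability of $L_{T(i)}\Sigma^\infty_+ BC_p$ does not place it in $\mathrm{thick}(L_{T(i)}\mathbb{S})$, and in any case you could not transport such a containment to a statement about $\Sigma^\infty_+ BC_p\otimes T(i)$ in $\sp$. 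Your fallback for non-trivial actions---analyzing the homotopy fixed point tower for $Y^{hC_p}$---does not close the gap either: that tower is infinite, so knowing each layer is a shift of $Y$ does not put the limit (or $Y_{hC_p}$) into $\mathrm{thick}(Y)$.

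The paper's proof is a single line once the right input is named. It cites the \emph{nilpotence} reformulation of Kuhn's Tate vanishing \cite[Prop.~5.31]{MNN19}: as an object of $\fun(BH,\sp)$ with trivial $H$-action, $T(i)$ lies in the thick subcategory generated by $T(i)\otimes H_+$. The upgrade from ``Tate construction vanishes'' to this finite thick-subcategory statement is exactly the nontrivial content you were trying to supply; it uses that $T(i)$ carries a ring structure, via an idempotent/nilpotence argument as in \cite[Th.~4.19]{MNN17}. Granting it, one tensors the containment by $X$ inside $\fun(BH,\mathcal{C})$, applies the exact functor $(-)_{hH}$, and uses $(X\otimes T(i)\otimes H_+)_{hH}\simeq X\otimes T(i)$ to conclude---for arbitrary finite $H$ and arbitrary action, with no reduction to $C_p$ needed.
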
 
\begin{proof} 
An equivalent form 
of the telescopic Tate vanishing is that, as an object of $\fun(BH, \sp)$ (with
trivial $H$-action), $T(i)$ belongs to the thick subcategory of $\fun(BH,
\sp)$ generated by $T(i) \otimes H_+$, cf.~\cite[Prop.~5.31]{MNN19}. 
From this, the result easily follows, since $(X \otimes T(i) \otimes H_+)_{hH}
= X \otimes T(i)$. 
\end{proof}

In the next result, we use the notation $\Phi^H$ for the $H$-geometric fixed
points functor on $\GSpec$. 

\begin{proposition}[Properties of $T(i)$-local $G$-spectra]\label{rem:k1_local_g_spectra}
Let $G$ be a finite group, $\mathcal{F}$ a family of subgroups and $i\geq 0$. 
Let $M \in L_{T(i)} \GSpec$. Then the following are equivalent: 
\begin{enumerate}
\item  $M$ is $\sF$-complete. 
\item For every finite type $i$ complex $F$, the $G$-spectrum $M \otimes F$ is $\sF$-nilpotent. 
\item We have $L_{T(i)} (\Phi^H M) =0 $ for $H \notin \sF$. 
\end{enumerate}
\end{proposition}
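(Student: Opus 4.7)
The plan is to establish the cycle $(2) \Rightarrow (1) \Leftrightarrow (3) \Rightarrow (2)$.

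For $(2) \Rightarrow (1)$: Any $\sF$-nilpotent $G$-spectrum is $\sF$-complete by \eqref{FcompmapX}, so for each finite type $i$ complex $F$, the $G$-spectrum $M \otimes F$ is $\sF$-complete, i.e., $F(\widetilde{E\sF}, M \otimes F) = 0$.  Finiteness of $F$ identifies $F(\widetilde{E\sF}, M \otimes F)$ with $F(\widetilde{E\sF}, M) \otimes F$, so $F(\widetilde{E\sF}, M) \otimes F = 0$ for every type $i$ complex $F$. Taking $H$-fixed points and using that $F(\widetilde{E\sF}, M)^H$ is $T(i)$-local (since $M$ is) while $F$ is a finite type $i$ complex forces $F(\widetilde{E\sF}, M)^H = 0$ for every $H \subseteq G$, hence (1).

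For $(1) \Leftrightarrow (3)$, the heart is a reduction to Kuhn's telescopic Tate vanishing (\Cref{Tatevanishconseq}). Fix a subgroup $H \notin \sF$; the isotropy separation pullback square relates $M^H$, $M^{hH}$, $\Phi^H M$, and the Tate construction $M^{tH}$. Kuhn's theorem forces $L_{T(i)} M^{tH} = 0$, so after $L_{T(i)}$ the square degenerates, giving the equivalence ``$M^H = M^{hH}$'' if and only if ``$L_{T(i)} \Phi^H M = 0$''. The $\sF$-completeness condition (1) unravels, by restriction to subgroups and descending induction on subgroup order, to precisely such comparison equivalences at all $H \notin \sF$, yielding (3); conversely, assuming (3) one propagates the comparison equivalences back up the subgroup lattice to reconstruct (1).

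For $(3) \Rightarrow (2)$: by the already established $(1) \Leftrightarrow (3)$ we know $M$, and hence $M \otimes F$ for every finite type $i$ complex $F$, is $\sF$-complete.  For $H \notin \sF$ we have $\Phi^H(M \otimes F) = \Phi^H M \otimes F = L_{T(i)}(\Phi^H M) \otimes F = 0$, using that smashing with a type $i$ complex factors through $L_{T(i)}$. Thus $M \otimes F$ has vanishing geometric fixed points outside $\sF$; together with its $\sF$-completeness and the finiteness of $F$, a thick $\otimes$-ideal argument identifies $M \otimes F$ as a member of the thick $\otimes$-ideal generated by $\{G/H_+\}_{H\in\sF}$, i.e., as $\sF$-nilpotent.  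The principal obstacle is this final step: promoting the localizing-ideal membership (encoded in $\Phi^H$-vanishing) to genuine thick-ideal membership.  The finiteness of $F$ together with the $T(i)$-local structure of $M$ is what makes this passage possible, and is the main point requiring care.
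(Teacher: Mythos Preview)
Your $(2) \Rightarrow (1)$ is correct and matches the paper's approach (the paper phrases it as writing $M$ as an inverse limit of $M \otimes F$'s, which amounts to the same thing).

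However, there are two genuine gaps.

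First, your $(1) \Leftrightarrow (3)$ argument invokes an ``isotropy separation pullback square'' relating $M^H$, $M^{hH}$, $\Phi^H M$, and $M^{tH}$. No such square exists. The geometric fixed points $\Phi^H M$ arise from the isotropy separation sequence for the family of \emph{proper} subgroups of $H$, whereas $M^{hH}$ and $M^{tH}$ arise from the Borel construction, i.e., the family consisting of only the trivial subgroup. These are two different families (unless $H$ is cyclic of prime order), and the two cofiber sequences $M_{hH} \to M^{hH} \to M^{tH}$ and $(E\mathcal{P}_+ \otimes M)^H \to M^H \to \Phi^H M$ do not fit into a single commuting square with the properties you need. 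Your inductive sketch of unraveling $\sF$-completeness via these squares cannot be completed as written.

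Second, and more importantly, your $(3) \Rightarrow (2)$ explicitly acknowledges that the main step---promoting membership in the localizing $\otimes$-ideal (i.e., $\Phi^H$-vanishing outside $\sF$) to membership in the \emph{thick} $\otimes$-ideal (i.e., $\sF$-nilpotence)---is ``the main point requiring care'', but you do not carry it out. This step does not follow from ``finiteness of $F$'' and ``$T(i)$-local structure of $M$'' alone by any formal argument.

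The paper's proof supplies exactly the missing ingredient: a preliminary claim, established by induction on families using \Cref{Tatevanishconseq}, that \emph{$E\sF_+ \otimes T(i)$ is itself $\sF$-nilpotent}. This is the crux of the entire proposition. Once you have it, the argument is short: choosing $F$ to be a type $i$ ring spectrum with $T(i) = F[v^{-1}]$, the $T(i)$-locality of $M$ forces $M \otimes F \simeq M \otimes T(i)$, so $M \otimes F$ is a $T(i)$-module. Then $M \otimes F = E\sF_+ \otimes M \otimes F$ (from the $\Phi^H$-vanishing) lies in the thick $\otimes$-ideal generated by $E\sF_+ \otimes T(i)$, hence is $\sF$-nilpotent. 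The paper then runs the implications as $(1) \Rightarrow (2) \Rightarrow (3) \Rightarrow (2) \Rightarrow (1)$, with the same preliminary claim powering both $(1) \Rightarrow (2)$ and $(3) \Rightarrow (2)$; your proposed direct route $(1) \Leftrightarrow (3)$ is not used.
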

\begin{proof} 
We first claim that for each family $\sG $ of subgroups of $G$, the $G$-spectrum $E \sG_+ \otimes
T(i)$ is
$\sG$-nilpotent; we prove this by induction on $\sG$. 
To start with, when $\sG = \triv$, then $E\sG_+ = (G_+)_{hG}$; this
uses the $G$-action on the $G$-space $G$ (by right multiplication, so in the
category of $G$-anima). 
It follows from \Cref{Tatevanishconseq} that 
$E\triv_+ \otimes T(i)$ is $\triv$-nilpotent. 
Now we treat the inductive step. 
Fix a proper family $\sG$ such that 
$E \sG_+ \otimes T(i)$ is $\sG$-nilpotent. 
Choose a subgroup $H  \subseteq G$ which is minimal for the
property of not belonging to $\sG$; one forms a new family $\sG'$ obtained by
adding the conjugates of $H$ to $\sG$. 
Then there is a cofiber sequence
of pointed $G$-anima
\[ E \sG_+ \to E \sG'_+ \to  E \sG'_+ \wedge \widetilde{E} \sG = (G/H_+)_{h
W_H} \wedge \widetilde{E} \sG,    \]
where $W_H$ is the Weyl group of $H \subseteq G$. 
Using this, the inductive assumption, and  
\Cref{Tatevanishconseq}, the inductive step follows and the claim is proved.

Now we prove the result. Suppose $M \in L_{T(i)} \GSpec$ is $\sF$-complete. 
By the thick subcategory theorem, condition $(2)$ is independent of the choice of $F$
 and we
 choose a finite type $i$ complex $F$ such that $F$ admits the structure of a ring
spectrum; given a $v_i$-self map $v$ of $F$ which we may assume central, we can take $T(i) = F[v^{-1}]$.
Then $M \otimes F$ admits the structure of a $T(i)$-module, since $M$ is
$T(i)$-local. It follows that the 
\emph{$\sF$-cellularization}\footnote{Compare \cite[Cons.~3.2, Prop.~6.5]{MNN17} for an
account, where cellularization is called acyclization.} $E \sF_+ \otimes M \otimes F$ of $M \otimes F$ 
belongs to the thick $\otimes$-ideal  of $\GSpec$ generated by $E \sF_+ \otimes
T(i) $ and is therefore $\sF$-nilpotent, by our initial claim. Consequently, the $\sF$-completion
$F(E \sF_+, E \sF_+ \otimes M \otimes F)$ (which is $M \otimes F$ again since
this is $\sF$-complete) is also $\sF$-nilpotent (here we implicitly use that the
$\sF$-completion of a $G$-spectrum depends only on its $\sF$-cellularization). Thus, (1) implies (2). 
Clearly (2) implies (3), again by smashing with $F$. 
If (3) holds, then $M \otimes F = M \otimes T(i)$ has trivial geometric fixed
points $\Phi^H$ for $H \notin \sF$, whence 
$M  \otimes F = E \sF_+ \otimes M \otimes F = E \sF_+ \otimes M \otimes T(i)$,
which we have seen is $\sF$-nilpotent. Thus, (3) implies (2). Finally, (2)
implies (1) by writing $M$ (which is assumed $T(i)$-local) as an inverse limit of $M \otimes F$ for suitable
finite type $i$ complexes $F$ (e.g., generalized Moore spectra). 
\end{proof} 

\begin{corollary} 
\label{moduleovercomplete}
Let $R \in L_{T(i)} \sp_G$ be an algebra object which is $\sF$-complete. 
Then any $R$-module $M \in L_{T(i)} \sp_G$ is $\sF$-complete. 
\end{corollary}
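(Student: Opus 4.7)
The plan is to deduce this directly from the equivalent characterizations of $\sF$-completeness given in \Cref{rem:k1_local_g_spectra}. I would use the equivalence of (1) and (3): for a $T(i)$-local $G$-spectrum $X$, being $\sF$-complete is equivalent to $L_{T(i)}(\Phi^H X) = 0$ for all $H \notin \sF$.

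First I would note that geometric fixed points $\Phi^H \colon \sp_G \to \sp$ are symmetric monoidal, so $\Phi^H R$ inherits the structure of an algebra in $\sp$ and $\Phi^H M$ inherits the structure of a $\Phi^H R$-module. Smashing with a type $i$ complex $F$ and inverting a $v_i$-self map (so that $T(i)$ can be realized as a localization of a ring spectrum $F[v^{-1}]$), we see that $L_{T(i)}(\Phi^H M)$ is a module over $L_{T(i)}(\Phi^H R)$ in spectra.

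Now since $R$ is $\sF$-complete and $T(i)$-local, the proposition gives $L_{T(i)}(\Phi^H R) = 0$ for $H \notin \sF$. A module over the zero ring is zero, so $L_{T(i)}(\Phi^H M) = 0$ for $H \notin \sF$ as well. Applying the proposition once more, this time to $M$, we conclude that $M$ is $\sF$-complete.

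No real obstacle is expected here; the corollary is essentially a module-theoretic repackaging of the fact that the $\sF$-completeness condition on $T(i)$-local $G$-spectra is detected by the vanishing of certain geometric fixed points, a condition manifestly inherited by modules. (Alternatively, one could argue via condition (2): if $R \otimes F$ is $\sF$-nilpotent, then $M \otimes F$, being a module over $R \otimes F$, lies in the thick $\otimes$-ideal generated by $R \otimes F$, hence is also $\sF$-nilpotent.)
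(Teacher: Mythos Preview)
Your proposal is correct and follows essentially the same route as the paper: the paper's proof is the single sentence ``This follows from item (3) of \Cref{rem:k1_local_g_spectra}, since $\Phi^H$ is a symmetric monoidal functor,'' which is precisely your argument via the vanishing of $L_{T(i)}(\Phi^H(-))$ and the fact that $\Phi^H M$ is a $\Phi^H R$-module. Your intermediate remark about realizing $T(i)$ as $F[v^{-1}]$ is unnecessary (it suffices that a module over a $T(i)$-acyclic ring is $T(i)$-acyclic, since it is a retract of its smash with the ring), but not incorrect.
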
 
\begin{proof} 
This follows from item (3) of 
\Cref{rem:k1_local_g_spectra}, since $\Phi^H$ is a symmetric monoidal functor. 
\end{proof} 

\begin{proposition} 
\label{splittingcor}
Let $R \in L_{T(i)} \sp_G$ be an $\mathbb{E}_1$-algebra, and let $M \in
L_{T(i)} \sp_G$ be an $R$-module. 
Then the map $M^G \to M^{hG}$ admits a section as $R^G$-modules. 
Similarly, the map $L_{T(i)} (M_{hG}) \to M^G$ admits a section as
$R^G$-modules. If $G = C_p$, then $M$ is Borel-complete if and only if either of
these maps is an equivlaence. 
\end{proposition} 
\begin{proof} 
All of this follows because the composite map 
$L_{T(i)} (M_{hG}) \to M^G \to M^{hG}$ is the norm, which is an equivalence
since Tate constructions
vanish in $T(i)$-local homotopy  \cite{Kuh04}. 
\end{proof} 

\begin{proposition} 
Let $R$ be an $\mathbb{E}_\infty$-algebra
in the symmetric monoidal $\infty$-category $\GSpec$.
Suppose that the rationalization $R_{\mathbb{Q}}$
is $\sF$-nilpotent. Then $R$ is $(\sF, \epsilon)$-nilpotent. 
\label{rationaltoFnilp}
\end{proposition}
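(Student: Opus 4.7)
The plan is to reduce to a chromatic vanishing statement for a family of $\mathbb{E}_\infty$-rings of geometric fixed points, upgrade rational vanishing to vanishing at all chromatic heights via Hahn's theorem (which uses the May nilpotence conjecture), and finally carry out a thick-ideal argument that parallels \Cref{crittrivnilpepsilon}.

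For each subgroup $H \notin \sF$, set $A_H := \Phi^H R$. Each $A_H$ is an $\mathbb{E}_\infty$-ring spectrum in $\sp$, since $R$ is $\mathbb{E}_\infty$ in $\GSpec$ and the geometric fixed points functor $\Phi^H$ is symmetric monoidal. The hypothesis that $R_{\mathbb{Q}}$ is $\sF$-nilpotent is equivalent to the vanishing $R_{\mathbb{Q}} \otimes \widetilde{E\sF} \simeq 0$ in $\GSpec$, which unpacks to $(A_H)_{\mathbb{Q}} = \Phi^H(R_{\mathbb{Q}}) \simeq 0$ for every $H \notin \sF$. The crucial step is now to invoke Hahn's theorem \cite{Hahn16}, whose proof uses the May nilpotence conjecture of \cite{MNN_nilpotence}: for an $\mathbb{E}_\infty$-ring $A$, the vanishing $L_{T(n)}A \simeq 0$ forces $L_{T(m)}A \simeq 0$ for all $m > n$. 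Applied to each $A_H$ with $n = 0$, this bootstraps to $L_{T(n)} A_H \simeq 0$ for every $n \geq 1$ and every prime $p$.

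Armed with the full chromatic triviality of $A_H$ for each $H \notin \sF$, we invoke \cite[Prop.~2.7]{CMNN}: for each such $H$ there is a finite set of primes $\Sigma_H$ so that $A_H$ lies in the thick $\otimes$-ideal of $\sp$ generated by every finite spectrum $F$ nontrivial at the primes of $\Sigma_H$. Since $G$ has only finitely many conjugacy classes of subgroups, $\Sigma := \bigcup_{H \notin \sF} \Sigma_H$ is finite, and for this $\Sigma$ we obtain uniform containment of $\Phi^H R$ in the thick $\otimes$-ideal of $\sp$ generated by any $F$ nontrivial at $\Sigma$, for all $H \notin \sF$ simultaneously.

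It remains to transfer this back to $\GSpec$. Arguing by induction over families $\sG \supseteq \sF$ using the isotropy separation cofiber sequences (as in the proof of \Cref{rem:k1_local_g_spectra}), uniform thick-ideal containment of all geometric fixed points $\Phi^H R$ for $H \notin \sF$ implies that $R \otimes \widetilde{E\sF}$ itself lies in the thick $\otimes$-ideal of $\GSpec$ generated by $F$. Finally, the cofiber sequence $R \otimes E\sF_+ \to R \to R \otimes \widetilde{E\sF}$, combined with the $\sF$-nilpotence of $R \otimes E\sF_+$ (by \cite[Th.~4.19]{MNN17} applied to the $\mathbb{E}_\infty$-ring $R$ to equate the localizing and thick $\otimes$-ideals generated by $\{G/H_+\}_{H\in\sF}$ on $R$-modules), shows that $R$ lies in the thick $\otimes$-ideal of $\GSpec$ generated by $F$ and $\sF$-nilpotent $G$-spectra, verifying $(\sF,\epsilon)$-nilpotence. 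The principal obstacle is the bootstrapping step where the $\mathbb{E}_\infty$-structure and the May nilpotence conjecture are essential via Hahn's theorem; the remaining steps are straightforward generalizations of \Cref{crittrivnilp} and \Cref{crittrivnilpepsilon} from $\sF = \triv$ to arbitrary $\sF$.
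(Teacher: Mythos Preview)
Your overall strategy is correct and closely parallels the paper's proof: reduce to a rational vanishing statement for an $\mathbb{E}_\infty$-ring, bootstrap to all telescopic heights via \cite{MNN_nilpotence}, invoke \cite[Prop.~2.7]{CMNN}, and then transfer back to $\GSpec$ using \cite[Th.~4.19]{MNN17}. The one substantive difference is that the paper works with the single $\mathbb{E}_\infty$-ring $(R\otimes\widetilde{E\sF})^G$ and uses the adjunction $(i_*,(-)^G)$ to pass back to $\GSpec$, whereas you work with the family $\{\Phi^H R\}_{H\notin\sF}$ and invoke an isotropy-separation induction. Both routes work (yours ultimately needs joint conservativity of geometric fixed points together with \cite[Th.~4.19]{MNN17} to upgrade from the localizing to the thick $\otimes$-ideal), though the paper's single-ring argument is a bit more direct and avoids having to take a union of finitely many sets $\Sigma_H$.

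There is, however, a genuine gap in your bootstrapping step. You write that Hahn's theorem ``applied to each $A_H$ with $n=0$'' yields $L_{T(n)}A_H\simeq 0$ for all $n\geq 1$ and all primes. But the hypothesis of Hahn's theorem at $n=0$ and prime $p$ is $L_{T(0)}A_H = A_H[1/p]\simeq 0$, which is \emph{not} implied by $(A_H)_{\mathbb{Q}}\simeq 0$: take $A_H = H\mathbb{F}_\ell$ with $\ell\neq p$. The conclusion you want is nonetheless true, and the fix is either to cite \cite{MNN_nilpotence} directly (as the paper does), or to first $p$-localize --- for a $p$-local ring, rationalization coincides with inverting $p$, so $((A_H)_{(p)})[1/p] = ((A_H)_{(p)})_{\mathbb{Q}} = 0$ and Hahn's theorem then applies at each prime separately.
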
 
\begin{proof} 
By assumption, the $\mathbb{E}_\infty$-ring $(R \otimes \widetilde{E
\sF})^G_{\mathbb{Q}}$ is 
contractible. 
Therefore, by the main result of \cite{MNN_nilpotence}, the
$\mathbb{E}_\infty$-ring 
$(R \otimes \widetilde{E
\sF})^G$ is 
annihilated by $L_{T(n)}$ for all $n$ and implicit primes $p$. 
In particular, by 
\cite[Prop.~2.7]{CMNN}, there exists a finite set $\Sigma$ of primes such that 
$(R \otimes \widetilde{E
\sF})^G$
belongs to the thick $\otimes$-ideal of spectra generated by any finite spectrum
$F$ such that $F_{(p)} \neq 0$ for $p \in \Sigma$. 
This implies that $R \otimes E\widetilde{\sF}$ belongs to the thick
$\otimes$-ideal of $\GSpec$ generated by $F$, whence $R$ belongs to the
localizing $\otimes$-ideal generated by $F$ and $\{G/H_+, H \in \sF\}$ in view of the
cofiber sequence $R \otimes E\sF_+ \to R \to R \otimes \widetilde{E\sF}$. 
Finally, \cite[Th.~4.19]{MNN17} again implies that $R$ belongs to the thick $\otimes$-ideal
generated by $F$ and 
$\{G/H_+, H \in \sF\}$ in $\GSpec$, as desired. 
\end{proof}

\begin{definition} 
Let $\mathcal{C}$ be a presentably symmetric monoidal stable $\infty$-category. 
We say that an object of $\mack_G(\mathcal{C}) = \mack_G(\sp) \otimes
\mathcal{C}  \simeq \GSpec \otimes \mathcal{C}$ is 
\emph{$\sF$-nilpotent} (resp. \emph{$(\sF, \epsilon)$-nilpotent}) if it belongs to the
thick $\otimes$-ideal of $\mack_G(\mathcal{C})$ generated by the
$\sF$-nilpotent (resp. $(\sF, \epsilon)$-nilpotent) objects in $\GSpec$. 
\end{definition}

It follows that for any cocontinuous functor $\mathcal{C} \to \sp$, 
the induced 
functor $\mack_G(\mathcal{C}) \to \mack_G(\sp) \simeq \GSpec$ carries 
$\sF$-nilpotent (resp. $(\sF, \epsilon)$-nilpotent) objects in the source to 
$\sF$-nilpotent (resp. $(\sF, \epsilon)$-nilpotent) objects in the target. 
Using the adjunction 
$\sp \rightleftarrows \mathcal{C}$
where the symmetric monoidal left adjoint carries $\mathbb{S}$ to the unit, 
we obtain the next result.

\begin{proposition} 
\label{nilpinmot}
Let $\mathcal{C}$ be a presentably symmetric monoidal stable $\infty$-category,
and 
suppose $\mathbf{1} \in \mathcal{C}$ is compact. 
Let $A$ be an object of $\mack_G(\mathcal{C})$ which admits a unital
multiplication in the homotopy category. 
Then $A$ is $\sF$-nilpotent (resp. $(\sF, \epsilon)$-nilpotent) in
$\mack_G(\mathcal{C})$ if and only if 
it is carried to 
an
$\sF$-nilpotent (resp. $(\sF, \epsilon)$-nilpotent)
object of $\mack_G(\sp)$ under the functor $\hom_{\mathcal{C}}(\mathbf{1},-)\colon 
\mack_G(\mathcal{C}) \to \mack_G(\sp)$. 
\end{proposition}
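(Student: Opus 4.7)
The plan is to exploit the adjunction induced on Mackey functors by the adjunction
$$F : \sp \rightleftarrows \mathcal{C} : U,$$
where $F$ is the cocontinuous symmetric monoidal left adjoint determined by $\mathbb{S} \mapsto \mathbf{1}$, and $U = \hom_{\mathcal{C}}(\mathbf{1}, -)$, which is cocontinuous by compactness of $\mathbf{1}$. Applying $\mack_G(-)$ gives an adjunction $F_G \dashv U_G$ with $F_G$ symmetric monoidal and cocontinuous and $U_G$ cocontinuous and lax symmetric monoidal; here $U_G$ is precisely the functor $\hom_{\mathcal{C}}(\mathbf{1}, -)$ of the statement.

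The forward direction is the consequence noted in the paragraph preceding the statement, applied to the cocontinuous functor $U_G$. For the reverse direction, I would first argue that $F_G$ itself preserves the relevant nilpotence conditions: being symmetric monoidal and colimit-preserving, $F_G$ maps thick $\otimes$-ideals into thick $\otimes$-ideals, and under the identification $\mack_G(\mathcal{C}) \simeq \GSpec \otimes \mathcal{C}$ it is (equivalent to) the inclusion of the first tensor factor, so it carries $\sF$-nilpotent (resp.\ $(\sF, \epsilon)$-nilpotent) objects of $\mack_G(\sp) = \GSpec$ to generators of the corresponding thick $\otimes$-ideal in $\mack_G(\mathcal{C})$. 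In particular, whenever $U_G(A)$ is $\sF$-nilpotent (resp.\ $(\sF, \epsilon)$-nilpotent), so is $F_G U_G(A)$.

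It then remains to transfer this property from $F_G U_G(A)$ back to $A$ itself, via the counit $\varepsilon_A \colon F_G U_G(A) \to A$. Since $U_G$ is lax symmetric monoidal and $A$ carries a unital multiplication in the homotopy category, so does $U_G(A)$, and hence also $F_G U_G(A)$; moreover, $\varepsilon_A$ is a unital algebra map in the homotopy category. Writing $u_A$ for the unit of $A$, $v$ for the unit of $F_G U_G(A)$, and $m_A$ for the multiplication on $A$, the identity $\varepsilon_A \circ v = u_A$ together with the unit axiom for $A$ shows that the composite
$$A \xrightarrow{v \otimes \mathrm{id}_A} F_G U_G(A) \otimes A \xrightarrow{\varepsilon_A \otimes \mathrm{id}_A} A \otimes A \xrightarrow{m_A} A$$
equals the identity on $A$. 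This exhibits $A$ as a retract of $F_G U_G(A) \otimes A$ in the homotopy category, which lies in the thick $\otimes$-ideal generated by $F_G U_G(A)$, so $A$ does too; the $(\sF, \epsilon)$-case is formally identical. I do not anticipate a serious obstacle: the retract argument is purely formal, and the only point that is not completely automatic is the preservation of nilpotence by $F_G$, which is immediate from the definitions.
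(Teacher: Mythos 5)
Your proposal is correct and follows essentially the same route as the paper: the forward direction is the remark on cocontinuous functors preserding nilpotence, and the reverse direction passes through $i^*i_*A \to A$ (your $F_GU_G(A)\to A$). The only difference is that the paper compresses the final step into the phrase ``hence so is $A$ thanks to the map $i^*i_*A \to A$,'' whereas you correctly spell out the underlying retract argument $A \to F_GU_G(A)\otimes A \to A$ using the unital multiplication.
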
 
\begin{proof} 
Let $i^*: \mack_G( \sp) \to \mack_G(\mathcal{C})$ denote the canonical symmetric
monoidal functor (obtained from $\sp \to \mathcal{C}$), and let $i_*:
\mack_G(\mathcal{C}) \to \mack_G(\sp)$ denote its right adjoint (which is
equally obtained by the cocontinuous functor $\hom_{\mathcal{C}}( \mathbf{1}, -):
\mathcal{C} \to \sp$). 
By assumption, $i_*A$ is $\sF$-nilpotent (resp. $(\sF, \epsilon)$-nilpotent);
thus, so is $i^* i_* A$ and hence so is $A$ since our assumption of a unital
multiplication implies that $A$
belongs to the thick $\otimes$-ideal generated by $i^* i_* A$. 
\end{proof}

\begin{example} 
Let $\mathcal{A} \in \fun(BG, \cats{R})$. 
Consider $\mathcal{U}_G( \mathcal{A}) \in \mack_G(\mot_R)$, a Mackey functor valued in $\mot_R$. 
Suppose $\mathcal{A}$ is an algebra object of $\cats{R}$ (i.e., is an $R$-linear
monoidal stable $\infty$-category). 
Then $\mathcal{U}_G(\mathcal{A})$ is $\sF$-nilpotent (resp. $(\sF, \epsilon)$-nilpotent) if and only if 
the $G$-spectrum 
$K_G(\mathcal{A})$
is $\sF$-nilpotent 
(resp. $(\sF, \epsilon)$-nilpotent), using the representability of $K$-theory. 
\label{Gmotnilp}
\end{example}

\section{Descent for $p$-groups; proof of \Cref{TateMitchell} and \Cref{ASthmintro}}

In this section, we give the proof of Theorems~\ref{TateMitchell} and \ref{ASthmintro}
 via \Cref{inductivevanishingthm}.  We start with the following general reduction.

\begin{proposition} 
\label{generalreduction}
Let $R$ be an $\mathbb{E}_2$-ring, and let $j \geq 0$. Then the following
are equivalent: 
\begin{enumerate}
\item $L_{T(j)} (\Phi^{C_p} K_{C_p}(R) )= 0$.  
\item The $C_p$-spectrum $L_{T(j)} K_{C_p}(R)$ is Borel-complete.
\item For every $R$-linear idempotent-complete stable $\infty$-category $\mathcal{C}$ equipped with
an ($R$-linear) action of a finite $p$-group $G$, and every additive invariant $E$ with values in $T(j)$-local spectra, we have 
$E(\mathcal{C}^{hG}) \xrightarrow{\sim} E(\mathcal{C})^{hG}$. 
\item For every $R$-linear idempotent-complete stable $\infty$-category $\mathcal{C}$ equipped with
an ($R$-linear) action of a finite $p$-group $G$, and every additive invariant $E$, we have 
\begin{equation} L_{T(j)}(E(\mathcal{C})_{hG}) \xrightarrow{\sim} L_{T(j)} E(\mathcal{C}_{hG})\xrightarrow{\sim}L_{T(j)}E(\mathcal{C}^{hG}) \xrightarrow{\sim} L_{T(j)}( E(\mathcal{C})^{hG}) 
\xrightarrow{\sim} ( L_{T(j)} E(\mathcal{C}))^{hG}.
\label{bigcompmap} \end{equation}
\end{enumerate}
\end{proposition}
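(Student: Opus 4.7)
The plan is to establish $(1) \Leftrightarrow (2)$, $(3) \Leftrightarrow (4)$, $(3) \Rightarrow (2)$, and $(2) \Rightarrow (3)$, with the last implication carrying the main content. The easy directions can be dispatched first: $(1) \Leftrightarrow (2)$ is \Cref{rem:k1_local_g_spectra} applied to $M := L_{T(j)} K_{C_p}(R)$ with $\sF = \triv$, since the only subgroup of $C_p$ not in $\triv$ is $C_p$ itself and $\Phi^{C_p}$ commutes with $L_{T(j)}$; $(4) \Rightarrow (3)$ is immediate upon taking $E$ already $T(j)$-local; and $(3) \Rightarrow (2)$ is obtained by specializing (3) to $\mathcal{C} = \perf(R)$ with trivial $C_p$-action, $G = C_p$, and $E = L_{T(j)} K$, whose descent equivalence reads exactly the Borel-completeness of $L_{T(j)} K_{C_p}(R)$.

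For $(3) \Rightarrow (4)$: analyzing the chain in \eqref{bigcompmap}, the fourth arrow is automatic since $L_{T(j)}$ is exact and $(-)^{hG}$ is a finite limit; the third is (3) applied to the $T(j)$-local additive invariant $L_{T(j)} \circ E$. The end-to-end composite is the norm map $L_{T(j)} E(\mathcal{C})_{hG} \to (L_{T(j)} E(\mathcal{C}))^{hG}$, an equivalence by Kuhn's telescopic Tate vanishing \cite{Kuh04}. The first arrow is handled by the standard fact that additive invariants preserve $(-)_{hG}$ for finite groups---this is already implicit in the coBorel Mackey functor $\mathcal{U}_{G,\cobor}$ constructed in the preceding section, whose value at $G/H$ is $\mathcal{U}(\mathcal{C}_{hH})$. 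The second arrow is then an equivalence by composition. This reduces the proposition to proving $(2) \Rightarrow (3)$.

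For $(2) \Rightarrow (3)$, I would first reduce to $G = C_p$ by induction on $|G|$: a finite $p$-group has nontrivial center, so one picks a central $N \cong C_p$ in $G$ with quotient $Q := G/N$, observes $\mathcal{C}^{hG} = (\mathcal{C}^{hN})^{hQ}$ with $\mathcal{C}^{hN}$ still $R$-linear carrying a $Q$-action, and combines the inductive hypothesis for $Q$ on $\mathcal{C}^{hN}$ with the $C_p$-case for $\mathcal{C}$ to yield descent for $G$. For the base case $G = C_p$: by (2) and \Cref{rem:k1_local_g_spectra}, for every finite type $j$ complex $F$ the $C_p$-spectrum $L_{T(j)} K_{C_p}(R) \otimes F$ is $\triv$-nilpotent in $\mack_{C_p}(\sp)$. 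Invoking \Cref{nilpinmot} with the representability identity $K_{C_p}(R) = \hom_{\mot_R}(\mathbf{1}, \mathcal{U}_{C_p}(\perf(R)))$ lifts this to $\triv$-nilpotence of $L_{T(j)} \mathcal{U}_{C_p}(\perf(R)) \otimes F$ in $\mack_{C_p}(\mot_R)$. Since $\mathcal{C}$ is $R$-linear and $C_p$ acts trivially on $R$, the Mackey functor $\mathcal{U}_{C_p}(\mathcal{C})$ is a module over the algebra $\mathcal{U}_{C_p}(\perf(R))$, so $L_{T(j)} \mathcal{U}_{C_p}(\mathcal{C}) \otimes F$ is a retract of the $\triv$-nilpotent object $L_{T(j)} \mathcal{U}_{C_p}(\perf(R)) \otimes F \otimes \mathcal{U}_{C_p}(\mathcal{C})$ and hence itself $\triv$-nilpotent. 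An arbitrary $T(j)$-local additive invariant $E = F_E \circ \mathcal{U}$ induces a cocontinuous functor $\mack_{C_p}(\mot_R) \to \mack_{C_p}(\sp)$ that preserves $\triv$-nilpotence (the generating induced-from-trivial Mackey functors are preserved); thus $L_{T(j)} E_{C_p}(\mathcal{C}) \otimes F$ is $\triv$-nilpotent in $\mack_{C_p}(\sp)$, and a final appeal to \Cref{rem:k1_local_g_spectra} yields Borel-completeness of $L_{T(j)} E_{C_p}(\mathcal{C})$, which is exactly (3).

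The hard part is this motivic upgrade in the $G = C_p$ base case: passing from the $K$-theoretic Borel-completeness of (2) to Borel-completeness for every $T(j)$-local additive invariant. The route is necessarily indirect, since $L_{T(j)}$-completeness is not detected by $K$-theory alone: one must convert to nilpotence (via \Cref{rem:k1_local_g_spectra}), use that nilpotence \emph{is} detected by $K$-theory through the right adjoint $\hom_{\mot_R}(\mathbf{1}, -)$ (via \Cref{nilpinmot}), propagate from the ``unit'' algebra $\mathcal{U}_{C_p}(\perf(R))$ to $\mathcal{U}_{C_p}(\mathcal{C})$ using the module structure, and convert back to completeness in the target after applying the chosen cocontinuous representing functor.
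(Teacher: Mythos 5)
Your overall architecture matches the paper's: the equivalences $(1)\Leftrightarrow(2)$, $(3)\Rightarrow(2)$, $(4)\Rightarrow(3)$ are handled the same way, and your reduction of $(2)\Rightarrow(3)$ to $G=C_p$ followed by exploiting the module structure of $\mathcal{U}_{C_p}(\mathcal{C})$ over $\mathcal{U}_{C_p}(\perf(R))$ is the paper's mechanism as well (the paper argues directly with $\sF$-\emph{completeness} via \Cref{moduleovercomplete} rather than detouring through nilpotence of $M\otimes F$ and \Cref{nilpinmot}, which saves you the fiddly points about whether an arbitrary additive invariant induces a nilpotence-preserving functor and whether $\hom_{\mot_R}(\mathbf{1},-)$ commutes with the localization). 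The genuine gap is in $(3)\Rightarrow(4)$. The first arrow of \eqref{bigcompmap} is the assembly map $E(\mathcal{C})_{hG}\to E(\mathcal{C}_{hG})$, and it is emphatically \emph{not} a standard fact that additive invariants preserve $(-)_{hG}$: for $E=K$ and $\mathcal{C}=\perf(\mathbb{Z})$ with trivial $C_p$-action this is $K(\mathbb{Z})\otimes BC_{p+}\to K(\mathbb{Z}[C_p])$, which is not an equivalence integrally. The coBorel Mackey functor $\mathcal{U}_{G,\cobor}(\mathcal{C})$ has value $\mathcal{U}(\mathcal{C}_{hH})$ at $G/H$ precisely because $\mathcal{U}$ does \emph{not} commute with homotopy orbits; if it did, $E(\mathcal{C}_{\cobor})$ would already be coBorel as a spectral Mackey functor, which it is not. (Separately, your justification for the fourth arrow is wrong: $(-)^{hG}$ is a limit over $BG$, which has cells in all dimensions, so it is not a finite limit; the commutation of $L_{T(j)}$ with $(-)^{hG}$ for finite $G$ is itself a consequence of Kuhn's Tate vanishing, not of exactness.)

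Concretely, the inputs you do have -- the total composite is the norm, hence a $T(j)$-equivalence by Kuhn; the composite of the last two arrows is an equivalence by (3); the fourth arrow is an equivalence by Kuhn -- only yield that the third arrow and the \emph{composite} of the first two arrows are equivalences. To split the first two you must control the middle term $L_{T(j)}E(\mathcal{C}_{hG})$, and that is where the real content lies. The paper does this by realizing the entire chain as the $G$-fixed points of a sequence of $G$-spectra $E(\mathcal{C}_{\cobor})_{\cobor}\to E(\mathcal{C}_{\cobor})\to E(\mathcal{C}^{\bor})\to E(\mathcal{C}^{\bor})^{\bor}\to(L_{T(j)}E(\mathcal{C}^{\bor}))^{\bor}$, all of which are modules over $K_G(R)$; after $T(j)$-localization they are modules over $L_{T(j)}K_G(R)$, which is Borel-complete by (3), hence all Borel-complete by \Cref{moduleovercomplete}. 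Since every map in the chain is an equivalence on underlying spectra after $T(j)$-localization, Borel-completeness upgrades this to an equivalence on $G$-fixed points, giving all four arrows at once. That module-theoretic Borel-completeness argument for the coBorel terms is the missing idea in your proposal.
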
 
\begin{proof} 
(1) and (2) are equivalent by \Cref{rem:k1_local_g_spectra}; (2) is the special
case of (3) where $E=L_{T(j)}K(-)$ and $G=C_p$ acts trivially on
$\operatorname{Perf}(R)$; and (3) is a special case of (4).  Thus let us show (1) implies (3)
and (3) implies (4).

First, we show (1) implies (3). 
Since every $p$-group has a composition series with successive quotients cyclic of order $p$, we can use
d\'evissage to reduce to the case when $G = C_p$. 
Let $E_{C_p}(\mathcal{C}) = E(\mathcal{C}^{\bor})$ denote the $C_p$-spectrum
obtained by applying $E$ to the $C_p$-Mackey functor $\mathcal{C}^{\bor}$ in
$\catst$. 
By construction, $E_{C_p}(\mathcal{C})$ is a module in $C_p$-spectra over
$K_{C_p}(R)$. In fact, this follows because $\mathcal{C}^{\bor}$ is a module
over $\perf(R)^{\bor}$, and $\mathcal{U}_{C_p}(\mathcal{C}) \in
\mack_{C_p}(\mot)$ is therefore a module over $K_{C_p}(R)$. 
Since $E_{C_p}(\mathcal{C})$ is $T(j)$-local, we find that
$E_{C_p}(\mathcal{C})$ is a module over $L_{T(j)} K_{C_p}(R)$ and is therefore
Borel-complete by \Cref{moduleovercomplete}. 

Finally, we show (3) implies (4). 
To this end, we will produce a sequence of $G$-spectra which we will show to be
Borel-complete, and which on fixed points 
realizes the maps in \eqref{bigcompmap}. 
In fact, 
consider the $G$-Mackey functors $\mathcal{C}^{\bor}, \mathcal{C}_{\cobor}$
with values in $\catst$; we have a natural map $\mathcal{C}_{\cobor} \to
\mathcal{C}^{\bor}$. Both are modules over $\perf(R)^{\bor}$ in
$\msk_G(\catst)$. 
Applying $E$ and then $L_{T(j)}$, we obtain a sequence of $G$-spectra 
\begin{equation} \label{seqmaps} E(\mathcal{C}_{\cobor})_{\cobor} \to E(\mathcal{C}_{\cobor}) \to
E(\mathcal{C}^{\bor}) \to E(\mathcal{C}^{\bor})^{\bor} \to (L_{T(j)}
E(\mathcal{C}^{\bor}))^{\bor}. \end{equation}
Note that all of these $G$-spectra are modules over $K_G(R)$. 
Therefore, the $T(j)$-localization of the 
$G$-spectra in \eqref{seqmaps} are modules over the $G$-ring spectrum $L_{T(j)}
K_G(R)$, which is Borel-complete by (3) (applied to the trivial $G$-action on
$\mathrm{Perf}(R)$). Consequently, in view of \Cref{moduleovercomplete}, the $T(j)$-localizations of the $G$-spectra in
\eqref{seqmaps} are all Borel-complete. Finally, all the maps of $G$-spectra in \eqref{seqmaps}
induce $T(j)$-equivalences on underlying spectra; consequently, the
$T(j)$-localizations induce equivalences on $G$-fixed points, whence the
equivalences in (4). 
\end{proof} 

For future reference, we recall also the following lemma. 

\begin{lemma}\label{triv on morava}
Let $E_i$ denote Morava $E$-theory of height $i$.  For any $T(i)$-local $\mathbb{E}_\infty$-algebra $R$ over $E_i$, we have that $E_i^{hC_p}\otimes_{E_i}R\overset{\sim}{\rightarrow} R^{hC_p}$, and this is a free $R$-module of rank $p^i$.  Here we always have $C_p$ acting trivially, and the relative tensor product is algebraic, not (a priori) $T(i)$-localized.
\end{lemma}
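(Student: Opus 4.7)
The plan is to describe $E_i^{hC_p}$ explicitly as a finite free $E_i$-module and then deduce the identification with $R^{hC_p}$ formally in the category of $E_i$-modules; the $T(i)$-locality of $R$ is not actually needed for the core identification, only the $E_i$-algebra hypothesis.

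First I would compute $E_i^*(BC_p)$ using complex orientability of $E_i$: the standard calculation gives $E_i^*(BC_p)\cong E_i^*[\![x]\!]/[p](x)$. Since the formal group of $E_i$ has height exactly $i$, $[p](x)\equiv u\,x^{p^i}$ modulo the maximal ideal for a unit $u\in\pi_0E_i$, and Weierstrass preparation writes $[p](x)=v\cdot f(x)$ with $v\in E_i^*[\![x]\!]$ a unit and $f$ monic of degree $p^i$. Hence $E_i^*(BC_p)\cong E_i^*[x]/f(x)$ is finite free of rank $p^i$ over $\pi_*E_i$ and concentrated in even degrees; by the degenerate universal coefficient theorem in $E_i$-cohomology (applicable since $E_i^*(BC_p)$ is $\pi_*E_i$-free), $E_{i,*}(BC_p)$ is likewise finite free of rank $p^i$. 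Lifting a $\pi_0E_i$-basis of $\pi_0E_i^{hC_p}=E_i^0(BC_p)$ to $E_i$-linear maps $E_i\to E_i^{hC_p}$ and summing yields an $E_i$-module map $E_i^{\oplus p^i}\to E_i^{hC_p}$ which is an isomorphism on $\pi_0$ and hence, by $2$-periodicity, on all homotopy groups, so it is an $E_i$-module equivalence; analogously one obtains $E_i^{\oplus p^i}\xrightarrow{\sim}BC_{p+}\wedge E_i$. Thus both $E_i^{hC_p}$ and $BC_{p+}\wedge E_i$ are perfect $E_i$-modules, each identified with the $E_i$-linear dual of the other by the evident pairing.

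Finally I would assemble these inputs. For any $E_i$-algebra $R$, the $E_i$-module adjunction gives
\[
R^{hC_p}=F(BC_{p+},R)\simeq \operatorname{map}_{E_i}(BC_{p+}\wedge E_i,R)\simeq \operatorname{map}_{E_i}(E_i^{\oplus p^i},R)\simeq R^{\oplus p^i},
\]
and likewise $E_i^{hC_p}\otimes_{E_i}R\simeq E_i^{\oplus p^i}\otimes_{E_i}R\simeq R^{\oplus p^i}$. Under these identifications the canonical comparison map $E_i^{hC_p}\otimes_{E_i}R\to R^{hC_p}$ becomes the identity on each summand, since on the $j$-th summand it is induced by pushing the chosen basis class in $E_i^0(BC_p)$ forward to $R^0(BC_p)$ along the unit $E_i\to R$. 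Hence the comparison map is an equivalence and both sides are manifestly free of rank $p^i$ over $R$. The main obstacle is really just the initial Morava-theoretic computation of $E_i^*(BC_p)$ via complex orientability and Weierstrass preparation, which is classical; once that is in hand, everything reduces to the formal fact that the mapping spectrum out of a perfect $E_i$-module commutes with base change.
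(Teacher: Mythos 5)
Your opening computation of $E_i^*(BC_p)$ and the identification $E_i^{hC_p}\simeq E_i^{\oplus p^i}$ are fine and match the paper (which uses the Gysin sequence for $S^1\to BC_p\to BS^1$ plus the height-$i$ hypothesis). The gap is in the ``analogous'' claim that $BC_{p+}\wedge E_i\simeq E_i^{\oplus p^i}$, and in your assertion that the $T(i)$-locality of $R$ is not needed. There is no universal coefficient argument passing from $E_i$-cohomology to $E_i$-homology of $BC_p$: cohomology of an infinite complex is an inverse limit over skeleta while homology is a direct colimit, and the two determine each other only for dualizable (finite) spaces. Concretely, $E_{i,*}(BC_p)=\pi_*\bigl((E_i)_{hC_p}\bigr)$ contains divisible torsion in odd degrees (already for $i=1$: $\widetilde{KU}_1(BC_p)\cong(\mathbb{Z}/p^\infty)^{p-1}$), so $(E_i)_{hC_p}$ is \emph{not} a perfect $E_i$-module; equivalently, the fiber of the norm $(E_i)_{hC_p}\to E_i^{hC_p}$ is the nonzero Tate spectrum $\Sigma^{-1}E_i^{tC_p}$. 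This breaks your chain $F(BC_{p+},R)\simeq\mathrm{Hom}_{E_i}(BC_{p+}\wedge E_i,R)\simeq R^{\oplus p^i}$ at the second step, and the lemma genuinely fails without locality: for $R$ an infinite coproduct of copies of $E_i$, the functor $F(BC_{p+},-)$ does not commute with the coproduct precisely because $BC_{p+}\wedge E_i$ is not perfect.

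The missing ingredient is exactly where the paper invokes Kuhn's telescopic Tate vanishing (or Greenlees--Sadofsky): since $E_i$ is $T(i)$-local, $L_{T(i)}(E_i^{tC_p})=0$, so the norm induces an equivalence $L_{T(i)}\bigl((E_i)_{hC_p}\bigr)\xrightarrow{\ \sim\ }E_i^{hC_p}$, and it is only this \emph{localized} homotopy-orbit spectrum that is free of rank $p^i$ (on the dual basis). Then, for $R$ a $T(i)$-local $E_i$-module, $\mathrm{Hom}_{E_i}(BC_{p+}\wedge E_i,R)\simeq\mathrm{Hom}_{E_i}\bigl(L_{T(i)}((E_i)_{hC_p}),R\bigr)\simeq R^{\oplus p^i}$, and your concluding compatibility check of the comparison map goes through. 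So the architecture of your argument is salvageable, but you must route the duality through the $T(i)$-localization and you cannot drop the locality hypothesis on $R$.
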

\begin{proof}
As $E_i$ is complex oriented and even periodic, and the $p$-series $[p](t)\in
(\pi_0 E_i)[[t]]$ of its associated formal group law is a nonzerodivisor, the
Gysin sequence for $S^1\rightarrow BC_p\rightarrow BS^1$ shows that $E_i^{hC_p}
= E_i^{BC_p}$
is also even periodic, and $\pi_0 E_i^{hC_p} = (\pi_0 E_i)[[t]]/[p](t)$.  Since
the formal group has height $i$, this is a free module of rank $p^i$ over $\pi_0 E_i$.  Since $E_i$ is
$T(i)$-local, Kuhn's Tate vanishing result from \cite{Kuh04} (or the earlier
\cite{GS96}) shows that this implies that $L_{T(i)} ((E_i)_{hC_p})$ is free of rank $p^i$.  Mapping out to an arbitrary $T(i)$-local $E_i$-module $M$, we deduce that 
$$M^{hC_p} = \hom_{E_i} ( L_{T(i)} ((E_i)_{hC_p}), M) = 
\hom_{E_i}( L_{T(i)} ((E_i)_{hC_p}), E_i) \otimes_{E_i} M = 
E_i^{hC_p} \otimes_{E_i} M
,$$ implying all the desired claims.
\end{proof}

\subsection{The case of ordinary rings}

For the proof of \Cref{inductivevanishingthm}, we will need to give an independent treatment of a special case: namely, the case where $R$ is an ordinary ring.  Note that for $n = 1$, $T(1)$ and $K(1)$-local homotopy coincide 
\cite{Ma81, Mi81}, and for all $n$ we have $L_{T(n)}A=0$ if and only if
$L_{K(n)}A=0$ whenever $A$ is a ring spectrum, thanks to the nilpotence theorem;
see \cite[Lem.~2.3]{LMMT20}.  This will let us consider $K(n)$ instead of $T(n)$.

What we will really need for the main proof is the following.

\begin{lemma}\label{K(1)localthm}
Let $R$ be a commutative ring.  Then the assembly map 
$$K(R)_{hC_p}\rightarrow K(R[C_p])$$
is a $T(n)$-equivalence for all $n\geq 1$.\footnote{The hypothesis that $R$
should be commutative is a posteriori not necessary, by \Cref{combinationAB}, but it will be
used in the proof here.}
\end{lemma} 

\begin{proof}
In fact, we will show that the assembly map is a $T(n)$-local equivalence for
$n=1$. By Mitchell's theorem (\Cref{Mithm}), $L_{T(n)}K(\mathbb{Z})=0$ for $n\geq 2$, which
implies the statement also holds when $n\geq 2$ since both sides vanish.

Thus, assume $n = 1$. 
 Since $K(A)\rightarrow K(A[1/p])$ is a $K(1)$-equivalence for all
rings $A$ (see \cite{BCM20, LMMT20, MTR20} for three different proofs), we can reduce to the case where $R$ is a $\mathbb{Z}[1/p]$-algebra.  By
transfer along the degree $p-1$ extension
$\mathbb{Z}[1/p]\rightarrow\mathbb{Z}[1/p,\zeta_p]$, we can even assume $R$ is a
$\mathbb{Z}[1/p,\zeta_p]$-algebra. 
The claim 
is equivalent to the assertion that the $C_p$-spectrum $K( \perf(R)_{\cobor})$
(obtained by applying $K(-)$ to the $C_p$-Mackey functor
$\perf(R)_{\cobor}$) has the property that $(KU \otimes K(
\perf(R)_{\cobor}))/p$
is Borel-complete. 
Equivalently, we need to show that the map 
\begin{equation} 
\left( KU
\otimes K(R) \right)_{hC_p} \to 
KU \otimes K(R[C_p])  \label{thismap} \end{equation}
induces an equivalence upon $p$-completion. 

The $p$-completion of the map \eqref{thismap} admits a retraction as $(KU \otimes K(R))_{\hat{p}}$-modules by \Cref{splittingcor}.\footnote{For this argument,
cf.~\cite{Mal17}.} 
We will show that the $p$-completions of both sides are free $(KU \otimes K(R))_{\hat{p}}$-modules of
rank $p$, which will therefore imply the claim. 
Indeed, the fact that the $p$-completion of $ \left( KU\otimes
K(R)\right)_{hC_p}$ is free of rank $p$ follows from
\Cref{triv on morava}.  Moreover, the fact that $(KU\otimes
K(R[C_p]))_{\widehat{p}}$ is free of rank $p$ follows because the
standard idempotents in the group ring give $R[C_p]\simeq R^{\times p}$ as
$R$-algebras.
This proves the claim and hence the lemma. 
\end{proof}

\begin{remark}[A proof of Mitchell's theorem] 
The above methods also reprove the vanishing $L_{T(n)}K(\mathbb{Z})=0$ for $n\geq 2$
using similar methods; it suffices to prove
$L_{K(n)} K(\mathbb{Z}) =0$ for such $n$.   By Quillen's
localization sequence $K(\mathbb{F}_p)\rightarrow K(\mathbb{Z})\rightarrow
K(\mathbb{Z}[1/p])$ and Quillen's calculation
$K(\mathbb{F}_p)_{(p)}=\mathbb{Z}_{(p)}$, it suffices to show
$L_{K(n)}K(\mathbb{Z}[1/p])=0$, or again by a transfer argument
$L_{K(n)}K(\mathbb{Z}[1/p,\zeta_p])=0$.
We now run a similar argument as above. 
Let $R = \mathbb{Z}[1/p, \zeta_p]$. The map
\begin{equation} \label{Knfreemodulesplit} L_{K(n)} (E_n \otimes K(R)_{hC_p}) \to L_{K(n)}(E_n \otimes
K(R[C_p])) \end{equation}
admits a retraction of $L_{K(n)}(E_n \otimes K(R))$-modules by
\Cref{splittingcor}. 
We showed in the proof of \Cref{K(1)localthm} that $K(R[C_p])$ is
a free $K(R)$-module of rank $p$, whence 
the right-hand-side of 
\eqref{Knfreemodulesplit}
is a free $L_{K(n)}(E_n \otimes K(R))$-module of rank $p$. 
By \Cref{triv on morava}, the left-hand-side of \eqref{Knfreemodulesplit} is a
free module of rank $p^n$. 
In particular, we obtain a split injection from a free module of rank $p^n$ over
$L_{K(n)} (E_n \otimes K(R))$ to a free module of rank $p$. As $p^n > p$, this
forces 
$L_{K(n)} (E_n \otimes K(R)) = 0$, whence the claim. 
\end{remark}

\begin{remark} 
Suppose $R$ is an associative $\mathbb{Z}/p^n$-algebra for some $n \geq 1$. 
Then the assembly map $K(R)_{hC_p} \to K(R[C_p])$ is 
a $T(0)$-equivalence as well. 
In fact, after $T(0)$-localization this map is simply the map $K(R)[1/p] \to
K(R[C_p])[1/p]$. This map admits a section given by the augmentation $R[C_p] \to
R$ which is surjective with nilpotent kernel, and induces an equivalence on
$K(-)[1/p]$ by \cite[Th.~2.25]{LT19}. We thank the referee for this remark. 
\end{remark}

\subsection{Extending to higher heights}

In this subsection, we prove 
\Cref{ASthmintro} and \Cref{TateMitchell} together and in full generality via
an inductive argument on the height.

To obtain the desired bounds on the chromatic complexity on $K(R)$,
we will use the following converse to chromatic blueshift.  
\begin{theorem}[{Cf.~\cite[Prop.~4.7]{Hahn16} and 
\cite[Th.~9.8]{BSY22}}]
\label{reverse:blueshift}
Let $A$ be an $\mathbb{E}_\infty$-ring and let $i \geq 0$. 
Suppose that $L_{T(i)} (A^{tC_p} )=0$. Then $L_{T(j)} A =0 $ for $j \geq i+1$. 
\end{theorem} 

For the convenience of the reader, we include a deduction of 
\Cref{reverse:blueshift} from the main theorem of \cite{Hahn16}.

\begin{lemma} 
\label{fflatlem}
The $\pi_0(E_{i+1})$-algebra $\pi_0(E_{i+1}^{tC_p})$ has the property that 
$\pi_0(E_{i+1}^{tC_p})/(p, v_1, \dots, v_{i-1})$ is faithfully flat 
over the field
$\pi_0(E_{i+1})/(p, v_1, \dots, v_{i-1})[v_{i}^{-1}] = k((v_i))$. 
\end{lemma}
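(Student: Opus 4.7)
The plan is to explicitly compute $\pi_0(E_{i+1}^{tC_p})/(p,v_1,\dots,v_{i-1})$ using the $p$-series of the Lubin--Tate formal group, and then identify the structure map from $k((v_i))$ as a finite totally ramified field extension via Weierstrass preparation. By complex orientability of $E_{i+1}$, as in the proof of \Cref{triv on morava}, we have $\pi_0(E_{i+1}^{hC_p})=\pi_0(E_{i+1})[[t]]/[p](t)$, where $t$ is the Euler class of the tautological line bundle on $BC_p$ and $[p](t)\in\pi_0(E_{i+1})[[t]]$ is the $p$-series of the formal group law. The Tate construction $E_{i+1}^{tC_p}$ is obtained from $E_{i+1}^{hC_p}$ by inverting the Euler class of the reduced regular $C_p$-representation; since each $[j](t)$ for $1\leq j\leq p-1$ equals $t$ times a unit in $\pi_0(E_{i+1})[[t]]$ (as the leading coefficient $j$ is a unit in the $p$-local ring $\pi_0(E_{i+1})$), this Euler class equals $t^{p-1}$ times a unit, and the localization reduces to inversion of $t$. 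Hence $\pi_0(E_{i+1}^{tC_p})=\pi_0(E_{i+1})((t))/[p](t)$, where $((t))$ denotes formal Laurent series.

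Reducing modulo $(p,v_1,\dots,v_{i-1})$ yields the $k[[v_i]]$-algebra $A:=k[[v_i]]((t))/\overline{[p]}(t)$, where $\overline{[p]}(t)=v_i t^{p^i}+u\,t^{p^{i+1}}+\text{(higher powers of }t\text{)}$ and $u\in k^{\times}$ is the reduction of $v_{i+1}$. Dividing by the unit $t^{p^i}$, the element $F(v_i,t):=\overline{[p]}(t)/t^{p^i}\in k[[v_i,t]]$ satisfies $F(v_i,0)=v_i$ and is therefore Weierstrass regular of degree one in $v_i$. Weierstrass preparation yields a factorization $F=U\cdot(v_i-\phi(t))$ with $U\in k[[v_i,t]]^{\times}$ and $\phi(t)\in t\cdot k[[t]]$; solving $F(\phi(t),t)=0$ recursively --- and noting that the coefficients of $t^j$ in $\overline{[p]}(t)$ for $p^i<j<p^{i+1}$ all vanish at $v_i=0$ --- one finds $\phi(t)=-u\,t^{p^{i+1}-p^i}+O(t^{p^{i+1}-p^i+1})$, so $\phi(t)$ is invertible in $k((t))$, with $t$-adic valuation $p^i(p-1)$.

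Consequently $A=k[[v_i]]((t))/(v_i-\phi(t))$, and the substitution $v_i\mapsto\phi(t)$ induces an isomorphism $A\xrightarrow{\sim}k((t))$ under which the structure map from $k((v_i))$ becomes the inclusion of Laurent series fields $k((\phi(t)))\hookrightarrow k((t))$. The latter is a finite totally ramified field extension of degree $p^i(p-1)$, and in particular faithfully flat, as required.

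The main obstacle is the initial identification of $\pi_0(E_{i+1}^{tC_p})$ as a Laurent series quotient --- namely that the Tate construction is genuinely obtained by inverting $t$ in $\pi_0(E_{i+1}^{hC_p})$ rather than via some subtler completion; once this is in hand, the rest is a routine Weierstrass preparation, with the decisive input being that $v_{i+1}$ is a unit in $\pi_0 E_{i+1}$.
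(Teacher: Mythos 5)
Your proof is correct, but it takes a genuinely different and much more computational route than the paper's. The paper disposes of the lemma in two lines: by the blueshift theorems of Greenlees--Sadofsky and Hovey--Sadofsky, $E_{i+1}^{tC_p}$ has trivial $K(i+1)$-localization but nontrivial $K(i)$-localization, which shows that $\pi_0(E_{i+1}^{tC_p})/(p,v_1,\dots,v_{i-1})$ is nonzero with $v_i$ inverted; since the base $k((v_i))$ is a \emph{field}, faithful flatness is then automatic for any nonzero algebra. You instead prove the needed nonvanishing and $v_i$-invertibility by hand, identifying $\pi_0(E_{i+1}^{tC_p})$ as $\pi_0(E_{i+1})((t))/[p](t)$ and running Weierstrass preparation on the reduced $p$-series. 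What your approach buys is strictly more information --- the quotient is an explicit finite, totally ramified field extension of $k((v_i))$ of degree $p^{i}(p-1)$ --- at the cost of reproving (a special case of) the blueshift input rather than citing it; what the paper's approach buys is brevity, since it never needs the structure of $[p](t)$ at all.

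Two small points to tighten. First, your displayed formula $\overline{[p]}(t)=v_i t^{p^i}+u\,t^{p^{i+1}}+\cdots$ is not literally the shape of the reduced $p$-series: there are in general nonzero coefficients in degrees strictly between $p^i$ and $p^{i+1}$ (coming from the formal sum), all divisible by $v_i$. Your parenthetical remark handles this correctly, but the display should be stated as ``leading term $v_i t^{p^i}$, with all coefficients below degree $p^{i+1}$ lying in $(v_i)$ and the coefficient of $t^{p^{i+1}}$ a unit,'' rather than as an equality. Second, the ``main obstacle'' you flag --- that $E_{i+1}^{tC_p}$ is the localization of $E_{i+1}^{hC_p}$ at the Euler class, with no hidden completion --- is standard: $\widetilde{EC_p}$ is the filtered colimit of representation spheres $S^{nV}$, and $\pi_*$ commutes with the resulting filtered colimit, so $\pi_0(E_{i+1}^{tC_p})=\pi_0(E_{i+1}^{hC_p})[t^{-1}]$ exactly as you use it.
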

\begin{proof} 
Note that 
$\pi_0(E_{i+1}^{tC_p})/(p, v_1, \dots, v_{i-1})$ is nonzero and has $v_{i}$ invertible, since
$E_{i+1}^{tC_p}$ has trivial $K(i+1)$-localization but nontrivial
$K(i)$-localization by \cite{GS96, HS96}. 
Therefore, the result follows. 
\end{proof}

\begin{proof} 
The vanishing results for the telescopic localizations are equivalent to those for the
analogous $K(j)$-localizations, i.e., it suffices to show that 
$L_{K(j)} A = 0$ for $j \geq i+1$ (cf.~\cite[Lem.~2.3]{LMMT20}). 
By \cite[Th.~1.1]{Hahn16}, it suffices to show that $L_{K(i+1)} A =0
$. 
Therefore, without loss of generality, we may replace $A$ with $L_{K(i+1)}
(E_{i+1} \otimes A)$ and assume that $A$ is a $K(i+1)$-local
$\mathbb{E}_\infty$-$E_{i+1}$-algebra such that $L_{K(i)} (A^{tC_p}) =0$; we then need to show that $A = 0$.

Now we have
\begin{equation}  A^{tC_p} = A \otimes_{E_{i+1}} E_{i+1}^{tC_p},  \end{equation}
by \Cref{triv on morava}.
Furthermore, 
$\pi_*(E_{i+1}^{tC_p})$ is a localization of $\pi_* (E_{i+1}^{hC_p})$ and is
therefore flat over $\pi_*(E_{i+1})$, 
whence it follows from \Cref{fflatlem} (and the K\"unneth spectral sequence) that 
the map 
$$
\pi_0(A/(p, v_1, \dots, v_{i-1})[v_{i}^{-1}]) \to \pi_0(A^{tC_p}/(p, v_1,
\dots, v_{i-1})[v_i^{-1}])$$  
is faithfully flat. 
Our assumption is that the target vanishes since $A^{tC_p}$ is
$L_{i-1}$-local. Therefore, the source vanishes and we find that $L_{K(i)} A =
0$, whence $L_{K(i+1)} A =0$ by \cite[Th.~1.1]{Hahn16}; thus $A =0$ since it is
$K(i+1)$-local, as desired. 
\end{proof}

We will also need to use some of the results of \cite{LMMT20} on
the chromatic behavior of algebraic $K$-theory. 
\begin{theorem}[{ \cite[Th.~3.8]{LMMT20}}] 
\label{LMTvanish}
Let $A$ be an $\mathbb{E}_1$-ring and let $n \geq 1$. Then the map 
$K(A) \to K(L_n^{p, f} A)$ is a $T(i)$-local equivalence for $1 \leq i \leq n$. 
\end{theorem}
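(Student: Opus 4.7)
The plan is to exploit the smashing nature of $L_n^{p,f}$ to convert the statement into $T(i)$-acyclicity of the $K$-theory of an explicit subcategory, and then bootstrap from known results at height one. Since $L_n^{p,f}$ is a smashing Bousfield localization on $\sp$, for any $\mathbb{E}_1$-ring $A$ the induced functor $\perf(A)\to\perf(L_n^{p,f}A)$ is a Verdier quotient whose kernel is the thick subcategory $\mathcal{C}\subseteq\perf(A)$ of $L_n^{p,f}$-acyclic perfect $A$-modules. By the Thomason--Neeman--Waldhausen localization theorem, $K$-theory carries this to a fiber sequence $K(\mathcal{C})\to K(A)\to K(L_n^{p,f}A)$, so it suffices to show $K(\mathcal{C})$ is $T(i)$-acyclic for $1\leq i\leq n$.

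By the Hopkins--Smith thick subcategory theorem, the kernel of $L_n^{p,f}\colon\spw\to\spw$ is the thick subcategory of finite spectra of type $\geq n+1$, generated by any single finite $\mathbb{S}$-module $F$ of type $n+1$. Tensoring with $A$, the category $\mathcal{C}$ is the thick subcategory of $\perf(A)$ generated by $A\otimes_{\mathbb{S}}F$, and Morita theory identifies $K(\mathcal{C})$ with $K(\mathrm{End}_A(A\otimes F))=K(A\otimes\mathrm{End}_{\mathbb{S}}(F))$. Since $\mathrm{End}_{\mathbb{S}}(F)=\mathbb{D}F\otimes F$ is again type $n+1$ (type is preserved under duality and tensor products of finite complexes), its tensor with $A$ is $T(i)$-acyclic as a spectrum for $1\leq i\leq n$.

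So the question reduces to a purity statement: if $B$ is an $\mathbb{E}_1$-ring whose underlying spectrum is $T(i)$-acyclic for some $1\leq i\leq n$, then $K(B)$ is $T(i)$-acyclic. For $i=1$ this is essentially the Bhatt--Clausen--Mathew rigidity theorem giving $K(R)_{T(1)}\simeq K(R[1/p])_{T(1)}$; the general case would proceed by induction on $i$, reducing in the connective case via Dundas--Goodwillie--McCarthy to an analogous statement in $\TC$, where the Nikolaus--Scholze description of $\TC$ as the fiber of Frobenius-minus-canonical on $\TP$, together with the vanishing $T(i)\otimes F=0$, should imply the claim. The main obstacle is handling non-connective $A$, since DGM does not apply directly; one would likely need an auxiliary input comparing $K(A)$ with $K$ of a connective cover after $T(i)$-localization, or else a detour through descent for a suitable filtration, to reduce the general case to the connective one.
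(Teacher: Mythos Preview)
This theorem is \emph{not} proved in the paper under review; it is quoted as an external input from \cite{LMMT20}. There is therefore no ``paper's own proof'' to compare your attempt against.

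That said, your reduction is correct and is essentially how \cite{LMMT20} proceeds: the localization sequence identifies the fiber of $K(A)\to K(L_n^{p,f}A)$ with $K(\mathcal{C})$ for $\mathcal{C}=\langle A\otimes F\rangle\subseteq\perf(A)$, and Morita theory rewrites this as $K(A\otimes\mathrm{End}_{\mathbb{S}}(F))$, an $\mathbb{E}_1$-ring whose underlying spectrum is $L_n^{p,f}$-acyclic. So everything does reduce to the purity statement you isolate: if $B$ is an $\mathbb{E}_1$-ring with $L_n^{p,f}B=0$, then $L_{T(i)}K(B)=0$ for $1\leq i\leq n$.

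The gap, which you honestly flag, is that you have not proved this purity statement. A few remarks on your sketch. For $i=1$, the results you cite (\cite{BCM20} and companions) are stated for discrete rings; extending $L_{T(1)}K(B)\simeq L_{T(1)}K(B[1/p])$ to arbitrary $\mathbb{E}_1$-ring spectra already requires the methods of \cite{LMMT20}, so this is not quite a free base case. For higher $i$, your proposed route through $\TC$ via Dundas--Goodwillie--McCarthy is the right idea in the connective case, but the step ``$T(i)\otimes B=0$ implies $L_{T(i)}\TC(B)=0$'' is not immediate: $\TC^-$ and $\TP$ are homotopy \emph{limits} of $\mathrm{THH}(B)$, and $T(i)$-acyclicity does not obviously pass through limits. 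Finally, your identified obstacle for non-connective $A$ is real, and handling it is one of the main contributions of \cite{LMMT20}. In short: your outline is the correct skeleton of the \cite{LMMT20} argument, but the flesh on those bones is precisely the content of that paper, which the present paper takes as a black box.
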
 

Now we get into the proofs of 
\Cref{ASthmintro} and \Cref{TateMitchell}; the following lemma, equivalent to
\Cref{inductivevanishingthm} from the introduction (thanks to \Cref{generalreduction}),  will be the key
inductive step. 

\begin{lemma} 
\label{Cpgeovanishlemma}
Let $R$ be an $\mathbb{E}_\infty$-ring and let $i \geq 1$.  For the following conditions, we have the implications (1) $\Rightarrow$ (2) $\Rightarrow$ (3):

\begin{enumerate}
\item $L_{T(i)} R =0 $ and $L_{T(i)} K(R^{tC_p}) =0 $. 
\item $L_{T(i)} \Phi^{C_p}( K_{C_p}(R)) =
0.$
\item $L_{T(j)} K(R) = 0 \quad \text{ for all } \quad j \geq i+1.$
\end{enumerate}
\end{lemma}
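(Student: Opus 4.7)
The proof divides along the two implications and hinges on an identification of $\Phi^{C_p}K_{C_p}(R)$ as a cofibre of an assembly map. Since the $C_p$-action on $\perf(R)$ is trivial, the $C_p$-spectrum $K_{C_p}(R)$ has underlying spectrum $K(R)$ (with trivial $C_p$-action) and $C_p$-fixed points $K(R[C_p])$. The standard cofibre sequence $X_{hC_p}\to X^{C_p}\to\Phi^{C_p}X$ then identifies
\[ \Phi^{C_p}K_{C_p}(R) \simeq \operatorname{cofib}\bigl(K(R)_{hC_p}\to K(R[C_p])\bigr), \]
the cofibre of the $C_p$-assembly map. Thus (2) is equivalent to this assembly map being a $T(i)$-equivalence, and by \Cref{generalreduction} also equivalent to Borel-completeness of $L_{T(i)}K_{C_p}(R)$.

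For (1) $\Rightarrow$ (2), I would show the assembly map $K(R)_{hC_p}\to K(R[C_p])$ is a $T(i)$-equivalence via a sequence of reductions. First, using \Cref{LMTvanish} one reduces from general $\mathbb{E}_\infty$-rings to connective ones (via the connective cover or an appropriate intermediate $L^{p,f}$-localization). Second, for connective $R$, the Dundas--Goodwillie--McCarthy theorem gives
\[ K(R)\simeq K(\pi_0 R)\times_{TC(\pi_0 R)} TC(R), \]
and analogously for $R[C_p]$ (noting $\pi_0(R[C_p])=(\pi_0 R)[C_p]$). The assembly map decomposes accordingly. The $K(\pi_0 R)$-piece is handled by the discrete ring case \Cref{K(1)localthm}, which is essentially a counting argument. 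The $TC$-piece requires the Hesselholt--Nikolaus formula \cite{HN19} describing $TC$ as a fibre built from $\operatorname{THH}$, its $S^1$-action, and the cyclotomic Frobenius; here both hypotheses of (1) should enter, with $L_{T(i)}R=0$ giving control on $\operatorname{THH}$ and $L_{T(i)}K(R^{tC_p})=0$ killing the relevant cyclotomic-Frobenius contribution. I expect this $TC$-analysis to be the main obstacle: one must carefully track how the group-ring construction interacts with the cyclotomic structure and how the two hypotheses in (1) combine, after inverting the Frobenius, to give a $T(i)$-equivalence on the assembly map.

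For (2) $\Rightarrow$ (3), the argument is much shorter. Borel-completeness of $L_{T(i)}K_{C_p}(R)$ says that the natural Borelification map $K_{C_p}(R)\to K(R)^{\bor}$ (to the Borel $C_p$-spectrum associated to $K(R)$ with trivial $C_p$-action) is a $T(i)$-equivalence of $C_p$-spectra. Applying $\Phi^{C_p}$ and comparing the two cofibre sequences
\[ K(R)_{hC_p}\to K(R[C_p])\to \Phi^{C_p}K_{C_p}(R), \qquad K(R)_{hC_p}\to K(R)^{hC_p}\to K(R)^{tC_p} \]
via the octahedral axiom yields $L_{T(i)}\Phi^{C_p}K_{C_p}(R)\simeq L_{T(i)}K(R)^{tC_p}$. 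Condition (2) therefore forces $L_{T(i)}K(R)^{tC_p}=0$. Since $K(R)$ is itself an $\mathbb{E}_\infty$-ring, \Cref{reverse:blueshift} applied to $K(R)$ now yields $L_{T(j)}K(R)=0$ for all $j\geq i+1$, completing the proof.
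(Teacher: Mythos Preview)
Your identification of $K_{C_p}(R)^{C_p}$ is wrong, and this throws off the whole structure of your argument for (1) $\Rightarrow$ (2). By definition $K_{C_p}(R) = K(\perf(R)^{\bor})$, so its $C_p$-fixed points are $K\bigl(\fun(BC_p,\perf(R))\bigr)$, not $K(R[C_p])$. The $C_p$-spectrum whose fixed points are $K(R[C_p])$ is the \emph{coBorel} one, $K(\perf(R)_{\cobor})$ (cf.\ \Cref{assembly maps}). Consequently $\Phi^{C_p}K_{C_p}(R)$ is \emph{not} the cofibre of the assembly map; the assembly cofibre is $\Phi^{C_p}K(\perf(R)_{\cobor})$.

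The missing step, and the correct entry point for the hypothesis $L_{T(i)}K(R^{tC_p})=0$, is the cofibre sequence of $C_p$-semiMackey functors
\[
\perf(R)_{\cobor} \longrightarrow \perf(R)^{\bor} \longrightarrow \perf(R)^{\bor}/\perf(R)_{\cobor}.
\]
The Verdier quotient on the right has trivial underlying category and $C_p$-fixed points $\fun(BC_p,\perf(R))/\perf(R[C_p])$, which is $R^{tC_p}$-linear (cf.\ \cite[Sec.~I.3]{NS18}). Hence $L_{T(i)}K(R^{tC_p})=0$ kills its $K$-theory $T(i)$-locally, and one is reduced to showing $L_{T(i)}\Phi^{C_p}K(\perf(R)_{\cobor})=0$, i.e.\ that the assembly map is a $T(i)$-equivalence. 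From here your outline is essentially right: reduce to connective $R$, apply DGM, use \Cref{K(1)localthm} for $\pi_0 R$. But in the $\TC$ step the Hesselholt--Nikolaus formula for the $p$-completed assembly cofibre is $\mathrm{THH}(A;\mathbb{Z}_p)_{h\mathbb{T}_p}[1]\otimes C_p$, which vanishes $T(i)$-locally using only $L_{T(i)}A=0$; the hypothesis on $K(R^{tC_p})$ plays no role there---it was already spent on the Verdier quotient above.

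Your (2) $\Rightarrow$ (3) is correct in spirit and matches the paper, though your first displayed cofibre sequence has the same fixed-points error. The cleaner formulation is simply that Borelification gives an $\mathbb{E}_\infty$-ring map $\Phi^{C_p}K_{C_p}(R)\to K(R)^{tC_p}$; if the source is $T(i)$-acyclic then so is the target (as an algebra over the zero ring), and \Cref{reverse:blueshift} finishes.
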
 
\begin{proof}
In the following proof, we use the following notation: given a  $C_p$-Mackey
functor $\mathcal{M}$ with values in $\catst$, we simply write $K(\mathcal{M}),
\TC(\mathcal{M})$, etc.~for the associated $C_p$-spectrum obtained by applying
$K, \TC$, etc.
With this notation, we have that $K_{C_p}(R) = K( \perf(R)^{\bor})$; note that
our hypotheses imply that this is an $\mathbb{E}_\infty$-algebra in $C_p$-spectra.

We start by showing (1) implies (2). 
First, we reduce to the case where $R$ is connective. Indeed, 
given a coconnective spectrum $X$, the $C_p$-Tate construction $X^{tC_p}$ is
annihilated by $L_n^{p, f}$ for any
$n \geq 0$; this follows by d\'evissage and filtered colimits (note that
$(-)^{tC_p}$ commutes with filtered colimits on coconnective spectra) from the case
where $X$ is an Eilenberg--MacLane spectrum in a single degree. 
Therefore, the map $(\tau_{\geq 0} R)^{tC_p} \to R^{tC_p}$ induces an
equivalence on $L_n^{p, f}$-localizations for any $n  \geq 0$. 
It follows that $L_{T(i)} K( (\tau_{\geq 0} R)^{tC_p}) \xrightarrow{\sim}
L_{T(i)} K(R^{tC_p})$ by \Cref{LMTvanish}. Therefore, the hypotheses of (1)
hold for $R$ if and only if they hold for $\tau_{\geq 0}R $, so we may assume $R$ is
connective; note also that the conclusion of (2) for $\tau_{\geq 0}R$ implies
it for $R$. 

Now we have the categorical Mackey subfunctor 
$\perf(R)_{\cobor} \subseteq \perf(R)^{\bor}$. 
By definition, this map of categorical $C_p$-Mackey functors 
is an equivalence on underlying objects (both have underlying object of $\catst$
given by $\perf(R)$), and on $C_p$-fixed points it is given by the inclusion
$\perf(R[C_p]) \subseteq \fun(BC_p, \perf(R))$, see
\Cref{fullyfaithfulnormmap} and \Cref{assembly maps}. 
The Verdier quotient of categorical Mackey functors
$\perf(R)^{\bor}/\perf(R)_{\cobor}$ is therefore a $\catst$-valued
$C_p$-Mackey functor
with trivial underlying object and $C_p$-fixed points given by the Verdier
quotient 
$\fun(BC_p, \perf(R))/\perf(R[C_p])$, which is linear over the
$\mathbb{E}_\infty$-ring $R^{tC_p}$ (cf.~\cite[Sec.~I.3]{NS18}). 
Applying $K$-theory, 
we obtain a cofiber sequence 
of $C_p$-spectra
\[ 
K( \perf(R)_{\cobor}) \to K_{C_p}(R) \to K( \perf(R)^{\bor}/\perf(R)_{\cobor}).
  \]
If we suppose that $L_{T(i)} K(R^{tC_p})=0$, then by the above, it follows that $L_{T(i)} \Phi^{C_p}(-)$ annihilates the last
term. 
Therefore, in order to prove (2), it suffices to show that 
$L_{T(i)} \Phi^{C_p} K( \perf(R)_{\cobor}) = 0$. 

Now $K( \perf(R)_{\cobor})$ of any $\mathbb{E}_1$-algebra is 
group-ring $K$-theory, cf.~\Cref{assembly maps}. Thus, since $R$ is now
assumed connective, we may apply the Dundas--Goodwillie--McCarthy
theorem \cite{DGM13} to obtain a pullback square 
of $C_p$-spectra,
\[ \xymatrix{
K( \perf(R)_{\cobor}) \ar[d]  \ar[r] & \TC( \perf(R)_{\cobor}) \ar[d]  \\
K( \perf(\pi_0 R)_{\cobor}) \ar[r] & \TC( \perf( \pi_0 R)_{\cobor}) .  \\
}\]
Here $K( \perf( \pi_0 R)_{\cobor})$ has trivial $T(i)$-localized geometric
fixed points by \Cref{K(1)localthm}. 
Thus, to prove (2), it suffices to prove that 
$L_{T(i)} \Phi^{C_p} \TC( \perf(A)_{\cobor} ) =0 $ whenever $A$ is a
connective $\mathbb{E}_1$-ring with $L_{T(i)}A=0$ (as this holds for both $A=R$ by hypothesis and $A=\pi_0R$ trivially).

Now we use an expression for the $p$-completion of 
$\Phi^{C_p} \TC(\perf(A)_{\cobor})$ given in the work of Hesselholt--Nikolaus, 
\cite[Th.~1.4.1]{HN19}. 
Indeed, 
$\Phi^{C_p} \TC(\perf(A)_{\cobor})$
is the cofiber of the assembly map 
$\TC(A) \otimes BC_{p+} \to \TC(A[C_p])$ and \emph{loc.~cit.} shows that after
$p$-completion, this cofiber becomes $\mathrm{THH}(A;
\mathbb{Z}_p)_{h\mathbb{T}_p}[1] \otimes C_p$ for $\mathbb{T}_p$ the $p$-fold cover
of the circle $\mathbb{T}$. In particular, our assumption that $L_{T(i)} A =0$
thus implies that 
$L_{T(i)}\left(\Phi^{C_p} \TC( \perf(A)_{\cobor})\right) = 0$
as desired. 
This shows (1) implies (2).

Finally, we show (2) implies (3).
The Borel-completion of the $C_p$-spectrum $K_{C_p}(R)$ is the Borel-complete
$C_p$-spectrum associated to the trivial $C_p$-action on
$K(R)$; in particular, we have a map of $\mathbb{E}_\infty$-rings 
$\Phi^{C_p} K_{C_p}(R) \to K(R)^{tC_p}$. 
It follows from (2) that $L_{T(i)} (K(R)^{tC_p}) = 0$,
whence (3) by \Cref{reverse:blueshift}.  
\end{proof} 

We now prove 
\Cref{TateMitchell} and \Cref{ASthmintro} from the introduction, by starting with their special case \Cref{chromatic}, which we restate here:

\begin{theorem}
Let $n\geq 0$, and let $\mathcal{C}$ be an $L_n^{p,f}$-local stable $\infty$-category.  Then $L_{T(m)}K(\mathcal{C})=0$ for all $m\geq n+2$, and for any finite $p$-group $G$ acting on $\mathcal{C}$ we have
$$L_{T(n+1)}K(\mathcal{C}^{hG})\overset{\sim}{\rightarrow}(L_{T(n+1)}K(\mathcal{C}))^{hG}.$$
\end{theorem}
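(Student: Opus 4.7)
The plan is to induct on $n \geq 0$, at each stage applying \Cref{Cpgeovanishlemma} to the $\mathbb{E}_\infty$-ring $R = L_n^{p,f}\mathbb{S}$ with $i = n+1$. Observe first that an $L_n^{p,f}$-local stable $\infty$-category $\mathcal{C}$ is automatically $R$-linear, so $K(\mathcal{C})$ is a module over $K(R)$ and, more generally, for any finite $p$-group $G$ acting on $\mathcal{C}$, the $G$-spectrum $K_G(\mathcal{C})$ is a module over $K_G(R)$ in $\mack_G(\sp)$. Both assertions of the theorem are module-level consequences of corresponding assertions for $R$ itself: the vanishing $L_{T(m)}K(\mathcal{C}) = 0$ for $m \geq n+2$ will follow from $L_{T(m)} K(R) = 0$ (a module over a zero ring spectrum vanishes), and the descent statement will follow from the Borel-completeness of $L_{T(n+1)} K_{C_p}(R)$ via \Cref{moduleovercomplete} combined with \Cref{generalreduction}, the latter bootstrapping the $C_p$ case to arbitrary finite $p$-groups and arbitrary $R$-linear categories via d\'evissage along a composition series.

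Given this reduction, both parts of the theorem are then immediate from conclusions (2) and (3) of \Cref{Cpgeovanishlemma} applied to $R = L_n^{p,f}\mathbb{S}$. Thus the only remaining task is to verify hypothesis (1) of that lemma at height $n+1$:
$$L_{T(n+1)} R = 0 \qquad \text{and} \qquad L_{T(n+1)} K(R^{tC_p}) = 0.$$
The first vanishing is immediate from the defining property of $L_n^{p,f}$. For the second, I would invoke Kuhn's blueshift theorem \cite{Kuh04}: $R^{tC_p}$, taken with respect to the trivial $C_p$-action, is $L_{n-1}^{p,f}$-local, using the convention $L_{-1}^{p,f}\mathbb{S} = 0$. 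In the base case $n = 0$, this forces $R^{tC_p} = 0$, and there is nothing to check. For $n \geq 1$, apply the inductive hypothesis to the $L_{n-1}^{p,f}$-local stable $\infty$-category $\perf(R^{tC_p})$: its vanishing part yields $L_{T(m)}K(R^{tC_p}) = 0$ for every $m \geq n+1$, in particular for $m = n+1$. This closes the induction.

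The main obstacle is essentially nonexistent for this particular deduction: all the real content sits inside \Cref{Cpgeovanishlemma} (which absorbs inputs from \cite{LMMT20}, \cite{HN19}, and \cite{Hahn16}) and \Cref{generalreduction}. The only point requiring care is aligning the inductive indexing with the one-step shift in chromatic height produced by Kuhn's blueshift, including the base case $n=0$ handled cleanly via the convention $L_{-1}^{p,f}\mathbb{S} = 0$.
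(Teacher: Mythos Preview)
Your proposal is correct and follows essentially the same route as the paper's proof: set $R = L_n^{p,f}\mathbb{S}$, apply \Cref{Cpgeovanishlemma} (together with \Cref{generalreduction}) with $i = n+1$, and verify hypothesis (1) by induction on $n$ using Kuhn's blueshift theorem; the base case $n=0$ is handled by the vanishing of $(\mathbb{S}[1/p])^{tC_p}$. The only cosmetic difference is that the paper phrases the inductive step as ``$K(R^{tC_p})$ is a module over $K(L_{n-1}^{p,f}\mathbb{S})$'' whereas you apply the inductive hypothesis directly to $\perf(R^{tC_p})$ as an $L_{n-1}^{p,f}$-local category, which amounts to the same thing.
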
 
\begin{proof}
Taking $R=L_n^{p,f}\mathbb{S}$, by applying \Cref{Cpgeovanishlemma} and \Cref{generalreduction} it suffices to show that $L_{T(n+1)}R=0$ and $L_{T(n+1)}K(R^{tC_p})=0$.  The first vanishing follows from the definition.  As for the second vanishing, we use induction on $n$.  When $n=0$ we have $R=\mathbb{S}[1/p]$ so $R^{tC_p}=0$.  When $n>0$, Kuhn's blueshift theorem \cite{Kuh04} shows that $R^{tC_p}$ is $L_{n-1}^{p,f}$-local, whence $K(R^{tC_p})$ is a module over $K(L_{n-1}^{p,f}\mathbb{S})$ and we conclude by induction.
\end{proof}

As a corollary of combining this theorem with
the results of \cite{LMMT20} (in particular, \Cref{LMTvanish}), one obtains the following
purity result in $T(n)$-local $K$-theory; this also appears in 
 \cite{LMMT20} and
is explored further there. 
 
\begin{corollary} 
\label{purityTnlocal}
Let $A$ be an $\mathbb{E}_1$-ring, and let $n \geq 1$. Then the map 
$A \to L_{T(n-1) \oplus T(n)} A$ induces an equivalence on $L_{T(n)}
K(-)$. 
\end{corollary}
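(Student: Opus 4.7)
The plan is to factor $A \to L_{T(n-1)\oplus T(n)}A$ through $L_n^{p,f}A$ as
$$A \to L_n^{p,f}A \to L_{T(n-1)\oplus T(n)}A,$$
which is well-defined because $L_{T(n-1)\oplus T(n)}A$ is automatically $L_n^{p,f}$-local (being local at the smaller wedge $T(n-1)\oplus T(n)$ is a stronger condition than being local at $T(0)\oplus\cdots\oplus T(n)$). By \Cref{LMTvanish} the first map induces a $T(n)$-equivalence on $K$-theory, so the task reduces to showing $K(B) \to K(LB)$ is a $T(n)$-equivalence, where $B := L_n^{p,f}A$ and $LB := L_{T(n-1)\oplus T(n)}A$.  (When $n=1$ the second map is the identity; we focus on $n\geq 2$.)

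First I would set up a $K$-theoretic fiber sequence. Since $L_{T(n-1)\oplus T(n)}$ is smashing on spectra, base change $M\mapsto LB\otimes_B M$ defines a smashing Bousfield localization $\mathrm{Mod}(B)\to\mathrm{Mod}(LB)$. By the Neeman--Thomason theorem, restriction to compact objects yields a Verdier fiber sequence
$$\perf(B)_{\ker} \to \perf(B) \to \perf(LB)$$
in $\catst$, where $\perf(B)_{\ker}$ consists of those perfect $B$-modules $M$ with $L_{T(n-1)\oplus T(n)}M = 0$. Applying $K$-theory via the localization theorem of \cite{BGT13} produces a fiber sequence of spectra
$$K(\perf(B)_{\ker}) \to K(B) \to K(LB).$$

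The crucial and most delicate step is to show that $\perf(B)_{\ker}$ is $L_{n-2}^{p,f}$-local as a stable $\infty$-category. Any $M\in\perf(B)_{\ker}$ is $L_n^{p,f}$-local as a spectrum (as a module over the $L_n^{p,f}$-local $\mathbb{E}_1$-ring $B$) and satisfies $T(n-1)\otimes M = T(n)\otimes M = 0$. I claim the fiber $F := \mathrm{fib}(L_n^{p,f}\mathbb{S}\to L_{n-2}^{p,f}\mathbb{S})$ lies in the localizing subcategory $\mathrm{Loc}(T(n-1), T(n))$: decomposing the map through $L_{n-1}^{p,f}\mathbb{S}$, the octahedral axiom exhibits $F$ as an extension of the successive monochromatic sphere pieces $\mathrm{fib}(L_n^{p,f}\mathbb{S}\to L_{n-1}^{p,f}\mathbb{S})$ and $\mathrm{fib}(L_{n-1}^{p,f}\mathbb{S}\to L_{n-2}^{p,f}\mathbb{S})$, each of which lies in the localizing subcategory generated by the corresponding telescope.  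Tensoring with $M$ then gives $F\otimes M = 0$, so $M\to L_{n-2}^{p,f}M$ is an equivalence. Since mapping spectra into $L_{n-2}^{p,f}$-local objects are themselves $L_{n-2}^{p,f}$-local, $\perf(B)_{\ker}$ is $L_{n-2}^{p,f}$-local.

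Finally, applying \Cref{chromatic} at height $n-2$ gives $L_{T(n)}K(\perf(B)_{\ker}) = 0$ since $n\geq (n-2)+2$, and $T(n)$-localizing the fiber sequence then yields the required equivalence $L_{T(n)}K(B)\overset{\sim}{\to}L_{T(n)}K(LB)$. The main obstacle is the chromatic lemma identifying $F$ as belonging to $\mathrm{Loc}(T(n-1), T(n))$; this uses the standard identification of the monochromatic sphere at height $m$ as being $T(m)$-local and, more specifically, in the localizing subcategory generated by $T(m)$, which follows from the smashing property of $L_{T(m)}$.
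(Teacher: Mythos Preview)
Your proof has a genuine gap in the construction of the Verdier fiber sequence.  The assertion that ``$L_{T(n-1)\oplus T(n)}$ is smashing on spectra'' is false: smashingness of telescopic localizations is essentially the telescope conjecture, now known to fail at heights $\geq 2$.  More importantly, even though $LB\otimes_B(-)$ does turn out to be an idempotent smashing functor on $\mathrm{Mod}(B)$, the Neeman--Thomason theorem requires the \emph{kernel} of this localization to be generated by compact objects of $\mathrm{Mod}(B)$, and this fails.  Indeed, the kernel consists exactly of the $L_{n-2}^{p,f}$-local $B$-modules, and $L_{n-2}^{p,f}B$ is not compact in $\mathrm{Mod}(B)$.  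Concretely, take $B=L_n^{p,f}\mathbb{S}$: every perfect $B$-module is $L_n^{p,f}F$ for some finite $F$, and such an object is $T(n-1)\oplus T(n)$-acyclic only if $F$ has type $\geq n+1$, in which case $L_n^{p,f}F=0$.  So your $\perf(B)_{\ker}$ is zero, yet $\perf(B)\to\perf(LB)$ is not fully faithful (already on endomorphisms of the unit, $B\to LB$ is not an equivalence).  Thus no Verdier sequence of the asserted form exists, and there is no fiber sequence $K(\perf(B)_{\ker})\to K(B)\to K(LB)$.

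The paper's proof avoids this by working with the genuinely smashing (finite) localization $L_{n-2}^{p,f}$ rather than $L_{T(n-1)\oplus T(n)}$, and by using a pullback square rather than a single fiber sequence.  The point is that the \emph{fiber} of $\mathbb{K}(A)\to\mathbb{K}(L_{n-2}^{p,f}A)$, namely $\mathbb{K}$ of the thick subcategory $\langle A\otimes F_{n-1}\rangle$, is unchanged upon replacing $A$ by $L_{T(n-1)\oplus T(n)}A$ --- precisely because $A\to L_{T(n-1)\oplus T(n)}A$ becomes an equivalence after tensoring with the type $n-1$ complex $F_{n-1}$.  Your argument that an $L_n^{p,f}$-local, $T(n-1)\oplus T(n)$-acyclic spectrum is $L_{n-2}^{p,f}$-local is correct and is exactly what makes this work; but it feeds into the paper's square, not into a direct fiber sequence.

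A smaller issue: your justification for $F\in\mathrm{Loc}(T(n-1),T(n))$ via ``the smashing property of $L_{T(m)}$'' again invokes the telescope conjecture.  The claim itself is correct, but for an elementary reason you did not state: for a type $m$ complex $F_m$ one has $L_m^{p,f}F_m\simeq T(m)$ (since $F_m/v_m$ is type $m+1$, hence $L_m^{p,f}$-acyclic), so the monochromatic layer $\mathrm{fib}(L_m^{p,f}\mathbb{S}\to L_{m-1}^{p,f}\mathbb{S})$, which lies in $\mathrm{Loc}(L_m^{p,f}F_m)$, lies in $\mathrm{Loc}(T(m))$.
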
 
\begin{proof} 
By \Cref{LMTvanish}, we may assume that 
$A$ is already $L_n^{p, f}$-local. 
We have a pullback square
\[ \xymatrix{
\mathbb{K}(A) \ar[d]  \ar[r] &  \mathbb{K}( L_{T(n-1) \oplus T(n)} A ) \ar[d]  \\
\mathbb{K}( L_{n-2}^{p, f} A) \ar[r] &  \mathbb{K}( L_{n-2}^{p, f} ( L_{T(n-1) \oplus T(n)} A)),
}\]
since both vertical homotopy fibers are given by the (non-connective) $K$-theory of the thick subcategory of
$\perf(A)$ 
generated by $A \otimes F$, for $F$ a finite type $n-1$ complex. 
The result now follows from \Cref{chromatic}, which shows that the spectra
on the bottom row are $T(n)$-acyclic. 
\end{proof} 

Now we can input this back in to our arguments and obtain \Cref{TateMitchell} and \Cref{ASthmintro}, which we combine and restate here.

\begin{theorem}
\label{combinationAB}
Let $R$ be an $\mathbb{E}_\infty$-ring.  
\begin{enumerate}
\item  
Suppose $L_{T(n)} (R^{tC_p}) = 0$ for some $n \geq 0$.  Let $\mathcal{C}$ be an $R$-linear 
idempotent-complete stable $\infty$-category 
equipped with an $R$-linear action of a finite $p$-group $G$. Let $E$ be an additive
invariant of $R$-linear idempotent-complete stable $\infty$-categories. Then the natural maps induce
equivalences
$$L_{T(n+1)}(E(\mathcal{C})_{hG}) \xrightarrow{\sim} L_{T(n+1)}
E(\mathcal{C}_{hG})\xrightarrow{\sim}L_{T(n+1)}E(\mathcal{C}^{hG})
\xrightarrow{\sim} L_{T(n+1)}( E(\mathcal{C})^{hG}) 
\xrightarrow{\sim} ( L_{T(n+1)} E(\mathcal{C}))^{hG}.$$
\item
Suppose $L_{T(n+1)} R = 0$ for some $n \geq -1$. 
Then $L_{T(j)} K(R) =0 $ for $j \geq n+2$. 
\end{enumerate}
\end{theorem}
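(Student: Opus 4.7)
Both statements reduce to the already-established \Cref{chromatic}: part (2) via Hahn's theorem \cite{Hahn16}, and part (1) via \Cref{reverse:blueshift}. The main additional ingredient needed for part (1) is \Cref{generalreduction}, which promotes $K$-theoretic descent to descent for an arbitrary additive invariant.

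For part (2), given $L_{T(n+1)}R = 0$, Hahn's theorem propagates this to $L_{T(m)}R = 0$ for all $m \geq n+1$. In the main case $n \geq 0$, this says $R$ is $L_n^{p,f}$-local; since the mapping spectra in $\perf(R)$ are $R$-modules, $\perf(R)$ is an $L_n^{p,f}$-local stable $\infty$-category, and the vanishing part of \Cref{chromatic} yields $L_{T(j)}K(R) = 0$ for $j \geq n+2$. The degenerate case $n=-1$ lies outside the range of \Cref{chromatic}; here Hahn gives $L_j^{p,f}R = 0$ for every $j \geq 0$, and \Cref{LMTvanish} directly furnishes the $T(j)$-equivalence $K(R) \xrightarrow{\sim} K(L_j^{p,f}R) = K(0) = 0$ for $j \geq 1$, settling this case.

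For part (1), I would apply the converse blueshift \Cref{reverse:blueshift} to $R$, converting the hypothesis $L_{T(n)}(R^{tC_p})=0$ into $L_{T(j)}R=0$ for all $j \geq n+1$. Consequently $R$, and hence every $R$-linear stable $\infty$-category $\mathcal{C}$, is $L_n^{p,f}$-local. The descent clause of \Cref{chromatic} then delivers the $K$-theoretic equivalence $L_{T(n+1)}K(\mathcal{C}^{hG}) \xrightarrow{\sim} (L_{T(n+1)}K(\mathcal{C}))^{hG}$ for any finite $p$-group $G$ acting on $\mathcal{C}$.

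The step I expect to be the most delicate is bootstrapping this $K$-theoretic descent to the full chain of equivalences for an arbitrary additive invariant $E$. For this I would specialize the displayed equivalence to $\mathcal{C} = \perf(R)$ equipped with the trivial $C_p$-action and $G=C_p$; this recovers precisely the Borel-completeness of the $C_p$-spectrum $L_{T(n+1)}K_{C_p}(R)$, which is condition (2) of \Cref{generalreduction} with $j=n+1$. The implications $(2) \Rightarrow (3) \Rightarrow (4)$ of that proposition then deliver the displayed chain of equivalences for every additive invariant $E$ and every $R$-linear stable $\infty$-category with a finite $p$-group action, completing the argument. The genuinely hard inductive work has already been absorbed into \Cref{chromatic} via \Cref{Cpgeovanishlemma}; what remains here is essentially a packaging exercise that combines the chromatic ingredients with the equivariant formalism of Section~2.
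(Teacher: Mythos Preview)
There is a genuine gap in both parts: the implication ``$L_{T(m)}R = 0$ for all $m \geq n+1$, therefore $R$ is $L_n^{p,f}$-local'' is false. A spectrum is $L_n^{p,f}$-local precisely when $R \otimes F = 0$ for every finite type $(n{+}1)$ complex $F$; but the vanishing of the telescopic localizations only gives $T(m) \otimes R \otimes F = 0$ for all $m \geq 0$, which does not force $R \otimes F = 0$. Concretely, take $R = H\mathbb{Z}_p$ and $n = 0$: one has $L_{T(m)}H\mathbb{Z}_p = 0$ for all $m \geq 1$, yet $H\mathbb{Z}_p \otimes \mathbb{S}/p = H\mathbb{F}_p \neq 0$, so $H\mathbb{Z}_p$ is not $L_0^{p,f}$-local (indeed $L_0^{p,f}H\mathbb{Z}_p = H\mathbb{Q}_p$). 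Ironically, your treatment of the degenerate case $n=-1$ in part (2) --- passing through $L_j^{p,f}R$ via \Cref{LMTvanish} --- is exactly the maneuver needed in general; you just abandoned it for $n \geq 0$. The same false locality claim recurs in your argument for part (1), and there it is harder to repair because $\mathcal{C}$ is an arbitrary $R$-linear category rather than $\perf$ of a ring, so \Cref{LMTvanish} is not directly available.

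The paper sidesteps this by never trying to make $R$ itself $L_n^{p,f}$-local. For (2) it uses the purity result \Cref{purityTnlocal}, namely $L_{T(j)}K(A) \simeq L_{T(j)}K(L_{T(j-1)\oplus T(j)}A)$: Hahn gives $L_{T(j-1)}R = L_{T(j)}R = 0$ for each $j \geq n+2$, so the right-hand side is $L_{T(j)}K(0) = 0$. For (1) the paper does not route through \Cref{chromatic} at all, but instead invokes \Cref{Cpgeovanishlemma} and \Cref{generalreduction} directly, reducing to the two vanishing statements $L_{T(n+1)}R = 0$ (from \Cref{reverse:blueshift}) and $L_{T(n+1)}K(R^{tC_p}) = 0$; for the latter, Hahn's theorem upgrades the hypothesis to $L_{T(n)}(R^{tC_p}) = L_{T(n+1)}(R^{tC_p}) = 0$, and purity again finishes.
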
 
\begin{proof} 
For (1), by \Cref{Cpgeovanishlemma} and \Cref{generalreduction} it suffices to show that $L_{T(n+1)}R=0$ and $L_{T(n+1)}K(R^{tC_p})=0$.  The first follows from \Cref{reverse:blueshift}; for the second, we also get the weaker vanishing $L_{T(n+1)}(R^{tC_p})=0$ (Hahn's theorem, \cite{Hahn16}) so this follows from the purity result \Cref{purityTnlocal}.  For (2), Hahn's theorem shows $L_{T(n+2)}R=0$ as well, so this follows from \Cref{purityTnlocal}.
\end{proof}

\begin{remark} 
The converse of part (2) of \Cref{combinationAB} is proved (for $p$-local
$\mathbb{E}_\infty$-rings) in
\cite[Th.~9.11]{BSY22} using the nonvanishing of $L_{T(n+1)} K(E_n)$ proved in
\cite{Yuan21}. 
\end{remark}

\subsection{Comparison with the redshift conjectures}
Finally, we discuss the relationship 
of our results 
to redshift. 
Conjecture 4.2 of \cite{AR} predicts that if $A \to B$ is a $K(n)$-local
$G$-Galois extension of $\mathbb{E}_\infty$-rings in the sense of
\cite{Rognes08}, then 
$L_{T(n+1)} K(A) \simeq L_{T(n+1)} ( K(B)^{hG})$. 
Here we will prove this conjecture in the case where $G$ is a $p$-group. In
fact, we will allow the 
(a priori more general) case of a $T(n)$-local $G$-Galois extension. 

We recall that the condition of being a $T(n)$-local $G$-Galois extension, in
which the map $B\otimes_AB\rightarrow \prod_G B$ need only be a
$T(n)$-equivalence, is much weaker than being a $G$-Galois extension of
underlying $\mathbb{E}_\infty$-ring spectra (also known as a ``global Galois extension"), and fundamental examples such as
the Galois extensions of the $K(n)$-local sphere produced by Devinatz--Hopkins
are only $T(n)$-locally (or $K(n)$-locally) Galois.  Thus the descent results in our previous paper \cite{CMNN} do not apply to them.  Moreover, even in the case of underlying $G$-Galois extensions our previous results required being able to verify an extra condition: the rational surjectivity of the transfer map.

In the global case, we directly obtain from \Cref{combinationAB} and Galois descent
the following. 

\begin{corollary} 
Let $A \to B$ be a faithful $G$-Galois extension of
$\mathbb{E}_\infty$-rings\footnote{The faithfulness assumption is imposed to
ensure Galois descent, in the form of \cite[Th.~9.4]{MGal} or \cite{Banerjee17}.}
with $G$ a finite $p$-group, and suppose that $L_{T(n)}(A^{tC_p}) = 0$. Then the maps 
$L_{T(n+1)} K(A) \to 
L_{T(n+1)} (K(B)^{hG}) \to 
( L_{T(n+1)} K(B))^{hG}$ are equivalences. \qed
\end{corollary} 

Now we consider the $T(n)$-local case, where we can also obtain results, 
but with an additional argument. 
Given a $T(n)$-local $\mathbb{E}_\infty$-ring $A$, we write $K'(A)$ for the
$K$-theory of the small, symmetric monoidal, stable $\infty$-category 
$\mathscr{D}(A)$
of 
dualizable objects in $T(n)$-local $A$-modules. 
We have a natural inclusion $\mathrm{Perf}(A) \subseteq \mathscr{D}(A)$, whence a
map of $\mathbb{E}_\infty$-rings $K(A) \to K'(A)$. 
The next result (together with \Cref{TateMitchell}) shows that this map is a
$T(n+1)$-equivalence and implies that $K$ or $K'$ can be used
equivalently in the
Ausoni--Rognes conjecture. 
\begin{proposition} 
\label{fibermoduleLnminusone}
Let $A$ be a $T(n)$-local $\mathbb{E}_\infty$-ring. 
Then the homotopy fiber of $K(A) \to K'(A)$ is naturally a module over $K(
L_{n-1}^{p, f}
\mathbb{S})$. 
\end{proposition}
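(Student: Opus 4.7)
The plan is to identify the homotopy fiber with the non-connective $K$-theory of a Verdier quotient of stable $\infty$-categories, and then equip that quotient with a natural $\perf(L_{n-1}^{p,f}\mathbb{S})$-linear structure.

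First, since $A$ is $T(n)$-local, every perfect $A$-module is $T(n)$-local as a spectrum, being built from $A$ via finite colimits and retracts; consequently every perfect $A$-module is automatically dualizable in $\mathrm{Mod}(A)_{T(n)}$, and the natural functor $\perf(A)\to\mathscr{D}(A)$ is well-defined. It is moreover fully faithful, because morphism spectra in $\mathrm{Mod}(A)_{T(n)}$ between $T(n)$-local objects coincide with those in $\mathrm{Mod}(A)$. Applying the Thomason--Trobaugh--Neeman localization theorem for non-connective algebraic $K$-theory to the Verdier sequence $\perf(A)\hookrightarrow\mathscr{D}(A)\to\mathscr{D}(A)/\perf(A)$, one obtains a cofiber sequence
\[ \mathbb{K}(A)\longrightarrow\mathbb{K}'(A)\longrightarrow\mathbb{K}(\mathscr{D}(A)/\perf(A)), \]
so the homotopy fiber of $K(A)\to K'(A)$ identifies, up to a shift, with $\mathbb{K}(\mathscr{D}(A)/\perf(A))$.

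The heart of the argument is to equip $\mathscr{D}(A)/\perf(A)$ with the structure of a $\perf(L_{n-1}^{p,f}\mathbb{S})$-linear stable $\infty$-category. Since $\mathscr{D}(A)$ is symmetric monoidal and $\perf(A)$ is a tensor ideal (being generated by the unit $A$), the Verdier quotient inherits a symmetric monoidal structure and hence an action of $\perf(\mathbb{S})$ via external tensoring. I would then show this action factors through the symmetric monoidal Verdier quotient $\perf(\mathbb{S})/\langle F\rangle\simeq\perf(L_{n-1}^{p,f}\mathbb{S})$, where $F$ is a finite complex of type $\geq n$: explicitly, for any $M\in\mathscr{D}(A)$ and any such $F$, the tensor product $F\otimes M$, computed in $\mathrm{Mod}(A)_{T(n)}$, should lie in the essential image of $\perf(A)$. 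Functoriality of $K$-theory would then upgrade the fiber to a module over $K(L_{n-1}^{p,f}\mathbb{S})$.

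The main obstacle is this factorization. For $F$ of type $\geq n+1$ the claim is automatic: such $F$ is $T(n)$-acyclic, so $F\otimes M$ is both $T(n)$-local and $T(n)$-acyclic, hence zero in $\mathscr{D}(A)$. The critical case is $F$ of type exactly $n$; here one exploits the fact that $F$ admits a $v_n$-self map $v$ whose cofiber has type $\geq n+1$ and thus becomes null in $\mathscr{D}(A)$, so that multiplication by $v$ on $F\otimes M$ becomes an equivalence in the $T(n)$-local setting. Combined with dualizability of $M$, one aims to realize $F\otimes M$ as a retract of a finite complex of $A$-modules. This step is the chief technical difficulty, and will require careful analysis of the interaction between dualizability in $\mathrm{Mod}(A)_{T(n)}$ and the $v_n$-periodic structure of finite type-$n$ complexes.
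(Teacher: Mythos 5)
Your reduction is the same as the paper's: pass to the Verdier quotient $\mathscr{D}(A)/\perf(A)$, identify the fiber with its (shifted, nonconnective) $K$-theory via localization, and reduce everything to showing that $M\otimes F\in\perf(A)$ for every $M\in\mathscr{D}(A)$ and every finite type $n$ complex $F$ (the type $\geq n+1$ case being trivial, as you note). But that last claim is the entire content of the proposition, and you have not proved it: you explicitly flag it as ``the chief technical difficulty'' and offer only the observation that a $v_n$-self map of $F$ acts invertibly on $F\otimes M$. That observation by itself does not produce perfectness --- $v$-periodicity of $F\otimes M$ says nothing about it being a retract of a finite cell $A$-module, and no mechanism is given for constructing such a retraction. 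So there is a genuine gap at the decisive step.

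The paper closes this gap by a compact-generation argument rather than by $v_n$-periodicity. Dualizability of $M$ in $L_{T(n)}\mathrm{Mod}(A)$ means $\hom_{L_{T(n)}\mathrm{Mod}(A)}(M,-)\simeq \mathbb{D}M\otimes_A(-)$ preserves all colimits of the localized category; since $\hom(M\otimes F,-)\simeq \mathbb{D}F\otimes\hom_A(M,-)$ and smashing with the finite type $n$ complex $\mathbb{D}F$ carries colimits computed in $L_{T(n)}\mathrm{Sp}$ to honest colimits of spectra, it follows that $M\otimes F$ is a \emph{compact} object of $L_{T(n)}\mathrm{Mod}(A)$. That category is compactly generated by the single object $A\otimes F$, so $M\otimes F$ lies in the thick subcategory generated by $A\otimes F$ and is therefore a perfect $A$-module. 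You should replace your $v_n$-self-map sketch with this argument (or something equivalent); as written, the proof is incomplete precisely where the dualizability hypothesis has to do its work.
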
 
\begin{proof} 
Indeed, consider the Verdier quotient $\mathscr{D}(A)/\mathrm{Perf}(A)$. 
We claim that this stable $\infty$-category is naturally $L_{n-1}^{p, f}
\mathbb{S}$-linear. To this end, we need to show that if $F$ is a finite type
$n$ complex, then for any $M \in \mathscr{D}(A)$, we have $M \otimes F \in
\mathrm{Perf}(A)$ (so that this vanishes in the Verdier quotient). 
To this end, we observe that dualizability implies that the functor
\[ \hom_{L_{T(n)} \mathrm{Mod}(A)}(M, -): L_{T(n)} \mathrm{Mod}(A) \to L_{T(n)}
\mathrm{Mod}(A) \]
commutes with all colimits. Tensoring with $F$, we find that $M \otimes
F$ is a compact object of $L_{T(n)} \mathrm{Mod}(A)$; this uses that
tensoring with $F$ yields a colimit-preserving functor $L_{T(n)}
\mathrm{Sp} \to \mathrm{Sp}$. Since
$L_{T(n)} \mathrm{Mod}(A)$ is compactly generated by $A \otimes F$, it follows
that $M \otimes F$ belongs to the thick subcategory generated by $A \otimes F$
and is therefore a perfect $A$-module, whence the result. 
\end{proof} 

Given a $T(n)$-local $G$-Galois extension $A \to B$ with $G$ a finite group, 
we have $\mathscr{D}(A) \simeq \mathscr{D}(B)^{hG}$; this follows because 
$L_{T(n)} \mathrm{Mod}(A) \simeq \left(L_{T(n)} \mathrm{Mod}(B)\right)^{hG}$ by
Galois descent,\footnote{Note that $A \to B$ is automatically $T(n)$-locally
faithful. In fact, $A \simeq B^{hG} \simeq L_{T(n)}B_{hG}$, so tensoring with $B$ is
conservative on $L_{T(n)} \mathrm{Mod}(A)$.} and using \cite[Prop.~4.6.1.11]{Lur17} to
commute the formation of dualizable objects over homotopy fixed points. 
Therefore, the next result follows in a similar manner; this proves
\cite[Conj.~4.2]{AR} in the case of a $p$-group. 

\begin{corollary} 
Let $ A \to B$ be a $T(n)$-local $G$-Galois extension, with 
$G$ a finite $p$-group. Then $L_{T(n+1)} K(A) \xrightarrow{\simeq}
L_{T(n+1)} ( K(B)^{hG}) \xrightarrow{\simeq}
(L_{T(n+1)}
K(B))^{hG}$. 
\label{ARpgroup}
\end{corollary}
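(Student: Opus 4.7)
The strategy is to transfer the descent question from $K$-theory to the $K$-theory of $T(n)$-locally dualizable modules, where Galois descent works, and then to translate the result back to ordinary $K$-theory after $T(n+1)$-localization. Throughout, we use the auxiliary functor $K'$ from \Cref{fibermoduleLnminusone}.

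First, I would argue that $K \to K'$ is a $T(n+1)$-equivalence on any $T(n)$-local $\mathbb{E}_\infty$-ring $C$. By \Cref{fibermoduleLnminusone} the fiber of $K(C) \to K'(C)$ is a module over $K(L_{n-1}^{p,f}\mathbb{S})$, and \Cref{chromatic} applied to the $L_{n-1}^{p,f}$-local stable $\infty$-category $\perf(L_{n-1}^{p,f}\mathbb{S})$ shows $L_{T(n+1)} K(L_{n-1}^{p,f}\mathbb{S})=0$, so the fiber is annihilated by $L_{T(n+1)}$. This reduces the desired statement (for both $A$ and $B$) to the analogous chain of equivalences with $K$ replaced by $K'$; note that $T(n+1)$-equivalence of $K(B) \to K'(B)$ is preserved by applying $(-)^{hG}$.

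Next, I would use the Galois descent equivalence $\mathscr{D}(A) \simeq \mathscr{D}(B)^{hG}$ recalled in the paragraph before the statement: $L_{T(n)}\md(A) \simeq (L_{T(n)}\md(B))^{hG}$ by faithful Galois descent, and taking dualizable objects commutes with the homotopy fixed points via \cite[Prop.~4.6.1.11]{Lur17}. Consequently $K'(A) \simeq K(\mathscr{D}(B)^{hG})$, viewing $\mathscr{D}(B)$ as an $A$-linear stable $\infty$-category equipped with the Galois $G$-action. Now I would apply \Cref{combinationAB}(1) to $R = A$, $E = K$, and $\mathcal{C} = \mathscr{D}(B)$; the required hypothesis $L_{T(n)}(A^{tC_p}) = 0$ is Kuhn's telescopic Tate vanishing \cite{Kuh04} for the trivial $C_p$-action on the $T(n)$-local spectrum $A$ (compare \Cref{Tatevanishconseq}). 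The conclusion yields
$$L_{T(n+1)} K(\mathscr{D}(B)^{hG}) \xrightarrow{\sim} L_{T(n+1)}\bigl(K(\mathscr{D}(B))^{hG}\bigr) \xrightarrow{\sim} \bigl(L_{T(n+1)} K(\mathscr{D}(B))\bigr)^{hG},$$
which via Step 2 and Step 1 translates into the desired chain of equivalences for $K(A)$ and $K(B)$.

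The main obstacle is conceptual rather than computational: a $T(n)$-local $G$-Galois extension need not be a global Galois extension, so $\perf(B)$ does not in general descend to $\perf(A)$ and \Cref{combinationAB}(1) cannot be applied to $\perf(B)$ directly. The key move is to pass to $\mathscr{D}(B)$, which does descend to $\mathscr{D}(A)$ by construction, and to then use \Cref{fibermoduleLnminusone} together with the vanishing $L_{T(n+1)} K(L_{n-1}^{p,f}\mathbb{S}) = 0$ of \Cref{chromatic} to return to ordinary $K$-theory after $T(n+1)$-localization.
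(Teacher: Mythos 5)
Your proposal is correct and follows essentially the same route as the paper: apply Theorem \ref{ASthmintro} to $\mathscr{D}(B)$ with its Galois $G$-action (using $\mathscr{D}(A)\simeq\mathscr{D}(B)^{hG}$ from $T(n)$-local Galois descent), and then pass between $K'$ and $K$ via \Cref{fibermoduleLnminusone} together with the $T(n+1)$-vanishing of $K(L_{n-1}^{p,f}\mathbb{S})$. The only cosmetic difference is that you invoke \Cref{chromatic} for that vanishing where the paper cites \Cref{TateMitchell}; both give the same conclusion.
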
 

Let $G$ be a finite $p$-group 
acting on a $T(n)$-local $\mathbb{E}_\infty$-ring $B'$; then the map $A'
\stackrel{\mathrm{def}}{=} B'^{hG} \to B'$ is $T(n)$-locally $G$-Galois,
cf.~\cite[Cor.~7.31]{BCSY}. 

\begin{proof} 
In fact, \Cref{ASthmintro} yields the analog of this result with $K'(-)$
replacing $K$, since $\mathscr{D}(-)$ satisfies $T(n)$-local Galois descent. 
Using \Cref{fibermoduleLnminusone}, we find that the difference between the
statements for $K'(-)$ and $K(-)$ is controlled by modules over $K( L_{n-1}^{p,
f}
\mathbb{S})$, which have trivial $T(n+1)$-localizations by \Cref{TateMitchell}. 
\end{proof} 

\begin{example} 
An important example of a $T(n)$-local (pro-)Galois extension is the 
map $L_{K(n)} \mathbb{S} \to E_n$, where the profinite group in question is the
extended Morava stabilizer group $\mathbb{G}_n$. 
We have a short exact sequence
$$ 1 \to \mathbb{S}_n \to \mathbb{G}_n \to \mathrm{Gal}(
\overline{\mathbb{F}}_p/\mathbb{F}_p) \to 1,$$
where $\mathbb{S}_n$ has an open subgroup which is pro-$p$. 
For each open subgroup $H \subseteq \mathbb{G}_n$, we write $E_n^{hH}$ for the
(Devinatz--Hopkins) continuous homotopy fixed points, \cite{DH04}. 
Now, we choose an open subgroup $U$ of $\mathbb{G}_n$ such that $U \cap
\mathbb{S}_n$ is a pro-$p$-group. 
Then for any normal inclusion $V' \trianglelefteq V \subseteq U$ of open
subgroups, we have
$L_{T(n+1)} K( E_n^{hV}) \xrightarrow{\sim} 
(L_{T(n+1)} K( E_n^{hV'}))^{h(V/V')}$, i.e., we obtain Galois descent 
for the $K(n)$-local finite Galois extensions of $E_n^{hU}$, and we therefore
obtain a sheaf of
$T(n+1)$-local spectra on finite continuous $U$-sets. 
This follows from the descent for $p$-groups proved above as well as the
descent for finite \'etale extensions, proved as in \cite{CMNN}. 
Our methods do not (to the best of our knowledge) yield hyperdescent for this
sheaf, which would closely relate $L_{T(n+1)} K(E_n)$ and $L_{T(n+1)}
K(L_{K(n)} \mathbb{S})$. 
\end{example}

\begin{remark} 
Finally, \cite[Conjecture~4.3]{AR} predicts that for appropriate $K(n)$-local
$\mathbb{E}_\infty$-ring spectra $B$ (e.g., $L_{K(n)} \mathbb{S}$), and for a
finite type $n+1$-complex $V$, the
map 
$V \otimes K(B) \to L_{T(n+1)} (V \otimes K(B))$ is an equivalence in high
enough degrees; this is a higher chromatic analog of the Lichtenbaum--Quillen
conjecture, cf.~\cite{AR02, Ausoni10, HW20, AKACHR22, HRW22} for instances where such
statements are proved. 
Our methods are certainly not strong enough to prove such statements; however,
this conjecture would imply 
the weaker assertion $L_{T(n+i)} K(B) = 0$ for $i \geq 2$, which we have proved above
as \Cref{TateMitchell}. 
\end{remark} 
\section{Descent by normal bases; proof of \Cref{normalbasisthm}}
In this section, we will give another condition that guarantees $T(n)$-local
descent, which will work uniformly for all $n$ (including $n = 0$). 

\begin{construction}[The transfer] 
We use the transfer map of the finite group $G$, which is a map of spectra $$\tr_{\BG} \colon BG_+ \to \mathbb{S}.$$
The adjoint map of anima 
$BG \to \Omega^\infty \mathbb{S}$
arises from interpreting the target as the $K$-theory of the category
$\mathrm{Fin}$ of finite sets (the
Barratt--Priddy--Quillen theorem), and considering the $G$-action on the
$G$-set $G$ (by right multiplication), so we take the composite map $BG \to \mathrm{Fin}^{\simeq} \to
\Omega^\infty K(
\mathrm{Fin}) = \Omega^\infty \mathbb{S}$. 
Our basic tool will be 
the following observation:
\end{construction} 

\begin{theorem} 
For any $n \geq 0$ and implicit prime $p$, 
the map $L_{T(n)} (\tr_{\BG}) \colon L_{T(n)} BG_+ \to L_{T(n)} \mathbb{S}$ admits a
section. 
\end{theorem}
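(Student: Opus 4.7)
The plan is to realize $L_{T(n)}(\tr_{\BG})$ as the composite of a $T(n)$-local equivalence with a manifestly split map, using Kuhn's Tate vanishing theorem \cite{Kuh04}. Viewing $\mathbb{S}$ as a $G$-spectrum with trivial $G$-action, one has $\mathbb{S}_{hG} \simeq BG_+$ and $\mathbb{S}^{hG} \simeq F(BG_+, \mathbb{S})$; the transfer $\tr_{\BG}$ admits the standard factorization
\[
BG_+ = \mathbb{S}_{hG} \xrightarrow{N} \mathbb{S}^{hG} \xrightarrow{\ev} \mathbb{S},
\]
where $N$ is the norm (whose cofiber is $\mathbb{S}^{tG}$) and $\ev$ is the map dual to the basepoint inclusion $S^0 \to BG_+$.

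Two observations then suffice. First, Kuhn's Tate vanishing yields $L_{T(n)} \mathbb{S}^{tG} = 0$, so $L_{T(n)} N$ is an equivalence. Second, $\ev$ admits a canonical splitting $\iota \colon \mathbb{S} \to \mathbb{S}^{hG}$, namely the map dual to the collapse $BG_+ \to S^0$: indeed $\ev \circ \iota$ is dual to the identity of $S^0$ and so equals $\mathrm{id}_\mathbb{S}$. Combining these, I would define the desired section as
\[
s \colon L_{T(n)} \mathbb{S} \xrightarrow{L_{T(n)} \iota} L_{T(n)} \mathbb{S}^{hG} \xrightarrow{(L_{T(n)} N)^{-1}} L_{T(n)} BG_+,
\]
whose composition with $L_{T(n)} \tr_{\BG}$ equals $L_{T(n)}(\ev \circ N \circ N^{-1} \circ \iota) = L_{T(n)}(\ev \circ \iota) = \mathrm{id}$.

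The main obstacle is the initial identification $\tr_{\BG} = \ev \circ N$. This is a classical fact, expressing that the $K$-theoretic (equivalently, Becker--Gottlieb) transfer for the cover $EG \to BG$ arises from the norm cofiber sequence defining the Tate construction. In practice it can be derived from Wirthm\"uller-type considerations applied to the $G$-equivariant coevaluation $\mathbb{S} \to G_+$ exhibiting the self-duality of the free $G$-set $G$, or extracted from existing treatments in the literature (e.g., the formalism of \cite{NS18}). Granting this identification, the proof reduces entirely to Kuhn's Tate vanishing plus the formal splitting of $\ev$.
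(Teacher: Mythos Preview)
Your approach is the same as the paper's in spirit, but there is a genuine error in how you invoke Kuhn's theorem. Kuhn's Tate vanishing says that $Y^{tG} = 0$ whenever $Y$ is $T(n)$-local; it does \emph{not} say that $L_{T(n)}(X^{tG}) = 0$ for arbitrary $X$. In particular your claim $L_{T(n)}(\mathbb{S}^{tG}) = 0$ fails already for $G = C_p$: by the Segal conjecture (Lin's theorem) one has $\mathbb{S}^{tC_p} \simeq \mathbb{S}_p^{\wedge}$, and for $n \geq 1$ the telescope $T(n)$ is a $p$-complete spectrum, so $T(n) \otimes \mathbb{S}_p^{\wedge} \simeq T(n) \neq 0$. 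Thus $L_{T(n)}(N)$ is not an equivalence and your proposed section $(L_{T(n)} N)^{-1} \circ L_{T(n)}(\iota)$ is not defined.

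The fix is exactly what the paper does: factor $L_{T(n)}(\tr_{\BG})$ through $(L_{T(n)}\mathbb{S})^{hG} = C^*(BG, L_{T(n)}\mathbb{S})$ rather than through $L_{T(n)}(\mathbb{S}^{hG})$. The relevant norm map is then the one for the $T(n)$-local spectrum $L_{T(n)}\mathbb{S}$ with trivial $G$-action, namely
\[
L_{T(n)}(BG_+) = L_{T(n)}\bigl((L_{T(n)}\mathbb{S})_{hG}\bigr) \longrightarrow (L_{T(n)}\mathbb{S})^{hG},
\]
and \emph{this} is an equivalence by Kuhn, since its cofiber is $(L_{T(n)}\mathbb{S})^{tG} = 0$. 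The projection $(L_{T(n)}\mathbb{S})^{hG} \to L_{T(n)}\mathbb{S}$ then admits the obvious section given by the unit, and composing with the inverse of the norm gives the desired section of $L_{T(n)}(\tr_{\BG})$. Your identification $\tr_{\BG} = \ev \circ N$ is correct and is precisely what underlies this factorization; the only issue is that you applied $L_{T(n)}$ and $(-)^{hG}$ in the wrong order.
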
 
\begin{proof} 
This follows from (and is equivalent to, as explained in \cite{CM17}) the
vanishing of Tate spectra in the $T(n)$-local category, due to Kuhn
\cite{Kuh04}. In fact, this vanishing yields that $L_{T(n)} BG_+
\xrightarrow{\sim} C^*(BG, L_{T(n)} \mathbb{S})$ via the norm map, and the
transfer is the composite of the norm with the projection $C^*(BG, L_{T(n)}
\mathbb{S} ) \to L_{T(n)} \mathbb{S}$, which clearly admits a
section.\footnote{Note also that as explained in \cite{CM17}, the existence of the section in the
essential case $G = C_p$ follows via the Bousfield--Kuhn functor 
\cite{Bou01, Kuh89}
from 
the Kahn--Priddy theorem \cite{KP78}, which states that
$\Omega^{\infty+1}( \tr_{\BG}) \colon \Omega^{\infty +1} BG_+ \to
{\Omega^{\infty+1}}\mathbb{S}$
has a
section.} 
\end{proof}

\begin{proposition} 
\label{BGcrit}
Let $R$ be an associative algebra in $\GSpec$. Suppose that 
there is a factorization of $BG_+ \xrightarrow{\tr_{\BG}} \mathbb{S} \to R^G$ through 
the $R$-transfer $R_{hG} \to R^G$. Then 
$R$ is $(\triv, \epsilon)$-nilpotent. 
\end{proposition}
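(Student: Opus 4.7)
The plan is to verify the criterion for $(\triv, \epsilon)$-nilpotence provided by \Cref{crittrivnilpepsilon}; that is, to show $(R \otimes \widetilde{EG})^G$ has trivial $T(n)$-localization for every $n \geq 1$ and every prime $p$, as well as trivial rationalization. The key structural input is the cofiber sequence in spectra
\[ R_{hG} \xrightarrow{\tr_R} R^G \longrightarrow (R \otimes \widetilde{EG})^G, \]
obtained by applying $(R \otimes -)^G$ to the defining cofiber sequence $EG_+ \to \mathbb{S} \to \widetilde{EG}$ in $\GSpec$ and using the identification $(R \otimes EG_+)^G \simeq R_{hG}$, under which the first map becomes the $R$-transfer in the hypothesis.

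Next, I would observe that for any Bousfield localization $L$ (on $\sp$) for which $L(\tr_{\BG})\colon L(BG_+) \to L\mathbb{S}$ admits a section, the unit $\mathbb{S} \to R^G$ becomes, after applying $L$, a retract of the composition $BG_+ \to \mathbb{S} \to R^G$. By the standing hypothesis this composition factors through $R_{hG} \to R^G$, so the $L$-localized unit $L\mathbb{S} \to L(R^G)$ factors through $L(R_{hG})$. Since $R_{hG} \to R^G \to (R \otimes \widetilde{EG})^G$ is null by the cofiber sequence, the composite $L\mathbb{S} \to L((R \otimes \widetilde{EG})^G)$ is null. But $(R \otimes \widetilde{EG})^G$ is an associative algebra in $\sp$ (as $\widetilde{EG}$ is a commutative algebra in $\GSpec$, $R \otimes \widetilde{EG}$ is an associative algebra in $\GSpec$, and genuine fixed points are lax symmetric monoidal), so nullity of the unit forces $L((R \otimes \widetilde{EG})^G) = 0$.

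It remains to confirm that $L(\tr_{\BG})$ has a section in the two cases of interest. For $L = L_{T(n)}$ with $n \geq 1$, this is the theorem of Kuhn \cite{Kuh04} cited immediately before the proposition, which says precisely that the $T(n)$-localized transfer $L_{T(n)} BG_+ \to L_{T(n)} \mathbb{S}$ splits off the target. Rationally, the section exists because for finite $G$ the reduced $BG$ is rationally contractible, so $BG_+ \otimes \mathbb{Q} \simeq \mathbb{S}_{\mathbb{Q}}$, and the composite $\mathbb{S} \to BG_+ \xrightarrow{\tr_{\BG}} \mathbb{S}$ is multiplication by $|G|$, which is a rational equivalence. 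Combining, the criterion of \Cref{crittrivnilpepsilon} is satisfied and $R$ is $(\triv, \epsilon)$-nilpotent.

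The main thing to be careful about is the identification of the $R$-transfer as the first map in the displayed cofiber sequence and the lax monoidal structure making $(R \otimes \widetilde{EG})^G$ into an algebra whose unit is the image of the unit of $R^G$; beyond this, the argument is essentially a retract/diagram chase combined with the Tate vanishing of \cite{Kuh04}.
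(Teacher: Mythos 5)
Your proposal is correct and follows essentially the same route as the paper: reduce via \Cref{crittrivnilpepsilon} to killing $(R\otimes\widetilde{EG})^G$ after each telescopic localization and rationally, then observe that the section of the localized transfer $\tr_{\BG}$ together with the hypothesized factorization forces the unit of the localized cofiber $L\bigl((R\otimes\widetilde{EG})^G\bigr)$ to be null, hence the algebra to vanish. The only (harmless) difference is that you handle the rational case by the elementary degree-$|G|$ argument rather than by appealing to the Tate-vanishing statement in that case as well.
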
 
\begin{proof} 
By \Cref{crittrivnilpepsilon}, it suffices to show that $L_{T(n)} (R^G/R_{hG})
=L_{T(n)} 
(R \otimes \widetilde{EG})^G = 
0$ for any $p$ and $n$
(including $p = 0$). 
For this, it suffices to show that the map 
\[ L_{T(n)}(R_{hG}) \to L_{T(n)}(R^G)  \]
has image on $\pi_0$ containing the unit. 
But this follows because we have seen above that 
\( L_{T(n)} ( \tr_{\BG})  \colon L_{T(n)}( BG_+) \to L_{T(n)} \mathbb{S}  \)
has image containing the unit. 
\end{proof} 

We will apply this below to associative $G$-ring spectra of a particularly special kind, 
where one has a homotopy commutative diagram
\begin{equation}  \xymatrix{
R_{hG} \ar[r] &  R^G \\
BG_+ \ar[u]\ar[r]^{\tr_{\BG}} & \mathbb{S}, \ar[u]
} \end{equation}
in which the factorization $BG_+ \to R_{hG}$ required in \Cref{BGcrit} is obtained as the $G$-homotopy orbits of the 
map $\mathbb{S} \to R$; in particular, these satisfy the
conditions of \Cref{BGcrit}. 
Note that this now is merely a condition on the algebra $R$ in $\sp_G$, namely that the diagram
\begin{equation}  \xymatrix{
R_{hG} \ar[r] &  R^G \\
BG_+ \ar[u]^{(\eta)_{hG}}\ar[r]^{\tr_{\BG}} & \mathbb{S}, \ar[u]^\eta
} \end{equation}
should commute up to homotopy, where $\eta$ denotes the unit map.
Such $R$ arise via the following categorical construction, namely by taking
 $R=K_G(\mathcal{C})$ below.

Let $(\mathcal{C}, \otimes, \mathbf{1})$ be a monoidal, stable $\infty$-category equipped with a
$G$-action. 
Let $f \colon G \to \ast$ be the map of $G$-sets. 
We use the induction functor 
$f_* \colon \mathcal{C} \to \mathcal{C}^{hG}$ (biadjoint to the forgetful functor
$\mathcal{C}^{hG} \to \mathcal{C}$); we note that this is $G$-equivariant
with respect to the trivial action on the target. 
Since $\mathbf{1} \in \mathcal{C}$ is $G$-invariant, we obtain a $G$-action
on $f_*(\mathbf{1}) \in \mathcal{C}^{hG}$.

\begin{definition}[The normal basis condition] 
We say that the $G$-action on $\mathcal{C}$ as above satisfies the \emph{normal basis
property} if the object
$f_*(\mathbf{1}) \in \fun(BG, \mathcal{C}^{hG})$ defines the same $K_0$-class as
the object $\mathbf{1}_{\mathcal{C}^{hG}} \otimes G_+ \in 
\fun(BG, \mathcal{C}^{hG})$. 
\end{definition} 

In other words, the normal basis condition implies that 
the following diagram, which is \emph{not} commutative,
\begin{equation} \label{updiagram} \xymatrix{
\mathcal{C}^{\simeq}_{hG} \ar[r]^{f_*} & 
(\mathcal{C}^{hG})^{\simeq}  \\
\ar[u]^{[\mathbf{1}]_{hG}}
BG   \ar[r]^{\tr_{\BG}} &  \Omega^\infty \mathbb{S},
\ar[u]^{\mathbf{1}_{\mathcal{C}^{hG}}}
}\end{equation}
gives rise to two objects in $\fun(BG, \mathcal{C}^{hG})$ with the same
$K_0$-class; the class obtained by going right and up is the normal basis class
$\mathbf{1}_{\mathcal{C}^{hG}} \otimes G_+$, while the class obtained by going
up and right is $f_*(\mathbf{1})$. 
It follows that we do have a homotopy commutative diagram
if we replace the top right 
in \eqref{updiagram}
with $\Omega^\infty K(\mathcal{C}^{hG})$.

We now prove the following result, which 
is a slight refinement of \Cref{normalbasisthm}. 

\begin{theorem} 
\label{normalbasisdescthm}
Suppose $R$ is an $\mathbb{E}_\infty$-ring, $\mathcal{C}$ is an algebra object of $\cats{R}$ equipped with an
action of a finite group $G$, and the $G$-action on $\mathcal{C}$ satisfies the normal basis
property. 
Then $\mathcal{U}_G(\mathcal{C}) \in \mack_G(\mot_R)$ is $(\triv, \epsilon)$-nilpotent. 
In particular, for any additive invariant $E$ on $\cats{R}$, the map 
$E(\mathcal{C}^{hG}) \to E(\mathcal{C})^{hG}$ induces an equivalence after
$T(i)$-localization for any $i$ and any implicit prime $p$, including $p=0$.

\end{theorem}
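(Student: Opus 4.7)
The plan is to reduce \Cref{normalbasisdescthm} to \Cref{BGcrit} via the ``factoring the transfer'' technology developed above, using the normal basis property to produce the required factorization.

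First, I would note that the final ``in particular'' conclusion follows formally once we establish $(\triv,\epsilon)$-nilpotence of $\mathcal{U}_G(\mathcal{C})\in\mack_G(\mot_R)$: indeed, any additive invariant $E\colon \cats{R}\to\sp$ factors through $\mot_R$, and $(\triv,\epsilon)$-nilpotence is preserved under base-change, so the comparison map $E(\mathcal{C}^{hG})\to E(\mathcal{C})^{hG}$ fits into the general framework \eqref{FcompmapX} which becomes an equivalence after any $T(i)$-localization.

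Next, I would reduce to a purely $G$-equivariant statement. Since $\mathcal{C}$ is an algebra object of $\cats{R}$, the Mackey functor $\mathcal{U}_G(\mathcal{C})$ admits a unital multiplication in the homotopy category. By \Cref{nilpinmot} (applied to $\mathcal{C}=\mot_R$, noting that $\mathbf{1}\in\mot_R$ is compact), it therefore suffices to check that the image of $\mathcal{U}_G(\mathcal{C})$ under $\hom_{\mot_R}(\mathbf{1},-)$ is $(\triv,\epsilon)$-nilpotent in $\mack_G(\sp)\simeq\GSpec$. But this image is precisely the equivariant algebraic $K$-theory $G$-spectrum $K_G(\mathcal{C})$, so the problem becomes showing $K_G(\mathcal{C})$ is $(\triv,\epsilon)$-nilpotent.

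Now I would apply \Cref{BGcrit} with $R=K_G(\mathcal{C})$: one has $R^G=K(\mathcal{C}^{hG})$ and the underlying spectrum of $R$ is $K(\mathcal{C})$, whose $G$-homotopy orbits map to $R^G$ via the assembly map $K(\mathcal{C})_{hG}\to K(\mathcal{C}^{hG})$ induced by the biadjoint $f_*\colon\mathcal{C}\to\mathcal{C}^{hG}$. I therefore need a factorization (of spectra)
\[
\Sigma^\infty BG_+\xrightarrow{\tr_{\BG}}\mathbb{S}\xrightarrow{\eta} K(\mathcal{C}^{hG})
\]
through $K(\mathcal{C})_{hG}\to K(\mathcal{C}^{hG})$. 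This is where the normal basis hypothesis enters: the two paths in the diagram \eqref{updiagram} represent, respectively, the objects $\mathbf{1}_{\mathcal{C}^{hG}}\otimes G_+$ (via the right-up path: transfer followed by the unit) and $f_*(\mathbf{1})$ (via the up-right path: take $G$-homotopy orbits of $[\mathbf{1}]$ in $\mathcal{C}^\simeq_{hG}$, then apply $f_*$) in $\fun(BG,\mathcal{C}^{hG})$. The normal basis condition asserts these are equal in $K_0(\fun(BG,\mathcal{C}^{hG}))$. The key observation is that the construction $M\mapsto\phi_M$ sending an object of $\fun(BG,\mathcal{C}^{hG})$ to its classifying map $BG\to\Omega^\infty K(\mathcal{C}^{hG})$ is additive and factors through $K_0$; hence the two resulting maps $BG\to\Omega^\infty K(\mathcal{C}^{hG})$ become homotopic. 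Taking the adjoint at the level of spectra and remembering that the up-right path visibly lifts through $\Omega^\infty K(\mathcal{C})_{hG}$ (since it factors through $\mathcal{C}^\simeq_{hG}$ before applying $f_*$), one extracts the desired spectrum-level factorization $\Sigma^\infty BG_+\to K(\mathcal{C})_{hG}$ lifting $\eta\circ\tr_{\BG}$.

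The main subtle point in the argument is the translation between the purely $K_0$-level statement (equality of two classes in the Swan group $K_0(\fun(BG,\mathcal{C}^{hG}))$) and the space-level statement (homotopy of two maps $BG\to\Omega^\infty K(\mathcal{C}^{hG})$) required to invoke \Cref{BGcrit}. The resolution is that since $BG$ is connected and the target $\Omega^\infty K(\mathcal{C}^{hG})$ is an infinite loop space, homotopy classes of such maps correspond to $[\Sigma^\infty BG_+,K(\mathcal{C}^{hG})]$, and the $K_0$-class of an object $M\in\fun(BG,\mathcal{C}^{hG})$ already records precisely this data through the additive refinement of the classifying map. Once this translation is justified, everything else is a formal application of \Cref{BGcrit} and \Cref{nilpinmot}.
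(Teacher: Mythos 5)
Your proposal is correct and follows essentially the same route as the paper: reduce via \Cref{Gmotnilp}/\Cref{nilpinmot} to showing $K_G(\mathcal{C})$ is $(\triv,\epsilon)$-nilpotent, then invoke \Cref{BGcrit} by using the normal basis hypothesis to make the diagram \eqref{updiagram} homotopy commute after replacing the upper-right corner by $\Omega^\infty K(\mathcal{C}^{hG})$, which yields the required factorization of the transfer through $K(\mathcal{C})_{hG}\to K(\mathcal{C}^{hG})$. Your extra paragraph justifying the passage from equality of $K_0$-classes to a homotopy of maps $BG\to\Omega^\infty K(\mathcal{C}^{hG})$ is a correct elaboration of a step the paper leaves implicit.
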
 
\begin{proof} 
We show that $\mathcal{U}_G(\mathcal{C}) \in \mack_G(\mot_R)$ is $(\triv, \epsilon)$-nilpotent,
which also implies the other claims.
By \Cref{Gmotnilp}, it suffices to show that $K_G(\mathcal{C})$ is $(\triv,
\epsilon)$-nilpotent. 
By \Cref{BGcrit}, it suffices to show that we have a factorization 
of the $BG_+ \to \mathbb{S}$ through $K(\mathcal{C})_{hG} \to
K(\mathcal{C}^{hG})$. However, this follows from the diagram \eqref{updiagram}
(which is not commutative, but which becomes homotopy commutative 
when we replace the upper right by $\Omega^\infty K(\mathcal{C}^{hG})$ by our
hypotheses). 
\end{proof} 

\begin{remark}[Alternative proof of \Cref{galoisdescthm}] \label{rem:alternate_argument}
Let $R \to R'$ be a $G$-Galois extension of commutative rings. 
Then Zariski locally on $R$, one has the normal basis property (even before
passage to $K_0$):  the
$R[G]$-module $R'$ is locally isomorphic to $R[G]$; indeed, this follows
because of the usual normal basis theorem when $R$ is a field, and hence more
generally a local ring. 
Using Zariski descent for $K$-theory \cite{TT90}, one reduces to this case,
whence the result via \Cref{normalbasisdescthm}. 
\end{remark} 

\begin{remark} 
In fact, the above argument for \Cref{galoisdescthm} is valid more generally
(with $L_{T(n)}$-localization for any $n$) if
$R \to R'$ is a $G$-Galois extension of $\mathbb{E}_\infty$-ring spectra in the sense of
\cite{Rognes08} in the case where $\pi_0(R) \to \pi_0(R')$ is additionally $G$-Galois, i.e., $R
\to R'$ is \'etale in the sense of \cite[Sec.~7.5]{Lur17}.
This is a special case of the results 
of \cite{CMNN}, which assume a weaker condition on $R \to R'$, but
which essentially
use the $\mathbb{E}_\infty$-structures on the algebras in question.
\end{remark} 

\section{Swan induction and applications; proofs of Theorem~\ref{swandescintro}
and \ref{swanvanishintro}}

In this section, we recall 
the notion of Swan $K$-theory, 
and prove Theorems~\ref{swandescintro} and \ref{swanvanishintro} from the
introduction. In the final section, we will give a number of examples of Swan
induction. 

The {Swan $K$-theory} of a ring spectrum with respect to a finite group was introduced by Malkiewich in \cite{Mal17}, following ideas of Swan
\cite{Swa60, Swa70} who defined it for discrete rings. 
Throughout the subsection, let $R$ be an $\mathbb{E}_\infty$-ring spectrum. 

\begin{definition}[{\cite[Def.~4.11]{Mal17}}]\label{def:swan_green} 
Given a finite group $G$, we let $\sw(G, R)$ denote 
the Grothendieck ring of the stable $\infty$-category $\fun(BG,
\perf(R))$. We will call this the \emph{Swan $K$-theory of $R$ with respect to $G$}.
The groups $\{\sw(H, R)\}_{H
\subseteq G}$ 
form a {Green functor}, as the $\pi_0$ of the $\mathbb{E}_\infty$-algebra $K_G( R)$ in $\GSpec$. 

For a family $\sF$ of subgroups of $G$, we will say that \emph{$\sF$-based
Swan induction holds for $R$} if there exist classes $x_{H}
\in \sw(H, R) \otimes \mathbb{Q}$ for $H \in \sF$ such that
\begin{equation} 
	1  = \sum_{H \in \sF} \mathrm{Ind}_H^G(x_H) \in \sw(G, R) \otimes \mathbb{Q},
\end{equation} 
for $\mathrm{Ind}_H^G \colon \sw(H, R) \to \sw(G, R)$ the map obtained by induction
of representations on $R$-modules.
\end{definition}

\begin{remark} 
\label{SwaninductionandFnilp}
The condition that $\sF$-based Swan induction holds for $R$ is precisely the
condition that $K_G(R) \otimes \mathbb{Q} \in \GSpec$ is $\sF$-nilpotent, in
light of \Cref{crittrivnilp}. 
\end{remark}

\begin{example}[Classes in $\rep(G, R)$] 
Let $M$ be a finite $G$-CW complex. 
Then  $R \otimes M_+$ defines an object of 
$\fun(BG, \perf(R))$ and consequently an element $[R \otimes M_+] \in \rep(G, R)$. 
If $M$ has the homotopy type of a $G$-CW complex,
then 
a cell decomposition shows that 
the class $[R \otimes M_+]$ actually belongs to the image of the map $A(G) \to
\rep(G, R)$, for $A(G)$ the Burnside ring. 
\end{example} 

We now prove the descent statement in the $K$-theory of $R$-linear $\infty$-categories
that Swan induction implies (this is \Cref{swandescintro}); the use of a
rational statement to deduce telescopic ones follows \cite{CMNN}. 

\begin{theorem}[Descent via Swan induction]
\label{swaninddesc}
Let $R$ be an $\mathbb{E}_\infty$-ring and let $G$ be a finite group. 
Suppose that $R$-based Swan induction holds for the family $\sF$. 
Then for any $R$-linear $\infty$-category $\mathcal{C}$ equipped with a
$G$-action, and for any additive invariant $E$ on $\cats{R}$, the
maps 
\begin{equation} \label{Fassembly1} E(\mathcal{C}^{hG}) \to \varprojlim_{G/H \in \sOFG^{op}} E(\mathcal{C}^{hH})
\end{equation} and 
\begin{equation} \varinjlim_{G/H \in \sOFG} E(\mathcal{C}_{hH}) \to
E(\mathcal{C}_{hG})  \label{Fassembly2} \end{equation}
become an equivalence after $T(n)$-localization, for any $n$ and any implicit prime $p$.
\end{theorem}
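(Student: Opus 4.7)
The plan is to establish that both categorical Mackey functors $\mathcal{U}_G(\mathcal{C})$ and $\mathcal{U}_{G,\cobor}(\mathcal{C})$, viewed as objects of $\mack_G(\mot_R)$ (\Cref{motRmackey} and \Cref{equivalgKcobor}), are $(\sF,\epsilon)$-nilpotent. Granting this, for any additive invariant $E \colon \cats{R} \to \sp$ corresponding to a cocontinuous functor $\mot_R \to \sp$, the images $E_G(\mathcal{C})$ and $E_{G,\cobor}(\mathcal{C})$ are $(\sF,\epsilon)$-nilpotent $G$-spectra whose values at $G/H$ are $E(\mathcal{C}^{hH})$ and $E(\mathcal{C}_{hH})$ respectively (the latter via the identification of the fixed points of the categorical coBorelification in the discussion preceding \Cref{assembly maps}). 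The general comparison property \eqref{FcompmapX} for $(\sF,\epsilon)$-nilpotent $G$-spectra then yields both \eqref{Fassembly1} and \eqref{Fassembly2} as $T(n)$-local equivalences for any $n$ and any implicit prime.

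First I would reduce to checking nilpotence of the algebra $\mathcal{U}_G(R)$ itself. Since $\mathcal{C}$ is $R$-linear, $\mathcal{C}^{\bor}$ is a module over $\perf(R)^{\bor}$ in $\msk_G(\cats{R})$ because Borelification is lax symmetric monoidal; and $\mathcal{C}_{\cobor} = \mathcal{C}^{\bor} \otimes y(G)_{hG}$ inherits the module structure since $y(G)_{hG}$ is an idempotent object of $\mathcal{P}_\Sigma(\burng)$ for the tensor structure. Pushing forward through $\mathcal{U}$ shows both $\mathcal{U}_G(\mathcal{C})$ and $\mathcal{U}_{G,\cobor}(\mathcal{C})$ are modules over the $\mathbb{E}_\infty$-algebra $\mathcal{U}_G(R) \in \mack_G(\mot_R)$. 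Modules over an object in a thick $\otimes$-ideal themselves lie in the ideal, via the retract $M \to \mathcal{U}_G(R) \otimes M \to M$ produced by the unit and action; so it suffices to show $\mathcal{U}_G(R)$ is $(\sF,\epsilon)$-nilpotent.

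Next, since $\hom_{\mot_R}(\mathbf{1}, \mathcal{U}_G(R)) = K_G(R)$, \Cref{nilpinmot} identifies the $(\sF,\epsilon)$-nilpotence of $\mathcal{U}_G(R)$ with the $(\sF,\epsilon)$-nilpotence of the $\mathbb{E}_\infty$-algebra $K_G(R)$ in $\GSpec$. The $\mathbb{E}_\infty$-structure then allows application of \Cref{rationaltoFnilp}, reducing the problem to showing that the rationalization $K_G(R)_{\mathbb{Q}}$ is $\sF$-nilpotent. By the rational criterion \Cref{prop:rational}, this is in turn equivalent to the condition that the unit $1 \in \pi_0 K_G(R)^G \otimes \mathbb{Q} = \rep(G,R) \otimes \mathbb{Q}$ lie in the image of induction from $\bigoplus_{H \in \sF} \rep(H,R) \otimes \mathbb{Q}$, and this is precisely the hypothesis that $R$-based Swan induction holds for $\sF$.

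The main conceptual obstacle is ensuring the coBorel side of the construction correctly models the assembly map, i.e.~that evaluation of $\mathcal{U}_{G,\cobor}(\mathcal{C})$ at $G/H$ really recovers $\mathcal{U}(\mathcal{C}_{hH})$ (and hence that its value under $E$ is $E(\mathcal{C}_{hH})$), so that the inverse/direct limit comparisons of \eqref{FcompmapX} on the nose yield \eqref{Fassembly1} and \eqref{Fassembly2}. This identification is handled by the thick-subcategory description of coBorel fixed points preceding \Cref{assembly maps}; after that, the argument simply assembles the rational criterion, the May-nilpotence style upgrade to $(\sF,\epsilon)$-nilpotence, and module-closedness of thick $\otimes$-ideals.
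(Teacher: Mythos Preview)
Your proposal is correct and follows essentially the same route as the paper's proof: reduce both the Borel and coBorel comparison maps to $(\sF,\epsilon)$-nilpotence of $\mathcal{U}_G(\mathcal{C})$ and $\mathcal{U}_{G,\cobor}(\mathcal{C})$, pass to the algebra $\mathcal{U}_G(\perf(R))$ via the module structure, transfer to $K_G(R)\in\GSpec$ via \Cref{nilpinmot}, and then invoke \Cref{rationaltoFnilp} together with the rational criterion \Cref{prop:rational} to identify the remaining condition with the Swan induction hypothesis. The paper's argument is slightly terser but structurally identical.
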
 
\begin{proof} 
For the first claim, it suffices to show that 
$\mathcal{U}_G(\mathcal{C}) \in \mack_G( \mot_R)$ is $(\sF,
\epsilon)$-nilpotent. By multiplicativity, this reduces to showing that 
$\mathcal{U}_G( \perf(R))$ is $(\sF, \epsilon)$-nilpotent, where the $G$-action
on $\perf(R)$ is trivial; for this in turn, it suffices to show that $K_G(R) \in
\GSpec$ is $(\sF, \epsilon)$-nilpotent  as in \Cref{Gmotnilp}. 
By 
\Cref{rationaltoFnilp}, it suffices to show that $K_G(R)_{\mathbb{Q}}$ is $\sF$-nilpotent, which is
precisely the condition of $R$-based Swan induction for $\sF$
(\Cref{SwaninductionandFnilp}).

For the second claim, we use the coBorel construction of 
\Cref{equivalgKcobor}. 
We claim that $\mathcal{U}_{G, \cobor}(\mathcal{C}) \in \mack_G(\mot_R)$ is $(\sF,
\epsilon)$-nilpotent. But this follows because it is a module over
$\mathcal{U}_G(\perf(R))$, which we have just seen is $(\sF,
\epsilon)$-nilpotent; thus $\mathcal{U}_{G, \cobor}(\mathcal{C})$ is $(\sF,
\epsilon)$-nilpotent. This implies that 
\eqref{Fassembly2} becomes an equivalence after $T(n)$-localization,
cf.~\Cref{Fepsilonimpliescompmap}. 
\end{proof} 

Now we record
a variant of \Cref{swaninddesc} specifically in the context where $R = L_n^{p, f}
\mathbb{S}$, and 
where the localization is precisely at height $n+1$; this relies on similar
techniques as in section~4, and follows from combining them with the above. In
fact, this yields a slight refinement of the results of section 4 to
non-$p$-groups.  
\begin{theorem}  
\label{ASthmwithcyclic}
Fix $n \geq 0$. Let $R$ be an $\mathbb{E}_\infty$-ring such that $L_{T(n)}
(R^{tC_p}) =0 $. 
Let $\mathcal{C}$ be an $ R$-linear idempotent-complete stable $\infty$-category equipped
with an $R$-linear action of a finite group $G$. Then for any additive invariant $E$, the
maps
\eqref{Fassembly1} and \eqref{Fassembly2} become equivalences after
$T(n+1)$-localization, for $\sF$ the family of cyclic subgroups of $G$ of
prime-to-$p$-power order. 
\end{theorem}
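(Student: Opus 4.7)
The strategy is to verify the geometric fixed point criterion of \Cref{rem:k1_local_g_spectra}: $L_{T(n+1)} \Phi^H K_G(R) = 0$ for every subgroup $H \subseteq G$ with $H \notin \sF$. This yields $\sF$-completeness of $L_{T(n+1)} K_G(R)$, which by \Cref{moduleovercomplete} propagates to the Mackey functors $\mathcal{U}_G(\mathcal{C})$ and $\mathcal{U}_{G,\mathrm{cobor}}(\mathcal{C})$ (as modules over $K_G(R)$ in $\mack_G(\mot_R)$) and then---via an argument modelled on \Cref{generalreduction}(3)$\Rightarrow$(4), enhanced by Kuhn's $T(n+1)$-local Tate vanishing for each $H \in \sF$ (which has order prime to $p$)---delivers both \eqref{Fassembly1} and \eqref{Fassembly2}. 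The subgroups $H\notin\sF$ split into two cases: $H$ non-cyclic, or $H$ cyclic of order divisible by $p$, which I handle by separate mechanisms.

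For $H$ non-cyclic, I invoke Artin's classical induction theorem: $1 \in A(G)\otimes\mathbb{Q}$ lies in the image of induction from cyclic subgroups, so via the canonical Green functor map $A \to \rep(-,R)$ the Swan induction condition holds for the family $\sF_{\mathrm{cyc}}$ of all cyclic subgroups. Applying \Cref{rationaltoFnilp} to the $\mathbb{E}_\infty$-algebra $K_G(R)$ gives $(\sF_{\mathrm{cyc}}, \epsilon)$-nilpotence; in particular, $L_{T(n+1)}\Phi^H K_G(R) = 0$ for every non-cyclic $H$.

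For $H$ cyclic with $p\mid|H|$, write $H = C_{p^a}\times C_m$ with $a \geq 1$ and $p\nmid m$. Since the restriction $K_G(R)|_H$ is the Borel $H$-spectrum on $K(R)$, we have $\Phi^H K_G(R) = K(R)^{tH}$; and since the subgroup $N = C_p \subseteq H$ is central (hence normal in the abelian $H$), the vanishing $L_{T(n+1)} K(R)^{tH} = 0$ reduces to $L_{T(n+1)} K(R)^{tC_p} = 0$ by iterating the cofiber sequence $X_{hN} \to X^{hN} \to X^{tN}$ and invoking Kuhn's $T(n+1)$-local Tate vanishing \cite{Kuh04} for the quotient $H/C_p$. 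The base step $L_{T(n+1)} K(R)^{tC_p} = L_{T(n+1)}\Phi^{C_p} K_{C_p}(R) = 0$ is delivered by \Cref{Cpgeovanishlemma} at height $n+1$, whose two hypotheses $L_{T(n+1)} R = 0$ and $L_{T(n+1)} K(R^{tC_p}) = 0$ follow from our assumption $L_{T(n)}(R^{tC_p})=0$ via \Cref{reverse:blueshift} and via \Cref{TateMitchell} applied to the $\mathbb{E}_\infty$-ring $R^{tC_p}$, respectively.

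The main obstacle is the passage from $\sF$-completeness to the codescent/assembly statement \eqref{Fassembly2}, which does not reduce to the descent statement \eqref{Fassembly1}; it is resolved by exploiting that every $H\in\sF$ has order prime to $p$, so Kuhn's Tate vanishing supplies norm equivalences at $T(n+1)$, allowing the argument of \Cref{generalreduction}(3)$\Rightarrow$(4) connecting the Borel and coBorel versions of $K_G(\mathcal{C})$ to be adapted from the $\triv$-family to $\sF$ without invoking any additional nilpotence input beyond the $\sF$-completeness already established.
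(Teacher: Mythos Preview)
Your proposal contains two genuine errors, one in each case of your main dichotomy.

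\textbf{Non-cyclic $H$.} Artin's induction theorem is a statement about the rational complex representation ring $R_{\mathbb{C}}(G)\otimes\mathbb{Q}$, not about the rational Burnside ring $A(G)\otimes\mathbb{Q}$. In fact, $1\in A(G)\otimes\mathbb{Q}$ is \emph{never} induced from proper subgroups when $G$ is nontrivial: under the mark isomorphism $A(G)\otimes\mathbb{Q}\simeq\prod_{(H)}\mathbb{Q}$, any class induced from $K\subsetneq G$ has $G$-mark zero (since $(G\times_K X)^G=\emptyset$), while $1$ has $G$-mark $1$. Hence the map $A(G)\otimes\mathbb{Q}\to\rep(G,R)\otimes\mathbb{Q}$ cannot deliver Swan induction for cyclic subgroups in general. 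Indeed, for $R=\mathbb{S}$ one has $\rep(G,\mathbb{S})=A(G)$ exactly, so your claim would give $\sF_{\mathrm{cyc}}$-Swan induction for the sphere, which is false.

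\textbf{Cyclic $H$ with $p\mid|H|$.} The assertion that ``the restriction $K_G(R)|_H$ is the Borel $H$-spectrum on $K(R)$'' is precisely what the theorem is trying to establish (indeed something weaker, namely $\sF$-completeness rather than full Borel-completeness). By construction $K_G(R)^H=K(\fun(BH,\perf(R)))$, whose $\pi_0$ is the Swan ring $\rep(H,R)$; this is not $K(R)^{hH}$. Consequently $\Phi^H K_G(R)$ is not $K(R)^{tH}$, and your reduction to $L_{T(n+1)}K(R)^{tC_p}=0$ collapses.

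The paper's proof avoids both pitfalls by first passing from $K_G(R)=K(\perf(R)^{\bor})$ to the group-ring $K$-theory $K(\perf(R)_{\cobor})$ (whose $T(n+1)$-localizations agree because the Verdier quotient is $R^{tH}$-linear and \Cref{TateMitchell} applies), and then analyzing the latter via Dundas--Goodwillie--McCarthy: the $\TC$ terms are handled by the cyclic-induction result of \cite{LRRV19}, and the discrete $K(\pi_0 R)$ term by Mitchell's theorem (for $n\geq 1$) or Swan's classical theorem (for $n=0$). Only after this do they treat the $p$-divisible cyclic case, and there they invoke the already-proved \Cref{ASthmintro} rather than a Tate computation.
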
 
\begin{proof} 
Let us first observe that we may replace $R$ by its connective cover $\tau_{\geq
0} R$. Indeed, via the Postnikov tower, we find that $(\tau_{\leq -1} R)^{tC_p}$ is
annihilated by $T(n)$. Thus, the vanishing assumption for $R$ is equivalent to
the same assumption for $\tau_{\geq 0} R$, whence we may assume for the rest of
the argument that $R$ is connective.

Now observe that for any finite group $H$, we have $L_{T(n)} (R^{tH}) =
0$. Indeed, we reduce to the case where $H$ is a $p$-group by restricting to a
$p$-Sylow. 
Inductively, we have a normal subgroup $C_p\trianglelefteq H$. The norm map $R_{hH} \to
R^{hH}$ factors as 
\[ R_{hH} = (R_{hC_p})_{h(H/C_p)} \to (R^{hC_p})_{h(H/C_p)} \to
(R^{hC_p})^{h(H/C_p)} = R^{hH}.  \]
By induction on $H$ and the assumption $L_{T(n)}(R^{tC_p}) =0$, we see that each map above has $T(n)$-acyclic cofiber, whence $R^{tH}$
is $T(n)$-acyclic.

The rest of the argument will closely follow that of \Cref{Cpgeovanishlemma}. 
We will  show that 
$L_{T(n+1)}K_G(  R)$ is $\sF$-complete, for $\sF$ as in the
statement. This will imply the result. 
Indeed, for any additive invariant $E$, the $G$-spectra 
$E(\mathcal{C}^{\bor}), E(\mathcal{C}_{\cobor}), F(E\sF_+,
E(\mathcal{C}^{\bor}))$ are modules over $K_G( R)$. 
If $L_{T(n+1)} K_G(R)$ is shown to be $\sF$-complete, then 
the $G$-spectra 
$$L_{T(n+1)}E(\mathcal{C}^{\bor}), \ L_{T(n+1)}E(\mathcal{C}_{\cobor}), \ 
L_{T(n+1)}F(E\sF_+,
E(\mathcal{C}^{\bor}))
$$ 
will be $\sF$-complete by \Cref{moduleovercomplete} and thus become
$\sF$-nilpotent after smashing with a finite type $(n+1)$-spectrum 
by \Cref{rem:k1_local_g_spectra}. The result then follows in light of
\cite[Prop.~2.8]{MNN19}. 

To see that $L_{T(n+1)}K_G( R)$ is $\sF$-complete, 
we let $\mathcal{D} = \perf( R)$ with trivial $G$-action. 
We have the fully faithful inclusion of $G$-Mackey functors 
$\mathcal{D}_{\cobor} \to \mathcal{D}^{\bor}$ (\Cref{fullyfaithfulnormmap}); the cofiber
takes values in $R^{tH}$-linear $\infty$-categories for various subgroups $H
\subseteq G$; indeed, this follows because the Verdier quotient
$\fun(BH, \perf(  R))/ \perf(  R[H])$ is linear over
$ R^{tH}$ as in \cite[Sec.~I.3]{NS18}. 
Therefore, using \Cref{TateMitchell}, we find an equivalence of $G$-Mackey functors 
$L_{T(n+1)} K( \mathcal{D}_{\cobor}) \simeq L_{T(n+1)} K( \mathcal{D}^{\bor})
= L_{T(n+1)} K_G(  R)$. 
Thus, it suffices to show that 
$L_{T(n+1)} K( \mathcal{D}_{\cobor})$ is $\sF$-complete, or equivalently that
its $T(n+1)$-local geometric fixed points vanish at all subgroups 
except possibly those 
which are cyclic of order prime to $p$ (\Cref{rem:k1_local_g_spectra}).

Now 
$K( \mathcal{D}_{\cobor})$ is group-ring $K$-theory (\Cref{assembly maps}). 
Since $R$ is connective, we obtain from the Dundas--Goodwillie--McCarthy theorem \cite{DGM13} 
a pullback square
of $G$-spectra,
$$ \xymatrix{
L_{T(n+1)} K( \mathcal{D}_{\cobor}) \ar[d] \ar[r] &   L_{T(n+1)} \mathrm{TC}( 
\perf(  R)_{\cobor}) \ar[d] 
\\
L_{T(n+1)} K( \perf( \pi_0  R)_{\cobor}) \ar[r] & L_{T(n+1)}
\mathrm{TC}( \perf( \pi_0  R)_{\cobor}).
}$$
Now we obtain from \cite[Th.~1.2]{LRRV19} that the $G$-spectra 
$\mathrm{TC}( 
\perf(  R)_{\cobor}), 
\mathrm{TC}( \perf( \pi_0 R)_{\cobor})$ are 
modulo $p$ induced from the family of cyclic subgroups of $G$; in particular,
their geometric fixed points at non-cyclic subgroups vanish modulo $p$. 
Finally, the term 
$L_{T(n+1)} K( \perf( \pi_0  R)_{\cobor})$ vanishes for $n \geq 1$
by Mitchell's theorem; if $n = 0$, it follows from \Cref{swaninddesc} and
Swan's induction theorem from \cite{Swa60} (reproved below as \Cref{swanthm}) that 
$L_{T(n+1)} K( \perf( \pi_0  R)_{\cobor})$
is 
induced from the family of cyclic subgroups. 

Thus, we find that $L_{T(n+1)}K(\mathcal{D}_{\cobor}) = 
L_{T(n+1)} K(\mathcal{D}^{\bor}) = 
L_{T(n+1)} K_G(R)$ is complete for the family of 
cyclic subgroups. 
In particular, the $T(n+1)$-local geometric fixed points vanish for 
non-cyclic subgroups. 
Suppose then that $G$ is cyclic and has order divisible by $p$; we must show that 
$L_{T(n+1)} \Phi^G K(\mathcal{D}^{\bor}) = 0$. 

In fact, since there is an inclusion $H \trianglelefteq G$ with $G/H \simeq
C_p$, 
we have the transfer map 
\[ (K(\mathcal{D}^{\bor})^{H})_{hC_p} \to K(\mathcal{D}^{\bor})^G.  \]
This map,
or equivalently $K( \mathcal{D}^{hH})_{hC_p} \to K(\mathcal{D}^{hG})$,
is $T(n+1)$-locally an equivalence thanks to \Cref{combinationAB} (applied to
the residual $C_p$-action on $\mathcal{D}^{hH}$). 
Since it factors through the map 
$(E \mathscr{P} \otimes K(\mathcal{D}^{\bor}))^{G} \to
K(\mathcal{D}^{\bor})^G$ for $\mathscr{P}$ the family of proper subgroups, it follows that this last map has $T(n+1)$-local
image (on $\pi_0$) containing the unit, whence 
$L_{T(n+1)} \Phi^G K(\mathcal{D}^{\bor}) = 0$
as desired. 
\end{proof}

Next, we prove \Cref{swanvanishintro}, which was
inspired by the generalized character theory of Hopkins, Kuhn, and Ravenel
\cite{HKR00} as well as the results of \cite{MNN19}. We will apply this below to
recover some cases of 
the chromatic bounds on $K$-theory spectra.

\begin{prop}\label{thm:bdd-complex}
	Fix a prime $p$ and a non-negative integer $n$. Let $R$ be an
	$\mathbb{E}_\infty$-ring spectrum and $G=C_p^{\times n}$. Suppose that the sum of the rationalized transfer maps
	\begin{equation}\label{eqn:tr}
	 \bigoplus_{H\subsetneq G} R^0(BH)\otimes \mathbb{Q} \to R^0(BG)\otimes \mathbb{Q}
	 \end{equation}
	  is a surjection (or equivalently has image containing the unit). Then
	  $L_{T(n+i)} R\simeq 0$ for all $i\geq 0$ (at the prime $p$). 
\end{prop}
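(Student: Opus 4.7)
The plan is to translate the transfer hypothesis into an equivariant nilpotence statement for a Borel-equivariant $\mathbb{E}_\infty$-ring, and then apply the reverse blueshift \Cref{reverse:blueshift} iteratively to extract the desired chromatic vanishing. Concretely, I will consider the Borel-equivariant $\mathbb{E}_\infty$-algebra $R^{\bor} \in \GSpec$ associated to $R$ with trivial $G$-action; its $\pi_0$-valued Green functor satisfies $(\pi_0 R^{\bor})(G/H) = \pi_0 R^{hH} = R^0(BH)$, with induction given by the stable transfer in $R$-cohomology. The hypothesis is thus precisely the condition that $(\pi_0 R^{\bor}) \otimes \mathbb{Q}$ has defect base contained in the family $\mathscr{P}$ of proper subgroups of $G$, so by \Cref{prop:rational} the rationalization $R^{\bor}_{\mathbb{Q}}$ is $\mathscr{P}$-nilpotent in $\GSpec$; \Cref{rationaltoFnilp}, which uses the $\mathbb{E}_\infty$-structure, then upgrades this to $(\mathscr{P}, \epsilon)$-nilpotence of $R^{\bor}$.

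Next, I extract vanishing of an iterated Tate construction. By $(\mathscr{P}, \epsilon)$-nilpotence, there is a finite set $\Sigma$ of primes such that, for every finite complex $F$ with nontrivial localizations at the primes in $\Sigma$, $R^{\bor}$ lies in the thick $\otimes$-ideal of $\GSpec$ generated by $F$ (inflated to $\GSpec$) and the $\mathscr{P}$-nilpotent $G$-spectra. Applying the symmetric monoidal exact functor $\Phi^G \colon \GSpec \to \Spec$, which annihilates $\mathscr{P}$-nilpotent objects (since $\Phi^G(G/H_+) = 0$ for $H \in \mathscr{P}$) and carries the inflated $F$ to $F$, I conclude that $\Phi^G R^{\bor}$ lies in the thick $\otimes$-ideal of $\Spec$ generated by $F$. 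Choosing $F$ for each $j \geq 0$ with $L_{T(j)} F = 0$ at the implicit prime $p$ (e.g., so that its $p$-local part is a generalized Moore spectrum of type $j+1$) then yields $L_{T(j)}(\Phi^G R^{\bor}) = 0$ for all $j \geq 0$. For $G = C_p^{\times n}$ with trivial $G$-action on $R$, I identify $\Phi^G R^{\bor}$ with the $n$-fold iterated Tate construction $T^{(n)}(R) := (\cdots(R^{tC_p})^{tC_p}\cdots)^{tC_p}$ by iterating the factorization $\Phi^{G'} = \Phi^{G'/H'} \circ \Phi^{H'}$ along a composition series $1 \triangleleft C_p \triangleleft \cdots \triangleleft G$ and using that $\Phi^{C_p}$ of the Borel-completion of a trivial-action spectrum computes its $C_p$-Tate construction.

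Finally, I chain applications of \Cref{reverse:blueshift}. Each $T^{(k)}(R)$ is an $\mathbb{E}_\infty$-ring by Nikolaus--Scholze \cite{NS18}, so \Cref{reverse:blueshift} applies. Writing $T^{(k)}(R) = T^{(k-1)}(R)^{tC_p}$, the vanishing $L_{T(k)}(T^{(n-k)}(R)) = 0$ combined with \Cref{reverse:blueshift} (taking $A = T^{(n-k-1)}(R)$ and $i = k$) yields $L_{T(j)}(T^{(n-k-1)}(R)) = 0$ for all $j \geq k+1$. Starting from $L_{T(0)}(T^{(n)}(R)) = 0$ (established above) and iterating $n$ times, using at each step the newly-obtained vanishing as the hypothesis of the next, I reach $L_{T(j)} R = 0$ for all $j \geq n$, i.e., $L_{T(n+i)} R = 0$ for $i \geq 0$. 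The identification $L_{T(n+i)} R \simeq L_{K(n+i)} R$ for a ring spectrum then follows by the nilpotence theorem, as recorded in \cite[Lem.~2.3]{LMMT20}.

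The main technical subtlety will be the identification of $\Phi^G R^{\bor}$ with the iterated Tate construction $T^{(n)}(R)$ for elementary abelian $G$, which must be done by tracking at each stage of the composition series that $\Phi^{H'} R^{\bor}$ retains sufficient Borel structure as a $G/H'$-spectrum (with trivial underlying action, inherited from the trivial $G$-action on $R$) so that the subsequent application of $\Phi^{G'/H'} = \Phi^{C_p}$ genuinely produces a $C_p$-Tate construction; this is distinct from the single-step Tate $(\widetilde{E}\{e\} \wedge R)^G$ with respect to the trivial family, so the composition-series argument is genuinely needed.
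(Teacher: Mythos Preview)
Your strategy---upgrading the rational transfer hypothesis to $(\mathscr{P},\epsilon)$-nilpotence of $R^{\bor}$ via \Cref{rationaltoFnilp}, then extracting chromatic vanishing through an iterated application of \Cref{reverse:blueshift}---is a genuinely different route from the paper's, and the first two steps are fine. The gap is in your identification $\Phi^{G}(R^{\bor}) \simeq T^{(n)}(R)$. This fails already for $G=C_p^{\times 2}$: the $G/H$-spectrum $\Phi^{H}(R^{\bor}_G)$ is \emph{not} Borel-complete in general. Indeed, in the recollement description of $C_p$-spectra, Borel-completeness of $\Phi^{H}(R^{\bor}_G)$ is exactly the assertion that the canonical comparison map $\Phi^{G}(R^{\bor}) \to (R^{tC_p})^{tC_p}$ is an equivalence, which is what you are trying to prove; and for instance when $R=H\mathbb{F}_p$ one sees that $\Phi^{C_p^{\times 2}}(R^{\bor})$ is obtained from $H^*(BC_p^{\times 2};\mathbb{F}_p)$ by inverting \emph{all} $p+1$ nontrivial Euler classes, which is not the same ring as the iterated Tate. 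So the ``tracking sufficient Borel structure'' you flag as a subtlety is in fact an obstruction.

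That said, your approach is salvageable: you do not need the equivalence, only an $\mathbb{E}_\infty$-ring map $\Phi^{G}(R^{\bor})\to T^{(n)}(R)$, since then $T^{(n)}(R)$ is an algebra over $\Phi^G(R^{\bor})$ and inherits its $T(j)$-acyclicity. Such a map is produced exactly by the Borel-completion step you were attempting: at each stage the lax symmetric monoidal Borel-completion $\Phi^{H_k}(R^{\bor}_G)\to (T^{(k)}(R))^{\bor}_{G/H_k}$ exists (it is the unit of a localization), and applying $\Phi^{G/H_k}$ and iterating yields the desired ring map. With this correction your argument goes through.

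For comparison, the paper's proof is far more direct and avoids both \Cref{rationaltoFnilp} and \Cref{reverse:blueshift} entirely: it base-changes $R$ to a $K(n+i)$-local $E_{n+i}$-algebra, uses \cite{MNN_nilpotence} to reduce to showing $\pi_0 R\otimes\mathbb{Q}=0$, and then derives a contradiction by a rank count---the transfer surjection would force a free $\pi_0(R)\otimes\mathbb{Q}$-module of rank $p^{(n+i)n}$ to be a quotient of one of rank $\frac{p^n-1}{p-1}\cdot p^{(n+i)(n-1)}$, which is impossible. Your route is more structural but depends on heavier machinery (Hahn's theorem via \Cref{reverse:blueshift}, and the $\mathbb{E}_\infty$-structure on iterated Tate constructions); the paper's is an elementary counting argument once the reduction to $E_{n+i}$-algebras is made.
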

\begin{proof}
A $T(n)$-local ring spectrum is
contractible if and only if its $K(n)$-localization is contractible,
cf.~\cite[Lem.~2.3]{LMMT20}. Therefore,
it suffices to prove that $L_{K(n+i)} R = 0$. 
To verify the desired vanishing, we can replace $R$ by the
$\mathbb{E}_\infty$-$R$-algebra $L_{K(n+i)}(E_{n+i}
\otimes R)$; by
naturality of the transfer map, the hypotheses of the result are preserved by
this replacement. 
Thus, we may
assume throughout that $R=L_{K(n+i)}R$ receives an $\mathbb{E}_\infty$-map from $E_{n+i}$. 
By the main result of 
\cite{MNN_nilpotence} then, it suffices to show $\pi_0 R\otimes \mathbb{Q}=0$.

When $n=0$, the left hand side of \eqref{eqn:tr} is 0, so $\pi_0 R\otimes \mathbb{Q}=0$. So it suffices to consider the case $n>0$.	Since each transfer map factors through a maximal proper subgroup, the surjectivity of \eqref{eqn:tr} is equivalent to the surjectivity of 
\begin{equation}\label{eqn:better-tr}
	\bigoplus_{C_p^{\times (n-1)}\cong H\subsetneq G} R^0(BH)\otimes \mathbb{Q}\to R^0(BG)\otimes\mathbb{Q}.
\end{equation}
The left hand side of this equation contains $\frac{p^n-1}{p-1}$-copies of $R^0(BC_p^{\times (n-1)})$. 
Since $R$ is a $K(n+i)$-local $E_{n+i}$-module, it follows that for any $k$, one
has that
$C^*(BC_p^{\times k}, R) = C^*(BC_p^{\times k}, E_{n+i}) \otimes_{E_{n+i}} R$
is a free $R$-module of rank 
$p^{(n+i)k}$ (cf.~\Cref{triv on morava}). 
In particular, the right-hand-side of \eqref{eqn:better-tr} is free 
over $\pi_0(R) \otimes \mathbb{Q}$
of rank $p^{(n+i)n}$, while each summand on the left-hand-side has rank 
$p^{(n+i)(n-1)}$. 
Using the surjectivity of \eqref{eqn:better-tr}, 
we find that if $\pi_0(R) \otimes \mathbb{Q} \neq 0$, then we would conclude the inequality of ranks,
\[ p^{(n+i)(n-1)} \frac{p^n-1}{p-1} \geq p^{(n+i)n}.  \]
However, we see easily that this inequality cannot hold if $i \geq 0$ and $n>0$. 
This contradiction proves the result. 
\end{proof}

\begin{thm}\label{thm:swan-vanish} 
Let $p$ be a prime, $n\ge 0$ and $R$ an $\mathbb{E}_\infty$-ring spectrum. 
Suppose that $R$-based Swan induction holds for the family of proper subgroups of $C_p^{\times n}$. 
Then $L_{T(i)} K(R) = 0$ for $i \geq n$ at the prime $p$.
\end{thm}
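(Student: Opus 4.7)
The strategy is to transport the Swan induction hypothesis for $R$ into a rational transfer-surjectivity statement for the $\mathbb{E}_\infty$-ring $K(R)$, and then invoke \Cref{thm:bdd-complex} applied to $K(R)$ in place of $R$. Throughout, write $G = C_p^{\times n}$ and let $\mathcal{F}$ denote the family of proper subgroups.

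First I would construct a natural map of $G$-Green functors
\[
\chi \colon \rep(-, R) \longrightarrow K(R)^0(B-)
\]
as follows. Because $G$ acts trivially on $\perf(R)$, the associated Mackey functor $K_G(R) \in \mack_G(\sp)$ has underlying spectrum $K(R)$ (with trivial $G$-action), so the counit of the Borelification adjunction produces a map of $G$-spectra $K_G(R) \to K(R)^{\bor}$. Since both $K_G(-)$ and $(-)^{\bor}$ are lax symmetric monoidal, this is a map of $\mathbb{E}_\infty$-algebras in $\mack_G(\sp)$; on $H$-fixed points it is the natural comparison $K(\fun(BH, \perf(R))) \to K(R)^{hH}$, and at $\pi_0$ it yields the ring maps $\chi_H \colon \rep(H, R) \to K(R)^0(BH)$. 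As a map of Mackey functors, $\chi$ intertwines the push-forward on $K_G(R)$---which is induction of representations by \Cref{prop:push_forward_borel}---with the Becker--Gottlieb transfer on $K(R)^0(B-)$.

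Next I would apply $\chi$ rationally to the Swan induction equation. By hypothesis there exist classes $x_H \in \rep(H, R) \otimes \mathbb{Q}$ for each $H \in \mathcal{F}$ with
\[
1 \;=\; \sum_{H \in \mathcal{F}} \operatorname{Ind}_H^G(x_H) \quad \text{in } \rep(G, R) \otimes \mathbb{Q}.
\]
Setting $\bar x_H := \chi_H(x_H) \in K(R)^0(BH) \otimes \mathbb{Q}$ and using the induction/transfer compatibility of $\chi$, we obtain $1 = \sum_H \operatorname{tr}_H^G(\bar x_H)$ in $K(R)^0(BG) \otimes \mathbb{Q}$. Equivalently, the rationalized transfer
\[
\bigoplus_{H \subsetneq G} K(R)^0(BH) \otimes \mathbb{Q} \;\longrightarrow\; K(R)^0(BG) \otimes \mathbb{Q}
\]
has image containing the unit. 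This is precisely the hypothesis of \Cref{thm:bdd-complex} with $R$ replaced by the $\mathbb{E}_\infty$-ring $K(R)$, whose conclusion $L_{T(n+j)} K(R) \simeq 0$ for all $j \geq 0$ is the desired $L_{T(i)} K(R) = 0$ for $i \geq n$.

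The only nontrivial point is the assertion that $\chi$ is a map of Green functors; this is the conceptual heart of the argument, since it is what converts induction on Swan $K$-theory into Becker--Gottlieb transfer on Borel $K(R)$-cohomology. I expect this to be the main place the proof requires care, but it is essentially formal once one recognizes $\chi$ as the unit of the Borelification adjunction applied to the $\mathbb{E}_\infty$-algebra $K_G(R)$: all compatibilities between multiplicative and Mackey structures are automatic from the lax symmetric monoidal nature of the constructions in Section~2. Granting this, the remainder of the proof is a one-line invocation of \Cref{thm:bdd-complex}.
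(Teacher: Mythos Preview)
Your proposal is correct and follows essentially the same route as the paper: both arguments use the comparison map of $\mathbb{E}_\infty$-algebras $K_G(R) \to K_G(R)^{\bor}$ in $G$-spectra to transport the Swan induction hypothesis into rational transfer-surjectivity for $K(R)$, and then invoke \Cref{thm:bdd-complex}. The only cosmetic difference is that the paper phrases the first step in the language of rational $\sF$-nilpotence (via \Cref{prop:rational}) rather than tracking the induction equation directly through a map of Green functors, but the content is identical.
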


\begin{proof}
We write $G=C_p^{\times n}$ and consider the $\mathbb{E}_\infty$-algebra in
$G$-spectra $K_G(R)$. By assumption, $K_G(R)_{\mathbb{Q}}$ is nilpotent for the family of
proper subgroups, cf. \Cref{SwaninductionandFnilp}. 
There is a natural map of $\mathbb{E}_\infty$-algebras in $\GSpec$ of the form 
$K_G(R)_{\mathbb{Q}} \to \left( K_G(R)^{\bor} \right)_{\mathbb{Q}}$, so the
target is also nilpotent for the family of proper subgroups. 
But $K_G(R)^{\bor}$ is simply the Borel-equivariant $G$-spectrum associated to
the trivial $G$-action on $K(R)$, so the condition that 
$\left( K_G(R)^{\bor} \right)_{\mathbb{Q}}$ should be nilpotent for the family
of proper subgroups is exactly that the map \eqref{eqn:tr} (with $K(R)$ replacing $R$) should be a surjection. 
Thus, the result follows 
from \Cref{thm:bdd-complex}. 
\end{proof}

\section{Swan induction theorems; proof of \Cref{Ex:Swaninduction}}

In this section, we establish several examples of Swan induction theorems for structured
ring spectra, and prove \Cref{Ex:Swaninduction}. 
In particular, we show that one always has Swan induction for the family of
abelian subgroups for $MU$ (\Cref{cxorientswan}), 
for the cyclic groups for $\mathbb{Z}$ 
(\Cref{swanthm}, recovering results of \cite{Swa60})
or for $\mathbb{S}[1/|G|]$ (\Cref{p-1local}), 
for the rank $\leq 2$ abelian subgroups for $KU$ (\Cref{artinindKU}), and for the rank $\leq
n+1$ abelian subgroups for $E_n$ at $p =2 $ (\Cref{thm:2-primary}). 
\subsection{Geometric arguments}

Throughout, let $R$ be an $\mathbb{E}_\infty$-ring spectrum. 
We first observe the following basic features of the Swan induction property. 

\begin{remark} 
\label{general:swaninduction}
\begin{enumerate}
\item  
If $R$ is an $\mathbb{E}_\infty$-ring such that 
one has 
$R$-based Swan induction with respect to a family of subgroups $\sF$ of some
group $G$, and $R'$ is an $\mathbb{E}_\infty$-ring admitting a map from $R$
(even an $\mathbb{E}_1$-map suffices), 
then $R'$-based Swan induction for $\sF$ and $G$ holds as well (this was used
in the proof of \Cref{thm:bdd-complex}). 
\item In order to prove that $R$-based Swan induction holds with respect to a
family $\sF$ of subgroups  of $G$, it suffices to show that for every subgroup
$H \subseteq G$ which is not in $\sF$, then one has Swan induction with respect to
the family of proper subgroups of $H$.  This is an elementary observation about Green 
functors, cf. \cite[Prop.~6.40]{MNN17}.
\item Suppose $G \twoheadrightarrow G'$ is a surjection, and $R$-based Swan
induction holds for the family of proper subgroups of $G'$. Then $R$-based Swan
induction holds 
for the family of proper subgroups of $G$. 
\end{enumerate}
\end{remark}

In this subsection, we give 
geometric proofs of Swan induction in several cases. 
Our basic tool is the following. 

\begin{prop} \label{artinworkhorse}
Suppose $M$ is a $G$-space such that: 
\begin{enumerate}
\item\label{item:CW_structure} M admits  a finite $G$-CW structure.
	\item  $M$ has isotropy in the family $\sF$ of subgroups of $G$.
	\item\label{item:trivialization} 
	There is an equivalence \begin{equation} \label{trivaction}R \otimes M_+ \simeq \bigoplus_{k = 1}^n
	\Sigma^{2i_k} R  \in \fun(BG, \perf(R))\end{equation}
	for some integers $i_1, \dots, i_n$. Here we equip the $\Sigma^{2i_k} R$ 
	with the trivial $G$-action.\footnote{In fact, for the argument, it suffices
	that the class in $\mathrm{Rep}(G, R)$ of  $R \otimes M_+$ is a nonzero
	integer.
	This would be satisfied, for example, if 
there are odd suspensions of $R$ that appear in \eqref{trivaction}, as long as
the Euler characteristic is nonzero.} 
\end{enumerate}
Then $R$-based Swan induction holds for the family $\sF$.  
\end{prop}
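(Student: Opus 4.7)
The plan is to compute the class $[R\otimes M_+]\in\sw(G,R)$ in two different ways---using the finite $G$-CW structure on $M$ together with the isotropy hypothesis on one side, and using the trivialization hypothesis on the other---and then to compare them to extract the Swan induction identity. The whole argument takes place at the level of $K_0$.

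\textbf{Cellular computation.} From the finite $G$-CW structure and the isotropy condition, $M$ is built by successively attaching cells of the form $G/H_\alpha\times D^{n_\alpha}$ with each $H_\alpha\in\sF$. This yields a finite filtration of $R\otimes M_+$ in $\fun(BG,\perf(R))$ whose associated graded pieces are $R$ (from the disjoint basepoint) together with shifts $\Sigma^{n_\alpha}(R\otimes(G/H_\alpha)_+)\simeq\Sigma^{n_\alpha}\mathrm{Ind}_{H_\alpha}^G R$. Passing to $K_0=\sw(G,R)$, this gives the identity
\[
[R\otimes M_+]\;=\;[R]\;+\;\sum_\alpha (-1)^{n_\alpha}\,\mathrm{Ind}_{H_\alpha}^G[R],
\]
so that $[R\otimes M_+]-[R]$ lies in the subgroup $I_\sF:=\sum_{H\in\sF}\mathrm{Ind}_H^G\sw(H,R)\subseteq\sw(G,R)$.

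\textbf{Trivialization and conclusion.} The trivialization hypothesis gives $[R\otimes M_+]=\sum_{k=1}^n[\Sigma^{2i_k}R]=n\cdot[R]$ in $\sw(G,R)$, because every even shift of $R$ with trivial $G$-action has the same $K_0$-class as $R$. Comparing with the cellular computation yields $(n-1)\cdot[R]\in I_\sF$. Assuming $n\neq 1$ (equivalently $\chi(M)\neq 0$, which is easily arranged in all intended applications such as flag-type coset spaces), we rationalize and invert $n-1$ to obtain an explicit expression $1=[R]=\sum_{H\in\sF}\mathrm{Ind}_H^G(x_H)$ in $\sw(G,R)\otimes\mathbb{Q}$ with $x_H\in\sw(H,R)\otimes\mathbb{Q}$, which is precisely the Swan induction condition of Definition~\ref{def:swan_green}.

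\textbf{Main obstacle.} The core technical step is setting up the cellular identity, which requires identifying each $R\otimes(G/H)_+$ with $\mathrm{Ind}_H^G R$ in $\fun(BG,\perf(R))$ and carefully tracking signs from the cell dimensions. This is standard Borel-equivariant bookkeeping, but merits verification in the present setting. The auxiliary nondegeneracy $n\neq 1$ is an Euler-characteristic condition on $M$ to be supplied in each concrete application of the proposition, and does not require any new ideas.
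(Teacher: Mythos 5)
Your overall strategy is exactly the paper's: compute $[R\otimes M_+]\in\sw(G,R)$ once from the equivariant cell structure and once from the trivialization, and compare. However, your cellular formula contains a genuine error: the term ``$[R]$ from the disjoint basepoint'' should not be there. The basepoint of $M_+$ is the basepoint of the spectrum $\Sigma^\infty M_+$ and contributes nothing; the skeletal filtration of $M$ gives subquotients $M^{(k)}_+/M^{(k-1)}_+\simeq\bigvee_\alpha \Sigma^{n_\alpha}(G/H_\alpha)_+$ indexed exactly by the cells $G/H_\alpha\times D^{n_\alpha}$ of $M$, with no extra summand. The correct identity is therefore
\[
[R\otimes M_+]\;=\;\sum_{H\in\sF}\mathrm{Ind}_H^G(n_H)\;\in\;I_\sF,
\]
not $[R\otimes M_+]-[R]\in I_\sF$. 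You can detect the mistake by restricting to the trivial subgroup: the trivialization gives underlying class $n\cdot[R]$, while the non-equivariant cell count gives $\chi(M)\cdot[R]=n\cdot[R]$; your formula would instead give $(1+\chi(M))\cdot[R]$, a contradiction.

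The error matters for the conclusion. With the corrected formula one gets $n\cdot 1\in I_\sF$ with $n>0$, and dividing by $n$ after rationalizing yields $1=\sum_{H\in\sF}\mathrm{Ind}_H^G(x_H)$ unconditionally --- which is what the proposition asserts. Your version only yields $(n-1)\cdot 1\in I_\sF$ and forces you to impose the extraneous hypothesis $n\neq 1$, which is not part of the statement and is not needed. The remaining steps (identifying $R\otimes (G/H)_+$ with $\mathrm{Ind}_H^G R$, the sign bookkeeping, and the observation that even shifts do not change the $K_0$-class) are fine and match the paper.
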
 
\begin{proof} 
We consider the object 
$X = R \otimes M_+  \in \fun(BG, \perf(R))$ and calculate its $K_0$-class $[X]$ in two different ways.
\begin{enumerate}
	\item By assumption, $M$ has a finite $G$-CW decomposition with
	equivariant cells of the form $G/H \times D^n$. 
	The $G$-cells necessarily satisfy $H \in \sF$ by hypothesis on the isotropy of
	$M$. It follows that there exist integers $n_H, H \in \sF$ such that
	\begin{equation} \label{firsteq} [X] = \sum_{H \in \sF} n_H[R \otimes G/H_+ ] = \sum_{H \in \sF}
	\mathrm{Ind}_H^G (n_H) \in \sw(G, R).  \end{equation}

	\item The assumption \Cref{item:trivialization} gives an {equivalence} in $\fun(BG,
	\perf(R))$ between $X$ and a direct sum of $n>0$ even shifts of the unit. It follows
	that 
	\begin{equation} \label{secondeq}
		[X] = n \in \sw(G, R).  \end{equation}
	Equating \eqref{firsteq} and \eqref{secondeq}, we obtain the result. \qedhere
\end{enumerate}
\end{proof} 

The first condition in \Cref{artinworkhorse} will be satisfied if, for example, $M$ is a compact smooth manifold
with $G$-action, by the equivariant triangulation theorem \cite{Ill78}.  
We can check the condition \Cref{item:trivialization} of 
 \Cref{artinworkhorse}  via the following result.

\begin{prop} 
\label{rem:Gactiontriv}
Suppose that $M$ is a $G$-space with the homotopy type of a finite $G$-CW complex.
Then $M$ satisfies condition \cref{item:trivialization} of \Cref{artinworkhorse} if and only if: 
\begin{enumerate}
	\item The $R^*$-cohomology $R^*(M_{hG})$ is a free module on generators in
	even degrees over $R^*(BG)$. 

	\item The natural map
	\[ R^*(\ast) \otimes_{R^*(BG)} R^*(M_{hG}) \to R^*(M)  \]
	is an isomorphism. 
	\end{enumerate}
	\end{prop}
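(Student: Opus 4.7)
The plan is to prove each direction by unraveling what condition (3) says cohomologically.

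For the forward direction, suppose $R \otimes M_+ \simeq \bigoplus_k \Sigma^{2i_k} R$ in $\fun(BG, \perf(R))$, with each summand carrying the trivial $G$-action. Applying the $R$-linear internal function spectrum $F_R(-, R)$ and then $(-)^{hG}$ converts this equivalence into
\[ F(M_{hG+}, R) \simeq \bigoplus_k \Sigma^{-2i_k} F(BG_+, R). \]
Taking $\pi_{-*}$ yields (1) as a free $R^*(BG)$-module decomposition with generators in degrees $2i_k$. Omitting the $(-)^{hG}$ step instead produces $F(M_+, R) \simeq \bigoplus_k \Sigma^{-2i_k} R$, and the naturality of the two splittings under the pullback along $M \to M_{hG}$ delivers (2).

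For the reverse direction, assume (1) and (2), and fix a free $R^*(BG)$-basis $\{y_k\}$ of $R^*(M_{hG})$ with each $y_k$ in some even degree $d_k$. By adjunction, each $y_k$ is equivalently encoded by a $G$-equivariant, $R$-linear map $R \otimes M_+ \to \Sigma^{-d_k} R$ (with trivial $G$-action on the target). Assembling these produces a comparison map
\[ \phi \colon R \otimes M_+ \longrightarrow \bigoplus_k \Sigma^{-d_k} R \]
in $\fun(BG, \mathrm{Mod}_R)$, and the task reduces to showing $\phi$ is an equivalence; by conservativity of the forgetful functor $\fun(BG, \perf(R)) \to \perf(R)$, it suffices to check this after forgetting the $G$-action.

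By construction, the induced map on $R$-cohomology sends the generator of the $k$-th summand to the pullback of $y_k$ along $M \to M_{hG}$; by (2), these pullbacks freely generate $R^*(M)$ over $R^*(\ast)$, so the cohomology map is an isomorphism. Since $R \otimes M_+$ is perfect (as $M$ is a finite CW complex) and hence dualizable, the cofiber of $\phi$ has vanishing $R$-linear dual; reflexivity of dualizable objects then forces the cofiber to vanish, and a posteriori the direct sum to be finite. The step requiring the most care is this last deduction: the cohomology isomorphism a priori only controls $R$-linear duals, and one must leverage the perfectness of the source to conclude both that $\phi$ is itself an equivalence and that the target lands in $\perf(R)$. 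The remainder of the argument is formal bookkeeping with adjunctions between $G$-equivariant and non-equivariant function spectra.
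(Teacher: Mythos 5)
Your proof is correct and follows essentially the same route as the paper's: use condition (1) to choose a basis of $R^*(M_{hG})$, assemble the corresponding equivariant maps into a comparison map with a sum of even shifts of the unit, and use condition (2) together with conservativity of the forgetful functor to check it is an equivalence. The only difference is that the paper builds the map \emph{into} the dual $R\otimes\mathbb{D}M_+$, so that condition (2) directly gives a $\pi_*$-isomorphism and the biduality/cofiber step (and the attendant finiteness worry) in your last paragraph is not needed.
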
  

\begin{proof}

In fact, using $(1)$, we can produce a $G$-equivariant map from a sum
	of shifts of the
	unit into 
	$R \otimes \mathbb{D} M_+  \in \fun(BG, \perf(R))$, by choosing a basis of
	$R^*(M_{hG}) = \pi_{-*}\hom_{\fun(BG, \perf(R)}( R, R \otimes
	\mathbb{D}M_+)$; the induced map is an equivalence in $\fun(BG, \perf(R))$ by
	the second condition. 
\end{proof}
\begin{thm} 
\label{cxorientswan}
Suppose there exists an $\mathbb{E}_1$-map $MU \to R$. Then $R$-based Swan induction holds
for the family of abelian subgroups (for any finite group $G$).
\end{thm}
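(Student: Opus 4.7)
The plan is to apply Proposition~\ref{artinworkhorse} with $M := U(n)/T$, where $G\hookrightarrow U(n)$ is a faithful unitary representation of $G$ (every finite group has one) and $T\subset U(n)$ is the standard maximal torus, with $G$ acting on $M$ by left multiplication through $\rho$. The stabilizer in $G$ of a point $uT\in M$ is $G\cap uTu^{-1}$, an abelian subgroup of $G$, so hypothesis~(2) of Proposition~\ref{artinworkhorse} holds for the family of abelian subgroups. Since $M$ is a compact smooth manifold with smooth $G$-action, Illman's equivariant triangulation theorem \cite{Ill78} supplies a finite $G$-CW structure, giving hypothesis~(1).

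For hypothesis~(3), I would invoke Proposition~\ref{rem:Gactiontriv} and verify its two cohomological conditions. The Borel construction $M_{hG} = EG\times_G (U(n)/T)$ is the complete flag bundle of the complex rank-$n$ vector bundle $V_\rho := EG\times_G \mathbb{C}^n$ over $BG$ associated to $\rho$. Because $R$ is $\mathbb{E}_1$-complex orientable, the projective bundle formula applies to projectivizations of complex vector bundles with $R$-coefficients; iterating this (equivalently, applying the splitting principle to the flag bundle) shows that $R^*(M_{hG})$ is a free $R^*(BG)$-module of rank $n!$ with basis given by monomials in the Chern roots of $V_\rho$, all living in even degrees. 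This verifies condition~(1) of Proposition~\ref{rem:Gactiontriv}. Pulling back along $\mathrm{pt}\to BG$ turns $V_\rho$ into the trivial bundle $\mathbb{C}^n$, and the same formula produces $R^*(M) = R^*(U(n)/T)$ as a free $R^*$-module on the same monomial basis, so the base-change map of Proposition~\ref{rem:Gactiontriv}(2) is an isomorphism.

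The principal conceptual step is identifying the right geometric model for the family of abelian subgroups, namely the flag variety $U(n)/T$, which is simultaneously (i) homogeneous under $U(n)$ so that its $G$-isotropy is forced into a maximal torus, and (ii) the total space of a smooth fibration structure that is visibly trivialized by complex orientation. Once that choice is made, the only potential obstacle is passing from a cohomological splitting to a genuine equivalence in $\fun(BG,\perf(R))$; this is precisely what Proposition~\ref{rem:Gactiontriv} packages (by reading off a map from a sum of shifts of $R$ using a basis of $R^*(M_{hG})$ over $R^*(BG)$ and checking that its fiber is an equivalence), so no further obstacle arises and the remaining verification is classical complex-oriented cohomology of flag bundles.
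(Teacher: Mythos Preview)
Your proposal is correct and follows essentially the same approach as the paper: both use the flag variety $U(n)/T$ for an embedding $G\hookrightarrow U(n)$, observe that stabilizers lie in conjugates of $T$ (hence are abelian), invoke Illman for the $G$-CW structure, and verify hypothesis~\cref{item:trivialization} of Proposition~\ref{artinworkhorse} via Proposition~\ref{rem:Gactiontriv} together with the projective/flag bundle formula. The paper also cites \cite[Prop.~7.49]{MNN17} as a direct reference for the trivialization, but explicitly names the flag bundle argument you spell out as the alternative route.
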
 
\begin{proof} 
We fix an embedding $G \subseteq U(n)$ and consider the action on the flag variety $M= F =
U(n)/T$ for $T \subseteq U(n)$ a maximal torus. As a smooth $G$-manifold, $M$ admits a finite $G$-CW structure. The stabilizers of the $G$-action are abelian (as they are
contained in conjugates of $T$). By \cite[Prop.~7.49]{MNN17}, we obtain an
equivalence of the form \eqref{trivaction}. 
Alternatively, 
we can use \Cref{rem:Gactiontriv} and the projective or flag bundle formula to
see this. 
Therefore, we can apply
\Cref{artinworkhorse} to conclude. 
\end{proof} 

When $R$ is a discrete commutative ring, a classical theorem of Swan 
\cite{Swa60} states that one has Swan induction for the   family of \emph{cyclic}
subgroups. We give a geometric proof of Swan's theorem in the
spirit of some of our other results. 
\begin{thm}[Swan \cite{Swa60}] 
\label{swanthm}
Let $R$ be an $\mathbb{E}_\infty$-ring which admits an $\mathbb{E}_1$-map from
$H\mathbb{Z}$. 
Then $R$-based Swan induction holds
for the family of cyclic groups  (for any finite group $G$).
\end{thm}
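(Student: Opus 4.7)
The plan is to deduce Swan's theorem from the classical rational Artin induction theorem for $\mathbb{Q}$-representations, via a rationalization map. For any subgroup $H\subseteq G$, there is a natural map of Green functors
\[ \varepsilon \colon \sw(-, H\mathbb{Z}) \otimes \mathbb{Q} \longrightarrow R_{\mathbb{Q}}(-) := K_0(\mathbb{Q}[-]) \otimes \mathbb{Q}, \]
given on a perfect complex $P$ of $\mathbb{Z}H$-modules by $[P] \mapsto [P \otimes_{\mathbb{Z}} \mathbb{Q}] = \sum_i (-1)^i [H_i(P) \otimes_{\mathbb{Z}} \mathbb{Q}]$. Since $\mathbb{Z}G$ is free as a $\mathbb{Z}H$-module for $H\subseteq G$, this map is compatible with induction, and it is clearly multiplicative and sends the unit to the unit. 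So $\varepsilon$ is a morphism of Green functors on subgroups of $G$.

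By classical rational Artin induction, we may write
\[ [\mathbb{Q}] = \sum_{C} n_C \cdot \mathrm{Ind}_C^G[V_C] \in R_\mathbb{Q}(G), \]
where $C$ ranges over cyclic subgroups of $G$, the $n_C$ are rational numbers, and each $V_C$ is a finite-dimensional $\mathbb{Q}$-representation of $C$. Each $V_C$ contains a $C$-stable $\mathbb{Z}$-lattice $L_C$ (take the $\mathbb{Z}$-span of any $\mathbb{Q}$-basis and average over $C$); this $L_C$ is a perfect $\mathbb{Z}C$-module with $L_C \otimes_\mathbb{Z} \mathbb{Q} \cong V_C$. Setting
\[ y := \sum_{C} n_C \cdot \mathrm{Ind}_C^G [L_C] \in \sw(G, H\mathbb{Z}) \otimes \mathbb{Q}, \]
we obtain a class which is by construction in the image of induction from the family of cyclic subgroups and satisfies $\varepsilon(y) = \varepsilon(1)$.

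It remains to conclude $y = 1$ in $\sw(G, H\mathbb{Z})\otimes \mathbb{Q}$, which amounts to the rational injectivity of $\varepsilon$. This is the classical assertion that $K_0(\mathbb{Z}G)\otimes\mathbb{Q} \xrightarrow{\sim} K_0(\mathbb{Q}G) \otimes \mathbb{Q}$: both sides are free abelian groups of rank equal to the number of conjugacy classes of cyclic subgroups of $G$, and the map is surjective with finite kernel because $\mathbb{Z}G$ is a $\mathbb{Z}$-order in the semisimple $\mathbb{Q}$-algebra $\mathbb{Q}G$, so class-group-type obstructions die rationally.

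The main obstacle in this plan is the rational injectivity step, which though classical requires some input from integral representation theory rather than being purely geometric. An alternative in the spirit of \Cref{cxorientswan} would be to invoke \Cref{artinworkhorse} directly by exhibiting a finite $G$-CW complex $M$ with cyclic isotropy such that $H\mathbb{Z} \otimes M_+$ is equivariantly a sum of even shifts of the unit; however, for $H\mathbb{Z}$-coefficients this condition is very restrictive (in particular much harder than the corresponding condition for $MU$), and it seems necessary to allow formal virtual combinations of $G$-orbits, which is exactly what the algebraic argument above achieves.
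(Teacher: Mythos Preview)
Your approach is genuinely different from the paper's, and the overall strategy—reduce to Artin induction for $R_{\mathbb{Q}}(G)$ plus a rational comparison—is sound. However, there is a real gap in the injectivity step.

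The issue is that $\sw(G, H\mathbb{Z}) = K_0(\fun(BG,\perf(\mathbb{Z})))$ is not $K_0(\mathbb{Z}G)$ but rather $G_0(\mathbb{Z}G)$, the Grothendieck group of \emph{all} finitely generated $\mathbb{Z}G$-modules: the objects of $\fun(BG,\perf(\mathbb{Z}))$ are exactly the bounded complexes of $\mathbb{Z}G$-modules with finitely generated cohomology, and the heart of the $t$-structure computes $K_0$. Your justification for rational injectivity (``class-group-type obstructions die rationally'') is the standard argument for $K_0$ of an order, where the kernel is a finite class group; it does not apply to $G_0$. The correct statement $G_0(\mathbb{Z}G)\otimes\mathbb{Q}\xrightarrow{\sim} G_0(\mathbb{Q}G)\otimes\mathbb{Q}$ \emph{is} true—indeed Swan proved $G_0(\mathbb{Z}G)\cong G_0(\mathbb{Q}G)$ integrally—but it requires a different argument (e.g., the localization sequence together with the nonvanishing of Cartan determinants to show that the image of each $G_0(\mathbb{F}_pG)$ in $G_0(\mathbb{Z}G)$ is torsion). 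Once you supply that, your proof goes through.

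By contrast, the paper's proof is geometric and self-contained. Using \Cref{cxorientswan} and \Cref{general:swaninduction} it reduces to showing induction from proper subgroups of $G=C_p^{\times 2}$. Then it lets $G$ act on $\mathbb{CP}^{p-1}$ via the projective Heisenberg representation; this action is fixed-point free, so the $K_0$-class of $H\mathbb{Z}\otimes\mathbb{CP}^{p-1}_+$ is a sum of properly induced classes. On the other hand, since the $G$-action extends to the connected group $PGL_p(\mathbb{C})$, it is trivial on each homology group, and the Postnikov filtration of $H\mathbb{Z}\otimes\mathbb{CP}^{p-1}_+$ shows its class equals $p\cdot 1$. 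Equating gives the result. Note this sidesteps exactly the difficulty you flagged: one does not need an \emph{equivariant splitting} of $H\mathbb{Z}\otimes\mathbb{CP}^{p-1}_+$ into shifts of the unit (the strong hypothesis of \Cref{artinworkhorse}); the Postnikov filtration already gives the needed equality of $K_0$-classes. Your algebraic route trades this geometric input for integral representation theory; each is a legitimate proof, but neither is free.
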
 
\begin{proof} 
Without loss of generality, we may take $R = H\mathbb{Z}$. 
By \Cref{cxorientswan} and downward induction based on 
\Cref{general:swaninduction}, we see that 
it suffices to consider $G = C_p ^{\times 2}$ for some prime $p$.
We consider 
the $p$-dimensional projective {Heisenberg representation} of $G$ on
$\mathbb{C}^p$, given by the matrices
\begin{equation} 
A = 
\begin{bmatrix}
1 \\
 & \zeta_p \\
 & & \zeta_p^2 \\
 & & & \ddots \\
 & & & & \zeta_p^{p-1}
\end{bmatrix}, \quad  \quad  B = 
\begin{bmatrix}
 & 1 \\
 & & 1 \\
 & & & \ddots \\
 1 & & 
\end{bmatrix}.
\end{equation} 
Here $A$ is a diagonal matrix whose eigenvalues are the powers of a primitive $p$th root $\zeta_p$ of unity, and $B$ is the permutation matrix for a cyclic permutation.
Since the matrices $A$ and $B$ commute up to scalars, they define a {projective} representation of $G$,
yielding
an embedding
$G \subseteq PGL_p(\mathbb{C})$.

The group $PGL_p(\mathbb{C})$ acts naturally on $\mathbb{CP}^{p-1}$, and
 the action of the subgroup  $G \subseteq
PGL_p(\mathbb{C})$ has no fixed points. It follows that the class
$[H\mathbb{Z} \otimes \mathbb{CP}^{p-1}_+]$ in 
$\sw(G, \mathbb{Z})$
is a sum of classes induced from proper subgroups. 
To calculate the class $[H \mathbb{Z} \otimes \mathbb{CP}^{p-1}_+]$ in another manner,
we can also consider the finite Postnikov filtration $\{\tau_{\leq 2i} 
( H\mathbb{Z} \otimes \mathbb{CP}^{p-1}_+)
\}$
whose successive subquotients are even suspensions $\Sigma^{2i} \mathbb{Z}$.
The $G$-action on each of the (shifted discrete) associated graded terms is trivial
because the $G$-action extend to an action of the connected group
$PGL_p(\mathbb{C})$. 
Therefore, we find that $[H\mathbb{Z} \otimes \mathbb{CP}^{p-1}_+] = p \in \sw(G, \mathbb{Z})$. 
It follows that we have 
integers $n_H$ for each $H \subsetneq G$ such that
\begin{equation} 
p = \sum_{H \subsetneq G} n_H\mathrm{Ind}_H^G(1) \in \sw(G, \mathbb{Z}). \qedhere
\end{equation} 
\end{proof}

\begin{thm} 
\label{p-1local}
Let $G$ be a finite group. If $R$ is any $\mathbb{E}_\infty$-ring with $|G|
\in \pi_0(R)^{\times}$, then $R$-based Swan induction holds
for the family of cyclic subgroups of $G$. 
\end{thm}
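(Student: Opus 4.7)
The plan is to reduce to the discrete case $R = H\mathbb{Z}$ (\Cref{swanthm}) by algebraically identifying the Swan $K$-theory of $\mathbb{S}[1/|G|]$ with that of $H\mathbb{Z}[1/|G|]$.  The key observation is that, since $|G|$ is invertible in $\pi_0\mathbb{S}[1/|G|]$, a Maschke-style averaging argument applies; the main obstacle is getting this averaging argument to work cleanly at the spectral level.

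I will first show that for any $\mathbb{E}_\infty$-ring $R$ in which $|G|$ acts invertibly, the natural fully faithful inclusion $\perf(R[G]) \hookrightarrow \fun(BG,\perf(R))$ (where $R[G] := R \otimes G_+$ denotes the group ring spectrum) is in fact an equivalence.  Indeed, for any $X \in \fun(BG,\perf(R))$, the composite of the unit $X \to \mathrm{Ind}_{(e)}^G \mathrm{Res}_{(e)}^G X$ with the counit $\mathrm{Ind}_{(e)}^G \mathrm{Res}_{(e)}^G X \to X$ is multiplication by $|G|$ and hence invertible, so $X$ is a retract of the induced object $R[G] \otimes_R X \in \perf(R[G])$; idempotent-completeness then finishes the argument.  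This identification is manifestly natural in $G$ with respect to both restriction and induction from subgroups.

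Applying the above to both $R = \mathbb{S}[1/|G|]$ and $R = H\mathbb{Z}[1/|G|]$, and using that $K_0$ of a connective $\mathbb{E}_1$-ring depends only on $\pi_0$ (which in both cases is the classical group ring $\mathbb{Z}[1/|G|][H]$ for each subgroup $H \subseteq G$), I obtain a natural isomorphism of Green functors on the subgroup lattice of $G$:
\[ \rep(-,\mathbb{S}[1/|G|]) \xrightarrow{\cong} \rep(-,H\mathbb{Z}[1/|G|]). \]
By \Cref{swanthm} together with item (1) of \Cref{general:swaninduction} applied to the unit map $H\mathbb{Z} \to H\mathbb{Z}[1/|G|]$, the right-hand side enjoys Swan induction for the cyclic family, and transporting the witnessing rational identity across this isomorphism yields the desired $\mathbb{S}[1/|G|]$-based Swan induction.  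The essential content lies in the Maschke identification; once that is established, the rest consists of tracking natural structures.
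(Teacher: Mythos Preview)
Your proposal is correct and follows essentially the same route as the paper: identify $\fun(BG,\perf(R))\simeq\perf(R[G])$ when $|G|$ is invertible in $R$, then use that $K_0$ of a connective $\mathbb{E}_1$-ring depends only on $\pi_0$ to reduce to the discrete case and invoke \Cref{swanthm}. The only cosmetic difference is that the paper justifies the equivalence $\fun(BG,\perf(R))\simeq\perf(R[G])$ by noting that $(-)^{hG}$ commutes with colimits on $R$-modules (so objects of $\fun(BG,\perf(R))$ are compact in $\fun(BG,\mathrm{Mod}(R))$), whereas you use the equivalent Maschke-style splitting of $X$ off of $\mathrm{Ind}^G_{(e)}\mathrm{Res}^G_{(e)}X$; these are two phrasings of the same fact.
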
 
\begin{proof} 
Without loss of generality, we may take $R = \mathbb{S}[1/|G|]$. 
We have $\fun(BG, \perf(R)) \simeq \perf(
R[G])$; 
equivalently, an $R[G]$-module is perfect if and only if its underlying
$R$-module is perfect, and similarly for every subgroup of $G$. 
This follows from the fact that taking homotopy $G$-fixed points commutes with
arbitrary colimits in the $\infty$-category of $R$-modules. 
Recall that if $A$ is a connective $\e{1}$-ring, then the natural map $K_0(A) \to
K_0( \pi_0 A)$ is an isomorphism. We thus find a chain of isomorphisms
\begin{align*}
	\sw(G,R)= K_0 ( \fun(BG, \perf(R))) & \simeq K_0( R[G])  \\
	& \simeq K_0 (\pi_0 R[G])
	\simeq K_0 ( \fun(BG, \perf( \mathbb{Z}[1/|G|] )))
	 .  
	\end{align*}
	Applying Swan's theorem (\Cref{swanthm} above), we conclude the result.  
\end{proof}

Next, we prove a Swan induction result for $KU$. 
We give a geometric argument here that only works at small primes; we will prove
the result in full generality later in \Cref{artinindKU}. 
\begin{thm} 
\label{artindKU0} \label{artinindKU0}
For any finite group $G$, 
 $KU$-based Swan induction holds for  the family of abelian subgroups of $G$
 whose $p$-part for $p \in \{2, 3, 5\}$ has
	rank $\leq 2$.
\end{thm}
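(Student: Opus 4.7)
The plan is to combine the reduction principles of \Cref{general:swaninduction} with the geometric argument from the proof of \Cref{swanthm}. First, by part (2) of \Cref{general:swaninduction} and \Cref{cxorientswan}, it suffices to show $KU$-based Swan induction for the family of proper subgroups of every finite abelian group $A$ not in $\sF$, i.e., every $A$ for which there exists $p \in \{2, 3, 5\}$ with $A_{(p)}$ of rank $\geq 3$. Applying part (3) of \Cref{general:swaninduction} to a surjection $A \twoheadrightarrow C_p^{\times 3}$ (obtained by killing the prime-to-$p$ components of $A$ and then quotienting $A_{(p)}$ down to $C_p^{\times 3}$), this reduces the problem to establishing $KU$-based Swan induction for the family of proper subgroups of $C_p^{\times 3}$, for each $p \in \{2, 3, 5\}$ individually.

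For fixed $p$ and $G = C_p^{\times 3}$, I would apply \Cref{artinworkhorse} to $M = \mathbb{CP}^{p-1}$. Choose a surjection $\pi \colon G \twoheadrightarrow C_p^{\times 2}$ and pull back via $\pi$ the Heisenberg projective representation $C_p^{\times 2} \hookrightarrow PGL_p(\mathbb{C})$ introduced in the proof of \Cref{swanthm}, giving a $G$-action on $\mathbb{CP}^{p-1}$ which factors through the connected Lie group $PGL_p(\mathbb{C})$. Hypothesis (1) of \Cref{artinworkhorse} holds because $\mathbb{CP}^{p-1}$ is a compact smooth $G$-manifold \cite{Ill78}. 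Hypothesis (2) holds because the $C_p^{\times 2}$-action on $\mathbb{CP}^{p-1}$ has no fixed points, so the stabilizer in $G$ of any point is the $\pi$-preimage of a proper subgroup of $C_p^{\times 2}$, and hence a proper subgroup of $G$.

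To verify hypothesis (3) of \Cref{artinworkhorse}, I would apply \Cref{rem:Gactiontriv}. Since the $G$-action on $\mathbb{CP}^{p-1}$ factors through the connected group $PGL_p(\mathbb{C})$, the action on $KU^*(\mathbb{CP}^{p-1}) = \mathbb{Z}[x]/x^p \otimes KU^*$ is trivial, and the Serre spectral sequence for $\mathbb{CP}^{p-1} \to M_{hG} \to BG$ computing $KU^*(M_{hG})$ degenerates by an appeal to the projective bundle formula applied to the universal $\mathbb{CP}^{p-1}$-bundle over $BPGL_p(\mathbb{C})$. This exhibits $KU^*(M_{hG})$ as a free $KU^*(BG)$-module on even-degree generators $1, x, \ldots, x^{p-1}$, and the base-change statement onto $KU^*(\mathbb{CP}^{p-1})$ follows formally. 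Applying \Cref{artinworkhorse} then gives an equality $p = \sum_{H \subsetneq G} \mathrm{Ind}_H^G(n_H)$ in $\sw(G, KU)$ for integers $n_H$; dividing by $p$, the unit $1$ lies in the image of induction from proper subgroups of $G$ in $\sw(G, KU) \otimes \mathbb{Q}$, as required.

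The main obstacle is verifying hypothesis (3) of \Cref{artinworkhorse} for $R = KU$. In \Cref{swanthm} the analogous $H\mathbb{Z}$-statement followed essentially for free from the Postnikov filtration, since $PGL_p(\mathbb{C})$ being connected forces trivial action on $H^*(\mathbb{CP}^{p-1}; \mathbb{Z})$. For $KU$ the situation is more delicate because the tautological line bundle $L$ on $\mathbb{CP}^{p-1}$ does not admit a $G$-equivariant extension to $M_{hG}$ (the projective representation does not linearize over $G$). The required Leray--Hirsch-type degeneration instead rests on the observation that the obstruction class in $H^2(BG; \mathbb{Z})$ has order dividing $p$, so that $L^{\otimes p}$ does extend, combined with the classical computation of $KU^*(BPGL_p(\mathbb{C}))$ via the universal projective bundle.
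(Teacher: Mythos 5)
Your reduction to the case $G=C_p^{\times 3}$ for $p\in\{2,3,5\}$ and the family of proper subgroups is exactly the paper's (via \Cref{cxorientswan} and \Cref{general:swaninduction}), but the geometric input is where the argument breaks. Hypothesis \cref{item:trivialization} of \Cref{artinworkhorse} genuinely fails for $M=\mathbb{CP}^{p-1}$ with the Heisenberg action: as the paper shows in \Cref{projdecomp}, one has $KU\otimes\mathbb{D}\mathbb{CP}^{p-1}_+\simeq\bigoplus_{\tau\in H^3(C_p^{\times 2};\mathbb{Z})}KU_\tau$, and the twists $KU_\tau$ for $\tau\neq 0$ are \emph{not} equivalent to the unit in $\fun(BC_p^{\times 2},\perf(KU))$. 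Concretely, $KU^0(\mathbb{CP}^{p-1}_{hC_p^{\times 2}})$ is not free of rank $p$ over $KU^0(BC_p^{\times 2})$ — the twisted summands have strictly smaller rank than the untwisted one, so even the ranks in your claimed Leray--Hirsch isomorphism do not match. Your proposed repair does not help: there is no projective bundle formula for the universal $\mathbb{CP}^{p-1}$-bundle over $BPGL_p(\mathbb{C})$ precisely because it is not the projectivization of a vector bundle, and while $L^{\otimes p}$ does extend, the classes $1,[L^{\otimes p}],\dots,[L^{\otimes (p-1)p}]$ do not restrict to a $KU^*$-basis of $KU^*(\mathbb{CP}^{p-1})$ (their change-of-basis matrix to $1,[L],\dots,[L^{p-1}]$ has determinant a positive power of $p$). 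The failure persists even rationally: by \eqref{expreCp} the class of $KU\otimes\mathbb{CP}^{p-1}_+$ in $\rep(C_p^{\times 2},KU)\otimes\mathbb{Q}$ is $p\,e_{C_p^{\times 2}}=1+x+\dots+x^{p-1}$ with $x=[KU_\tau]$, not $p$; if it were $p$, the complementary idempotent $\widetilde{e}_{C_p^{\times 2}}$ would vanish and Swan induction for $KU$ would already hold for the family of \emph{cyclic} subgroups, which is not the case.

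The paper's proof of this statement uses a different manifold entirely: a non-toral embedding $C_p^{\times 3}\hookrightarrow\Gamma$ into a simply connected compact Lie group $\Gamma$ (e.g.\ $E_8$) whose integral cohomology has $p$-torsion, and the flag variety $M=\Gamma/T$. Non-torality gives the fixed-point-freeness, and \cite[Cor.~8.17]{MNN17} gives a genuinely $\Gamma$-equivariant (hence $G$-equivariant) splitting $\Gamma/T_+\otimes KU\simeq\bigoplus_1^{|W|}KU$, so hypothesis \cref{item:trivialization} holds on the nose; the restriction $p\in\{2,3,5\}$ is exactly the condition for such a non-toral rank-$3$ elementary abelian subgroup to exist. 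The full statement for all primes (\Cref{artinindKU}) requires confronting the twists $KU_\tau$ head-on, which is what \Cref{section:Swan_for_KU} does via the idempotent calculus in $\mathbb{Q}[H^3(BC_p^{\times 3};\mathbb{Z})]$.
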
 
\begin{proof} 
In view of 
\Cref{cxorientswan}, 
we may assume that $G$ is abelian. 
We then reduce to proving 
that one has Swan induction for $G = C_p^{\times 3}$ and $p\in\{ 2,3,5\}$ for the family of proper subgroups
(cf.~\Cref{general:swaninduction}). 
Since $p \leq 5$,  we have an embedding $G \hookrightarrow \Gamma$ for
$\Gamma$ a
suitable simply connected compact Lie group whose image is not contained in any
maximal torus: by the results of \cite{Borel61}, it suffices
to choose $\Gamma$ such that $H^*(\Gamma; \mathbb{Z})$ has
$p$-torsion in its cohomology, e.g., we can take $\Gamma=E_8$. 
Let ${T} \subseteq \Gamma$ be a maximal torus and consider the $\Gamma$-action 
on the flag variety $\Gamma/{T}$, as well as its restricted $G$-action. 
We will show that the $G$-space $\Gamma/T$ satisfies the hypotheses of
\Cref{artinworkhorse}, for $\sF$ the family of proper subgroups.
First of all, $G$ acts without fixed points since it is not contained in any
maximal torus of
$\Gamma$.
By \cite[Cor.~8.17]{MNN17}, we have an equivalence
\[ \Gamma/T_+ \otimes KU \simeq \bigoplus_1^{|W|} KU \in \fun(B\Gamma ,
\perf(KU)),  \]
where $W$ is the Weyl group of $\Gamma$. 
Restricting to $G$, this proves hypothesis \Cref{item:trivialization} of \Cref{artinworkhorse}
and thus our result. 
\end{proof} 

Next, we include some results which are specific to the prime $2$, based on the
use of representation spheres; they have the advantage of applying at arbitrary
chromatic heights. 
We first need two lemmas that will enable us to recognize the triviality of
group actions (for which we fix an arbitrary prime $p$).

\begin{lem} 
\label{lemCptrivial} 
Let $E$ be an even $\mathbb{E}_1$-ring spectrum such that $\pi_*E$ is
$p$-torsion-free. Let $M \in
\fun(BC_p, \perf(E))$ be such that: 
\begin{enumerate}
\item The underlying $E$-module of $M$ is equivalent to a direct sum of copies
of $E$.
\item The $C_p$-action on $\pi_* M$ is trivial. 
\end{enumerate}
Then $M$ is equivalent to a direct sum of copies of the unit in $\fun(BC_p,
\perf(E))$.
\end{lem}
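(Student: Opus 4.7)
The plan is to construct a $C_p$-equivariant equivalence $\bigoplus_{i=1}^n E \xrightarrow{\sim} M$, where $E$ carries the trivial $C_p$-action and $n$ is the rank of $\pi_0 M$ as a free $\pi_0 E$-module.  Since equivalences in $\fun(BC_p, \perf(E))$ are detected on underlying $E$-modules, it suffices to build a $C_p$-equivariant map sending the standard basis of $\pi_0$ to a chosen $\pi_0 E$-basis $x_1, \dots, x_n$ of $\pi_0 M$; each $x_i$ is automatically $C_p$-fixed by hypothesis.  The space of $C_p$-equivariant maps $E \to M$ is $M^{hC_p}$, so the problem reduces to lifting each $x_i$ along the edge map $\pi_0 M^{hC_p} \to \pi_0 M$.

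To do this I would use the homotopy fixed point spectral sequence
\[ E_2^{s,t} = H^s(C_p; \pi_t M) \Longrightarrow \pi_{t-s} M^{hC_p}. \]
Since $\pi_* E$ is torsion-free and concentrated in even degrees, so is $\pi_* M \cong (\pi_* E)^n$; combined with the triviality of the $C_p$-action, this forces $E_2^{s,t}$ to vanish whenever $s$ or $t$ is odd.  (For odd $t$ this is immediate, and for odd $s$ it uses the standard fact that $H^{\mathrm{odd}}(C_p; A) = 0$ whenever $A$ is a torsion-free abelian group with trivial $C_p$-action.)  A parity check then rules out every differential $d_r \colon E_r^{s,t} \to E_r^{s+r, t+r-1}$: the shift changes $s+t$ by $2r-1$ and so flips its parity, meaning that whenever the source has $s, t$ both even the target has at least one odd coordinate and hence vanishes.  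Thus the spectral sequence collapses at $E_2$.

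With collapse in hand, each $x_i \in E_\infty^{0,0}$ lifts to some $\tilde x_i \in \pi_0 M^{hC_p}$, and assembling these lifts produces a map $\bigoplus_{i=1}^n E \to M$ in $\fun(BC_p, \perf(E))$.  Its underlying $E$-module map sends the standard basis to $\{x_i\}$, and by $\pi_* E$-linearity extends to an isomorphism of free $\pi_* E$-modules of rank $n$; hence it is an underlying equivalence, as required.  The main subtlety I anticipate is ensuring strong convergence of the HFPSS when $E$ is unbounded (e.g.\ periodic, as in $KU$); this should be handled either by $p$-completing before applying the spectral sequence and using torsion-freeness to descend back, or by exploiting the $E_2$-collapse to verify a Mittag--Leffler condition on the skeletal filtration of $BC_p$.
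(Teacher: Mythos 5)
Your proof is correct and follows essentially the same route as the paper: choose a $\pi_*E$-basis of $\pi_*M$, lift each basis element along $\pi_*(M^{hC_p})\to\pi_*M$ using that the homotopy fixed point spectral sequence is concentrated in even total degree (evenness plus $H^{\mathrm{odd}}(C_p;A)=0$ for torsion-free trivial modules) and hence collapses, and assemble the resulting equivariant maps into an equivalence detected on underlying $E$-modules. The convergence point you flag is real but standard: conditional convergence together with degeneration at $E_2$ gives strong convergence by Boardman, so the edge map is surjective.
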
 
\begin{proof} 
Let $\{x_i\}\subseteq \pi_0 M$ be a basis of the free $\pi_* E$-module $\pi_* M$.
For each $i$, we want to produce a $C_p$-equivariant map 
of $E$-modules
\begin{equation} \label{equivmapsigma}  E \to M  \end{equation} 
which carries $1 \mapsto x_i$ in homotopy. Taking the direct sum of these maps,
we will have the desired equivalence. 
Equivalently, to produce \eqref{equivmapsigma}, we need to show that the image
of $\pi_* (M^{hC_p} )\to \pi_* M$ contains each $x_i$. However, the $E_2$-term of the homotopy
fixed point spectral sequence for $\pi_* (M^{hC_p})$ is concentrated in even total degree by our assumptions, and thus collapses. This shows that there are no obstructions to producing the maps
\eqref{equivmapsigma} and thus to providing the equivalence of the lemma. 
\end{proof} 

\begin{lemma} 
Fix a group $G$. 
Let $E$ be an $\mathbb{E}_1$-ring spectrum such that $\pi_*( C^*(BG; E))$
is even and  $p$-torsion-free. 
Let 
$M \in \fun(B (C_p \times G), \perf(E))$ be such that: 
\begin{enumerate}
\item  
The underlying object of $M$ in $\fun(BG, \perf(E))$ is equivalent to a direct sum of
copies of the unit. 
\item The $C_p$-action on $\pi_*(M^{hG})$ is trivial. 
\end{enumerate}
Then $M$ is equivalent to a direct 
sum of copies of the unit in 
$\fun(B(C_p \times G), \perf(E))$: in particular, the $C_p \times G$-action is trivial. 
\label{trivactionproduct}
\end{lemma}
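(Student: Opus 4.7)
The plan is to promote \Cref{lemCptrivial} to a relative statement by applying it to $M^{hG}$, viewed as a $C_p$-equivariant module over the $\mathbb{E}_1$-ring $E^{hG}$.  More precisely, I would work with the functor
\[ (-)^{hG}\colon \fun(B(C_p\times G),\perf(E))\to\fun(BC_p,\perf(E^{hG})), \]
noting that the $C_p$-action on $E^{hG}$ is trivial because $E$ is the unit of $\fun(B(C_p\times G),\perf(E))$ and hence has trivial $(C_p\times G)$-action.

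First I would verify the hypotheses of \Cref{lemCptrivial} for the object $M^{hG}\in\fun(BC_p,\perf(E^{hG}))$.  Evenness and torsion-freeness of $\pi_*E^{hG}=\pi_*C^*(BG;E)$ are assumed; hypothesis (1) of the present lemma yields $M\simeq\bigoplus_j E$ in $\fun(BG,\perf(E))$, so taking $G$-fixed points gives $M^{hG}\simeq\bigoplus_j E^{hG}$ as $E^{hG}$-modules (hence perfect); and hypothesis (2) says the $C_p$-action on $\pi_*M^{hG}$ is trivial.  Applying \Cref{lemCptrivial} would therefore produce an equivalence $M^{hG}\simeq\bigoplus_{i=1}^n E^{hG}$ in $\fun(BC_p,\perf(E^{hG}))$.

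Finally, I would transport this downward using the identification
\[ \mathrm{Map}_{\fun(B(C_p\times G),\perf(E))}(E,M)\simeq M^{h(C_p\times G)}\simeq (M^{hG})^{hC_p}. \]
A basis $\{y_i\}$ of $\pi_0M^{hG}$ arising from the equivalence of the previous paragraph admits lifts $\tilde y_i\in\pi_0 M^{h(C_p\times G)}$ via the section $E^{hG}\to(E^{hG})^{hC_p}$ of the forgetful map (a section exists because $C_p$ acts trivially on $E^{hG}$).  Those lifts assemble into a $(C_p\times G)$-equivariant map $\phi\colon\bigoplus_{i=1}^n E\to M$, and it remains to check $\phi$ is an equivalence on underlying $E$-modules.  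This is the one piece of bookkeeping I anticipate as the main (though routine) obstacle: one must compare $\{y_i\}$ with the canonical basis of $\pi_0 M^{hG}\cong\bigoplus_j\pi_0 E^{hG}$ coming from the $G$-trivialization of $M$.  Writing $y_i=\sum_j a_{ij}e_j$ with $A=(a_{ij})\in GL_n(\pi_0 E^{hG})$, the image of $A$ under the augmentation $\pi_0 E^{hG}\to\pi_0 E$ lies in $GL_n(\pi_0 E)$, so $\{\bar y_i\}$ is a $\pi_0 E$-basis of $\pi_0 M$; since $\pi_* M$ is a free $\pi_* E$-module concentrated in degrees matching $\pi_*E$, this forces $\phi$ to be an equivalence, completing the argument.
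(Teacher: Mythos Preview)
Your proof is correct and follows the same strategy as the paper: reduce to \Cref{lemCptrivial} applied to $M^{hG}$ over the even, torsion-free ring $C^*(BG;E)$. The paper packages your ``transport downward'' step more efficiently by observing that the thick subcategory of $\fun(BG,\perf(E))$ generated by the unit is equivalent to $\perf(C^*(BG;E))$ via $(-)^{hG}$; since hypothesis~(1) places $M$ (as an object of $\fun(BC_p,\fun(BG,\perf(E)))$) inside $\fun(BC_p,\langle E\rangle_{\mathrm{thick}})$, the equivalence produced by \Cref{lemCptrivial} in $\fun(BC_p,\perf(C^*(BG;E)))$ transports \emph{automatically} to an equivalence in $\fun(B(C_p\times G),\perf(E))$. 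Your explicit lifting of the basis $\{y_i\}$ and the change-of-basis check with the matrix $A$ is exactly the content of this categorical equivalence, spelled out by hand.
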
 

\begin{proof} 
We use that $\fun(B(C_p \times G), \perf(E)) = \fun(BC_p,  \fun(BG, \perf(E)))$. 
The thick subcategory of $\fun(BG, \perf(E))$ generated by the unit is
equivalent to $\perf( C^*(BG; E))$, via the functor $(-)^{hG}$. 
Thus, the result follows from 
\Cref{lemCptrivial} applied to the object $M^{hG} \in \fun(BC_p,
\perf(C^*(BG; E)))$. 
\end{proof}

\begin{prop} \label{thm:induction}
Let $p$ be a prime, $n\ge 1$ and $G$ be an elementary abelian $p$-group of rank
$n+2$. Let $R$ be a $T(n)$-local, even 
$\mathbb{E}_\infty$-ring under $E_n$ such that $\pi_* R$ is torsion-free. Let $M \in \fun(BG, \perf(R))$. 
Suppose that for each $H \subsetneq G$, the object $\Res^G_H M \in \fun(BH,
\perf(R))$ is equivalent to a direct sum
of copies of the unit. Then $M$ is equivalent to a direct sum of copies of the
unit in $\fun(BG, \perf(R))$.

\end{prop}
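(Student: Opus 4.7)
The plan is to apply \Cref{trivactionproduct} to a decomposition $G = \langle g\rangle\times H$ with $H\cong C_p^{\times(n+1)}$ of corank one and $g\in G$ of order $p$. Since $R$ is a $K(n)$-local $E_n$-algebra, iterating \Cref{triv on morava} gives $R^{hH}\simeq R\otimes_{E_n}E_n^{hH}$, so $\pi_*R^{hH}$ is a free $\pi_*R$-module, in particular even and torsion-free; this verifies the hypothesis of \Cref{trivactionproduct} on $C^*(BH;R)$. By assumption $M|_H$ is trivial, so it remains to verify that the residual $\langle g\rangle$-action on $\pi_*(M^{hH})$ is trivial.

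Fix a trivialization $\phi\colon M|_H\xrightarrow{\sim} R^{\oplus r}$ in $\fun(BH,\perf(R))$. Since $G$ is abelian, the $g$-action $\sigma_g\colon M\to M$ is $H$-equivariant, and transports via $\phi^{hH}$ to an $R^{hH}$-linear automorphism $\psi_H\in GL_r(\pi_0 R^{hH})$ whose action on $\pi_*(M^{hH})\cong(\pi_*R^{hH})^{\oplus r}$ realizes the $g$-action. For each corank-one subgroup $K\subset H$, the group $\langle g\rangle\times K$ is proper in $G$, so the hypothesis also provides a trivialization $\phi'\colon M|_{\langle g\rangle\times K}\xrightarrow{\sim} R^{\oplus r}$ as $\langle g\rangle\times K$-objects, under which the $g$-action becomes trivial. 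Viewing $\phi|_K$ and $\phi'|_K$ as two $K$-equivariant trivializations of $M|_K$, they differ by a $K$-equivariant automorphism $\beta\in GL_r(\pi_0 R^{hK})$, so the image of $\psi_H$ in $GL_r(\pi_0 R^{hK})$ is $\beta\cdot 1\cdot\beta^{-1}=1$.

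Thus the proof reduces to the injectivity of the restriction map $\pi_0R^{hH}\to\prod_K \pi_0R^{hK}$, with $K$ ranging over corank-one subgroups of $H$. Using \Cref{triv on morava} we identify $\pi_0R^{hH}=\pi_0R\otimes_{\pi_0E_n}E_n^0(BH)$; since each $E_n^0(BK)$ is flat over $\pi_0E_n$, a Tor-vanishing argument reduces the question to showing $\bigcap_K J_K=0$ in $E_n^0(BH)$, where $J_K=\ker(E_n^0(BH)\to E_n^0(BK))$. The ring $E_n^0(BH)$ is $p$-torsion-free with étale generic fiber over $\pi_0E_n[1/p]$ (the $p$-series of $\hat{\mathbb{G}}$ is separable in characteristic zero), hence is reduced, so it suffices to check set-theoretic coverage of $\spec E_n^0(BH)$ by the closed subschemes $\spec E_n^0(BK)$. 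A geometric generic point of $\spec E_n^0(BH)$ is a homomorphism $\phi\colon H^*\to\hat{\mathbb{G}}(\bar\kappa)[p]$ from $\mathbb{F}_p^{n+1}$ into an $\mathbb{F}_p$-vector space of dimension at most $n$ (the height), so has nontrivial kernel and hence factors through $K^* = H^*/\langle \chi\rangle$ for some corank-one $K\subset H$. I expect this injectivity step to be the main obstacle; note that it fails modulo the maximal ideal of $\pi_0 E_n$ (for $n=1$, $p=2$ the element $t_1t_2$ lies in $(t_1)\cap(t_2)\cap(t_1+_{\hat{\mathbb{G}}}t_2)$ inside $\mathbb{F}_2[t_1,t_2]/(t_1^2,t_2^2)$), so one must essentially use the mixed-characteristic torsion-free structure of $\pi_0R^{hH}$.
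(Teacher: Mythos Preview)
Your approach is essentially the same as the paper's: decompose $G$ as a product of a $C_p$ and a rank-$(n+1)$ subgroup, invoke \Cref{trivactionproduct}, and verify triviality of the residual $C_p$-action by injecting into the product over corank-one subgroups where the action is visibly trivial. The paper phrases the last step more abstractly (the $H$-equivariant map $M^{hG'}\to\prod_{G''} M^{hG''}$ is injective on homotopy and the target has trivial $H$-action), while you explicitly track a matrix $\psi_H$; these are equivalent.

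There is, however, a gap in your ``Tor-vanishing argument''. Flatness of each $E_n^0(BK)$ gives you that the kernel of $\pi_0 R\otimes_{\pi_0 E_n} E_n^0(BH)\to \pi_0 R\otimes_{\pi_0 E_n} E_n^0(BK)$ is the image of $\pi_0 R\otimes J_K$, but it does \emph{not} let you identify $\bigcap_K(\pi_0 R\otimes J_K)$ with $\pi_0 R\otimes(\bigcap_K J_K)$: intersections of submodules do not commute with tensor products in general, and the cokernel of $E_n^0(BH)\hookrightarrow\prod_K E_n^0(BK)$ need not be flat over $\pi_0 E_n$. So $\bigcap_K J_K=0$ alone does not yield the injectivity you need for arbitrary $\pi_0 R$.

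The fix is exactly what the paper does and what your final sentence gestures at: use torsion-freeness directly. Your reducedness-plus-covering argument correctly shows that $E_n^0(BH)\to\prod_K E_n^0(BK)$ is injective; in fact after inverting $p$ the closed subschemes $\spec E_n^0(BK)[1/p]$ are open-closed in the \'etale $\spec E_n^0(BH)[1/p]$, so the map is split injective over $\pi_0 E_n[1/p]$ and hence remains injective after any base change. Since $\pi_* R^{hH}$ is torsion-free (as you note), integral injectivity follows. This is precisely the paper's appeal to HKR character theory for rational injectivity together with torsion-freeness; your covering argument is a pleasant direct proof of that HKR-type input.
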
 
The above proposition establishes a very weak result towards the general expectation that 
the complexity of the representation theory over $\mathbb{E}_\infty$-rings should stabilize once
the rank of the group is a bit larger than the chromatic complexity of the coefficients.
A more subtle such result would be our \Cref{conj:Morava_Swan}.

\begin{proof}[Proof of \Cref{thm:induction}] 
Let $G'   \subsetneq  G$ be a maximal proper subgroup, and fix a complement
$C_p \simeq H \subseteq G$, so that $G = G' \times H$. 
By our assumptions and \Cref{trivactionproduct}, 
it suffices to show that the $H$-action on 
$\pi_*( M^{hG'})$
is trivial (cf.~\Cref{triv on morava}, which shows that $C^*(BG'; R)$ is even and torsion-free).

To see this, we claim that the map of $C^*(BG', R)$-modules
\begin{equation} \label{mapofRmodules}  M^{hG'} \to \prod_{G'' \simeq
C_p^{\times n} \subsetneq  G' } M^{hG''}  \end{equation}
is injective on homotopy. Since $M$ is $G'$-equivariantly isomorphic to a sum of
copies of the unit, it suffices to verify the injectivity of
$\pi_*(\eqref{mapofRmodules})$ with $M$ replaced by $R$; this case follows because
\begin{equation} \label{equation:restrict} C^*(BG', R) \to \prod_{G''\simeq
C_p^{\times n} \subsetneq G'} C^*(BG'', R)  \end{equation}
is injective on $\pi_*(-) \otimes \mathbb{Q}$ in light of
\cite[Th.~3.18 and Prop.~5.36]{MNN19} (this is essentially a consequence of the
character theory of
\cite{HKR00}) since both sides are torsion-free. 

For any $G''\subsetneq G'$, we have an induced $H$-action on 
the $C^*(BG'', R)$-module
$M^{hG''}$, and the map in \eqref{mapofRmodules} is $H$-equivariant. 
Since $M$ restricts to a direct sum of copies of the unit for every proper subgroup of $G$, the
$H$-action on $M^{hG''}$
is trivial;  indeed, this follows because the action of the
\emph{proper} subgroup generated by
$G''$ and $H$ on $M$ is trivial.
It follows from the injectivity on homotopy of 
\eqref{mapofRmodules}  and this observation that the $H$-action on 
$M^{hG'} $ is trivial on homotopy groups. This completes the proof. 
\end{proof}

We now start considering representation spheres.
	For a based space $B$, let $B\langle n\rangle$ be the $(n-1)$st connective cover of $B$, so the first potentially non-trivial homotopy group is in degree $n$. 
	Let $MO\langle n\rangle$ be the Thom spectrum associated to the
	$J$-homomorphism $BO\langle n\rangle \to BO\to BGL_1 \mathbb{S}$.
We define the function $\phi$ for all integers $n \geq 1$ via  $\phi(n) = 8a +2^b$ when $n=4a +b+1$ with $0\leq b\leq 3$.

\begin{lemma}\label{lem:orientable}
	Let $n\ge 1$, $G=C_2^{\times n}$ and consider the characters $\{\eta_i\}_{1\leq
	i\leq n}$ of $G$ obtained by pulling back the sign character along the $n$
	projection maps. Define $\alpha=\prod_{i=1}^{n} (1-\eta_i)\in RO(G)$. Then for any $MO\langle
	\phi(n) \rangle$-oriented $\mathbb{E}_1$-ring spectrum $R$, there is an equivalence $S^\alpha \otimes R\simeq R$ in $\fun(BG, \perf(R))$.
\end{lemma}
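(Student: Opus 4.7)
The plan is to reduce the desired equivalence $S^\alpha \otimes R \simeq R$ to a lifting question on classifying spaces. Since $\alpha = \prod_{i=1}^n(1-\eta_i)$ has virtual dimension $\prod_{i=1}^n(1-1)=0$, it defines a classifying map $\alpha\colon BG \to BO$. The object $S^\alpha \otimes R \in \fun(BG,\perf(R))$ identifies with the $R$-linearization of the Borel-equivariant Thom spectrum of $\alpha$, and by the Thom isomorphism principle, the equivalence $S^\alpha \otimes R \simeq R$ holds if and only if the composite $BG \xrightarrow{\alpha} BO \to BGL_1(R)$ is null-homotopic.

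The hypothesis that $R$ is $MO\langle \phi(n)\rangle$-oriented is equivalent, by the definition of $MO\langle \phi(n)\rangle$ as the Thom spectrum of the $J$-homomorphism $BO\langle \phi(n)\rangle \to BGL_1(\mathbb{S})$, to a null-homotopy of the composite $BO\langle \phi(n)\rangle \to BGL_1(\mathbb{S}) \to BGL_1(R)$. Consequently it suffices to produce a lift of $\alpha\colon BG \to BO$ through the connective cover map $BO\langle \phi(n)\rangle \to BO$.

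To construct this lift, I would invoke the Hurwitz--Radon / Atiyah--Bott--Shapiro theory: the integer $\phi(n)$ is exactly the dimension of the smallest real irreducible module $\Delta$ over the Clifford algebra $\mathrm{Cliff}(\mathbb{R}^n)$, and the group $G = C_2^{\times n}$ acts orthogonally on $\Delta$ via (an appropriately normalized form of) the Clifford generators. By the Atiyah--Bott--Shapiro construction, the resulting representation $G \to O(\phi(n))$ admits a canonical lift to $BO\langle \phi(n)\rangle$. A character computation then identifies the associated reduced class (up to an integer scaling absorbed by tensoring with a trivial bundle) with $\alpha$, providing the desired lift.

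The main obstacle will be the character-theoretic matching between $\alpha$ and the Clifford-module class in $RO(G)$: although both classes have virtual dimension $0$, the precise integer multiplicities and $2$-torsion bookkeeping must be handled with care. As a more pedestrian alternative, one can bypass Clifford theory and verify the lift by direct obstruction theory, using the explicit formula $w(\alpha) = \prod_{S \subseteq [n]}(1+x_S)^{(-1)^{|S|}}$ for the total Stiefel--Whitney class in $H^*(BG;\mathbb{F}_2)$ (where $x_S = \sum_{i \in S}x_i$), together with analogous expressions for the higher Pontryagin-type obstructions; the symmetry of $\alpha$ under the $\Sigma_n$-action permuting coordinates should then force all characteristic classes of $\alpha$ to vanish in degrees less than $\phi(n)$.
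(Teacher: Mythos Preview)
Your reduction to the lifting problem is correct and is exactly what the paper does: one only needs the composite $BG \xrightarrow{\alpha} BO$ to lift to $BO\langle \phi(n)\rangle$, and then the orientation hypothesis trivializes the resulting map to $BGL_1(R)$.

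Where you diverge from the paper is in how to produce the lift, and here both of your proposals have real problems. The Clifford idea does not work as stated: the Clifford generators $e_1,\dots,e_n$ anticommute, so the group they generate inside $\mathrm{Cliff}(\mathbb{R}^n)^\times$ is an extraspecial $2$-group, not $C_2^{\times n}$. There is no natural action of the elementary abelian group $G$ on the Clifford module $\Delta$ coming from the generators, and the ABS construction produces a class for $\mathrm{Spin}$ or $\mathrm{Pin}$, not for $BG$. Matching this against $\alpha$ in $RO(G)$ is not just bookkeeping---the objects live over different groups. Your obstruction-theory alternative is in principle viable, but ``symmetry should force vanishing'' is not an argument; for $BO\langle 8\rangle$ and beyond the obstructions are not Stiefel--Whitney classes, and one would need to control the $KO$-theoretic obstructions (e.g.\ $\lambda$-classes or $p_1/2$) explicitly.

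The paper's argument is much shorter and exploits the one structural feature you have not used: $\alpha$ is a \emph{product}. One inducts on $n$. The base case $n=1$ is trivial since $BO\langle\phi(1)\rangle=BO$. For the inductive step, the factorization $\alpha_{n+1}=(1-\eta_{n+1})\cdot\alpha_n$ in $RO(G)$ translates on classifying spaces into a factorization of $BC_2^{\times(n+1)}\to BO$ through
\[
BC_2\times BC_2^{\times n}\;\longrightarrow\; BO(1)\wedge BO\langle\phi(n)\rangle\;\longrightarrow\; BO\wedge BO\;\xrightarrow{\ \otimes\ }\; BO,
\]
using the ring-spectrum multiplication on $KO$. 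Since $BO(1)\wedge BO\langle\phi(n)\rangle$ is $\phi(n)$-connected and $\phi(n+1)$ is the degree of the next nonzero homotopy group of $BO$ after $\phi(n)$, the composite automatically lifts to $BO\langle\phi(n+1)\rangle$. No characteristic-class computation is needed.
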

\begin{proof}
We claim that the map 
\begin{equation} \label{nthBC2map} BC_2^{\times n} \xrightarrow{\prod_{i=1}^n (1 -\eta_i)}
BO\end{equation}
lifts to $BO\left \langle \phi(n)\right\rangle$. 
This implies that for any  $MO\langle \phi(n)\rangle$-oriented $R$, there is 
an equivalence $S^\alpha \otimes R \simeq R $ in $\fun(BG, \perf(R))$ as
desired. 

By Bott periodicity, $\phi(n)$ is the 
degree of the $n$th nonzero homotopy group of $BO$, starting with $\phi(1) = 1$. 
We argue inductively on $n\geq 1$ to construct the lifting. If $\eta$ is the sign representation of
$C_2$, then the virtual representation sphere
$S^{1-\eta}$ is classified by a map \[BC_2\simeq BO(1)\to BO \]  
which lifts to $BO\left\langle \phi(1)\right \rangle = BO \left\langle 1\right \rangle$
because $BC_2$ is connected. This settles the base case $n=1$.

Suppose now we have a lifting of 
\eqref{nthBC2map} to $BO\left \langle \phi(n)\right\rangle$ for some $n\ge 1$.
The next classifying map is obtained as follows:
	\[
	BC_2^{\times n+1} \simeq BC_2 \times BC_2^{\times n}
		\xrightarrow{(1-\eta_{n+1})\otimes \prod_{i =1}^n(1-\eta_i)) }  BO(1) \wedge BO\langle \phi(n)\rangle 
		\to  BO\wedge BO 
		\xrightarrow {\otimes} BO .
		\]
	Since $BO(1)\wedge BO\langle \phi(n)\rangle$ is $\phi(n)$-connected, the
	composite of the last two maps lifts to $BO\langle \phi(n+1)\rangle$ as desired.
\end{proof}

\begin{thm} \label{thm:2-primary}
Let $G$ be an abelian 2-group. 
Let $R$ be an $\mathbb{E}_\infty$-ring spectrum. Suppose that for some $n \geq 1$, we
have either: 
\begin{enumerate}
  \item $R  =E_n$, a Lubin-Tate theory of height $n$ at the prime $2$.
  \item $R$ is $MO\langle \phi (n+2)\rangle$-orientable (as an $\mathbb{E}_1$-ring and with  $\phi$ as defined before \Cref{lem:orientable}).  \end{enumerate}
Then $R$-based Swan induction holds for the family of subgroups of $G$ of rank at
most $n+1$. 
\end{thm}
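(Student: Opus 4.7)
The plan is to reduce via \Cref{general:swaninduction} to the case where $G$ is elementary abelian of rank exactly $n+2$, to construct a specific virtual representation $\alpha$ whose Thom spectrum becomes trivial over $R$, and then to deduce Swan induction from a computation in $K_0$ of representation spheres. First, applying \Cref{general:swaninduction}(2), it suffices to establish Swan induction for the family of proper subgroups of each rank-$\geq n+2$ subgroup $H$ of the original $G$; since every such $H$ surjects onto $C_2^{\times(n+2)}$, part (3) of that remark further reduces everything to the single case $G = C_2^{\times(n+2)}$ with the family of proper subgroups.

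Fix this $G$. Let $\eta_1, \ldots, \eta_{n+2}$ be the coordinate sign characters and set $\alpha := \prod_{i=1}^{n+2}(1-\eta_i) \in RO(G)$, a virtual representation of virtual dimension zero. The heart of the proof is to show that $S^\alpha \otimes R \simeq R$ in $\fun(BG, \perf(R))$. In case (2), this is immediate from \Cref{lem:orientable} applied with its $n$ set to $n+2$, using the $MO\langle \phi(n+2)\rangle$-orientation of $R$. In case (1), with $R = E_n$, I would apply \Cref{thm:induction} to $M := S^\alpha \otimes E_n$: its hypothesis demands that for every proper $H \subsetneq G$, the restriction $S^{\alpha|_H} \otimes E_n$ be equivalent to a direct sum of copies of the unit in $\fun(BH, \perf(E_n))$. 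When some $\eta_i|_H$ is trivial, we have $\alpha|_H = 0 \in RO(H)$ and the claim is immediate; for proper $H$ on which all $\eta_i$ restrict nontrivially, the analysis requires an appeal to the complex or real orientability of $E_n$ at the prime $2$ together with HKR-theoretic information about $E_n^{BH}$, and this is the main technical hurdle of the argument.

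Once $S^\alpha \otimes R \simeq R$ is in hand, I would extract Swan induction as follows. Writing $\chi_S := \prod_{i \in S}\eta_i$, decompose $\alpha = \alpha_+ - \alpha_-$ with $\alpha_+ := \sum_{|S|\text{ even}}\chi_S$ and $\alpha_- := \sum_{|S|\text{ odd}}\chi_S$. The empty-support term $\chi_\emptyset = 1$ contributes a trivial summand to $\alpha_+$, so $\alpha_+ = 1 \oplus \alpha'_+$ with $\alpha'_+$ containing no trivial summand; likewise $\alpha_-$ has no trivial summand. The equivalence $S^\alpha \otimes R \simeq R$ gives $[S^{\alpha_+} \otimes R] = [S^{\alpha_-} \otimes R]$ in $K_0(\fun(BG, \perf(R)))$, and combining this with $[S^1 \otimes R] = -1$ and the cofiber sequences $S(V)_+ \to S^0 \to S^V$ yields the $K_0$-identity
\[ [S(\alpha'_+)_+ \otimes R] + [S(\alpha_-)_+ \otimes R] = 2. \]
Since $S(\alpha'_+)$ and $S(\alpha_-)$ are smooth compact $G$-manifolds on which $G$ acts with only proper isotropy, they admit finite $G$-CW structures, and the cell-counting computation of \Cref{artinworkhorse} shows that the two classes on the left-hand side lie in the image of the induction maps from the family of proper subgroups. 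Thus $2$, and therefore rationally $1$, lies in this image, completing the proof.
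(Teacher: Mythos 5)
Your overall strategy coincides with the paper's: the same reduction to $G=C_2^{\times(n+2)}$ with the family of proper subgroups, the same class $\alpha=\prod_{i=1}^{n+2}(1-\eta_i)$, \Cref{lem:orientable} for case (2), \Cref{thm:induction} for case (1), and an equivalent $K_0$ bookkeeping at the end (your identity $[S(\alpha_+')_+\otimes R]+[S(\alpha_-)_+\otimes R]=2$ is a correct and slightly more direct packaging of the paper's computation $[S^\alpha]=-1+C$).

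However, there is a genuine gap in case (1), precisely at the point you flag as ``the main technical hurdle'' and then leave unresolved. To apply \Cref{thm:induction} you must show that $S^{\alpha|_H}\otimes E_n$ is a sum of copies of the unit for \emph{every} proper $H\subsetneq G$, and as you note there are proper subgroups (e.g.\ $H=\ker(\eta_1\eta_2)$) on which all the $\eta_i$ restrict nontrivially, so the case $\alpha|_H=0$ does not cover them. An appeal to ``HKR-theoretic information about $E_n^{BH}$'' is not the right mechanism here. The paper's resolution is purely representation-theoretic: $\alpha$ lies in the $(n+2)$-nd power of the augmentation ideal of $RO(G)$, and for any elementary abelian $2$-group $H$ of rank $m$ the augmentation ideal of $RO(H)/2\simeq\mathbb{F}_2[H]$ has vanishing $(m+1)$-st power; since a proper $H\subsetneq G$ has rank $m\leq n+1$, it follows that $\alpha|_H$ is divisible by $2$ in $RO(H)$ and hence is the realification of a virtual \emph{complex} representation. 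Complex representation spheres become trivial in $\fun(BH,\perf(E_n))$ by complex orientability (Thom isomorphism), which supplies exactly the input \Cref{thm:induction} needs. Without this observation, the argument for $R=E_n$ does not close.
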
 
\begin{proof} 
By pulling back along the map to a maximal elementary abelian quotient of $G$,
it suffices to treat the case where $G=C_2^{\times{n+2}}$ and prove that
$R$-based Swan
induction holds for the family of proper subgroups, cf. \Cref{general:swaninduction}.
Let $\epsilon_1, \dots, \epsilon_{n+2}$ be independent sign
characters $G \to \left\{\pm 1\right\}$, i.e., $\left\{\epsilon_i\right\}$ is a
basis for the $\mathbb{F}_2$-vector space $\hom(G, \left\{\pm 1\right\})$. 
Let $\eta_i \in RO(G)$ $(1\leq i\leq n+2)$ be the class of the associated one-dimensional
representation (as in \Cref{lem:orientable}). We consider the class
\[ \alpha = \prod_{i=1}^{n+2}( 1 - \eta_i)  \in RO(G), \]
and the associated representation $G$-sphere $S^{\alpha} \in \GSpec$. 
Note that for any proper subgroup $H \subsetneq G$, 
$\alpha$ restricts to a class in $RO(H)$ which is divisible by $2$ and
therefore comes from the complex representation ring. This follows
because $\alpha$ belongs to the $(n+2)$-th power of the augmentation ideal,
and for any $m$, the augmentation ideal in $RO(C_2^{\times m})/2 =
\mathbb{F}_2[C_2^{\times m}]$ has its
$(m+1)$th power
equal to zero.

Given a nontrivial character $\mu$ of $G$, considered as a 1-dimensional real
representation, the cofiber sequence $S(\mu)_+ \to
\mathbb{S} \to S^{\mu}$ shows that the class $[S^{\mu}] \in \rep(G, \mathbb{S})$ has the property that $[S^{\mu}]-1=[S(\mu)_+]$ is induced from a
proper subgroup, namely the kernel of $\mu$. 
Consequently, if $V$ is a sum of nontrivial characters in $RO(G)$, 
then
$[S^V] -1 \in \rep(G, \mathbb{S})$ is a sum of classes induced from proper subgroups in 
$\rep(G, \mathbb{S})$. 
Expanding out the product defining $\alpha$ into a sum of characters, we find
only a single trivial representation (since the $\epsilon_i$ are linearly independent).
It follows that
in 
$\rep(G, \mathbb{S})$, one has 
\begin{equation} [S^{\alpha}] = -[S^{\alpha -1}] = -1 + C,
\label{Ceq}\end{equation}  for $C$ a sum of classes induced
from proper subgroups. 

We also claim that \[ S^\alpha \otimes R \simeq  R \mbox{ in } \fun(BG, \perf(R)).  \]
Under the first hypothesis, this follows from \Cref{thm:induction} since the
restriction of $\alpha$ to a proper subgroup is a complex representation sphere,
and thus
trivializable.
Under the second hypothesis, this follows from \Cref{lem:orientable}.
Consequently, $[S^\alpha \otimes R] = 1 \in \rep(G, R)$. 
By \eqref{Ceq}, it follows that $1 = -1 + C$, so 
 $2 \in \rep(G, R)$ is a sum of classes induced from proper subgroups, as desired. 
\end{proof} 

Note that this argument cannot work at odd primes, since all representations of
$C_p$ are complex for $p > 2$.

\begin{proof}[Proof of \Cref{Ex:Swaninduction}, \Cref{Ex:Swaninduction_Morava}]
This follows from \Cref{p-1local} to handle the prime-to-$2$ case combined with
\Cref{thm:2-primary}, which handles the prime $2$.
\end{proof}

\subsection{Swan induction for $KU$}\label{section:Swan_for_KU}

In this subsection, we prove the Swan induction theorem for $KU$. 
Note that we have already given (geometric) proofs earlier for the $p$-part with
$
p\leq 5$, see \Cref{artinindKU0}. 

\begin{thm} 
\label{artinindKU}
Let $G$ be any finite group. 
Then 
$KU$-based Swan induction  holds for the family of abelian subgroups of
rank $\leq 2$. 
\end{thm}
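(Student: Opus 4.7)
The plan is to reduce, via the standard machinery already established in the paper, to a single base case at each prime $p$, and then handle the remaining primes with a new $p$-adic argument. Specifically, by \Cref{cxorientswan} we already have $KU$-based Swan induction for the family of all abelian subgroups of any finite group. Combining this with \Cref{general:swaninduction}(1)--(3) and downward induction on $|G|$, the general statement reduces to showing: for every prime $p$, $KU$-based Swan induction for $G = C_p^{\times 3}$ holds with respect to the family $\sF$ of proper subgroups (all of which are abelian of rank $\leq 2$). The cases $p \in \{2,3,5\}$ are precisely the content of \Cref{artinindKU0}, where the non-toral $C_p^{\times 3}$-subgroup of a suitable simply connected compact Lie group (such as $E_8$) was used; the remaining work is the case $p \geq 7$, where no such embedding is available.

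For $p \geq 7$, the strategy is to analyze $\rep(G, KU) \otimes \mathbb{Q}$ prime-by-prime. At primes $\ell \neq p$, the group $G$ is a $p$-group and hence $\ell$-invertible, so standard arguments with the Burnside ring and \Cref{p-1local} give induction from cyclic subgroups. The substantial content is at the prime $p$: one must show that the $p$-completion of the cokernel of
\[ \bigoplus_{H \subsetneq G} \rep(H, KU)_{(p)} \otimes \mathbb{Q} \;\longrightarrow\; \rep(G, KU)_{(p)} \otimes \mathbb{Q} \]
vanishes. Here I would exploit the $K(1)$-local structure of $KU_p^\wedge$ and generalized character theory at height $1$: rationally, $p$-adic $KU$-valued class functions on $G$ are ordinary class functions on cyclic $p$-subgroups, and classical rational Artin induction expresses the constant function $1$ as a $\mathbb{Q}$-linear combination of characters induced from proper (cyclic) subgroups.

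The main obstacle will be controlling the part of $\rep(G, KU)$ that does not come from ordinary complex representations. The point is that $\rep(G, KU) = K_0(\fun(BG,\perf(KU)))$ properly contains the image of $R(G)$, since $BGL_1(KU) \simeq \mathbb{Z}/2 \times BU$ allows $G$-actions on free $KU$-modules that are not homotopic to genuine complex linear actions; these extra classes are detected by $\widetilde{KU}^0(BG)$. Showing that these ``exotic'' classes also lie in the induction ideal from proper subgroups of $C_p^{\times 3}$ (and that rank $\leq 2$ subgroups genuinely suffice, so that one cannot quite get away with cyclic inductions as in the classical case) is the delicate step; I expect it to follow from combining the Atiyah--Segal completion theorem, an explicit computation of $\rep(C_p^{\times 3}, KU)_{p}^\wedge$ as a module over $R(C_p^{\times 3})_p^\wedge$, and a rank count analogous to that in the proof of \Cref{thm:bdd-complex}.
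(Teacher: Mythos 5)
Your reduction is fine and matches the paper's: by \Cref{cxorientswan} and \Cref{general:swaninduction} everything comes down to proving Swan induction for $G=C_p^{\times 3}$ with respect to the family of proper subgroups, and \Cref{artinindKU0} indeed disposes of $p\in\{2,3,5\}$. But for $p\geq 7$ your proposal contains no actual argument, and the tools you name are aimed at the wrong object. Height-one character theory and the Atiyah--Segal completion theorem control the \emph{Borel-completed} ring $KU^0(BG)$, whereas the Swan induction condition concerns the Green functor $\rep(-,KU)\otimes\mathbb{Q}$, i.e.\ $K_0$ of the genuine categories $\fun(BH,\perf(KU))$. The natural map $\rep(G,KU)\to KU^0(BG)$ is a map of Green functors in the wrong direction for your purposes: surjectivity of induction onto the target says nothing about the source, and indeed the completed theory rationally satisfies induction from cyclic $p$-subgroups while the genuine theory demonstrably requires rank~$2$ abelians --- your own parenthetical remark flags this tension without resolving it. The remaining ingredient you invoke, ``an explicit computation of $\rep(C_p^{\times 3},KU)^\wedge_p$ as a module over $R(C_p^{\times 3})^\wedge_p$,'' is precisely the hard part (the paper explicitly notes that these rings are essentially unknown), and no rank count in the style of \Cref{thm:bdd-complex} is available because there is no freeness result for $\rep(G,KU)$ analogous to the freeness of $C^*(BC_p^{\times k},E_n)$. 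Also, the preliminary ``primes $\ell\neq p$'' step is vacuous or confused: after tensoring with $\mathbb{Q}$ there is no residual prime-by-prime decomposition of the target, and \Cref{p-1local} concerns $\mathbb{S}[1/|G|]$, not $KU$.

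For comparison, the paper's proof works uniformly in $p$ (so the $p\leq 5$ case needs no separate treatment) and its engine is twisted $K$-theory: the Heisenberg projective representation gives a fixed-point-free $C_p^{\times 2}$-action on $\mathbb{CP}^{p-1}$, and \Cref{projdecomp} identifies $KU\otimes\mathbb{D}\mathbb{CP}^{p-1}_+$ with $\bigoplus_{\tau\in H^3(C_p^{\times 2};\mathbb{Z})}KU_\tau$. Comparing this with the Burnside-ring computation $[KU\otimes\mathbb{CP}^{p-1}_+]=p\,e_{C_p^{\times 2}}$ expresses the idempotent $\widetilde{e}_{C_p^{\times 2}}$ as $\frac{1}{p}\prod_{j=1}^{p-1}(1-x^j)$ for $x=[KU_\tau]$, i.e.\ as the image of an explicit element of the group ring $\mathbb{Q}[H^3(BC_p^{\times 2};\mathbb{Z})]$. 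Writing $\widetilde{e}_{C_p^{\times 3}}$ as the product of the pullbacks of $\widetilde{e}_{C_p^{\times 2}}$ along all corank-one surjections then exhibits it as the image of a group-ring element that dies on every cyclic quotient of $H^3(BC_p^{\times 3};\mathbb{Z})$, hence is zero. Some mechanism of this kind --- producing the exotic, non-linear classes in $\rep(C_p^{\times 3},KU)$ explicitly and showing they account for the full defect --- is what your proposal is missing.
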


To prove \Cref{artinindKU}, 
it suffices (cf.~\Cref{general:swaninduction}) to treat the case of $G
= C_p^{\times
3}$ for an arbitrary prime $p$. 
Our proof will rely essentially on {twisted} $K$-theory.
We will first need various preliminaries. 

\begin{cons}[Twists of $K$-theory]
There is a natural map of anima
\[ K(\mathbb{Z}, 3) \to BGL_1(KU),  \]
 where $BGL_1(KU)$ is the classifying space of trivial invertible $KU$-modules, 
cf.~\cite[Sec.~7]{ABG10} for an account,  
which induces the identity on $\pi_3$.
Consequently, for any finite group $G$, we have a natural map
\begin{equation} \label{twistK} H^3(G; \mathbb{Z}) \to \pic( \fun(BG, \perf(KU))),  \end{equation}
where the right-hand-side is the Picard group of the symmetric monoidal
$\infty$-category 
$\fun(BG, \perf(KU))$. 
Given a class $\tau \in H^3(G; \mathbb{Z})$, we let $KU_{\tau}$ 
be the associated object of $\fun(BG, \perf(KU))$. 
\end{cons}

We will be especially interested in the case $G = C_p^{\times 2}$. 
Choosing a nonzero class $\tau \in H^3(G; \mathbb{Z}) = \mathbb{F}_p$, 
we obtain an invertible object $KU_{\tau} \in \fun(B(C_p^{\times 2}),
\perf(KU))$. 
The induced map $B(C_p^{\times 2}) \to BGL_1(KU)$ is nontrivial, as it lifts
uniquely to
the $3$-connective cover $\tau_{\geq 3} BGL_1(KU)$, and $K(\mathbb{Z}, 3)$
splits off as a direct factor from here; this means that $KU_{\tau}$ is not
equivalent to the unit in $\fun(BG, \perf(KU))$. 

In the next lemma, to distinguish the factors, we write $C_p^a \subseteq
C_{p}^{\times 2}$ for the first factor and $C_p^b \subseteq C_p^{\times 2}$
for the second. 
Note that $KU^0(BC_p^b) $ is isomorphic to the completion of the
representation  ring $R(C_p^b)$ at the augmentation ideal by
the Atiyah--Segal completion theorem \cite{AS69, At61}. 
If $\zeta$ is a nontrivial character of $C_p^b$, then $R(C_p^b)$ is free on the
powers of $[\zeta]$.

\begin{lem}\label{lem:twisted-action} 
Let $\tau \in H^3(C_p^{\times 2}; \mathbb{Z})$ be a nontrivial element. 
\begin{enumerate}
\item  The underlying object $KU_{\tau}|_{C^b_p}$ in $\fun(BC^b_p, \perf(KU))$ is
isomorphic to the unit. 
\item The residual $C_p^a$-action on $(KU_{\tau})^{hC_p^b}\simeq
C^*({BC_p^b}, KU)$ has the property that the action
by a generator in $C_p^a$ acts by multiplication by 
$[\zeta]^i, $ for some $0 < i < p$. 
\end{enumerate}
\end{lem}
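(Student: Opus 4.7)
The plan is to deduce both claims from the elementary fact that $H^3(BC_p; \mathbb{Z}) = 0$ for any cyclic group of prime order, combined with the Atiyah--Segal description of $KU^0(BC_p)$ and an Atiyah--Hirzebruch analysis. For (1), the restriction of $\tau$ along $BC_p^b \hookrightarrow BC_p^{\times 2}$ lies in $H^3(BC_p^b; \mathbb{Z}) = 0$, so the composite classifying map $BC_p^b \to K(\mathbb{Z}, 3) \to BGL_1(KU)$ for $KU_\tau|_{C_p^b}$ is null-homotopic, yielding an equivalence $\phi \colon KU_\tau|_{C_p^b} \simeq \mathbf{1}$ in $\fun(BC_p^b, \perf(KU))$.

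For (2), I will fix such a $\phi$ and transport the residual $C_p^a$-action to an action on the unit of $\fun(BC_p^b, \perf(KU))$; applying $(-)^{hC_p^b}$ gives a class $[\alpha] \in (gl_1 A)^1(BC_p^a)$ where $A := C^*(BC_p^b, KU)$. The Atiyah--Hirzebruch spectral sequence $E_2^{s,t} = H^s(BC_p^a; \pi_{-t} gl_1 A)$ converging to $(gl_1 A)^{s+t}(BC_p^a)$ has only one potentially nonzero contribution in total degree $1$, namely $E_2^{1,0} = \mathrm{Hom}(C_p^a, (R(C_p^b)^\wedge_I)^\times)$: the remaining terms vanish because $A$ is even (so $\pi_{\mathrm{odd}} A = 0$) and $R(C_p^b)^\wedge_I$ is $p$-torsion-free (so $H^s(BC_p^a; R(C_p^b)^\wedge_I) = 0$ for odd $s \geq 3$, using that $H^s(BC_p; M) \cong M[p]$ in such degrees). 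Consequently $[\alpha]$ is fully captured by a single character $\bar u \colon C_p^a \to (R(C_p^b)^\wedge_I)^\times$ of order dividing $p$, which is precisely the $\pi_0$-level automorphism we wish to identify.

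To identify $\bar u = [\zeta]^i$, I first argue that $\bar u$ lands in $1 + I$, where $I$ is the augmentation ideal: composing $\bar u$ with the augmentation $R(C_p^b)^\wedge_I \to \mathbb{Z}$ equals the $\pi_0$-character of the $C_p^a$-action on the further restriction $KU_\tau|_{C_p^a}$, and the latter is trivializable since $H^3(BC_p^a; \mathbb{Z}) = 0$, so this character is trivial (automorphisms of the unit act by conjugation, which is trivial in this abelian setting). Then, via the Atiyah--Segal identification $R(C_p^b)^\wedge_I \cong \mathbb{Z}_p[t]/(t^p - 1)$ and the decomposition $\mathbb{Q}_p \otimes R(C_p^b)^\wedge_I \cong \mathbb{Q}_p \times \mathbb{Q}_p(\zeta_p)$ arising from $t^p - 1 = (t-1)\Phi_p(t)$, a direct count shows the $p$-th roots of unity in $1 + I$ are exactly $\{[\zeta]^i : 0 \leq i \leq p-1\}$, forcing $\bar u = [\zeta]^i$ for some such $i$.

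Finally, $i > 0$ follows by contradiction: if $\bar u = 1$, then $[\alpha] = 0$ by the Atiyah--Hirzebruch argument above, so $\phi$ upgrades to a $C_p^a$-equivariant trivialization; descent along the split fibration $BC_p^b \to BC_p^{\times 2} \to BC_p^a$ then yields $KU_\tau \simeq \mathbf{1}$ in $\fun(BC_p^{\times 2}, \perf(KU))$, contradicting the nontriviality of $\tau$. The main delicate point is the Atiyah--Hirzebruch vanishing: one must verify precisely that evenness of $KU$ and $p$-torsion-freeness of $R(C_p^b)^\wedge_I$ together ensure no higher-filtration data about $\tau$ can be encoded beyond $\bar u$.
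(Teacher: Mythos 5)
Your proof is correct and follows the same basic line as the paper's (which disposes of part (2) in two sentences): trivialize over $C_p^b$ using $H^3(BC_p;\mathbb{Z})=0$, observe that the residual generator of $C_p^a$ acts on the trivialized $C_p^b$-fixed points by a unit of $KU^0(BC_p^b)\cong R(C_p^b)^{\wedge}_I$ whose $p$-th power is the identity, and rule out the trivial unit because it would force the whole $C_p^{\times 2}$-action on $KU_\tau$ to be trivial, contradicting the nontriviality of $KU_\tau$ established just before the lemma. Your Atiyah--Hirzebruch computation for $gl_1(C^*(BC_p^b,KU))$ plays exactly the role the paper delegates to \Cref{trivactionproduct}, whose proof (via \Cref{lemCptrivial}) is the additive homotopy-fixed-point analogue of your argument; both hinge on the same two inputs, evenness and torsion-freeness of $\pi_*C^*(BC_p^b;KU)$. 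The one place where you supply something the paper elides is the identification of the unit as $[\zeta]^i$ rather than merely as some $p$-th root of unity: at $p=2$ the ring $\mathbb{Z}_2[t]/(t^2-1)$ contains the four square roots of unity $\pm 1,\pm t$, so your observation that the unit must augment to $1$ (since $KU_\tau|_{C_p^a}$ is equivariantly trivial) together with the count of $p$-th roots of unity in $1+I$ is genuinely needed to complete the statement as written.
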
 
\begin{proof} 
The first assertion follows because $\tau$ restricts to zero in $H^3(C_p;
\mathbb{Z}) = 0$. The second assertion follows because the action of a
generator is necessarily given by multiplication by an element of $KU^0(BC_p)$ whose $p$th power is the
identity.  Moreover, this generator is necessarily nontrivial or the entire
$C_p^{\times 2}$-action on $KU_{\tau}$ would be trivial by \Cref{trivactionproduct}. 
\end{proof} 

Our key tool is the following result. 
We consider the $C_p^{\times 2}$-action on $\mathbb{CP}^{p-1}$ arising from the
projective representation on $\mathbb{C}^p$ as in the proof of \Cref{swanthm}. 
We identify the $KU$-linearization of this action. 

\begin{prop} 
\label{projdecomp}
We have a decomposition in $\fun(B(C_p^{\times 2} ), \perf(KU))$
\begin{equation}  KU \otimes \mathbb{D}\mathbb{CP}^{p-1}_+  \simeq
\bigoplus_{\tau \in H^3(C_p^{\times 2}; \mathbb{Z})} KU_{\tau}.  \label{KUCPequiv} \end{equation}
\end{prop}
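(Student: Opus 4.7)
The plan is to first establish an analogous decomposition over the Heisenberg extension $H$ of $C_p^{\times 2}$, and then to descend to $C_p^{\times 2}$ by matching central characters of $H$ with the corresponding twists in $H^3(C_p^{\times 2}; \mathbb{Z})$.

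First, let $H$ denote the Heisenberg central extension
\[ 1 \to C_p \to H \to C_p^{\times 2} \to 1 \]
which lifts the projective Heisenberg representation of $C_p^{\times 2}$ used in the proof of \Cref{swanthm} to a genuine linear action of $H$ on $\mathbb{C}^p$, with the central $C_p$ acting via a fixed faithful character $\zeta \colon C_p \to U(1)$. This induces an $H$-action on $\mathbb{CP}^{p-1}$ through the quotient $C_p^{\times 2}$, and for each $k \in \{0, \ldots, p-1\}$ we obtain an $H$-equivariant line bundle $\mathcal{O}(k)$, namely the $k$-th tensor power of the hyperplane bundle, on whose fibers the central $C_p$ acts through $\zeta^k$. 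Their classes assemble to a natural map
\[ \phi \colon \bigoplus_{k=0}^{p-1} \mathcal{O}(k) \to KU \otimes \mathbb{D}\mathbb{CP}^{p-1}_+ \quad \text{in } \fun(BH, \perf(KU)). \]
On underlying non-equivariant $KU$-modules, $\phi$ realizes the classical basis $1, [\mathcal{O}(1)], \ldots, [\mathcal{O}(1)]^{p-1}$ of $KU^0(\mathbb{CP}^{p-1})$ from the projective bundle formula, and is therefore an equivalence in $\fun(BH, \perf(KU))$.

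Next I descend $\phi$ along the quotient map $BH \to B(C_p^{\times 2})$. The extension $H$ is classified by a map $B(C_p^{\times 2}) \to B^2 C_p$; composing with the delooped character $B^2\zeta \colon B^2 C_p \to B^2 U(1) = K(\mathbb{Z}, 3) \to BGL_1(KU)$ produces a class $\tau \in H^3(C_p^{\times 2}; \mathbb{Z})$. Since $H$ is nontrivial and $\zeta$ is faithful, $\tau$ is nonzero, and hence a generator of the cyclic group $H^3(C_p^{\times 2}; \mathbb{Z}) \simeq \mathbb{F}_p$. The key observation is that any $H$-equivariant perfect $KU$-module on which the central $C_p$ acts through $\zeta^k$ descends, along the quotient $BH \to B(C_p^{\times 2})$, to a $k\tau$-twisted $KU$-module on $B(C_p^{\times 2})$; applied summand by summand to $\phi$, this identifies $\mathcal{O}(k)$ with the invertible object $KU_{k\tau}$. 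Since $k$ ranging over $\{0, \ldots, p-1\}$ exhausts $H^3(C_p^{\times 2}; \mathbb{Z})$, the claimed decomposition \eqref{KUCPequiv} follows.

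The main obstacle is justifying the correspondence in the preceding paragraph between $\zeta^k$-isotypic $H$-equivariant $KU$-modules and $k\tau$-twisted $C_p^{\times 2}$-equivariant $KU$-modules. This requires reconciling two a priori independent sources of a class in $H^3(C_p^{\times 2}; \mathbb{Z})$: the $E_\infty$-map $K(\mathbb{Z}, 3) \to BGL_1(KU)$ implicit in the definition \eqref{twistK} of $KU_{\tau}$, and the obstruction class of the central extension $H$ paired with the character $\zeta^k$. Once this compatibility is pinned down---essentially by observing that both constructions factor through the composite of $B^2\zeta^k$ with the fundamental $E_\infty$-delooping---the identification is immediate, completing the proof.
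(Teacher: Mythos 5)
Your route — linearize the projective representation on the Heisenberg extension $1\to C_p\to H\to C_p^{\times 2}\to 1$, trivialize $KU\otimes\mathbb{D}\mathbb{CP}^{p-1}_+$ over $BH$ via the equivariant line bundles $\mathcal{O}(k)$, and then descend using the dictionary between central characters of $H$ and twists on $C_p^{\times 2}$ — is genuinely different from the paper's. The paper never introduces $H$: it computes $KU^*_{C_p^b}(\mathbb{CP}^{p-1})$ by the projective bundle theorem, checks by hand (via the tautological bundle) that a generator of $C_p^a$ sends $x$ to $x[\zeta^i]$, matches the resulting residual $C_p^a$-action on $(-)^{hC_p^b}$ against \Cref{lem:twisted-action}, and closes with a homotopy fixed point spectral sequence argument in which the obstructions vanish. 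Your first step is sound: the classes $[\mathcal{O}(k)]$ do lift to $KU^0((\mathbb{CP}^{p-1})_{hH})$, and a map in $\fun(BH,\perf(KU))$ that is an equivalence on underlying $KU$-modules is an equivalence, so $KU\otimes\mathbb{D}\mathbb{CP}^{p-1}_+$ restricted to $H$ is indeed a sum of $p$ copies of the unit (consistent with the fact that $\tau$ pulls back to zero in $H^3(H;\mathbb{Z})$).

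However, the descent step is a genuine gap, and it is not ``immediate'' once the two $H^3$-classes are matched. First, ``the central $C_p$ acts through $\zeta^k$'' has no direct meaning for an object of $\fun(BH,\perf(KU))$: the center acts trivially on $\mathbb{CP}^{p-1}$, so the restriction of every object in sight to the central $BC_p$ is the \emph{trivial} local system of $KU$-modules; the character only becomes visible as multiplication by the unit $[\zeta]^k\in KU^0(BC_p)^{\times}$ on homotopy fixed points. Making this precise for the summands of $KU\otimes\mathbb{D}\mathbb{CP}^{p-1}_+$ is exactly the paper's tautological-bundle computation, and establishing the corresponding statement for $KU_{k\tau}$ is exactly \Cref{lem:twisted-action}(2); neither is supplied by your sketch. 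Second, even granting the dictionary on $\pi_0$, you must show that the idempotent splitting you constructed over $BH$ is pulled back from $\fun(BC_p^{\times 2},\perf(KU))$ and then produce actual equivalences of the descended summands with $KU_{k\tau}$, not merely an identification of isomorphism classes of their fixed points; the paper does this by comparing $\pi_0$ of the $C_p^b$-fixed points as modules with residual $C_p^a$-action and running the fixed point spectral sequence, observing there is no room for obstructions. Filling your gap honestly would require essentially this same analysis, so as written the proposal records the correct picture but defers the actual content of the proof to the step it declares immediate.
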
 
\begin{proof} 
Again, we label the first and second factors of $C_p^{\times 2}$ by $C_p^a, C_p^b$. 
We first calculate 
$KU_{C_p^{b}}^*( \mathbb{CP}^{p-1}) $, i.e., 
the $C_p^b$-equivariant $KU$-theory of $\mathbb{CP}^{p-1}_+$. 
Fix  a nontrivial character $\zeta$ of $C_p^{b}$. 
The underlying $C_p^b$-space of $\mathbb{CP}^{p-1}_+$ is the projectivization of the 
representation $1 \oplus \zeta \oplus \dots \oplus \zeta^{\otimes (p-1)}$ of
$C_p^{b}$.

By the projective bundle theorem, it follows that 
there is an isomorphism of $R(C_p^b)$-algebras,
\[ KU_{C_p^{b}}^*( \mathbb{CP}^{p-1}) \simeq 
R(C_p^b)[x]/ \prod_{i=0}^{p-1} ( x - [\zeta^i])
,\]
cf.~\cite[Prop.~3.9]{Segal68}.
Here $x$ is the class of 
the tautological line bundle on $\mathbb{CP}^{p-1}$, which is (canonically)
$C_p^b$-equivariant. 
We have a residual $C_p^{a}$-action on this $R(C_p^{b})$-algebra. 
Using the definition of $x$ as the class of a tautological bundle, one checks
that 
a generator of $C_p^a$ carries $x$ to $x [\zeta^i]$ for an appropriate $i \neq
0$.\footnote{Explicitly, 
we consider the $C_p^b$-equivariant line bundle on $\mathbb{CP}^{p-1}$ given by
the set of pairs $(x, v)$ for $x \in \mathbb{CP}^{p-1}$ and $v \in x$; the
$C_p^b$-equivariant structure is by action on the pair. 
The claim follows by noting that $C_p^a, C_p^b$ act on $\mathbb{C}^p$, but their
actions fail to commute by a $p$th root of unity.} 

From this, it follows that $(KU \otimes
\mathbb{D}\mathbb{CP}^{p-1}_+)|_{C_p^{b}}$ is a direct sum of $p$ copies of the unit in
$\fun(BC_p^b, \perf(KU))$. 
Therefore, we have 
$(KU \otimes \mathbb{D}\mathbb{CP}^{p-1}_+)^{hC_p^b} \simeq \bigoplus_{i=0}^{p-1}
C^*(BC_p^b,
KU)$. By the comparison between equivariant and Borel-equivariant $K$-theory,
and the above calculation, we see that the residual 
$C_p^a$ acts on the $i$th factor by multiplication by $[\zeta^i] \in R(C_p^b)
\to KU^0(BC^b_p)$ (up to renumbering factors). 

Now we prove the desired equivalence. 
It suffices to compare the $C_p^b$-homotopy fixed points of both sides of
\eqref{KUCPequiv}, $C_p^a$-equivariantly as free modules over the even,
torsion-free $\mathbb{E}_\infty$-ring spectrum $C^*(BC_p^b; KU)$. We will do this by a
homotopy fixed-point spectral sequence argument. 
On $\pi_0$, we have seen from the previous paragraph and 
\Cref{lem:twisted-action}
that 
$\pi_0 \left( (KU \otimes \mathbb{D}\mathbb{CP}^{p-1}_+)^{hC_p^b} \right)$
and $\pi_0 ( \bigoplus_{\tau} KU_{\tau}^{hC_p^b} )$ 
are isomorphic as 
$\pi_0( C^*(BC_p^b, KU))$-modules equipped with a 
$C_p^a$-action. 
Using the homotopy fixed point spectral sequence (and observing that there is no
room for obstructions\footnote{Note here that 
all the summands $\pi_0 ( KU_{\tau}^{hC_p^b})$ for $\tau \neq 0$ have trivial
higher $C_p^a$-cohomology.}), we can produce 
$C_p^a$-equivariant maps $KU_{\tau}^{hC_p^b} \to (KU \otimes
\mathbb{D}\mathbb{CP}^{p-1})^{hC_p^b}$ for each $\tau$, and the direct sum of these is an
equivalence. 
\end{proof}

\begin{proof}[Proof of \Cref{artinindKU}] 
It suffices to show that $KU$-based Swan induction holds for the
family of proper subgroups of $C_p^{\times 3}$. 
We first observe that $\sw(-, KU) \otimes \mathbb{Q}$ is a Green functor
and thus $\sw(G, KU) \otimes \mathbb{Q}$ receives a map from the rationalized Burnside ring $A(G) \otimes \mathbb{Q}$. 
For any finite group $G$, we have complementary idempotents $e_G, \widetilde{e}_G$ in
$ A(G) \otimes \mathbb{Q} $ (which is isomorphic to a product of copies of
$\mathbb{Q}$ over conjugacy classes of subgroups $H \subseteq G$) such that: 
\begin{enumerate}
\item  $e_G$ is a $\mathbb{Q}$-linear combination of the classes of the $G$-sets $G/H, H\subsetneq 
G$. 
\item  
For each $H \subsetneq G$, the 
restriction of $e_G$ to $A(H) \otimes \mathbb{Q}$ is equal to $1$. 
Equivalently, for each $H \subsetneq G$, the homomorphism $A(G) \otimes \mathbb{Q} \to
\mathbb{Q}$ which sends a $G$-set $T$ to $|T^H|$ carries $e_G$ to $1$. 
\item $e_G + \widetilde{e}_G = 1$.
\end{enumerate}

Let $M$ be a rational  Green functor for the group $G$, so that 
we have a ring map $A(G) \otimes \mathbb{Q} \to M(G)$. Then $M$ is induced from the
family of proper subgroups (equivalently, $1 \in M(G)$ is a sum of classes
induced from proper subgroups) if and only if this map carries $e_G$ to $1$ (or, equivalently, sends
$\widetilde{e}_G$ to zero); indeed, this follows because multiplication by $e_G$ acts as the
identity on classes induced from a proper subgroup. 

Our strategy of proof is to verify this identity in $\mathrm{Rep}(C_p^{\times 3}, KU)
\otimes \mathbb{Q}$ directly using \eqref{KUCPequiv}, using a relation (proved in
the next paragraph) between the idempotent for $C_p^{\times 2}$ and the class of
$\mathbb{CP}^{p-1}$.

Consider $\sw(C_p^{\times 2}, R)$ for any $\e{\infty}$-ring $R$. 
In this case, we have another expression for the image of $e_{C_p^{\times 2}}$
under $A(C_p^{\times 2})\otimes\mathbb{Q} \to \sw(C_p^{\times 2}, R)\otimes \mathbb{Q}$
(for which we will simply write $e_{C_p^{\times 2}}$). In fact, we claim that 
\begin{equation} [R \otimes \mathbb{CP}^{p-1}_+]/p = e_{C_p^{\times 2}} \in \rep( C_p^{\times 2}, R) \otimes \mathbb{Q}, \label{expreCp}\end{equation} 
for the $C_p^{\times 2}$-action on $\mathbb{CP}^{p-1}$ arising from the
$p$-dimensional projective representation as above. 
In other words, 
$pe_{C_p^{\times 2} }$ is the class of $R \otimes \mathbb{CP}^{p-1}_+ \in \fun(BC_p^{\times 2},
\perf(R))$ in rationalized $K_0$. To see this, we first observe that any finite $G$-CW complex has a well-defined Euler characteristic taking values in $A(G)$ which can be calculated by taking the Euler characteristic of the cellular chains. Now $\mathbb{CP}^{p-1}$ is a finite
fixed-point-free 
$C_p^{\times 2}$-complex such that the fixed points
under any proper subgroup have Euler characteristic $p$ (since these fixed
points will either be $p$ distinct points or $\mathbb{CP}^{p-1}$).
This implies the
associated class in $A(C_p^{\times 2})$ is $pe_{C_p^{\times 2}}$ as desired, by
the above characterization of the idempotent $e_G$, whence the claim.

We specialize now to the case where $R = KU$. 
Let $\tau$ be a generator of $H^3(C_p^{\times 2}; \mathbb{Z}) = \mathbb{F}_p$ and let $x =
[KU_{\tau}]$.
Then, combining \eqref{expreCp} and  the decomposition of \Cref{projdecomp},
we conclude 
$$\frac{1 + x + \dots + x^{p-1}}{p} =   e_{C_p^{\times 2}} \in \sw(C_p^{\times
2}, KU) \otimes \mathbb{Q}.$$ 
Note that $x^p = 1$, so the left hand side is clearly idempotent. This also
determines the complementary idempotent;  we therefore have
\begin{equation} \frac{1}{p}\prod_{j=1}^{p-1} ( 1 - x^j) =
\widetilde{e}_{C_p^{\times 2}},
\label{eGt} \end{equation}
since  
$\frac{1}{p}\prod_{j=1}^{p-1} ( 1- x^j)$ is the complementary idempotent to
$\frac{1 + x + \dots + x^{p-1}}{p}$ in the group ring $\mathbb{Z}[1/p, x]/(x^p =
1) = \mathbb{Z}[1/p, \zeta_p] \times \mathbb{Z}[1/p]$.

Our goal is to show that $\widetilde{e}_{C_p^{\times 3}}= 0$ in $\sw(C_p^{\times 3}, KU) \otimes
\mathbb{Q}$, which is equivalent to the Swan induction claim. 
Given an elementary abelian $p$-group $G$ of rank $\mathrm{rk}(G)\geq 2$, 
one has $\widetilde{e}_G = \prod_{\phi \colon G \twoheadrightarrow G'} \phi^*
e_{\widetilde{G}'}$, where the product ranges over all surjections $G
\twoheadrightarrow G'$ with $\mathrm{rk}(G') = \mathrm{rk}(G) -1$.
This follows 
since the given product over all $\phi$ is an idempotent in $A(G) \otimes \mathbb{Q}$
with trivial restriction to proper subgroups and with image under the $G$-fixed
point map $A(G) \to \mathbb{Q}$ equal to $1$. 
Therefore, we can express 
$ \widetilde{e}_{C_p^{\times 3}}$ as the product 
\begin{equation} \label{eCp3expr}
\prod_{\phi \colon  C_p^{\times 3} \twoheadrightarrow
C_p^{\times 2}} \phi^* \widetilde{e}_{C_p^{\times 2}}, \end{equation} 
using the pullback in
the representation ring. 
We will now analyze this using group rings.

For any finite group $G$, we have a 
natural map
\( H^3(BG; \mathbb{Z}) \to \pic( \fun(BG, \perf(KU))) , \)
which defines a map of commutative rings
\begin{equation} \label{mapgpringko} \varphi_G  \colon \mathbb{Q}[  H^3(BG; \mathbb{Z})  ] \to
\mathbb{Q} \otimes_{\mathbb{Z}} \rep(G, KU), \end{equation}
which is compatible with pullback in $G$. 
By \eqref{eGt}, there exists a class in 
the group ring 
$\mathbb{Q}[H^3( B C_p^{\times 2}; \mathbb{Z})]$
whose image under $\varphi_{C_p^2}$ is precisely the idempotent
$\widetilde{e}_{C_p^{\times 2}}$. 
Using the expression \eqref{eCp3expr}, 
we see that there is a class $u \in \mathbb{Q}[H^3( B C_p^{\times 3}; \mathbb{Z})]$
whose image under $\varphi_{C_{p^3}}$ is $\widetilde{e}_{C_{p}^{\times 3}}$. 
Moreover, $u$ restricts to zero 
in $\mathbb{Q}[H^3(BH; \mathbb{Z})]$
for all proper subgroups $H \subsetneq C_{p^3}$. 
By the next two lemmas,  this is enough to force $u = 0$, which proves the theorem. 
\end{proof}

\begin{lemma} 
Let $X \simeq C_p^{\times 3}$ be a rank $3$ elementary abelian $p$-group, so $H^3(X; \mathbb{Z})$ is
also a rank $3$ elementary abelian $p$-group. 
As $Z \subseteq X$ ranges over the rank $2$ subgroups of $X$, the maps 
$H^3(X; \mathbb{Z}) \to H^3(Z; \mathbb{Z}) \simeq \mathbb{F}_p$ range over the
nonzero maps $H^3(X; \mathbb{Z}) \to \mathbb{F}_p$, up to scalars. 
\end{lemma}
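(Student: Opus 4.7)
The plan is to identify $H^3(BY;\mathbb{Z})$ algebraically for every elementary abelian $p$-group $Y$, read off the restriction maps as duals of inclusions of exterior squares, and then invoke the classical fact that the Plücker embedding is an isomorphism in dimension three.

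First, since $H^3(BY;\mathbb{Q}) = 0$ for any finite abelian $p$-group $Y$, the Bockstein furnishes a natural isomorphism $H^3(BY;\mathbb{Z}) \cong H^2(BY;\mathbb{Q}/\mathbb{Z})$; combining with universal coefficients (using that $\mathbb{Q}/\mathbb{Z}$ is $\mathbb{Z}$-injective, so the $\mathrm{Ext}$-term vanishes) and the classical Hurewicz-type identification $H_2(BY;\mathbb{Z}) \cong \Lambda^2 Y$ (the Schur multiplier of an abelian group), one gets a natural isomorphism
$$H^3(BY;\mathbb{Z}) \cong \hom_{\mathbb{Z}}(\Lambda^2 Y, \mathbb{Q}/\mathbb{Z}).$$
For $Y = C_p^m$ elementary abelian, the right-hand side equals $\hom_{\mathbb{F}_p}(\Lambda^2 Y, \mathbb{F}_p) = (\Lambda^2 Y)^{\vee}$, and these identifications are natural in subgroup inclusions.

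Second, for a rank-$2$ subgroup $Z \subseteq X$ the restriction $H^3(X;\mathbb{Z}) \to H^3(Z;\mathbb{Z})$ becomes the $\mathbb{F}_p$-linear dual of the inclusion $\Lambda^2 Z \hookrightarrow \Lambda^2 X$. Since $\dim_{\mathbb{F}_p}(\Lambda^2 Z) = 1$, the resulting functional $(\Lambda^2 X)^\vee \to \mathbb{F}_p$, considered up to the $\mathbb{F}_p^\times$-ambiguity in the identification $H^3(Z;\mathbb{Z}) \cong \mathbb{F}_p$, records precisely the line $\Lambda^2 Z \subseteq \Lambda^2 X$.

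Therefore the claim reduces to showing that the assignment $Z \mapsto \Lambda^2 Z$ is a bijection between rank-$2$ subspaces of $X$ and lines in $\Lambda^2 X$. This is the statement that the Plücker embedding $\mathrm{Gr}(2,X) \hookrightarrow \mathbb{P}(\Lambda^2 X)$ is an isomorphism in our setting of $\dim X = 3$. Explicitly, given $0 \neq \omega \in \Lambda^2 X$, the linear functional $v \mapsto \omega \wedge v \in \Lambda^3 X \cong \mathbb{F}_p$ is nonzero, and a direct coordinate computation shows that its $2$-dimensional kernel $Z_\omega$ satisfies $\omega \in \Lambda^2 Z_\omega$; uniqueness of $Z_\omega$ is automatic. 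No substantial obstacle arises; the only care needed is in lining up the natural identifications and scalar ambiguities.
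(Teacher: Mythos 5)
Your proof is correct, but it takes a genuinely different route from the paper's. The paper argues combinatorially: it observes that the assignment $\Psi$ from rank-$2$ subgroups $Z\subseteq X$ to hyperplanes $\ker\bigl(H^3(X;\mathbb{Z})\to H^3(Z;\mathbb{Z})\bigr)$ is a map between finite sets of equal cardinality which is $\mathrm{Aut}(X)=GL_3(\mathbb{F}_p)$-equivariant, then uses a K\"unneth/universal-coefficient sequence for a splitting $X=V\oplus W$ to match up a distinguished $\mathrm{Aut}(V)$-orbit of subgroups with a distinguished orbit of hyperplanes, and concludes by transitivity that all fibers of $\Psi$ are singletons. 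You instead make the map completely explicit: the natural identification $H^3(BY;\mathbb{Z})\cong H^2(BY;\mathbb{Q}/\mathbb{Z})\cong \hom(H_2(BY;\mathbb{Z}),\mathbb{Q}/\mathbb{Z})\cong(\Lambda^2 Y)^\vee$ (Bockstein, injectivity of $\mathbb{Q}/\mathbb{Z}$, and the Schur-multiplier computation $H_2(BY;\mathbb{Z})\cong\Lambda^2 Y$) converts the statement into the bijectivity of the Pl\"ucker embedding $\mathrm{Gr}(2,X)\to\mathbb{P}(\Lambda^2 X)$ for $\dim X=3$, i.e.\ the fact that every nonzero element of $\Lambda^2$ of a $3$-dimensional space is decomposable, which you verify by the standard wedge-with-$\omega$ argument. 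Your approach buys an explicit description of the bijection (and would tell you exactly which hyperplanes occur for higher-rank $X$, namely the decomposable locus), at the cost of importing the naturality of $H_2(BA;\mathbb{Z})\cong\Lambda^2 A$; the paper's argument stays within K\"unneth and counting and never needs to name the bijection. One small point worth making explicit in your write-up: for $v\in Z_\omega$ one has $\omega\wedge v=0$ automatically once $\omega\in\Lambda^2 Z_\omega$ (as $\Lambda^3 Z_\omega=0$), and conversely the kernel of $v\mapsto\omega\wedge v$ is exactly $2$-dimensional, which is what recovers $Z$ from the line $\Lambda^2 Z$ and gives both injectivity and uniqueness in one stroke.
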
 
\begin{proof} 
The construction which sends $Z \subseteq X$ to the kernel of the surjection $H^3(X; \mathbb{Z})
\to H^3(Z; \mathbb{Z})$ establishes a map
\begin{equation} \label{combmap} \Psi \colon \left\{\text{$2$-dimensional subspaces } Z \subseteq X\right\}  \to 
\left\{\text{hyperplanes in } H^3(X; \mathbb{Z})\right\}.
\end{equation}
We need to show that 
\eqref{combmap} is a bijection. Note that both sides are finite sets of the
same cardinality, and that the map is $\mathrm{Aut}(X) =
GL_3(\mathbb{F}_p)$-equivariant (using the induced action on $H^3(X;
\mathbb{Z})$). 

Choose a decomposition $X = V \oplus W$ where $V$ has rank $2$ and $W$ has rank
$1$. 
By the universal coefficient theorem, we have a natural short exact sequence
\begin{equation}  0 \to H^3(V; \mathbb{Z}) \to H^3(X; \mathbb{Z}) \to \mathrm{Tor}_1( H^2(V;
\mathbb{Z}), H^2(W; \mathbb{Z})) \to 0,   \label{univcoeffseq} \end{equation}
where 
$\mathrm{Tor}_1( H^2(V;
\mathbb{Z}), H^2(W; \mathbb{Z}))$ has rank two. 
On the left-hand-side of \eqref{combmap}, we consider the collection
$C$ of
subspaces
$Z \subseteq X$ such that the composite $Z \subseteq X \twoheadrightarrow V$ is 
not surjective; equivalently, $Z = L \oplus W$ for some $L \subseteq V$ a
1-dimensional subspace. 
Note that $|C| = p + 1$. 
On the right-hand-side, consider the collection $D$ of hyperplanes in $H^3(X;
\mathbb{Z})$ which contain $H^3(V; \mathbb{Z})$; the exact sequence
\eqref{univcoeffseq} also easily shows $|D| = p+1$. 

We claim that $\Psi^{-1}(D) = C$. 
In fact, given a two-dimensional subspace $Z \subseteq X$ such that $H^3(V;
\mathbb{Z}) \to H^3(X; \mathbb{Z}) \to H^3(Z; \mathbb{Z})$ is zero, it follows
easily that the map $Z \to X \twoheadrightarrow V$ fails to be surjective, and
conversely. 
The group $\mathrm{Aut}(V) \subseteq \mathrm{Aut}(X)$ (via the diagonal
embedding, fixing $W$) preserves and acts transitively 
on $C$.  Moreover, $\mathrm{Aut}(V) \subseteq \mathrm{Aut}(X)$ preserves and acts transitively
on $D$, because we have an $\mathrm{Aut}(V)$-equivariant identification
$H^2(V; \mathbb{Z}) \simeq H^1(V; \mathbb{Q}/\mathbb{Z}) = \mathrm{Hom}(V,
\mathbb{F}_p)$, and $D$ is identified with the set of lines in $H^2(V;
\mathbb{Z})$. 
Therefore, for $c \in C$, we necessarily have that $\Psi^{-1}(\Psi(c))$ consists of a
single point since the fibers of $\Psi$ at points of $D$ must all have the same
cardinality. 
Since $\mathrm{Aut}(X)$ acts transitively on the set of 2-dimensional subspaces
of $X$, it follows easily that \eqref{combmap} is an isomorphism as desired. 
\end{proof} 

\begin{lemma} 
Let $A$ be a finite abelian group. Let $x \in \mathbb{Q}[A]$ be an element
such that for every map $A \to C$, for $C$ a cyclic group, the image of $x$
under $\mathbb{Q}[A] \to \mathbb{Q}[C]$ is zero. Then $x = 0$. 
\end{lemma}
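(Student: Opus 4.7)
The plan is to reduce the statement to character theory over $\overline{\mathbb{Q}}$. Since $A$ is a finite abelian group, Maschke's theorem together with the splitting of $\overline{\mathbb{Q}}[A]$ yields a decomposition
\[
\overline{\mathbb{Q}}[A] \;\simeq\; \prod_{\chi \in \widehat{A}} \overline{\mathbb{Q}},
\]
indexed by characters $\chi \colon A \to \overline{\mathbb{Q}}^{\times}$. Hence to prove $x = 0$ in $\mathbb{Q}[A]$, it suffices to prove $\chi(x) = 0$ in $\overline{\mathbb{Q}}$ for each such character.

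First I would make the trivial reduction that ``for every map $A \to C$ with $C$ cyclic'' in the hypothesis may be replaced by ``for every surjection $A \twoheadrightarrow C$ with $C$ cyclic'': indeed, any map $f \colon A \to C$ factors as $A \twoheadrightarrow f(A) \hookrightarrow C$, and the induced map of rational group algebras $\mathbb{Q}[f(A)] \hookrightarrow \mathbb{Q}[C]$ is injective (it carries a basis to a subset of a basis), so vanishing in $\mathbb{Q}[C]$ is equivalent to vanishing in $\mathbb{Q}[f(A)]$. The key step is then the observation that every character $\chi$ factors through its image: since $\chi(A)$ is a finite subgroup of $\overline{\mathbb{Q}}^{\times}$, it is cyclic, and we obtain a factorization $A \twoheadrightarrow \chi(A) \hookrightarrow \overline{\mathbb{Q}}^{\times}$. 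Applying the hypothesis to the surjection $A \twoheadrightarrow \chi(A)$ shows that $x$ dies in $\mathbb{Q}[\chi(A)]$, and then postcomposing with the inclusion $\mathbb{Q}[\chi(A)] \to \overline{\mathbb{Q}}$ gives $\chi(x) = 0$, as required.

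There is no real obstacle here; the argument is entirely formal once one invokes the character decomposition and the fact that finite subgroups of a field are cyclic. An alternative, more elementary route would be to induct on $|A|$, reducing to the case of non-cyclic $A$ and using the product of the quotient maps $A \to A/H$ as $H$ ranges over the minimal nontrivial subgroups; but the character-theoretic argument above is more direct.
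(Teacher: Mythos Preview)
Your proof is correct and takes essentially the same approach as the paper: both extend scalars to an algebraically closed field, use the character decomposition of the group algebra, and observe that every character factors through a cyclic quotient of $A$. The only cosmetic difference is that the paper phrases this via Pontryagin duality (maps $C' \to A^{\vee}$ with $C'$ cyclic) rather than directly factoring each character through its cyclic image, but the content is identical.
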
 
\begin{proof} 
We can extend scalars to $\mathbb{C}$. 
Then we have a natural isomorphism 
$\mathbb{C}[A] \simeq \prod_{A^{\vee}} \mathbb{C}$, for $A^{\vee}$ the group of
characters of $A$. 
Our assumption is that for any map $C' \to A^{\vee}$ with $C'$ cyclic, the
restriction 
$\prod_{A^{\vee}} \mathbb{C} \to \prod_{C'} \mathbb{C}$ annihilates $x$; this
clearly forces $x =0$. 
\end{proof} 

\subsection{Applications to chromatic complexity}
In this subsection, we record the 
applications of the above Swan induction theorems to chromatic bounds for the
$K$-theory of certain ring spectra. 
We recover another new proof of Mitchell's theorem and 
are able to treat some special cases of \Cref{TateMitchell}.

\begin{corollary}[Mitchell \cite{Mitchell90}] \label{mitchellthm}
For $i \geq 2$, we have $L_{T(i)}K(\mathbb{Z}) \simeq 0$ (for any prime $p$).
\end{corollary}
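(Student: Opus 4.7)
The plan is to apply the Swan-induction vanishing criterion \Cref{thm:swan-vanish} with $R = H\mathbb{Z}$ and $n = 2$. For this it suffices to verify that $H\mathbb{Z}$-based Swan induction holds for the family of proper subgroups of $C_p^{\times 2}$, for each prime $p$.

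Since $C_p^{\times 2}$ is not cyclic, every proper subgroup of $C_p^{\times 2}$ is cyclic (it has order $1$ or $p$). Hence the family of proper subgroups of $C_p^{\times 2}$ coincides with the family of cyclic subgroups of $C_p^{\times 2}$. By Swan's theorem, \Cref{swanthm}, $H\mathbb{Z}$-based Swan induction holds for the family of cyclic subgroups of any finite group, so in particular for the family of proper subgroups of $C_p^{\times 2}$.

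Applying \Cref{thm:swan-vanish} with the $\mathbb{E}_\infty$-ring $R = H\mathbb{Z}$ and $n = 2$ at the implicit prime $p$ now yields $L_{T(i)}K(\mathbb{Z}) \simeq L_{T(i)} K(H\mathbb{Z}) = 0$ for all $i \geq 2$, as desired. Since the prime $p$ was arbitrary, this establishes the result. There is no real obstacle here — the entire content has been pushed into the two inputs (Swan's induction theorem and the Swan-induction vanishing theorem), and the corollary is simply the observation that their hypotheses match when $G = C_p^{\times 2}$.
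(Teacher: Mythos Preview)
Your proof is correct and follows essentially the same approach as the paper, which simply says to combine \Cref{thm:swan-vanish} with Swan induction for $H\mathbb{Z}$ (\Cref{swanthm}). You have just spelled out the matching of hypotheses (that proper subgroups of $C_p^{\times 2}$ are cyclic) in slightly more detail.
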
 
\begin{proof} 
Combine \Cref{thm:swan-vanish} and 
\Cref{swanthm} 
(Swan induction for $H\mathbb{Z}$).  
\end{proof}

\begin{cor}
	Let $E_n$ be a height $n\ge 1$ Lubin-Tate theory at the prime $2$ and $G
	\subseteq \mathbb{G}_n$ a finite subgroup of the extended Morava stabilizer group. Then 
	$L_{T(n+m)}K(E_n^{hG})=0$ for all $m\geq 2$.
\end{cor}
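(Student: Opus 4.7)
The plan is to deduce the corollary as an immediate consequence of \Cref{combinationAB}(2) (i.e., \Cref{TateMitchell}) applied to the $\mathbb{E}_\infty$-ring $R = E_n^{hG}$. By that theorem, it suffices to verify that $L_{T(n+1)}E_n^{hG} = 0$; the conclusion $L_{T(j)}K(E_n^{hG}) = 0$ for $j \geq n+2$ is then automatic, which is precisely $L_{T(n+m)}K(E_n^{hG}) = 0$ for all $m \geq 2$.

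To establish this $T(n+1)$-acyclicity, I would first observe that $E_n^{hG}$ is $K(n)$-local. Indeed, it is computed as the limit in spectra of the constant $BG$-diagram on the $K(n)$-local spectrum $E_n$, and the inclusion of $K(n)$-local spectra into all spectra, being right adjoint to $L_{K(n)}$, preserves limits. Next, by the standard orthogonality of the Morava $K$-theory Bousfield localizations, any $K(n)$-local spectrum has vanishing $K(j)$-localization for $j \neq n$; in particular $L_{K(n+1)}(E_n^{hG}) = 0$. Since $E_n^{hG}$ is an $\mathbb{E}_\infty$-ring (with $\mathbb{E}_\infty$-structure induced from that of $E_n$ via Goerss--Hopkins--Miller), the vanishing of $L_{K(n+1)}E_n^{hG}$ is equivalent to that of $L_{T(n+1)}E_n^{hG}$ by \cite[Lem.~2.3]{LMMT20}.

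I do not anticipate any serious obstacle: the argument is a direct formal application of Theorem~A once the $T(n+1)$-acyclicity of $E_n^{hG}$ has been noted. It is worth remarking that the hypothesis $p=2$ in the statement does not actually enter this argument, so the same proof should yield the corollary for Lubin--Tate theories at any prime.
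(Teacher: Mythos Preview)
Your proof is correct, but it takes a genuinely different route from the paper. The paper deduces the corollary from the Swan induction machinery of Section~7 rather than from \Cref{TateMitchell}: for the trivial subgroup it combines \Cref{thm:2-primary} (which establishes $E_n$-based Swan induction at $p=2$ for the family of abelian subgroups of rank $\leq n+1$, hence in particular for the proper subgroups of $C_2^{\times(n+2)}$) with \Cref{thm:swan-vanish}, giving $L_{T(i)}K(E_n)=0$ for $i\geq n+2$; it then passes to general finite $G\subseteq\mathbb{G}_n$ via the Galois descent theorem of \cite{CMNN}, which identifies $L_{T(n+m)}K(E_n^{hG})\simeq (L_{T(n+m)}K(E_n))^{hG}$.

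Your approach is more direct and, as you observe, does not use the hypothesis $p=2$ at all. The paper is aware of this: the introduction explicitly notes that these chromatic bounds are special cases of \Cref{TateMitchell}, but that the Swan induction method ``is different and could be useful in other settings as well.'' The placement of this corollary in Section~7.3 is precisely to illustrate that the Swan induction theorems of Section~7 furnish an independent route to such vanishing results, one that bypasses the inductive machinery of Section~4 and instead rests on the representation-theoretic input of \Cref{thm:2-primary}. So your argument is preferable as a proof of the bare statement, while the paper's argument serves the expository purpose of showing what the Swan induction results buy.
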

\begin{proof}
	When $G$ is the trivial subgroup, this follows directly from
	\Cref{thm:2-primary} and \Cref{thm:swan-vanish}. The general case then follows
	from the Galois descent theorem \cite[Thm. 1.10]{CMNN}, which gives 
	\(
	  L_{T(n+m)} K(E_n^{hG})\simeq (L_{T(n+m)} K(E_n))^{hG} \simeq 0.
	\)
\end{proof}

We next recover the following result. 
At $p \geq 5$, the result is a consequence of the calculations of
Ausoni--Rognes \cite{AR02} and Ausoni \cite{Ausoni10}, which determine the mod $(p, v_1)$ homotopy groups of
$K(l)$ (resp.~$K(ku)$) at such primes (and in particular yield the stronger 
Lichtenbaum--Quillen style claim that $K(ku)/(p, v_1)$ agrees with its
$T(2)$-localization in high degrees). 
For all primes, this result has been recently proved by Angelini-Knoll--Salch
\cite{AKS20}  and Hahn--Raksit--Wilson \cite{HRW22}. 
The result is also a special case of \Cref{TateMitchell}. 

\begin{corollary} 
For $i \geq 3$, we have $L_{T(i)} K(KU) = L_{T(i)} K(KO) \simeq 0$ (at any prime
$p$).
\end{corollary}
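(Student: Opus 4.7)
The strategy is to deduce both vanishings from the Swan-induction-based vanishing theorem \Cref{swanvanishintro} applied at height $n=3$.

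For $KU$, this is almost immediate: by \Cref{artinindKU}, $KU$-based Swan induction holds for the family of abelian subgroups of rank at most $2$. Specializing to $G = C_p^{\times 3}$ at an arbitrary prime $p$, this family coincides precisely with the family of proper subgroups of $G$. Hence \Cref{swanvanishintro} with $n = 3$ yields $L_{T(i)} K(KU) = 0$ for all $i \geq 3$ at every prime $p$.

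For $KO$, we instead pass through the $C_2$-Galois extension $KO \to KU$ of $\mathbb{E}_\infty$-rings in the sense of \cite{Rognes08} and deduce the result from the $KU$ case just established. We split into two cases by the implicit prime. At the prime $p = 2$, the Galois group $C_2$ is a $p$-group, so the $T(i)$-local Galois descent result \Cref{ARpgroup} of this paper applies and gives, for each $i \geq 3$,
\[ L_{T(i)} K(KO) \xrightarrow{\ \sim\ } \bigl( L_{T(i)} K(KU) \bigr)^{hC_2} = 0, \]
where the vanishing of the target follows from the $KU$ statement just proved. At odd primes $p$, the Galois group $C_2$ has order $2$ which is a unit $p$-locally. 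The composite of the extension of scalars $K(KO) \to K(KU)$ with the restriction of scalars (transfer) $K(KU) \to K(KO)$ is multiplication by $|C_2| = 2$, hence a $p$-local equivalence; consequently $K(KO)_{(p)}$ is a retract of $K(KU)_{(p)}$, and the vanishing $L_{T(i)} K(KU) = 0$ for $i \geq 3$ transfers to $KO$.

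The main obstacle in the plan is handling $KO$ at odd primes $p$, where neither the $p$-group Galois descent of \Cref{ARpgroup} nor the $MSpin$-orientable branch of \Cref{thm:2-primary} directly applies, and where moreover $KO$ is not genuinely complex orientable (so \Cref{cxorientswan} cannot be invoked directly, and in any case would only give induction from the family of all abelian subgroups of $C_p^{\times 3}$, which is not a proper family). The resolution uses the essentially classical observation that Galois descent for $K$-theory is automatic after inverting the order of the Galois group, combined with the Swan-induction-based vanishing already obtained for $KU$.
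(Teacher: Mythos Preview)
Your argument for $KU$ is correct and coincides with the paper's: apply \Cref{artinindKU} to see that $KU$-based Swan induction holds for the proper subgroups of $C_p^{\times 3}$, then invoke \Cref{swanvanishintro} (equivalently \Cref{thm:swan-vanish}).

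For $KO$, however, your citation of \Cref{ARpgroup} at $p=2$ does not work as written. That corollary concerns a $T(n)$-local $G$-Galois extension $A\to B$ and yields descent only at height $n+1$. The rings $KO$ and $KU$ are not $T(n)$-local for any $n\geq 1$ in the relevant sense (and applying it to their $T(n)$-localizations for $n\geq 2$ gives the trivial statement $0\simeq 0$), so \Cref{ARpgroup} gives no information about $L_{T(i)}K(KO)$ for $i\geq 3$. What you need instead is the corollary immediately preceding \Cref{ARpgroup} (for global faithful Galois extensions with $p$-group Galois group under the hypothesis $L_{T(n)}(A^{tC_p})=0$), or equivalently \Cref{combinationAB}(1). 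To apply it you must verify $L_{T(i-1)}(KO^{tC_2})=0$ for each $i\geq 3$; this holds because $KO$ is $L_1^{p,f}$-local and Kuhn's blueshift makes $KO^{tC_2}$ $L_0^{p,f}$-local. With that fix, your $p=2$ argument goes through, and your odd-prime transfer argument is correct (the composite $K(KO)\to K(KU)\to K(KO)$ is multiplication by $[KU]=2\in K_0(KO)$ via the Wood cofiber sequence).

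The paper avoids this case split entirely by citing the Galois descent theorem of \cite{CMNN}, which applies to the global Galois extension $KO\to KU$ uniformly at all primes and all heights, reducing the $KO$ statement to the $KU$ statement in one line. Your route works after the correction, but is more circuitous.
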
 
\begin{proof} 
By Galois descent \cite{CMNN}, it suffices to handle the case of $KU$. In this
case, \Cref{thm:swan-vanish} together with \Cref{artinindKU} imply the result.  \end{proof}

Motivated by the above results, we conjecture the following Swan induction
result for $E_n$; we have proved it at $p = 2$ in 
\Cref{thm:2-primary}, or for $n = 1$ as a consequence of  \Cref{artinindKU}. 

\begin{conj}\label{conj:Morava_Swan}
Let $p$ be a prime, $n\ge 1$, $E_n$ a Lubin-Tate theory of height $n$
at the prime $p$ and $G$  finite group. Then $E_n$-based Swan induction 
holds for the family of those abelian subgroups of $G$ for which:
	\begin{enumerate}
		\item The prime-to-$p$ part is cyclic.  
		\item The $p$-part has rank $\leq n+1$.
	\end{enumerate}
\end{conj}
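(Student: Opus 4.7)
The plan is to extend the argument for \Cref{artinindKU} to higher chromatic heights. By parts (2)--(3) of \Cref{general:swaninduction}, it suffices to establish $E_n$-based Swan induction for $G = C_p^{\times(n+2)}$ with respect to the family of proper subgroups; the allowance for a cyclic prime-to-$p$ factor can be handled separately by writing $G \simeq G_{(p)} \times G_{(p')}$, using \Cref{p-1local} on $G_{(p')}$, and combining multiplicatively via the external product on Green functors.

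The central geometric input will be a height-$n$ analog of the Heisenberg picture of \Cref{projdecomp}. I would form the $(n+1)$-fold external tensor product of the $p$-dimensional Heisenberg projective representation of $C_p^{\times 2}$, yielding a $p^{n+1}$-dimensional projective representation of $C_p^{\times 2(n+1)}$ on $\mathbb{C}^{p^{n+1}}$. Since $n+2 \leq 2(n+1)$, one can embed $G = C_p^{\times(n+2)} \hookrightarrow C_p^{\times 2(n+1)}$ so that the restricted action on $\mathbb{CP}^{p^{n+1}-1}$ remains fixed-point-free. The aim is then to prove a $G$-equivariant decomposition
\begin{equation*}
 E_n \otimes \mathbb{D}\mathbb{CP}^{p^{n+1}-1}_+ \;\simeq\; \bigoplus_{\tau} (E_n)_\tau \quad \text{in } \fun(BG,\perf(E_n)),
\end{equation*}
where the $p^{n+1}$ summands are parameterized by twists arising from a natural cohomological source --- expected to be a finite torsion subgroup of $H^{n+2}(BG;\mathbb{Z}_p)$ --- mapping into $\pic(\fun(BG,\perf(E_n)))$ through higher orientations for $E_n$. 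Granting such a decomposition, the Euler characteristic argument of \Cref{artinindKU} generalizes to identify $[E_n \otimes \mathbb{D}\mathbb{CP}^{p^{n+1}-1}_+] \in \rep(G, E_n) \otimes \mathbb{Q}$ with $p^{n+1} e_G$, and the twist decomposition then produces a group-ring element in $\mathbb{Q}[H^{n+2}(BG;\mathbb{Z}_p)]$ whose image under the analog of the twist map $\varphi_G$ equals $\widetilde{e}_G$. A combinatorial lemma paralleling the bijection between maximal proper subgroups of $G$ and hyperplanes in the twist group (via K\"unneth), together with a Fourier argument on rational group rings of finite abelian groups as in the proof of \Cref{artinindKU}, would then force $\widetilde{e}_G = 0$.

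The principal obstacle lies in constructing the decomposition and identifying the twists with concrete cohomology classes. At $n=1$, the projective bundle formula for $KU$ together with the explicit Heisenberg calculation of \Cref{lem:twisted-action} pinned down the summands as $H^3(BG;\mathbb{Z})$-twists. At higher heights, while the K\"unneth theorem for $E_n^*(\mathbb{CP}^\infty)$ still produces a decomposition into invertible pieces, identifying them as twists from a manageable cohomology group of $BG$ demands new input, plausibly from higher-chromatic Picard-group computations or from a Hopkins--Kuhn--Ravenel-style character theory for the Green functor $\rep(-,E_n) \otimes \mathbb{Q}$ in which the defect base becomes visible. A purely character-theoretic attack, expressing $\widetilde{e}_G$ directly via generalized characters on tuples of commuting $p$-power elements as in \cite{HKR00}, might alternatively bypass the geometric step, but would require extending the HKR framework from $E_n^0(BG)$ to the full Swan $K$-theory $\rep(G, E_n)$ --- itself a nontrivial undertaking.
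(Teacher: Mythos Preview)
The statement you are attempting to prove is a \emph{conjecture} in the paper, not a theorem; the paper does not prove it, and explicitly states just before \Cref{conj:Morava_Swan} that only the cases $p=2$ (via \Cref{thm:2-primary}, using real representation spheres) and $n=1$ (via \Cref{artinindKU}, using twisted $K$-theory) are established. So there is no proof in the paper to compare your proposal against.

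Your proposal is a reasonable sketch of how one might try to extend the $n=1$ argument, and you are candid about the obstacles. But the central gap is genuine and is precisely why the statement remains open: there is no known map $K(\mathbb{Z}_p, n+2) \to BGL_1(E_n)$ (or anything like it) producing twists of $E_n$ from ordinary cohomology classes in $H^{n+2}(BG;\mathbb{Z}_p)$. The map $K(\mathbb{Z},3) \to BGL_1(KU)$ that drives \Cref{artinindKU} is special to $K$-theory; for height $n>1$ the space $BGL_1(E_n)$ has a much more complicated Postnikov decomposition, and no analogous source of twists is available. Without this, the proposed decomposition of $E_n \otimes \mathbb{D}\mathbb{CP}^{p^{n+1}-1}_+$ into invertible summands indexed by a tractable finite group cannot even be formulated, and the subsequent group-ring argument has nothing to act on. Your alternative suggestion of extending HKR character theory to $\rep(G,E_n)$ is likewise a substantial open problem rather than a step one can carry out. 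In short, what you have written is not a proof but a research program, and the paper's authors evidently share your view that this is the natural direction---hence the conjecture.
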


\begin{remark}[A purely algebraic question] 
Finally, \Cref{thm:swan-vanish} can be used to prove that $L_{K(1)}
K(\mathbb{F}_p) = 0$, which is a consequence of the stronger result 
$K(\mathbb{F}_p; \mathbb{Z}_p) = H\mathbb{Z}_p$ proved by Quillen; indeed, one
sees that $H\mathbb{F}_p$-based Swan induction holds for the trivial family in
$C_p$ using the filtration of the regular representation $\mathbb{F}_p[C_p]$ by
trivial representations. 
One also knows that $L_{K(1)}K(\mathbb{Z}/p^n) =0$ for any $n \geq 1$,
cf.~\cite{LMMT20, BCM20, MTR20}
 for three proofs. Can this result also be proved using 
\Cref{thm:swan-vanish}, i.e., does $H\mathbb{Z}/p^n$-based Swan induction hold
for the trivial family in $C_p$? 

\end{remark} 

\appendix

\newcommand{\psg}{\mathcal{P}_{\Sigma}}

\newcommand{\CMon}{\mathrm{CMon}}
\newcommand{\OG}{\mathcal{O}(G)}
\newcommand{\OGP}{\mathcal{O}(G)_{\mathcal P}}
\newcommand{\Fin}{\mathrm{Fin}}
\newcommand{\Span}{\mathrm{Span}}
\newcommand{\Sp}{\mathrm{Sp}}
\newcommand{\Mack}{\mathrm{Mack}}
\newcommand{\op}{\mathrm{op}}
\newcommand{\colim}{\varinjlim}
\newcommand{\id}{\mathrm{id}}
\newcommand{\map}{\mathrm{Hom}}
\newcommand{\fin}{\mathrm{Fin}}
\newcommand{\Psigma}{\mathcal{P}_\Sigma}
\newcommand{\M}{\mathcal{M}}
\newcommand{\C}{\mathcal{C}}
\newcommand{\Pcal}{\mathcal{P}}
\newcommand{\Scal}{\mathcal{S}}

\section{Mackey functors and orthogonal $G$-spectra}

This appendix provides a fairly self-contained proof of the fact that, for a finite group $G$,
the symmetric monoidal $\infty$-categories afforded by orthogonal $G$-spectra and by spectral Mackey
functors are equivalent.
This result is due originally to Guillou and May \cite{GM11} (ignoring the monoidal structure),
and was revisited by Barwick and Barwick-Glasman-Shah \cite{Bar17, BGS20} in the context of more general 
parametrized homotopy theory, see specifically \cite[Thm.~A.4]{Na16}. Compared to their work, our approached is streamlined by ignoring all
models (as used by \cite{GM11}), and by not addressing any universal properties of Mackey functors
(as in \cite{Bar17, BGS20}). 

The motivation for giving our proof of their result is the immediate need of the present paper:
We use categorical methods to construct Mackey functors, and then apply descent results proven for the homotopy theory of orthogonal $G$-spectra to them.
Our work also yields a new proof of the equivariant Barratt-Priddy-Quillen theorem (which however uses the non-equivariant one).\\

Throughout, let $G$ denote a finite group. We refer the reader to
\cite[Sec.~5]{MNN17} for a quick account of the symmetric monoidal $\infty$-category $\Sp_G$ extracted from the model category of orthogonal $G$-spectra.
We denote by $\OG$ the orbit category of $G$, by $\Scal$ the $\infty$-category
of anima, by $\Scal_G:=\Fun(\OG^{\op},\Scal)$ the presentable, cartesian closed $\infty$-category of
$G$-anima (see \cite[Lem.~2.1]{BH17}), by $\Scal_{G,\bullet}\simeq \Scal_{G,*/}$
the presentable, closed symmetric monoidal $\infty$-category of based $G$-anima, and by $\Sigma_G^\infty\colon\Scal_{G,\bullet}\longrightarrow\Sp_G$ the suspension spectrum functor. We consider $\Scal_G$ with its cartesian monoidal structure.
In the appendix, we will write the units of $G$-spectra and spectral Mackey
functors by $1$ rather than $\mathbb{S}$. 

To set the notation for Mackey functors, we denote by $\Fin_G$ the category of finite $G$-sets, by
$\Span(\Fin_G)$ the $(2,1)$-category of spans on $\Fin_G$ (cf. \cite[App. C]{BH17}) and set
\[ \Mack_G:=\Mack_G(\Sp)=\Fun^\times(\Span(\Fin_G)^{\op},\Sp), \]
the category of finite product-preserving presheaves on $\Span(\fin_G)$ with values in the $\infty$-category $\Sp$ of spectra.
Note that $\Span(\Fin_G)=\mathrm{Burn}^{\mathrm{eff}}_G$ as recalled in \Cref{def:burnside},
but we stick to the former notation in the Appendix, in order to be compatible with our main references.
We recall that $\Mack_G\simeq\Psigma(\Span(\Fin_G))\otimes\Sp$, cf. \Cref{rem:p_sigma}. 
Below, we will recall the suspension functor in the Mackey context, to be denoted
\[ \Sigma_\M^\infty\colon  \Scal_{G,\bullet}\longrightarrow\Mack_G.\]
The cartesian product on $\Fin_G$ induces a symmetric monoidal structure
on $\Span(\Fin_G)$, we endow $\Mack_G$ with the symmetric monoidal structure
given by Day-convolution and denote it by $\otimes$. This is the unique
symmetric monoidal structure which is bicocontinuous and is such that $\Sigma^\infty_\M$ is symmetric monoidal. 

Our main result is the following.

\begin{theorem}\label{thm:orth_mackey}
There is a unique symmetric monoidal left-adjoint $L\colon \Sp_G\to\Mack_G$ such
that $L\circ\Sigma_G^\infty\simeq\Sigma_\M^\infty$, and $L$ is an equivalence.
\end{theorem}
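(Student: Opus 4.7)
The plan is to construct $L$ via the universal property of $\Sp_G$ as the stabilization of $\Scal_{G,\bullet}$ obtained by inverting all representation spheres, and then to show $L$ is an equivalence by comparing mapping spectra between compact generators on both sides. The two essential inputs are (a) invertibility of the representation spheres $\Sigma_\M^\infty(S^V)$ in $\Mack_G$, which allows the extension of $\Sigma_\M^\infty$ to $\Sp_G$, and (b) the equivariant Barratt--Priddy--Quillen theorem of Segal--tom Dieck, which identifies $\map_{\Sp_G}$ between orbits with the Burnside-span $E_\infty$-space that also computes $\map_{\Mack_G}$.

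For (a), by the universal property (cf.~\cite[Sec.~4.8]{Lur17} and Robalo's thesis), $\Sp_G$ is initial among presentably symmetric monoidal stable $\infty$-categories receiving a symmetric monoidal left adjoint from $\Scal_{G,\bullet}$ and inverting every representation sphere $S^V$. Hence the symmetric monoidal left adjoint $\Sigma_\M^\infty\colon\Scal_{G,\bullet}\to\Mack_G$ extends uniquely to $L\colon\Sp_G\to\Mack_G$ with $L\circ\Sigma_G^\infty\simeq\Sigma_\M^\infty$, provided we verify $\Sigma_\M^\infty(S^V)$ is invertible in $\Mack_G$. This invertibility is established by a $G$-CW inductive argument: the cells in $S^V$ involve $G/H_+$ for subgroups $H\leq G$, which map to self-dual representables in $\Mack_G$ (via the symmetry of spans in $\Span(\Fin_G)$), and the resulting Atiyah-style duality pairing can be lifted to $\Mack_G$ by a 2-out-of-3 argument along a cellular filtration.

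For the equivalence, both $\Sp_G$ and $\Mack_G$ are compactly generated, respectively by the orbits $\{\Sigma_G^\infty G/H_+\}_{H\leq G}$ and the representables $\{\Sigma_\M^\infty G/H_+\}_{H\leq G}$, and since $L$ is colimit-preserving and carries the first set to the second, it suffices to show full faithfulness on generators. The Yoneda lemma identifies
\[ \map_{\Mack_G}(\Sigma_\M^\infty G/H_+,\Sigma_\M^\infty G/K_+) \simeq (\Sigma_\M^\infty G/K_+)(G/H), \]
which by construction of $\Sigma_\M^\infty$ is the spectrum associated to the $E_\infty$-groupoid $\Span(\Fin_G)(G/H,G/K)$, equivalently (by rearranging a span along its leg into $G/H$) the group completion of the $E_\infty$-space of finite $H$-sets over $G/K|_H$. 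On the genuine side, the orbit/fixed-points adjunction together with commutation of fixed points with restriction yields
\[ \map_{\Sp_G}(\Sigma_G^\infty G/H_+,\Sigma_G^\infty G/K_+) \simeq (\Sigma_G^\infty G/K_+)^H \simeq (\Sigma_H^\infty(G/K|_H)_+)^H, \]
and by (b) the underlying infinite loop space is precisely the same group completion. Naturality forces $L$ to induce the canonical comparison between the two, so $L$ is fully faithful on generators and hence an equivalence.

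The hardest step is (b), the equivariant Barratt--Priddy--Quillen identification. For a self-contained argument one proceeds by induction on $|G|$ via isotropy separation: geometric fixed points at each subgroup $H$ reduce the comparison to non-equivariant Barratt--Priddy--Quillen for the Weyl group $W_G(H)$, while the Borel/free piece is handled by a direct transfer computation using the ordinary case. Step (a) is less subtle but also nontrivial; symmetric monoidality, naturality, and uniqueness of $L$ are all formal consequences of the universal property of $\Sp_G$ once (a) is in place.
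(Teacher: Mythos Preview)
Your step (a) has a genuine gap. A cellular ``2-out-of-3'' argument along a filtration of $S^V$ by orbit cells yields \emph{dualizability} of $\Sigma_\M^\infty(S^V)$ in $\Mack_G$ (dualizable objects in a stable symmetric monoidal $\infty$-category are closed under finite colimits), but invertibility is not preserved by cofiber sequences, and there is no object $S^{-V}$ available in $\Mack_G$ against which to run an Atiyah-duality construction before you know spheres are invertible. The paper supplies the missing idea: it builds, directly from the fixed-point functor $\Fin_G\to\Fin$ on span categories, symmetric monoidal left adjoints $\Phi^{G,H}_\M\colon\Mack_G\to\Sp$ for each $H\leq G$, proves they are jointly conservative (via an inductive isotropy-separation argument together with the key computation $(\Sigma_\M^\infty\widetilde{E\Pcal})(G/G)\simeq\mathbb{S}$), and then checks invertibility of $\Sigma_\M^\infty(S^V)$ by noting that each $\Phi^{G,H}_\M$ sends it to an ordinary sphere $S^{\dim V^H}$. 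This is exactly the kind of detection-by-geometric-fixed-points step your sketch of (b) invokes on the $\Sp_G$ side, but it has to be established intrinsically in $\Mack_G$ first, and that is the substantive content.

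Your overall strategy for the equivalence is also different from the paper's, and in a way that matters for the logical direction. You take the equivariant Barratt--Priddy--Quillen theorem as input and use it to match mapping spectra between orbit generators. The paper instead proves $L$ is an equivalence by a monadicity/projection-formula argument (both sides are generated by dualizables, so $R$ is colimit-preserving and conservative, reducing to showing the unit $1_{\Sp_G}\to R(1_{\Mack_G})$ is an equivalence), handled by induction on $|G|$ together with the same $\widetilde{E\Pcal}$ computation; equivariant BPQ then falls out as a \emph{corollary}. Your route is legitimate if you are willing to import equivariant BPQ from the literature, but your proposed self-contained proof of (b) via isotropy separation again presupposes a compatible geometric-fixed-points decomposition on the Mackey side, which brings you back to the machinery you skipped in (a).
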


The rest of this section will provide a proof of this result.

The construction of $L$ rests on the following result, and we thank
Markus Hausmann for providing a key reference in its proof. Compare also
\cite[App.~C]{GM20} for a treatment. 

\begin{theorem}\label{thm:folklore} The suspension $\Sigma_G^\infty\colon \Scal_{G,\bullet}\longrightarrow\Sp_G$ is the
initial example of a presentably symmetric monoidal functor\footnote{In other words, a map in $\CAlg(\Pr^L)$.} which inverts the functor $S^V\otimes -$
for all finite-dimensional, orthogonal representations $V$ of $G$.
\end{theorem}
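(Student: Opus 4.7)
The plan is to reduce the claim to a universal-inversion theorem of Robalo for a single symmetric object, and then identify the resulting abstract localization with $\Sp_G$ as constructed from orthogonal $G$-spectra.

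First I would reduce to inverting the single sphere $S^\rho$, where $\rho$ is the regular representation of $G$. Any finite-dimensional orthogonal $G$-representation $V$ is a summand of $\rho^{\oplus n}$ for some $n$, so $S^V \otimes S^{V^\perp} \simeq S^{\rho^{\oplus n}} \simeq (S^\rho)^{\otimes n}$. Therefore a presentably symmetric monoidal functor inverts every $S^V$ if and only if it inverts $S^\rho$. Moreover $S^\rho$ is symmetric in Robalo's sense: the cyclic permutation of $(S^\rho)^{\otimes 3}$ comes from a cyclic rotation of $\rho^{\oplus 3}$ by an element of $SO(3|G|)$, a path-connected group, so this automorphism is homotopic to the identity already in $\Scal_{G,\bullet}$.

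Next, by Robalo's theorem on presentably symmetric monoidal localization (cf.~\cite{Rob15}), the universal presentably symmetric monoidal $\infty$-category $\Scal_{G,\bullet}[(S^\rho)^{-1}]$ exists and is computed as the sequential colimit
\[ \Scal_{G,\bullet}[(S^\rho)^{-1}] \simeq \varinjlim\left( \Scal_{G,\bullet} \xrightarrow{\,S^\rho \otimes -\,} \Scal_{G,\bullet} \xrightarrow{\,S^\rho \otimes -\,} \cdots \right), \]
taken in $\PrL$. Since $\Sigma_G^\infty$ is presentably symmetric monoidal and inverts each $S^V$, the universal property yields a canonical presentably symmetric monoidal functor $F\colon \Scal_{G,\bullet}[(S^\rho)^{-1}] \to \Sp_G$ with $F \circ \Sigma^\infty \simeq \Sigma_G^\infty$. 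It remains to show $F$ is an equivalence.

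To verify this, I would check that $F$ is an equivalence on a set of compact generators. The objects $\Sigma^\infty_G(G/H_+) \otimes (S^\rho)^{\otimes -n}$, for $H \subseteq G$ and $n \geq 0$, form a set of compact generators on both sides: on the left by construction of the colimit, and on the right by the standard theory of orthogonal $G$-spectra. Fully faithfulness on these generators reduces to a computation of equivariant stable homotopy, namely that for $H, K \subseteq G$ and $n$ large enough the groups $[\Sigma^\infty_G (G/H_+), \Sigma^\infty_G (G/K_+)]^{G}$ match the colimits $\varinjlim_n [G/H_+ \wedge S^{n\rho}, G/K_+ \wedge S^{n\rho}]_G$ of unstable $G$-maps; both recover the Burnside-module $A(G)(G/H, G/K)$, by the equivariant Freudenthal suspension theorem applied to finite $G$-CW complexes and the fact that $\rho$ is a complete universe when allowed to grow with $n$.

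The main obstacle will be this last step: pinning down that the concrete mapping spaces in $\Sp_G$ obtained from the model of orthogonal $G$-spectra coincide with the colimit-of-unstable-$G$-maps computation that arises on the left-hand side. Philosophically this is the content of the equivariant Whitehead theorem together with the Wirthmüller-type identification of stable maps between orbits with the Burnside ring; technically, it requires that one already knows $\Sp_G$ (as extracted from the model category of orthogonal $G$-spectra) is compactly generated by the orbits and has the expected mapping spaces between them. Assuming these known facts about the model-categorical construction of $\Sp_G$, the above argument completes the proof.
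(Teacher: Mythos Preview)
Your approach is essentially the same as the paper's: invoke Robalo's formal inversion of a symmetric object (you reduce to the single sphere $S^\rho$, the paper inverts all $S^V$ at once, but these are interchangeable) and then compare to $\Sp_G$ by checking full faithfulness on the generating orbits.

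The one place you over-complicate things is the final step. You frame the comparison of mapping spaces as requiring an independent identification of both sides with the Burnside module via equivariant Freudenthal, and you flag this as the ``main obstacle.'' In fact no such detour is needed. By Robalo's description, mapping spaces in the formal inversion between $\Sigma^\infty(G/H_+)$ and $\Sigma^\infty(G/K_+)$ are the colimit $\varinjlim_V \hom_{\Scal_{G,\bullet}}(S^V \wedge G/H_+, S^V \wedge G/K_+)$; and by the very construction of $\Sp_G$ from orthogonal $G$-spectra (its weak equivalences being the $\pi_*$-isomorphisms, together with fibrant replacement by an $\Omega$-spectrum), mapping spaces in $\Sp_G$ between the same objects are given by exactly the same formula. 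So the comparison functor is fully faithful on these compact generators by inspection, not by computing both sides separately. The paper's proof makes precisely this observation in one sentence.
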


\begin{proof} 
We have the symmetric monoidal suspension functor 
$\Sigma^\infty_G: \mathcal{S}_{G, \bullet} \longrightarrow \Sp_G$. 
By 
construction of $\Sp_G$, the representation spheres map to invertible objects in
$\Sp_G$. 
Now the initial presentably 
symmetric monoidal $\infty$-category $\mathcal{C}$
equipped with a cocontinuous, symmetric monoidal functor from $\mathcal{S}_{G,
\bullet} $ inverting the representation spheres 
is discussed (in a more general context) in \cite[Sec.~2.1]{robalo}, cf.~also
\cite[Lem.~4.1]{BH17}; note that the representation spheres are symmetric
objects by \cite[Lem.~C.5]{GM20} and that the required cyclic invariance condition is easily checked.
Equivalently, we can also perform this construction at the level of small finitely
cocomplete symmetric monoidal
$\infty$-categories by restricting to the compact objects.  By
\cite[Prop.~2.19, Cor.~2.22]{robalo}, we find that 
this formal inversion (in $\prl$)
is given by the colimit of smashing with $S^V$ on 
$\mathcal{S}_{G, \bullet}$, as $V$ ranges over $G$-representations.  By
construction, we obtain a canonical, cocontinuous symmetric monoidal functor
$\mathcal{C} \to \GSpec$.  It follows from the above that the mapping
anima between the finite
$G$-sets $T, T'$ are computed in the same way (namely, as 
$\varinjlim_V \hom_{\mathcal{S}_{G, \bullet}}(S^V \wedge T_+,  S^V \wedge
T'_+)$), so $\mathcal{C} \to \GSpec$ is fully
faithful on compact generators, whence the result. 
\end{proof}

To construct $\Sigma_\M^\infty$, recall from \cite[\S 9.1 before Lem.~9.4]{BH17} the canonical cartesian monoidal functor $\iota\colon\Fin_{G,+}\to\Span(\Fin_G)$ and the symmetric monoidal equivalence $\Psigma(\Fin_{G,+})\simeq
\Scal_{G,\bullet}$ (\cite[Lem.~2.1]{BH17}). This induces $\Sigma_\M^\infty$, to be defined 
as the composition \[  \Sigma_\M^\infty:=\left( \Scal_{G,\bullet}\simeq \Psigma(\Fin_{G,+})\xrightarrow{\Psigma(\iota)}\Psigma(\Span(\Fin_G))=\Fun^\times(\Span(\Fin_G)^{op},\Scal)\rightarrow\Mack_G\right),\]
where the final map is the stabilization. By construction, $\Sigma_{\mathcal{M}}^\infty$ is a map in $\CAlg(\Pr^L)$.

\Cref{thm:folklore} tells us that to construct the functor $L$ in \Cref{thm:orth_mackey}, we need to see that $\Sigma_\M^\infty$ inverts all
representation spheres. We will do this by constructing from scratch on $\Mack_G$ what 
will a posteriori turn out to be geometric fixed point functors, and by establishing some of their
basic properties. Denote by $\Pcal$ the family of proper subgroups of
$G$ and recall the cofiber sequence in $\Scal_{G,\bullet}$,
defining $\widetilde{E\Pcal}$:
\[ \colim_{G/H\in\OG_\Pcal} G/H_+\simeq E\Pcal_+\longrightarrow
*_+=S^0\longrightarrow \widetilde{E\Pcal},\]
(cf. \cite[Appendix A.1]{MNN19}). The least formal part of our argument is the following.

\begin{lemma}\label{lem:fixed_of_tilde_EP}
We have an equivalence $\left(\Sigma_\M^\infty(\widetilde{E\mathcal{P}})
\right)(G/G)\simeq \mathbb{S}$ in $\Sp$ (since the source is an
$\mathbb{E}_\infty$-ring, the equivalence is uniquely specified).
\end{lemma}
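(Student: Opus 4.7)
My plan is to establish a tom Dieck-type decomposition
\[ \Sigma_\M^\infty(X)(G/G) \;\simeq\; \bigoplus_{(K)\leq G} \Sigma^\infty \bigl(EW_G(K)_+ \wedge_{W_G(K)} X^K\bigr), \]
natural in $X \in \Scal_{G,\bullet}$, and then apply it to $X = \widetilde{E\Pcal}$. Using the defining fixed-point properties $(\widetilde{E\Pcal})^K = *$ for $K \in \Pcal$ and $(\widetilde{E\Pcal})^G = S^0$, all summands with $K \in \Pcal$ vanish, and only the $K = G$ summand contributes; since $W_G(G)$ is trivial, this summand is $\Sigma^\infty(EW_G(G)_+ \wedge_{W_G(G)} S^0) = \Sigma^\infty(S^0) = \mathbb{S}$, giving the claim.

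To establish the decomposition, I would first treat the representable generators $X = (G/H)_+$. By construction, $\Sigma_\M^\infty((G/H)_+)(G/G)$ is the spectrum associated to the $\mathbb{E}_\infty$-groupoid $\hom_{\Span(\Fin_G)}(G/G, G/H)$, which via the standard equivalence between finite $G$-sets over $G/H$ and finite $H$-sets is identified with $\Fin_H^\simeq$ (with the disjoint union $\mathbb{E}_\infty$-structure). Decomposing $\Fin_H^\simeq$ into connected components indexed by isomorphism classes of finite $H$-sets, whose automorphism groups are wreath products of the form $\Sigma_n \wr W_H(K)$, and invoking the ordinary non-equivariant Barratt--Priddy--Quillen theorem applied to these wreath products, yields the splitting for representables. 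For arbitrary pointed finite $G$-CW complexes $X$ one extends by induction on equivariant cells, using the fact that both sides of the putative equivalence are cocontinuous in $X$. Alternatively, this conclusion can be reached by applying $\Sigma_\M^\infty$ and the (also cocontinuous) evaluation $(-)(G/G)$ to the defining cofiber sequence $E\Pcal_+ \to S^0 \to \widetilde{E\Pcal}$ and analyzing the resulting cofiber sequence summand-by-summand via the decomposition.

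The main obstacle is justifying the tom Dieck-type decomposition directly within the Mackey functor framework, without circularly invoking the equivalence $\Sp_G \simeq \Mack_G$ that is the goal of this appendix. Fortunately, the approach sketched above relies only on the ordinary non-equivariant Barratt--Priddy--Quillen theorem together with a standard analysis of the connected components and automorphism groups of $\Fin_H^\simeq$; the reference to Hausmann mentioned in the appendix presumably provides the appropriate $\infty$-categorical formulation of this calculation at the level of the Burnside category.
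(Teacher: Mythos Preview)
Your strategy of deducing the lemma from a full tom Dieck decomposition is more ambitious than what the paper does, and as written it contains a genuine gap. You correctly identify $\Sigma_\M^\infty((G/H)_+)(G/G)$ with the spectrum associated to $\Fin_H^\simeq$, and the orbit-type decomposition of this monoid, together with non-equivariant Barratt--Priddy--Quillen, does give the expected tom Dieck sum on each orbit separately. But knowing that two cocontinuous functors $\Scal_{G,\bullet}\to\Sp$ take isomorphic values on each orbit is not enough to conclude they are naturally equivalent: you must produce a natural transformation between them (on $\Fin_{G,+}$, say), and you have not done so. Constructing the tom Dieck projections $\Sigma_\M^\infty(X)(G/G) \to \Sigma^\infty\bigl(EW_G(K)_+ \wedge_{W_G(K)} X^K\bigr)$ within the Mackey framework, without already invoking $\Mack_G \simeq \Sp_G$, is precisely the delicate point; indeed, the paper uses the present lemma later to identify $\Phi_\M^G$ with the expected formula, so you cannot appeal to geometric fixed points here. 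Your ``alternatively'' at the end runs into the same obstruction: analyzing the cofiber sequence summand-by-summand presupposes that the map $\Sigma_\M^\infty(E\Pcal_+)(G/G) \to \Sigma_\M^\infty(S^0)(G/G)$ respects the splitting, which you have not established.

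The paper's argument sidesteps this by working one categorical level down. Rather than splitting anything, it identifies the relevant map \emph{before} group completion: a colimit manipulation (the paper's \Cref{prop:compute_colim}) shows that $\colim_{G/H\in\OGP}(\Fin_G)_{/(G/H)}^\simeq$ is the free commutative monoid on $\OGP^\simeq$, so the map in question is the free commutative monoid functor applied to the summand inclusion of pointed spaces $\OGP^\simeq_+ \hookrightarrow \OG^\simeq_+$. Since $\OG^\simeq \simeq \OGP^\simeq \sqcup \{G/G\}$, the cofiber in $\CMon(\Scal)$ is the free commutative monoid on $S^0$, namely $\Fin^\simeq$, and one then applies non-equivariant BPQ once. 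No natural matching of summands is ever required. (Incidentally, the Hausmann reference in the appendix concerns the universal property of $\Sp_G$, not this lemma.)
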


To see this, we will need the following result on manipulating colimits. For an $\infty$-category $\C$, we
denote by $\C^\vartriangleright$ the result of freely adjoining a final object to $\C$,
and by $\C^\simeq$ the maximal underlying subgroupoid of $\C$. The construction
$\C\mapsto\C^\simeq$ is right adjoint to the inclusion $\Scal\simeq\mathcal{G}rp_\infty\subseteq
\mathcal{C}at_\infty$ of $\infty$-groupoids into all $\infty$-categories.

\begin{prop}\label{prop:compute_colim} Let $\C$ be an $\infty$-category and $F\colon \C^\vartriangleright\to \Scal$
the functor defined by $F(c)=\left( (\C^\vartriangleright)_{/c}
\right)^{\simeq}$. Then the canonical map of anima $\colim_\C F \to\colim_{\C^\vartriangleright}F$ is equivalent to the inclusion
$\C^\simeq\subseteq\left( \C^\vartriangleright\right)^\simeq$.
\end{prop}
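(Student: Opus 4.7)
The plan is to identify each of $\colim_{\mathcal{C}^\vartriangleright} F$ and $\colim_{\mathcal{C}} F$ separately, then check that the comparison map corresponds to the stated inclusion. Let $\infty \in \mathcal{C}^\vartriangleright$ denote the cone point. Since $\infty$ is terminal in $\mathcal{C}^\vartriangleright$, the colimit on the right is immediate:
\[ \colim_{\mathcal{C}^\vartriangleright} F \simeq F(\infty) = ((\mathcal{C}^\vartriangleright)_{/\infty})^\simeq = (\mathcal{C}^\vartriangleright)^\simeq. \]
Moreover, since there are no morphisms from $\infty$ to any $c\in\mathcal{C}$, one has $(\mathcal{C}^\vartriangleright)_{/c} = \mathcal{C}_{/c}$, so $F|_{\mathcal{C}}(c) = (\mathcal{C}_{/c})^\simeq$. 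The substantive claim is thus the identification $\colim_{\mathcal{C}} F|_{\mathcal{C}} \simeq \mathcal{C}^\simeq$ via the map classifying $(d\to c)\mapsto d$.

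I would prove this identification via the straightening/unstraightening equivalence \cite[Ch.~3]{Lur09}. The left fibration $\mathcal{E}\to\mathcal{C}$ unstraightening $F|_{\mathcal{C}}$ admits the explicit description as the subcategory of $\mathrm{Ar}(\mathcal{C}) := \mathrm{Fun}(\Delta^1,\mathcal{C})$ containing all arrows as objects but only those morphisms whose source-leg is an equivalence --- equivalently, the pullback $\mathcal{E} = \mathrm{Ar}(\mathcal{C}) \times_{\mathcal{C}} \mathcal{C}^\simeq$ taken along the source projection --- with $\mathcal{E}\to\mathcal{C}$ given by target projection. This is because those morphisms are precisely the cocartesian edges of the target-projection cocartesian fibration $\mathrm{Ar}(\mathcal{C})\to\mathcal{C}$, whose underlying left fibration classifies the fiberwise maximal subgroupoids $c\mapsto(\mathcal{C}_{/c})^\simeq$. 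Hence $\colim_{\mathcal{C}} F|_{\mathcal{C}} \simeq |\mathcal{E}|$. Now consider the second (source) projection $\mathcal{E}\to\mathcal{C}^\simeq$: as the pullback of the source-projection cartesian fibration $\mathrm{Ar}(\mathcal{C})\to\mathcal{C}$ along $\mathcal{C}^\simeq\hookrightarrow\mathcal{C}$, it is itself a cartesian fibration, with fiber $\mathcal{C}_{c/}$ over $c\in\mathcal{C}^\simeq$. Each fiber $\mathcal{C}_{c/}$ has the initial object $\mathrm{id}_c$ and is therefore weakly contractible. Since the realization of a cartesian fibration is the colimit of the fiberwise realizations over the opposite of the base (a consequence of the fact that $|{-}|\colon \mathrm{Cat}_\infty\to\mathcal{S}$ is a left adjoint), one concludes $|\mathcal{E}| \simeq |\mathcal{C}^\simeq| = \mathcal{C}^\simeq$.

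Finally, to identify the comparison map, trace a point $(d\to c)\in F|_{\mathcal{C}}(c)$: under the equivalence $\colim_{\mathcal{C}}F|_{\mathcal{C}}\simeq\mathcal{C}^\simeq$ it maps to its source $d$, whereas under the canonical comparison to $\colim_{\mathcal{C}^\vartriangleright}F\simeq F(\infty)$ it maps to the image of $(d\to c)$ along the unique $c\to\infty$, i.e.\ the class of the composite $d\to c\to\infty$, which under $F(\infty) = (\mathcal{C}^\vartriangleright)^\simeq$ is again $d\in\mathcal{C}^\simeq\subseteq(\mathcal{C}^\vartriangleright)^\simeq$. Therefore the comparison map corresponds to the inclusion $\mathcal{C}^\simeq\hookrightarrow(\mathcal{C}^\vartriangleright)^\simeq$, as asserted. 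The principal obstacle is the identification $|\mathcal{E}|\simeq\mathcal{C}^\simeq$: once one accepts the dictionary between colimits of space-valued functors and realizations of left fibrations, the argument reduces to the standard fact that a cartesian fibration with weakly contractible fibers induces a realization equivalence; the rest is essentially formal bookkeeping about slice categories and the variance of (co)cartesian fibrations.
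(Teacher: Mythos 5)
Your proof is correct and follows essentially the same route as the paper's: both identify the unstraightening of $F$ with the sub-simplicial set of the arrow category spanned by the squares whose source-leg is an equivalence (your pullback $\mathrm{Ar}(\mathcal{C})\times_{\mathcal{C}}\mathcal{C}^{\simeq}$), and both conclude via the source projection being a cartesian fibration with weakly contractible fibers (each having an initial object $\mathrm{id}_c$). The only cosmetic difference is that you compute $\colim_{\mathcal{C}^{\vartriangleright}}F\simeq F(\infty)$ directly from terminality of the cone point, where the paper runs the arrow-category argument over $\mathcal{C}^{\vartriangleright}$ and then passes to the fiber over the cone point.
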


\begin{proof} The closely related functor $F'\colon \C^\vartriangleright\to
\mathcal{C}at_\infty$ defined by $F'(c):=\left(\C^\vartriangleright\right)_{/c}$
classifies the cocartesian codomain fibration $cd\colon \Fun(\Delta^1,\C^\vartriangleright)\longrightarrow \C^\vartriangleright$ given by evaluation on 1 \cite[Cor.~2.4.7.12]{Lur09}.
It follows that $F=(-)^\simeq\circ F'$ classifies the left fibration $cd'\colon\Fun(\Delta^1,\C^\vartriangleright)^{\mathrm{left}}\longrightarrow \C^\vartriangleright$ obtain by passing 
from $\Fun(\Delta^1,\C^\vartriangleright)$ to the sub-simplicial set 
$\Fun(\Delta^1,\C^\vartriangleright)^{\mathrm{left}}\subseteq \Fun(\Delta^1,\C^\vartriangleright)$
consisting of all simplices all of whose edges are $cd$-cocartesian.
Informally then, the objects of $\Fun(\Delta^1,\C^\vartriangleright)^{\mathrm{left}}$ are the morphisms in $\C^\vartriangleright$, and the morphisms are the commuting squares in which the map between sources is an equivalence.

We now observe that evaluation {\em at zero}, $ez\colon\Fun(\Delta^1,\C)^{\mathrm{left}}
\to \C^\simeq$, is a Cartesian fibration which has all fibers contractible (because each of them has an initial object). In particular, $ez$ is a weak equivalence, and an
inverse equivalence is provided by sending objects to identity morphisms. We have thus seen that $\colim_{\C} F\simeq\C^\simeq$. 

Furthermore, the canonical map $\colim_{\C} F\to \colim_{\C^\vartriangleright} F$ is equivalent to the obvious map
$\Fun(\Delta^1,\C)^{\mathrm{left}}\to \Fun(\Delta^1,\C^\vartriangleright)^{\mathrm{left}}$, the target of 
which is equivalent to the fiber over the cone point, namely $(\C^\vartriangleright)^\simeq$. One checks that this identifies the canonical map with the inclusion $\C^\simeq\subseteq\left( \C^\vartriangleright\right)^\simeq$, as claimed.
\end{proof}

\begin{proof}[Proof of \Cref{lem:fixed_of_tilde_EP}]
Applying \Cref{prop:compute_colim} with $\C=\OGP$ the category of
orbits with proper isotropy (hence $\C^\vartriangleright=\OG$), we obtain a cofiber sequence
in anima
\[ \colim_{G/H\in\OGP}\left( \OG/{(G/H)}\right)^{\simeq}\simeq \OG_\Pcal^\simeq\hookrightarrow
\OG^\simeq\longrightarrow * \sqcup +\]
where the final map sends all orbits with proper isotropy to $+$, and sends $G/G$ to $*$.
We can consider this as a cofiber sequence in pointed anima $\Scal_\bullet$
of the form
\[ \colim_{G/H\in\OGP}\left( \OG/{(G/H)}\right)^{\simeq}_+\simeq \OG_{\Pcal,+}^\simeq\longrightarrow
\OG^\simeq_+\longrightarrow S^0=*_+\]
where the final map sends all orbits with proper isotropy to the base-point $+$,
and sends $G/G$ to $*$.
Applying the free commutative monoid functor $\mathbb{P}\colon \Scal_\bullet\to\CMon(\Scal)$
yields a cofiber sequence in $\CMon(\Scal$):
\begin{equation}\label{eq:cofiber}
 \colim_{G/H\in\OGP}\left( \Fin_G/{(G/H)}\right)^{\simeq}\longrightarrow
\Fin_G^\simeq\longrightarrow \Fin^\simeq
\end{equation}
in which the final map is identified with taking $G$-fixed points.
To see this, observe that $\OG_{/(G/H)}\simeq\mathcal{O}(H)$ and that 
$\mathbb{P}(\mathcal{O}(H)_+^\simeq)\simeq\Fin_H^\simeq$, as can be checked most
easily using the general formula $\mathbb{P}(Z)=\bigvee_{n\ge 0}\left( Z^{\times
n}\times_{\Sigma_n} E\Sigma_{n_+}\right)$.

We denote by $(-)^+$ the group completion on $\CMon(\Scal)$,
and observe that
\[ \Omega^\infty\left(\Sigma^\infty_\M(G/H_+)(G/G)\right) =
\hom_{\Span(\Fin_G)}(G/G,G/H)^+\simeq
\left( \Fin_G/(G/H)\right)^{\simeq,+}.\]
We thus see that the delooping of the group completion of the cofiber sequence \eqref{eq:cofiber} is a cofiber sequence 
in $\Sp$ of the form
\[ \left(\Sigma_\M^\infty(E\mathcal{P})\right)(G/G)\longrightarrow
\Sigma_\M^\infty(S^0)(G/G)\longrightarrow
\Sigma_\M^\infty(\widetilde{E\mathcal{P}})(G/G)\simeq \mathbb{S},\]
 using the Barratt--Priddy--Quillen theorem that $\Omega^\infty(\mathbb{S})\simeq\Fin^{\simeq,+}$. 
\end{proof}

Next, we will need to discuss restriction for Mackey functors. 

\begin{construction}[Restriction for Mackey functors]
Let $H \subseteq G$ be a subgroup. 
\begin{enumerate}
\item We have a symmetric monoidal and coproduct preserving functor
\[ \mathrm{Res}^G_H\colon \Span(\Fin_G) \longrightarrow \Span(\Fin_H).  \]
This sends a $G$-set $U$ to the 
underlying $H$-set of $U$, and behaves accordingly on correspondences (cf. \cite[App.~C.3]{BH17}).
\item
We also have a functor
\[ G \times_H (-)\colon \Span(\Fin_H) \longrightarrow \Span(\Fin_G),  \]
which takes a $H$-set $T$ to the $G$-set $G \times_H T$, and behaves
analogously on correspondences. 
Note that the construction $ T \mapsto G \times_H T$ 
on finite $H$-sets preserves fiber products.  
\end{enumerate}
\end{construction}

\begin{prop}
\label{biadjointburn}
Both the functors $(\mathrm{Res}^G_H(-), G \times_H(-) ): \Span(\Fin_G)
\rightleftarrows \Span(\Fin_H)$ and the functors $(G \times_H (-) ,\mathrm{Res}^G_H(-) ): \Span(\Fin_H)
\rightleftarrows \Span(\Fin_G)$ are biadjoint.
\end{prop}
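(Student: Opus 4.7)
The plan is to prove both adjunctions simultaneously by exhibiting explicit units and counits at the level of spans, and reducing their triangle identities to those of the underlying ordinary adjunction. First, recall that on the level of $1$-categories we have the classical adjunction $G\times_H(-)\dashv\mathrm{Res}^G_H$ between $\Fin_H$ and $\Fin_G$, with unit $\eta_T\colon T\to \mathrm{Res}^G_H(G\times_H T)$, $t\mapsto[e,t]$, and counit $\epsilon_U\colon G\times_H \mathrm{Res}^G_H U\to U$, $[g,u]\mapsto gu$. Both functors preserve pullbacks: $\mathrm{Res}^G_H$ since restriction of $G$-actions creates all limits, and $G\times_H(-)$ by direct inspection (one can identify $G\times_H T$ with the quotient of $G\times T$ by the antidiagonal $H$-action and check pullbacks fiberwise over $G/H$). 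Consequently both functors extend to symmetric monoidal $(2,1)$-functors of span categories, as already used elsewhere in the paper.

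In a span category $\Span(\mathcal{E})$ for $\mathcal{E}$ a $1$-category with pullbacks, any morphism $f\colon A\to B$ of $\mathcal{E}$ gives rise to two morphisms in $\Span(\mathcal{E})$: the \emph{pushforward} $f_!:=(A\xleftarrow{\mathrm{id}}A\xrightarrow{f}B)$ and the \emph{pullback} $f^*:=(B\xleftarrow{f}A\xrightarrow{\mathrm{id}}A)$. To exhibit the adjunction $(G\times_H(-),\mathrm{Res}^G_H)$ between $\Span(\Fin_H)$ and $\Span(\Fin_G)$, I take the unit at $T$ to be $(\eta_T)_!$ and the counit at $U$ to be $(\epsilon_U)_!$. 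For the other adjunction $(\mathrm{Res}^G_H,G\times_H(-))$ between $\Span(\Fin_G)$ and $\Span(\Fin_H)$, I take the unit at $U$ to be $(\epsilon_U)^*$ and the counit at $T$ to be $(\eta_T)^*$. Naturality of $\eta,\epsilon$ in the underlying categories, together with pullback preservation, yields naturality of these transformations in the span categories.

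Verifying the four triangle identities is then a straightforward span-chase, using the observation that any pullback in which one of the two legs is an identity reduces to the pullback object being the domain of the other leg. For instance, the first triangle identity for $(\mathrm{Res}^G_H,G\times_H(-))$ requires the composition
\[
\mathrm{Res}^G_H U\;\xrightarrow{\mathrm{Res}^G_H(\epsilon_U)^*}\;\mathrm{Res}^G_H(G\times_H\mathrm{Res}^G_H U)\;\xrightarrow{(\eta_{\mathrm{Res}^G_H U})^*}\;\mathrm{Res}^G_H U
\]
to be the identity span; composing the two spans amounts to pulling back the right leg $\mathrm{id}$ of the first against the left leg $\eta_{\mathrm{Res}^G_H U}$ of the second, giving a composite span whose left leg is $\mathrm{Res}^G_H(\epsilon_U)\circ\eta_{\mathrm{Res}^G_H U}=\mathrm{id}_{\mathrm{Res}^G_H U}$ (by the triangle identity of the underlying adjunction) and whose right leg is $\mathrm{id}$, i.e.\ the identity span. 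The other three triangle identities are verified in exactly the same way, in each case reducing to one of the two underlying triangle identities after one routine pullback.

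The main obstacle, such as it is, is bookkeeping: one must carefully distinguish push and pull directions of the various spans, and match up the correct leg of each with the correct leg of its neighbor under composition. This is made manageable by the observation that composition with a span whose one leg is an identity is essentially ordinary composition. Since $\Span(\Fin_G)$ is a $(2,1)$-category (all $n$-morphisms for $n\ge 3$ being invertible and uniquely determined by those below), there are no higher coherences to check beyond the 2-cell level, so the classical 2-categorical notion of adjunction suffices for the proof.
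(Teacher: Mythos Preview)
Your approach is sound and genuinely different from the paper's, but there is a gap in the naturality step that you should close.

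The paper argues directly on mapping spaces: using the description $\hom_{\Span(\Fin_G)}(X,Y)\simeq(\Fin_G)_{/X\times Y}^{\simeq}$, it exhibits an equivalence of slice categories
\[
(\Fin_G)_{/(G\times_H S)\times T}\;\simeq\;(\Fin_H)_{/S\times \mathrm{Res}^G_H T}
\]
via pullback along the $H$-map $S\times T\to (G\times_H S)\times T$. This gives the adjunction isomorphism on hom-spaces in one stroke and symmetrically yields both adjunctions; there is no separate verification of units, counits, or triangle identities. Your approach, by contrast, transports the ordinary adjunction $G\times_H(-)\dashv\mathrm{Res}^G_H$ through the covariant and contravariant inclusions $f\mapsto f_!$ and $f\mapsto f^*$ into spans. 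This is more hands-on, makes the unit and counit explicit, and generalizes nicely to any adjunction between categories with pullbacks satisfying the appropriate hypotheses.

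The gap is this: you assert that ``naturality of $\eta,\epsilon$ in the underlying categories, together with pullback preservation, yields naturality of these transformations in the span categories.'' That is not enough. For $(\eta_T)_!$ to be natural with respect to a backward morphism $f^*$ (where $f\colon T'\to T$), you need the Beck--Chevalley isomorphism $(\mathrm{Res}^G_H(G\times_H f))^*\circ(\eta_T)_!\simeq(\eta_{T'})_!\circ f^*$, and this holds precisely when the naturality square of $\eta$ at $f$ is a \emph{pullback} in $\Fin_H$. In other words, you need $\eta$ and $\epsilon$ to be \emph{cartesian} natural transformations, not just natural; pullback preservation of the two functors alone does not guarantee this. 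In the case at hand both are indeed cartesian (for $\eta$ one checks that $T$ sits as the $eH$-component inside $\mathrm{Res}^G_H(G\times_H T)\cong\coprod_{gH}T$; for $\epsilon$ one uses $G\times_H\mathrm{Res}^G_H U\cong G/H\times U$ with $\epsilon_U$ the projection), and once you add that verification your argument goes through. Your triangle identity checks are fine as written.
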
 
\begin{proof} 
If $S$ is a finite $H$-set and $T$ is a finite $G$-set, then we have an
equivalence of categories
\[ \left(\mathrm{Fin}_G\right)_{/ (G \times_H S ) \times T} \stackrel{\simeq}{\longrightarrow}
\left(\mathrm{Fin}_H \right)_{/S \times \mathrm{Res}^G_H(T)},
\]
given by pulling back along the $H$-map $S \times T \to (G \times_H S  )
\times T$. 
Using that $\map_{\Span(\Fin_G)}(X,Y)\simeq\left( \Fin_G\right)_{/X\times Y}^\simeq$, the
result follows. See also  \cite[App. C.3]{BH17} for a more general treatment of Span$(-)$
as an $(\infty,2)$-functor.
\end{proof} 

We now define restriction for Mackey functors, essentially by left Kan extension. Namely, 
we define the symmetric monoidal, cocontinuous functor $\mathrm{Res}^G_{H,\M}
\colon \Mack_G\to \Mack_H$ to be $\mathrm{Res}^G_{H,\M}:=\Pcal_\Sigma(\mathrm{Res}^G_H)\otimes\id_{\Sp_G}$.

As an example, note that for $F\in\Mack_G$ and subgroups $H'\subseteq H\subseteq G$ we have
\[ \Res_{H,\M}^G(F)(H/H')\simeq F(G/H').\]
To see this, since both sides are colimit preserving functors of $F$, it suffices to 
check the case when $F$ is the suspension of an orbit, and then the claim is immediate from the second adjunction in
\Cref{biadjointburn}. 

\begin{prop}\label{prop:isotropy_sep} Assume $F\in\Mack_G$ is such 
that for all proper subgroups $H\subseteq G$ we have 
$\Res_{H,\M}^G(F)\simeq *$. Then the canonical map
\[ F(G/G)\longrightarrow\left(\Sigma_\M^\infty(\widetilde{E\Pcal})\otimes F)\right)(G/G)\]
is an equivalence.
\end{prop}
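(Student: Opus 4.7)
The plan is to reduce to a pointwise vanishing statement using the defining cofiber sequence of $\widetilde{E\Pcal}$. Applying the cocontinuous symmetric monoidal functor $\Sigma_\M^\infty$ to the cofiber sequence $E\Pcal_+ \to S^0 \to \widetilde{E\Pcal}$ in $\Scal_{G,\bullet}$ and tensoring with $F$ produces a cofiber sequence in $\Mack_G$
\[
\Sigma_\M^\infty(E\Pcal_+) \otimes F \longrightarrow F \longrightarrow \Sigma_\M^\infty(\widetilde{E\Pcal}) \otimes F,
\]
whose second map is the one in question after evaluating at $G/G$. Since $\Sp$ is stable, colimits of product-preserving functors are computed objectwise, so evaluation at $G/G$ is exact; consequently it suffices to prove that $(\Sigma_\M^\infty(E\Pcal_+) \otimes F)(G/G) \simeq *$.

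Next, decomposing $E\Pcal_+ \simeq \colim_{G/H \in \OG_\Pcal} G/H_+$ and invoking the cocontinuity of $\Sigma_\M^\infty$, of $(-) \otimes F$, and of evaluation at $G/G$, it suffices to show that $(\Sigma_\M^\infty(G/H_+) \otimes F)(G/G) \simeq *$ for each proper subgroup $H \subsetneq G$. I would establish this via self-duality: each finite $G$-set $S$ is self-dual in $\Span(\Fin_G)$ (evaluation and coevaluation are given by the spans $S \times S \leftarrow S \to *$ and $* \leftarrow S \to S \times S$ built from the diagonal), and this self-duality transfers through the symmetric monoidal functor $\Span(\Fin_G) \to \Mack_G$ to make $\Sigma_\M^\infty(G/H_+)$ self-dual in $\Mack_G$. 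Combined with Yoneda for Mackey functors (the fact that $\Sigma_\M^\infty(G/H_+)$ corepresents evaluation at $G/H$, which follows from its construction via $\Psigma$ and stabilization), this yields
\[
(\Sigma_\M^\infty(G/H_+) \otimes F)(G/G) \simeq \map_{\Mack_G}(1,\, \Sigma_\M^\infty(G/H_+) \otimes F) \simeq \map_{\Mack_G}(\Sigma_\M^\infty(G/H_+),\, F) \simeq F(G/H).
\]
By the identification $\Res_{H,\M}^G(F)(H/H) \simeq F(G/H)$ recalled just before the proposition, the hypothesis $\Res_{H,\M}^G(F) \simeq *$ forces $F(G/H) \simeq *$, completing the proof.

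The main technical ingredient is the self-duality of $\Sigma_\M^\infty(G/H_+)$ in $\Mack_G$ and the attendant Yoneda identification; both are essentially formal consequences of how $\Mack_G$ is built from $\Span(\Fin_G)$, but need to be pinned down cleanly. An alternative bypass would be to use the biadjunctions of \Cref{biadjointburn} together with the projection formula to write $\Sigma_\M^\infty(G/H_+) \otimes F \simeq \mathrm{Ind}_{H,\M}^G \Res_{H,\M}^G F$, from which the vanishing under the hypothesis on $F$ is immediate.
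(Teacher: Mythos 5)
Your proof is correct and follows essentially the same route as the paper's: reduce via the cofiber sequence for $\widetilde{E\Pcal}$ and cocontinuity to the vanishing of $(\Sigma_\M^\infty(G/H_+)\otimes F)(G/G)$ for proper $H$, then identify this with $F(G/H)\simeq \Res^G_{H,\M}(F)(H/H)$ using the self-duality of $\Sigma_\M^\infty(G/H_+)$. The paper's proof is just a terser version of the same argument, so no comparison is needed.
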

\begin{proof}
First observe that for all subgroups $H\subseteq G$, the suspension
$\Sigma_\M^\infty(G/H_+)\in\Mack_G$ is self-dual. 
It then follows that for all proper subgroups $H\subseteq G$,
the spectrum \[(F\otimes\Sigma_\M^\infty(G/H_+))(G/G)\simeq F(G/H)\simeq
\Res^G_{H,\M}(F)(H/H)\simeq *\]
is contractible, and hence that $\left( F\otimes\Sigma_\M^\infty(E\Pcal_+)\right)(G/G)\simeq *$.
The result follows. 
\end{proof}

We next introduce geometric fixed points in the Mackey context.
The fixed point functor $(-)^G:\Fin_G\longrightarrow\Fin$ commutes 
with pullbacks and hence induces a functor on span categories.
This functor preserves finite coproducts and the cartesian product,
hence the functor
\[ \Phi^G_\M:=\Psigma(\Span((-)^G))\otimes\id_\Sp:
\Mack_G\longrightarrow\Psigma(\Span(\Fin))\otimes\Sp\simeq \Sp\]
commutes with all colimits and is symmetric
monoidal. By construction, it takes the expected values on orbits, namely
$\Phi^G_\M(\Sigma_\M^\infty(G/H_+))$ is contractible for a proper subgroup
$H\subseteq G$, and equivalent to $\mathbb{S}$ for $H=G$.
In fact, more generally, for each $X\in \Scal_{G,\bullet}$ we have
\[ \Phi^G_\M(\Sigma_\M^\infty(X))\simeq \Sigma^\infty(X(G/G)).\]

For a subgroup $H\subseteq G$, we denote 
$\Phi^{G,H}_\M:=\Phi^H_\M\circ\Res^G_{H,\M}$.

We will need to know that our geometric fixed points are given by the familiar contruction:

\begin{prop}\label{prop:geo_fixed_formula} There is an equivalence\footnote{The
equivalence is constructed in the proof, we will only need an abstract
equivalence.} 
of functors $\Phi^G_\M(-) \simeq
\left(\Sigma_\M^\infty(\widetilde{E\Pcal})\otimes (-)\right)(G/G)$.
\end{prop}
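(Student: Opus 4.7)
The plan is to construct a natural transformation between the two cocontinuous functors $\Phi^G_\M(-)$ and $G(-) := \left(\Sigma_\M^\infty(\widetilde{E\Pcal})\otimes (-)\right)(G/G)$, and verify it is an equivalence on the compact generators $\Sigma_\M^\infty(G/H_+)$ of $\Mack_G$. Both functors from $\Mack_G$ to $\Sp$ are cocontinuous: $\Phi^G_\M$ by construction, and $G(-)$ since it is the composition of tensoring with a fixed object (a left adjoint) and $\ev_{G/G}$, which preserves colimits because all colimits in $\Mack_G$ are computed pointwise (using that finite products in $\Sp$ coincide with finite coproducts and so commute with all colimits). As the $\Sigma_\M^\infty(G/H_+)$ generate $\Mack_G$ under colimits, it will suffice to check that the natural transformation is an equivalence on these generators.

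For the natural transformation: since $\Phi^G_\M$ is cocontinuous, it is $\Sp$-linear and induces natural maps on mapping spectra. Taking the source equal to $\mathbf{1}_{\Mack_G}$ and using $\Phi^G_\M(\mathbf{1}_{\Mack_G})=\mathbb{S}$, this yields for every $A\in\Mack_G$ a natural map
\[ A(G/G) = \hom_{\Mack_G}(\mathbf{1}_{\Mack_G},A) \longrightarrow \hom_{\Sp}(\mathbb{S},\Phi^G_\M A) = \Phi^G_\M(A). \]
Substituting $A = \Sigma_\M^\infty(\widetilde{E\Pcal})\otimes F$ and combining with the symmetric monoidality of $\Phi^G_\M$ together with the identity $\Phi^G_\M(\Sigma_\M^\infty(\widetilde{E\Pcal})) = \Sigma^\infty(\widetilde{E\Pcal}(G/G)) = \mathbb{S}$, one obtains the sought-after natural transformation $\eta_F\colon G(F) \to \Phi^G_\M(F)$.

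It remains to verify $\eta_F$ on the generators $F = \Sigma_\M^\infty(G/H_+)$. For $H\subsetneq G$, the target $\Phi^G_\M(F) = \Sigma^\infty((G/H)^G_+) = *$ vanishes. The source, using the self-duality of $\Sigma_\M^\infty(G/H_+)$ in $\Mack_G$ (recalled in the proof of \Cref{prop:isotropy_sep}) together with the formula $F'(G/H) = \Res^G_{H,\M}(F')(H/H)$ established in the appendix, equals $\Sigma_\M^\infty(\widetilde{E\Pcal}|_H)(H/H)$; here we additionally use that $\Sigma_\M^\infty$ commutes with restriction, which holds because both composites are cocontinuous functors $\Scal_{G,\bullet}\to \Mack_H$ that agree on orbits. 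This vanishes because $H\in\Pcal$ forces $E\Pcal|_H$ to be contractible and hence $\widetilde{E\Pcal}|_H\simeq *$. For $H=G$, both sides equal $\mathbb{S}$---the source by \Cref{lem:fixed_of_tilde_EP} and the target tautologically---and $\eta_\mathbf{1}$ is a map of $\mathbb{E}_\infty$-ring spectra $\mathbb{S}\to \mathbb{S}$.

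The main obstacle is the case $H=G$: although both source and target are abstractly $\mathbb{S}$, one must know that the abstractly defined $\eta_\mathbf{1}$ really does coincide with the equivalence of \Cref{lem:fixed_of_tilde_EP} rather than the zero map. This is settled by observing that both maps are canonical $\mathbb{E}_\infty$-ring maps $\mathbb{S} \to \mathbb{S}$, which are forced to be the identity by the initiality of $\mathbb{S}$ in $\mathbb{E}_\infty$-rings.
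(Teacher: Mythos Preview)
Your proof is correct and follows essentially the same approach as the paper: both construct the natural transformation $(-)(G/G)\to\Phi^G_\M(-)$ from corepresentability by the unit together with the symmetric monoidality of $\Phi^G_\M$, then substitute $F\mapsto \Sigma_\M^\infty(\widetilde{E\Pcal})\otimes F$, and check on orbits. Your treatment is in fact slightly more careful: the paper simply asserts that the map is an equivalence at $H=G$ because both sides are $\mathbb{S}$, whereas you explicitly justify this via the $\mathbb{E}_\infty$-ring argument; and for proper $H$ you compute the source via self-duality and restriction, while the paper argues directly that $\widetilde{E\Pcal}\wedge G/H_+$ is contractible as a pointed $G$-space.
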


\begin{proof} 
First, we have a natural transformation 
for $F \in \mathrm{Mack}_G$ given by 
$F(G/G) \to \Phi^G_{\mathcal{M}}(F)$, since
$F(G/G)$ is corepresented by the unit and 
$\Phi^G_{\mathcal{M}}(-)$ is symmetric monoidal by construction.
Since the target is unaffected by replacing $F$ by $\Sigma^\infty_{\mathcal{M}}(
\widetilde{E \mathcal{P}} \otimes F)$, we obtain a map 
$\left( \Sigma^\infty_{\mathcal{M}} (\widetilde{E\mathcal{P}} \otimes (-))
\right)(G/G) \to \Phi^G_{\mathcal{M}}(-)$. 
This map is an equivalence  on all orbits, because for a subgroup $H\subseteq G$ we can compute that 
\[\left(\Sigma_\M^\infty(\widetilde{E\Pcal})\otimes\Sigma_\M^\infty(G/H_+)\right)(G/G)\simeq
\left(\Sigma_\M^\infty\left(\widetilde{E\Pcal}\otimes (G/H_+) \right)\right)(G/G)\]
is contractible if $H$ is proper, and is $\mathbb{S}$ if $H=G$ by \Cref{lem:fixed_of_tilde_EP}.
Therefore, the result follows since both functors preserve colimits. 
\end{proof}

This allows to easily establish the basic properties of geometric fixed points
in the Mackey context:

\begin{prop}\label{prop:conservative} The family $\{  \Phi_\M^{G,H}\}_{H\subseteq G}$ of symmetric
monoidal left adjoints is jointly conservative.
\end{prop}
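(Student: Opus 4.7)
The plan is to proceed by induction on the order $|G|$, using \Cref{prop:isotropy_sep} and \Cref{prop:geo_fixed_formula} as the key inputs. Since each $\Phi^{G,H}_\M$ is a cocontinuous, symmetric monoidal functor into $\Sp$, and $\Mack_G$ is stable, joint conservativity is equivalent to showing that any $F \in \Mack_G$ with $\Phi^{G,H}_\M(F) \simeq 0$ for all subgroups $H \subseteq G$ must itself be contractible.

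The base case $G = 1$ is immediate, since $\Phi^{1,1}_\M$ is essentially the identity. For the inductive step, suppose the result is known for all proper subgroups of $G$ and that $F \in \Mack_G$ satisfies $\Phi^{G,H}_\M(F) \simeq 0$ for every $H \subseteq G$. For any proper subgroup $H \subsetneq G$ and any $K \subseteq H$, the compatibility $\Res^H_{K,\M} \circ \Res^G_{H,\M} \simeq \Res^G_{K,\M}$ (which follows from the corresponding identity at the level of $\Span(\Fin_{(-)})$, itself obvious since the underlying-set functors compose) gives
\[ \Phi^{H,K}_\M(\Res^G_{H,\M}(F)) \simeq \Phi^{G,K}_\M(F) \simeq 0. \]
By the inductive hypothesis applied to $\Res^G_{H,\M}(F) \in \Mack_H$, we deduce $\Res^G_{H,\M}(F) \simeq 0$ for every proper $H \subsetneq G$.

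Now I invoke \Cref{prop:isotropy_sep}, whose hypothesis has just been verified, to conclude that the canonical map $F(G/G) \to \bigl(\Sigma_\M^\infty(\widetilde{E\Pcal}) \otimes F\bigr)(G/G)$ is an equivalence. By \Cref{prop:geo_fixed_formula}, the target is $\Phi^G_\M(F) = \Phi^{G,G}_\M(F) \simeq 0$, so $F(G/G) \simeq 0$. Combined with the vanishing $F(G/H) \simeq \Res^G_{H,\M}(F)(H/H) \simeq 0$ for proper $H$ established above, we conclude that $F$ vanishes on every orbit of $G$. Since $F$ is a Mackey functor, i.e., a product-preserving presheaf on $\Span(\Fin_G)^{\op}$, and every finite $G$-set is a finite coproduct of orbits, $F$ is determined up to equivalence by its values on orbits; hence $F \simeq 0$, completing the induction.

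The main obstacle is essentially bookkeeping, namely the clean identification $\Res^G_{H,\M}(F)(H/K) \simeq F(G/K)$ for $K \subseteq H$, which was already noted in the excerpt, together with the transitivity of restriction. Everything else is assembled formally from the already-established \Cref{prop:isotropy_sep} and \Cref{prop:geo_fixed_formula}.
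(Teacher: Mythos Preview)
Your proof is correct and follows essentially the same approach as the paper's own proof: induction on $|G|$, using the inductive hypothesis to obtain $\Res^G_{H,\M}(F)\simeq 0$ for proper $H$, then combining \Cref{prop:isotropy_sep} and \Cref{prop:geo_fixed_formula} to kill $F(G/G)$. You are slightly more explicit than the paper about the transitivity of restriction needed to apply the inductive hypothesis, but otherwise the arguments are identical.
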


\begin{proof}
Assume $\Phi^{G,H}_\M(F)\simeq *$ for all $H\subseteq G$, and we need to see that $F\simeq *$.

This is clear for trivial $G$, and we argue by induction on the 
group order in general. We can thus assume that $\Res^G_{H,\M}(F)
\simeq *$ for all proper subgroups $H\subseteq G$. In particular then, for all proper subgroups $H\subseteq G$ we know that
\[ F(G/H)=\Res^G_{H,\M}(F)(H/H)=*\]
is contractible, and need to see that $F(G/G)$ is as well.
But combining \Cref{prop:isotropy_sep} and \Cref{prop:geo_fixed_formula}, we see that $F(G/G)\simeq\Phi^G_\M(F)=\Phi^{G,G}_\M(F)$, and this is contractible by assumption.
\end{proof}

This finally lets us check that suspension for Mackey functors inverts all
representation spheres.

\begin{prop}\label{prop:invert_sphere}
For every representation $V$ of $G$, $\Sigma_\M^\infty(S^V)\in
\Mack_G$ is invertible.
\end{prop}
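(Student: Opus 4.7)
The plan is to leverage Proposition \ref{prop:conservative}, which furnishes a jointly conservative family of symmetric monoidal left adjoints $\{\Phi^{G,H}_\M\}_{H \subseteq G}$ from $\Mack_G$ to $\Sp$. Recall that in any symmetric monoidal stable $\infty$-category, a dualizable object $X$ is invertible if and only if its evaluation map $X^\vee \otimes X \to \mathbf{1}$ is an equivalence. A symmetric monoidal functor preserves dualizability and duals, so if each $\Phi^{G,H}_\M(X)$ is invertible in $\Sp$, then $\Phi^{G,H}_\M$ sends the evaluation of $X$ to the evaluation of an invertible object, which is an equivalence, and joint conservativity upgrades this to an equivalence in $\Mack_G$. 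The argument therefore splits into two steps: show $\Sigma^\infty_\M(S^V)$ is dualizable, and compute each geometric fixed point.

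For dualizability, since $V$ is finite-dimensional, $S^V$ has the homotopy type of a finite pointed $G$-CW complex. The cocontinuous functor $\Sigma^\infty_\M \colon \Scal_{G,\bullet} \to \Mack_G$ therefore carries $S^V$ into the thick subcategory of $\Mack_G$ generated by the orbit suspensions $\Sigma^\infty_\M(G/H_+)$, $H \subseteq G$. It remains to see each such orbit is dualizable. This follows from the fact that every object of $\Span(\Fin_G)$ is self-dual, with duality data given by spans built from diagonals: the evaluation $G/H \times G/H \to \ast$ and coevaluation $\ast \to G/H \times G/H$ are realized by $G/H \times G/H \xleftarrow{\Delta} G/H \to \ast$ and $\ast \leftarrow G/H \xrightarrow{\Delta} G/H \times G/H$, and the triangle identities unwind to an elementary span-composition verification. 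The symmetric monoidal functor $\Psigma(-)\otimes\Sp$ preserves duality, so each $\Sigma^\infty_\M(G/H_+)$ is self-dual in $\Mack_G$, and in particular the whole generated thick subcategory consists of dualizable objects.

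For the computation, I first verify the compatibility $\Res^G_{H,\M} \circ \Sigma^\infty_\M \simeq \Sigma^\infty_\M \circ \Res^G_H$ as symmetric monoidal cocontinuous functors $\Scal_{G,\bullet} \to \Mack_H$. Via the identifications $\Scal_{G,\bullet} \simeq \Psigma(\Fin_{G,+})$ and $\Res^G_{H,\M} = \Psigma(\Res^G_H) \otimes \id_\Sp$, this reduces to the manifest naturality of $\iota$ with respect to restriction (the evident commutative square with horizontal maps $\iota_G, \iota_H$ and vertical restrictions on $\Fin_{\bullet,+}$ and $\Span(\Fin_\bullet)$). Combining this with the identity $\Phi^H_\M(\Sigma^\infty_\M(Y)) \simeq \Sigma^\infty(Y(H/H))$ recorded immediately after the definition of $\Phi^G_\M$, and observing that the $H$-fixed points of the $H$-space $\Res^G_H S^V$ are the sphere $S^{V^H}$, we obtain $\Phi^{G,H}_\M(\Sigma^\infty_\M(S^V)) \simeq \Sigma^\infty S^{V^H}$, which is invertible in $\Sp$.

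With both ingredients in place, the argument sketched in the first paragraph concludes the proof: the evaluation map $\Sigma^\infty_\M(S^V)^\vee \otimes \Sigma^\infty_\M(S^V) \to \mathbf{1}$ becomes an equivalence after every $\Phi^{G,H}_\M$, hence is itself an equivalence by Proposition \ref{prop:conservative}, so $\Sigma^\infty_\M(S^V)$ is invertible. The main subtlety is bookkeeping: establishing self-duality of orbits in $\Span(\Fin_G)$ cleanly via the diagonal spans, and the compatibility of restriction with suspension through the defining Day-convolution construction. Both are formal, but require careful unwinding of the definitions in the preceding subsection.
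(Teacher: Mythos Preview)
Your proof is correct and follows essentially the same strategy as the paper's: establish dualizability via the finite $G$-CW structure of $S^V$ and self-duality of orbits in $\Span(\Fin_G)$, then use the jointly conservative family $\{\Phi^{G,H}_\M\}$ from \Cref{prop:conservative} together with the computation $\Phi^{G,H}_\M(\Sigma^\infty_\M(S^V))\simeq \Sigma^\infty S^{V^H}$ to conclude invertibility. You supply more detail than the paper (the explicit diagonal-span duality data, the evaluation-map reformulation of invertibility, and the compatibility $\Res^G_{H,\M}\circ\Sigma^\infty_\M\simeq\Sigma^\infty_\M\circ\Res^G_H$), but these are exactly the points the paper leaves implicit.
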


\begin{proof}
We first note that $\Sigma_\M^\infty(S^V)\in\Mack_G$ is at least dualizable. Since $\Mack_G$ is stable, the dualizable objects are stable under finite colimits, and $S^V$ is a finite colimits of orbits. It thus suffies to remark that the orbits are dualizable (in fact, self-dual) already in $\Span(\Fin_G)$.
Once we know $\Sigma_\M^\infty(S^V)\in\Mack_G$ is dualizable, it will be invertible if and only if it becomes so 
after applying any family of jointly conservative symmetric monoidal functors. By \Cref{prop:conservative} it will thus suffice to see that for every subgroup $H\subseteq G$, the spectrum $\Phi^{G,H}_\M(\Sigma_\M^\infty(S^V))$ is invertible, but this follows from a direct computation:
\[\Phi^{G,H}_\M(\Sigma_\M^\infty(S^V))=\Phi^H_\M(\Res^G_{H,\M}(\Sigma_\M^\infty(S^V)))\simeq\Sigma^\infty((S^V)^H)\simeq
S^{\dim(V^H)}.\]
\end{proof}

We can now complete the proof of our main result.

\begin{proof}[Proof of \Cref{thm:orth_mackey}]
\Cref{thm:folklore} and \Cref{prop:invert_sphere} show that there is a unique symmetric monoidal left
adjoint $L:\Sp_G\longrightarrow\Mack_G$ such that $L\circ\Sigma_G^\infty\simeq\Sigma_\M^\infty$.
It remains to see that $L$ is an equivalence. Denote by $R$ the right adjoint of $L$. Since both $\Sp_G$
and $\Mack_G$ are generated under colimits by dualizable objects (namely the suspensions of orbits), it
follows from \cite[Thm. 1.3]{balmer-grothendieck} that $R$ admits itself a right adjoint, hence preserves colimits, and that the
adjunction $(L,R)$ satisfies a projection formula. Furthermore, $R$ is conservative because the image of its left adjoint $L$ contains a set of generators. We can thus apply \cite[Prop. 5.29]{MNN17} to conclude that the
adjunction $(L,R)$ induces an adjoint equivalence
\[ \mathrm{Mod}_{\Sp_G}(R(\mathrm{1}_{\Mack_G}))\simeq\Mack_G,\]
and it remains to see that the counit of the adjunction
\begin{equation}  \mathrm{1}_{\Sp_G}\longrightarrow
R(L(\mathrm{1}_{\Sp_G}))\simeq R(\mathrm{1}_{\Mack_G})
\label{counitadj} \end{equation}
is an equivalence. 

Now we use induction on the group order. Given a proper subgroup $H \subsetneq
G$, we have a commutative diagram in
$\mathrm{CAlg}(\Pr^L)$,
\[ \xymatrix{
\Sp_G \ar[d]  \ar[r] &  \Mack_G \ar[d] \\
\Sp_H \ar[r] &  \Mack_H,
}\]
by the universal property of $\Sp_G$. 
The inductive hypothesis gives that the bottom horizontal arrow is an
equivalence. 
This implies that if $X \in \Sp_G$, then 
$\hom_{\Sp_G}( G/H_+, X) = \hom_{\Mack_G}(G/H_+, X)$ since both sides are
calculated as maps out of the unit in $\Sp_H$ (resp.~$\Mack_H$). 
In particular, this implies that  
\eqref{counitadj}
restricts to an equivalence after restriction to proper subgroups; therefore, it suffices to see that
$\Phi^G$ (i.e., geometric 
fixed points for orthogonal spectra) turns this map into an equivalence. Since
$\Phi^G(\mathrm{1}_{\Sp_G})=\mathbb{S}$ and we are looking at a map of
commutative algebras, it suffices in fact to see that there is an equivalence of
spectra $\Phi^G(R(\mathrm{1}_{\Mack_G}))\simeq \mathbb{S}$.
This follows from the following computation:
\[ \Phi^G(R(\mathrm{1}_{\Mack_G}))\simeq\left(
\Sigma_G^\infty(\widetilde{E\Pcal}) \otimes
R(\mathrm{1}_{\Mack_G}))\right)^G\simeq\left( R\left[
L(\Sigma_G^\infty(\widetilde{E\Pcal}))\otimes \mathrm{1}_{\Mack_G})\right] \right)^G\simeq\]
\[\simeq L(\Sigma_G^\infty(\widetilde{E \Pcal}))(G/G) \simeq
(\Sigma_\M^\infty(\widetilde{E\Pcal}))(G/G)\simeq \mathbb{S}.\]
This computation used in turn: The definition of $\Phi^G$, the projection formula for $(L,R)$, 
the fact that $\left( R(-)\right)^G\simeq (-)(G/G)$ (by adjointness of $L$ and $R$), the fact that $L\circ\Sigma_G^\infty\simeq
\Sigma_\M^\infty$, and finally \Cref{lem:fixed_of_tilde_EP}.
\end{proof}

As promised earlier, our account yields the following proof of the equivariant
Barratt--Priddy--Quillen theorem (originally due to \cite{GM11}), which by-passes any loop-space theory (but uses the non-equivariant version).

\begin{cor}
For a finite group $G$, there is an equivalence in $\CMon(\Scal)$
\[ \colim_V \Omega^V S^V \simeq \left( \Fin_G\right)^{\simeq,+},\]
where the colimit is taken along any cofinal system of representations of $G$,
and $(-)^+$ denotes group completion.
\end{cor}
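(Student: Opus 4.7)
The plan is to chain together three identifications, each living in $\CMon(\Scal)$, using \Cref{thm:orth_mackey} together with a computation already carried out in the proof of \Cref{lem:fixed_of_tilde_EP}.

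First, I would identify $\colim_V \Omega^V S^V$ with $\Omega^\infty\bigl((\mathrm{1}_{\Sp_G})^G\bigr)$, where $\mathrm{1}_{\Sp_G} = \Sigma_G^\infty S^0 \in \Sp_G$ is the $G$-equivariant sphere spectrum. This is the standard presentation of the $G$-fixed-point infinite loop space in the model of orthogonal $G$-spectra; cofinality of the chosen system of representations $V$ ensures that the stated colimit agrees with the genuine construction.

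Second, I would transport this across the symmetric monoidal equivalence $L \colon \Sp_G \xrightarrow{\sim} \Mack_G$ of \Cref{thm:orth_mackey}. Since $(G/G)_+$ corepresents $(-)^G$ on $\Sp_G$ and $L\bigl((G/G)_+\bigr) \simeq \Sigma_\M^\infty(G/G_+) = \mathrm{1}_{\Mack_G}$ (which corepresents evaluation at $G/G$ on $\Mack_G$), the equivalence identifies the functor $(-)^G \colon \Sp_G \to \Sp$ with $(-)(G/G) \colon \Mack_G \to \Sp$. Applied to the unit, this yields
$$(\mathrm{1}_{\Sp_G})^G \simeq \mathrm{1}_{\Mack_G}(G/G) = \Sigma_\M^\infty(S^0)(G/G).$$

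Third, I would invoke the explicit formula
$$\Omega^\infty\bigl(\Sigma_\M^\infty(G/H_+)(G/G)\bigr) \simeq \bigl(\Fin_G/(G/H)\bigr)^{\simeq, +}$$
derived in the proof of \Cref{lem:fixed_of_tilde_EP}, and specialize to $H = G$. Since $G/G$ is terminal in $\Fin_G$, the slice $\Fin_G/(G/G)$ is canonically equivalent to $\Fin_G$, so we obtain $\Omega^\infty\bigl(\mathrm{1}_{\Mack_G}(G/G)\bigr) \simeq (\Fin_G)^{\simeq, +}$. Concatenating the three identifications gives the required equivalence in $\CMon(\Scal)$.

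The only non-formal ingredient is the first step: bridging the model-categorical expression $\colim_V \Omega^V S^V$ with the $\infty$-categorical $\Omega^\infty\bigl((\mathrm{1}_{\Sp_G})^G\bigr)$. This rests on the explicit definition of $(-)^G$ in orthogonal spectra and the non-equivariant Barratt--Priddy--Quillen theorem, both of which are already implicit in \Cref{lem:fixed_of_tilde_EP}. Once this bridge is in place, the rest is a direct transport along the equivalence of \Cref{thm:orth_mackey} and reuse of a formula already established in the paper.
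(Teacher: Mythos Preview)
Your proposal is correct and follows essentially the same route as the paper: compute the endomorphism space of the unit on each side of the equivalence $L\colon\Sp_G\simeq\Mack_G$ from \Cref{thm:orth_mackey}, using the colimit description of $\Sp_G$ from \Cref{thm:folklore} on one side and the formula from the proof of \Cref{lem:fixed_of_tilde_EP} on the other. One minor correction to your last paragraph: the non-equivariant Barratt--Priddy--Quillen theorem is needed only in your third step (it is what powers the computation in \Cref{lem:fixed_of_tilde_EP}), not in the first, which is purely a matter of unwinding the definition of mapping spaces in $\Sp_G$.
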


For the proof, one simply computes the endomorphism anima of the unit of both
$\Sp_G$ and $\Mack_G$ from the definition, and compares the result.

\bibliographystyle{alpha}
\bibliography{DescentGroupAct}

\end{document}